\documentclass[10pt,oneside]{amsart}
\usepackage[a4paper, total={5.5in, 8in}]{geometry}

\usepackage{mathtools}
\usepackage{todonotes}
\usepackage{soul}
\usepackage{easyReview}
\usepackage{derivative}
\usepackage[dvipsnames]{xcolor} 
\usepackage[urlcolor=purple,linkcolor=blue, citecolor=Emerald, colorlinks=true, linktocpage=true]{hyperref}
\usepackage{tikz}
\usepackage{graphicx,amssymb,eucal,mathrsfs}
\usepackage{latexsym,amsmath,epsfig,epic,eepic}
\usepackage{amscd}
\usepackage{amsthm}
\usepackage{mathabx}
\usepackage{indentfirst}
\usepackage{epsf,graphicx,pgf}
\usepackage{pstricks,mathrsfs}
\usepackage{mathtools}
\usepackage{tikz-cd}
\usepackage{comment}
\usepackage{stmaryrd}
\usepackage[foot]{amsaddr}
\usepackage{graphicx} % Required for inserting images

\newtheorem{thmA}{Theorem}

\newtheorem{prop}{Proposition}[section]
\newtheorem*{prop*}{Proposition}
\newtheorem{lemma}[prop]{Lemma}

\newtheorem*{coro*}{Corollary}
\newtheorem*{ques*}{Question}

\theoremstyle{definition}

\newtheorem{rem}{Remark}[prop]

\newcommand{\Ham}{\text{Ham}}

\usetikzlibrary{calc,intersections, decorations.markings,decorations.pathreplacing}
\title{Braid Stability and a Floer theory of Braid Isotopies}
\author{Nicolas Grunder}
\email{nicolas.grunder@unine.ch}
\begin{document}
\begin{abstract}
   We build a Floer theory on the space of braids of periodic orbits of a Hamiltonian flow on a closed symplectic surface. The differential and continuation maps are defined by counting Floer isotopies of braids: tuples of Floer cylinders with pairwise disjoint graphs.
    With this new perspective, we prove a quantitative braid stability result that shows the persistence of braids under possibly large Hamiltonian perturbations. As an application, we show that for every $\alpha\geq 0$ there is a sequence of Hamiltonian diffeomorphisms $\phi_k$ on the two-torus $T^2$ such that $$d_H(\operatorname{Ent}_{\leq \alpha}(T^2,\omega),\phi_k)\to \infty \quad (k \to \infty),$$ where $\operatorname{Ent}_{\leq \alpha}(T^2,\omega)\subset \Ham(T^2,\omega)$ denotes the set of Hamiltonian diffeomorphisms with topological entropy at most $\alpha$ and $d_H$ the Hofer metric. We prove this result by studying only contractible periodic orbits, whereas analogous higher-genus statements were previously obtained using non-contractible orbits.
\end{abstract}
\maketitle
\section{Introduction}\label{section: intro}
Let $(S,\omega)$ be a closed symplectic surface of genus $g\geq 0$. For every smooth Hamiltonian $H:S^1\times S\to \mathbb R$ the equation $$\omega(X_{H_t},\cdot) = -dH_t$$ uniquely defines the Hamiltonian vector field $X_{H_t}$ whose flow we denote by $\phi_H^t$. Such time-1 maps $\phi_H^1$ are called Hamiltonian diffeomorphisms, and their collection forms the group of Hamiltonian diffeomorphisms denoted by $\Ham(S,\omega)$. On $\Ham(S,\omega)$, there is a non-degenerate bi-invariant metric called the \emph{Hofer metric}, which is defined as follows. For a Hamiltonian $H$ set $$E(H) = \int_0^1\left(\max_{x\in S}H(t,x)-\min_{x\in S}H(t,x)\right)\,dt.$$ Then the Hofer metric is defined by $$d_H(\phi,\psi) := \inf_{H,G}E(H-G),$$ where the infimum runs over all $H$ and $G$ such that $\phi_H^1 = \phi$ and $\phi_G^1 = \psi$. For every smooth loop $x:S^1\to S$ and a capping $C$ given by a homotopy from $x$ to a fixed reference loop in its homotopy class, we define $$\mathcal A_H([x,C]) = \int_0^1H(t,x(t))\,dt-\int_C\omega.$$ In accordance with standard convention, we let $C$ be a disk in the case when $x$ is contractible.
\subsection{The Braided Floer Space}
We denote the set of one-periodic orbits of the flow $\phi_H^t$ by $\mathcal P(H)$. For $x=\{x_1,\dots,x_k\}\subset \mathcal P(H)$ one can define a \emph{(pure closed) surface braid} $$B(H,x) = \{(x_1(t),t),\dots,(x_k(t),t)\}_{t\in S^1},$$ where the loops $(x_i(t),t)$ are called \emph{strands}. The isotopy class of this braid will be denoted by $\mathcal B(H,x)$. We define the \emph{braided Floer space in the class $\beta$} by $$\operatorname{BCF}_\beta(H)= \langle \{x_1,\dots,x_k\}\subset \mathcal P(H)\,|\,\mathcal B(H,\{x_1,\dots,x_k\})= \beta\rangle_{\mathbb Z_2}.$$ In this paper, we build a Floer theory for this space and study the stability of braids of orbits $B(H,\{x_1,\dots,x_k\})$ under Hamiltonian perturbations $H+h$ of $H$, where $E(h)$ is possibly large.
  \begin{figure}[h!]
\begin{tikzpicture}
\draw[thick, name path = S] (-10,0) circle (1.3);
\draw[thick, dashed] (-8.7,0) arc (0:180:1.3 and 0.5);
\draw[thick ] (-8.7,0) arc (0:-180:1.3 and 0.5);
\path[thin, name path=a1] (-10,0.9)--(-1,0.9);
\path[thin, name path=b1] (-10,0.3)--(-1,0.3);
\path[thin, name path=c1] (-10,-0.3)--(-1,-0.3);
\path[thin, name path=d1] (-10,-0.9)--(-1,-0.9);
\path[name intersections={of=S and a1, by=A1}];
\path[name intersections={of=S and b1, by=B1}];
\path[name intersections={of=S and c1, by=C1}];
\path[name intersections={of=S and d1, by=D1}];
%%%%%%%%%%%%%%%%%%%%%%%%%%%%%%%%%%%%%%%%%%%%%%%
\draw[thick] (A1)-- (-8,0.9);
\draw[thick] (B1)-- (-8,0.3);
\draw[thick] (C1)-- (-8,-0.3);
\draw[thick] (D1)-- (-8,-0.9);
%%%%%%%%%%%%%%%%%%%%%%%%%%%%%%%%%%%%%%%%%%%%%%%
\draw[thick] (-7,0.3)-- ($(-7,0.3)!0.40!(-8,0.9)
$);
\draw[thick] (-8,0.9) -- ($(-8,0.9)!0.40!(-7,0.3)
$);
\draw[thick] (-7,0.9)-- (-8,0.3);
\draw[thick] (-7,-0.9)-- ($(-7,-0.9)!0.4!(-8,-0.3)$);
\draw[thick] (-8,-0.3)-- ($ (-8,-0.3)!0.4!(-7,-0.9)$);
\draw[thick] (-7,-0.3)-- (-8,-0.9);
%%%%%%%%%%%%%%%%%%%%%%%%%%%%%%%%%%%%%%%%%%%%%%%
\draw[thick] (-7,0.3)-- (-6,0.9);
\draw[thick] (-7,0.9)-- ($(-7,0.9)!0.4!(-6,0.3)$);
\draw[thick] (-6,0.3)-- ($(-6,0.3)!0.4!(-7,0.9)$);
\draw[thick] (-7,-0.3)-- ($(-7,-0.3)!0.4!(-6,-0.9)$);
\draw[thick] (-6,-0.9)-- ($(-6,-0.9)!0.4!(-7,-0.3)$);
\draw[thick] (-7,-0.9)-- (-6,-0.3);
%%%%%%%%%%%%%%%%%%%%%%%%%%%%%%%%%%%%%%%%%%%%%%%
\draw[thick] (-5,0.3)-- (-6,0.3);
\draw[thick] (-5,0.9)-- (-6,0.9);
\draw[thick] (-5,-0.9)-- (-6,-0.9);
\draw[thick] (-5,-0.3)-- (-6,-0.3);
%%%%%%%%%%%%%%%%%%%%%%%%%%%%%%%%%%%%%%%%%%%%%%%
\draw[thick] (-5,0.9)-- (-4,0.9);
\draw[thick] (-5,0.3)-- ($(-5,0.3)!0.4!(-4,-0.3)$);
\draw[thick] (-4,-0.3)-- ($(-4,-0.3)!0.4!(-5,0.3)$);
\draw[thick] (-5,-0.3)-- (-4,0.3);
\draw[thick] (-5,-0.9)-- (-4,-0.9);
%%%%%%%%%%%%%%%%%%%%%%%%%%%%%%%%%%%%%%%%%%%%%%%
\draw[thick] (-4,0.9)-- (-3,0.9);
\draw[thick] (-4,0.3)-- ($(-4,0.3)!0.4!(-3,-0.3)$);
\draw[thick] (-3,-0.3)-- ($(-3,-0.3)!0.4!(-4,0.3)$);
\draw[thick] (-4,-0.3)-- (-3,0.3);
\draw[thick] (-4,-0.9)-- (-3,-0.9);
%%%%%%%%%%%%%%%%%%%%%%%%%%%%%%%%%%%%%%%%%%%%%%%
\draw[thick] (-3,0.9)-- (-2,0.9);
\draw[thick] (-3,0.3)-- (-2,0.3);
\draw[thick] (-3,-0.3)-- (-2,-0.3);
\draw[thick] (-3,-0.9)-- (-2,-0.9);
%%%%%%%%%%%%%%%%%%%%%%%%%%%%%%%%%%%%%%%%%%%%%%%
\draw[thick,dashed] (-1,0.9)-- (-2,0.9);
\draw[thick,dashed] (-1,0.3)-- (-2,0.3);
\draw[thick,dashed] (-1,-0.3)-- (-2,-0.3);
\draw[thick,dashed] (-1,-0.9)-- (-2,-0.9);
%%%%%%%%%%%%%%%%%%%%%%%%%%%%%%%%%%%%%%%%%%%%%%%
\draw[thick,->] (-7.5,-1.75) -- (-3.5,-1.75) node[midway, below]{$S^1$};
\end{tikzpicture}
\caption{A (pure closed) surface braid on $S^2$ with four strands.}
\end{figure}
 \subsection{An Application: High-Entropy Braids Stable under Large Perturbations}
 From a topological perspective, it is very natural to study flows of vector fields through the braids traced by fixed points $\{z_1,\dots,z_k\}$ of the time-1 map of the flow. There is a push map $$\text{Push}: \mathrm{PB}_k(S,z_1,\dots,z_k)\to \operatorname{MCG}(S,z_1,\dots,z_k)$$ from the pure braid group at $z_1,\dots,z_k$ to the mapping class group of the punctured surface $S\setminus \{z_1,\dots,z_k\}$ (see \cite{BM12}). At the level of conjugacy classes, we obtain a map from closed braids to conjugacy classes of mapping classes, where $\mathcal B(H,\{z_1,\dots,z_k\})$ is mapped to the conjugacy class of $ [\phi_H^1|_{S\setminus \{z_1,\dots,z_k\}} ]$. Thus,  properties determined by the conjugacy class of $[\phi^1_H|_{S\setminus \{z_1,\dots,z_k\}}]$ can be inferred from the existence of the corresponding braid. One such property is the minimal \emph{topological entropy} in the conjugacy class (see Section~\ref{section: braids and entropy} for a definition), which measures dynamical complexity. To estimate the topological entropy, we use a notion of entropy at the level of braids. For a non-degenerate Hamiltonian $H$ this estimate takes the form $$h(\mathcal B(H,\{x_1,\dots,x_k\}))\leq h_{\mathrm{\operatorname{top}}}(\phi_H^1).$$ This motivates the study of braid stability under Hamiltonian perturbations. To the best of the author's knowledge, Alves and Meiwes first proved braid stability in \cite{AM24} for closed surfaces and the disk, and Hutchings generalized it to general symplectic surfaces with boundary \cite{Hu25}. Alves and Meiwes \cite{AM24} proved that under small Hamiltonian perturbations, braids persist, thereby showing that topological entropy is lower-semicontinuous with respect to the Hofer metric. To quantify the perturbations for which braid stability is guaranteed, they had to consider the action of all periodic orbits. The machinery built in this work allows more refined estimates: braid stability is essentially quantified by the energy of Floer isotopies that end at the braid. For action estimates, this sharply restricts the set of action values we need to consider. As a proof of concept that this is in fact an improvement on the theory, we prove the following theorem by only doing action estimates of contractible orbits.
\begin{thmA}\label{thm:entropy}
    For every $\alpha\geq 0$ there is a sequence of Hamiltonian diffeomorphisms $\phi_k\in \operatorname{Ham}(T^2,\omega)$ of the torus such that $$d_H(\operatorname{Ent}_{\leq \alpha}(T^2,\omega),\phi_k)\to \infty \quad(k\to \infty),$$ where $\operatorname{Ent}_{\leq \alpha}(T^2,\omega)$ denotes the set of Hamiltonian diffeomorphisms with topological entropy at most $\alpha$.
\end{thmA}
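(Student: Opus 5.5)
The plan is to construct $\phi_k$ as time-one maps of autonomous-like Hamiltonians $H_k$ on $T^2$ that carry a braid $\beta$ of contractible one-periodic orbits with braid entropy $h(\beta)>\alpha$. The braid is engineered so that every perturbation $H_k+h$ with $E(h)$ below a threshold $c_k\to\infty$ still carries a braid in the class $\beta$. Then the estimate $h(\mathcal B(H,x))\le h_{\mathrm{top}}(\phi^1_H)$, applied to the perturbed Hamiltonian, gives $h_{\mathrm{top}}(\phi^1_{H_k+h})>\alpha$. Hence $d_H(\operatorname{Ent}_{\le\alpha},\phi_k)\ge c_k$. Degenerate perturbations are handled by approximating with non-degenerate ones and using the stability of the braid class under $C^\infty$-small perturbations together with the closedness argument of Alves--Meiwes.

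\textbf{Step 1: the braid.} First fix a pure braid class $\beta$ on $T^2$ with $h(\beta)>\alpha$, realized by contractible orbits. I would take it to live inside a disk $D\subset T^2$: a pseudo-Anosov pure braid in the disk with large enough entropy, for instance a high power of $\sigma_1\sigma_2^{-1}$ squared to make it pure. I would then build a Hamiltonian $G$ supported near $D$ whose time-one map realizes this braid by non-degenerate fixed points $x_1,\dots,x_m$, using standard Hamiltonian constructions of braids in the disk.

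\textbf{Step 2: the scaling trick.} To make the threshold grow, set $H_k = G + kF$, where $F$ is a Morse-type function on $T^2$ whose only effect near the braid is a rotation or constant shift. This raises the action gaps: the actions of the capped strands of the braid stay in a window, while every other contractible orbit, or every Floer isotopy landing there, must cross action level differences of size $\sim k$. Concretely, I would arrange $F$ to be constant, equal to its maximum, on $D$, so the braid persists unchanged. All other contractible orbits of $H_k$ would then be critical points of $F$ outside $D$, with actions differing from the braid actions by at least $k\cdot\delta$.

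\textbf{Step 3: nontriviality and perturbation.} Next show that the braid generator $\{x_1,\dots,x_m\}$ represents a nonzero class in the braided Floer homology of $\operatorname{BCF}_\beta(H_k)$, and that this class cannot be killed within an action window of width $\sim k$. For this I would use the continuation maps of the braided theory. If $E(h)<c_k$, the composition of continuation maps $H_k\to H_k+h\to H_k$ counts Floer isotopies of energy at most $E(h)$. By the energy–action identity for Floer isotopies, which involves only the contractible strand actions, this composition is the identity up to terms of action shift larger than the gap. Since there are no other generators of class $\beta$ in that window, the composition is the identity on the braid generator. Therefore the Floer isotopy from $H_k+h$ exists, which forces $\operatorname{BCF}_\beta(H_k+h)\ne0$.

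\textbf{Main obstacle.} The hardest step is Step 3's claim that no generators in class $\beta$ other than the designed one fall into the relevant action window. This requires controlling which tuples of contractible orbits of $H_k$ can form the braid class $\beta$, and computing their actions. My first approach would be to use that any other tuple involves an orbit outside $D$, which is a critical point of $kF$, so its action is shifted by a multiple of $k$; non-contractible orbits never enter because the strands of $\beta$ are contractible. Failing a clean gap, I would instead work in a filtered version and argue with a spectral-invariant-type quantity attached to $\beta$, which is Lipschitz in Hofer norm, and show directly that it differs from its value at an entropy-$\le\alpha$ map by order $k$.
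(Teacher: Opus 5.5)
\textbf{There is a genuine gap, and it is in Steps 1--2, not just in the bookkeeping of Step 3.} Your maps $\phi_k$ stay at uniformly bounded Hofer distance from zero-entropy maps.

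With $H_k=G+kF$ and $G$ fixed, the definition of the Hofer metric gives
\[
d_H\bigl(\phi^1_{H_k},\phi^1_{kF}\bigr)\le E\bigl(H_k-kF\bigr)=E(G)\quad\text{for every }k.
\]
Here $\phi^1_{kF}$ is the time-one map of an autonomous Hamiltonian on a surface. It therefore has zero topological entropy and lies in $\operatorname{Ent}_{\le\alpha}(T^2,\omega)$ for every $\alpha\ge0$. Hence $d_H(\operatorname{Ent}_{\le\alpha}(T^2,\omega),\phi^1_{H_k})\le E(G)$ for all $k$. By Theorem~\ref{thm: braid stability} and $C^\infty$ upper semicontinuity of entropy, the stability radius of your braid is also at most $E(G)$.

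The Floer-theoretic reason is that the strands, and every orbit competing with them inside $D$, sit where $H_k=G+k\max F$. Adding $k\max F$ shifts all their actions by the same constant. So the action gaps and Floer-isotopy energies that actually control stability do not depend on $k$, and the perturbation $h=-G$, of energy $E(G)$, destroys the braid. The large gaps produced by $kF$ only separate the braid from orbits outside $D$, which are irrelevant to killing it.

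A secondary error: for large $k$ the $1$-periodic orbits of $kF$ are not only critical points of $F$. Level circles whose $X_F$-period equals $k/j$ give nonconstant degenerate families. More generally, any sequence at bounded Hofer distance from autonomous maps is ruled out, and a fixed disk-localized braid plus a large autonomous term is exactly such a sequence.

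\textbf{What is missing} is a braid whose own geometry forces the restricted action gap to grow. The paper builds it as follows.
\begin{itemize}
\item It uses eggbeater maps $\phi_{2k}$, compositions of four alternating shears of speed $\sim k$.
\item It picks two contractible orbits $x_1^{(k)},x_2^{(k)}$ with winding numbers $\pm(k,k)$ and Conley--Zehnder indices $\pm2$.
\item Their difference loop spells $a^{2k+1}b^{2k+1}a^{-2k-1}b^{-2k-1}$, so the braid class is $\beta_{2k+1,2k+1}$. Its entropy satisfies $h\ge\frac25\log(2k+1)$ by the self-intersection estimate for point-push maps.
\end{itemize}

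The bound $\mathcal E\ge\frac k8+O(1)$ then comes from three inputs.
\begin{itemize}
\item Conjugacy of spelled words forces any braid Floer-isotopic to $\{x_1,x_2\}$ to have winding numbers $(k+p,k+q)$ and $(-k+p,-k+q)$.
\item The injective framing forces one cylinder of an isotopy with $\mu_{\max}=1$ to be constant, which gives $p=q=0$.
\item The explicit action formula then yields an action difference of $\frac k8+O(1)$. Meanwhile $\mathcal E^{\mathrm{int}}=\infty$ because the two indices differ by $4$.
\end{itemize}

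Your Step 3 is essentially a loose version of Theorem~\ref{thm: braid stability}. It also skips tuples of glued continuation cylinders that break at coinciding orbits of the perturbed Hamiltonian, which the paper handles via the injective framing or the $\mathrm{Fix}(y)$ parity argument. The real work, absent from your plan, is producing $H_k$ for which the gap restricted to same-class braids actually tends to infinity.
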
 The braid stability result proven in this work is essential to this theorem; we apply a new type of action estimate that relies on the energy of Floer isotopies rather than on the energy of Floer cylinders alone. Importantly, we prove this result by only considering contractible periodic orbits. For surfaces of genus $\geq 2$, the analogous statement was proved by Chor and Meiwes \cite{CM23}. Their work builds on that of Polterovich and Shelukhin \cite{PS16}, who used persistence modules in a Floer theory of non-contractible orbits to show that there are Hamiltonian diffeomorphisms of a closed symplectic surface of genus $g\geq 4$ that lie arbitrarily far, in the Hofer metric, from the set of \emph{autonomous} Hamiltonian diffeomorphisms. Also using non-contractible orbits, Chor \cite{Ch22} extended this result to $ g\geq 2$, and Khanevsky \cite{Kha22}. Since autonomous Hamiltonian diffeomorphisms of surfaces have zero entropy, Theorem \ref{thm:entropy} extends the last result. That an analogous statement to that in \cite{CM23} holds for the case $g=1$ is not surprising. The novelty, however, is that we need only consider contractible orbits, making the case $g=0$, where little is known, more approachable. Next, we outline the machinery that is used in the proof of Theorem~\ref{thm:entropy}: we prove a quantitative version of braid stability by counting Floer isotopies.
\subsection{Quantitative Braid Stability and Floer Isotopies}
To study the stability of the space $\operatorname{BCF}_{\beta}(H)$, we define continuation maps between $\operatorname{BCF}_\beta(H)$ and $\operatorname{BCF}_\beta(G)$ by counting \emph{Floer isotopies}. These Floer isotopies are given by Floer cylinders $u_1,\dots,u_k$ that connect the periodic orbits of the strands. Each cylinder satisfies the Floer equation $$\partial_su_i+J_{s,t}(\partial_tu_i-X_{F_{s,t}}\circ u_i)=0,$$ for a Hamiltonian $F: \mathbb R\times S^1\times S\to \mathbb R$ and a generic family of almost complex structures $(J_{s,t})_{(s,t)\in \mathbb R\times S^1}$, and the cylinders additionally satisfy the following: for all $(s,t)\in \mathbb R\times S^1$ and $i \neq j$ $$u_i(s,t) \neq u_j(s,t).$$ In contrast to standard Floer theory, where the relevant moduli space of Floer cylinders of index $1$ is a union of intervals, the moduli space of isotopies in which each curve has index $1$ is a union of hypercubes that arise as a product of moduli spaces of Floer cylinders. \begin{figure}[h!]
\begin{tikzpicture}[yscale = 0.9]
    \node[] at (0.6,0.6) {${\mathcal M}(x,y)$};
    \draw[thin] (-1.5,-1.5)--(1.5,-1.5)  --(1.5,1.5)--(-1.5,1.5)--cycle;
    \draw[thick](-1.6,-1.6)-- (1.4,-1.6) node[midway,sloped,below]{${\mathcal M}(x_2,y_2)$};
    \draw[thick] (-1.6,-1.5)--(-1.6,1.5) node[midway,sloped, above] {${\mathcal M}(x_1,y_1)$};
    \draw[thin] (-1.5,1.5)--(0.25,2.5);
    \draw[thin, dashed] (-1.5,-1.5)--(0.25,-0.5);
    \draw[thin, dashed](0.25,-0.5)--(0.25,0.25); \draw[thin, dashed](0.25,1)--(0.25,2.5);
    \draw[thin,dashed](0.25,-0.5)--(3.25,-0.5);
     \draw[thin] (1.5,-1.5)--(3.25,-0.5);
    \draw[thick] (1.63,-1.55)--(3.38,-0.55) node[midway,sloped,below]{${\mathcal M}(x_3,y_3)$};
    \draw[thin] (3.25,-0.5)--(3.25,2.5);
    \draw[thin] (1.5,1.5)--(3.25,2.5);
    \draw[thin] (3.25,2.5)--(0.25,2.5);
\end{tikzpicture}
\caption{A cube as a moduli space.}
\end{figure} 

These continuation maps preserve the \emph{Conley--Zehnder index $\mu = \mu_{CZ}$} of each strand. This index is a symplectic twisting number associated to a periodic orbit $x\in \mathcal P(H)$ and takes values in $\mathbb Z$ for $g \geq 1$ and in $\mathbb Z_4$ for $g = 0$.  Since continuation maps preserve the index of every strand, the maps preserve the \emph{framing} $$f_x:x_i\mapsto \mu(x_i)$$ of the underlying sets $\{x_1,\dots,x_k\}\subset \mathcal P(H)$ of the braids of orbits. A \emph{(closed pure) $\mathbb Z$-framed surface braid} or \emph{(closed pure) $\mathbb Z_4$-framed surface braid} is a (closed pure) surface braid $b$ with strands $\{s_1,\dots,s_k\}$ equipped with a map, called \emph{framing}, $$f:\{s_1,\dots,s_k\}\to \mathbb Z\quad \text{ or }\quad f:\{s_1,\dots,s_k\}\to \mathbb Z_4.$$ When it is clear from the context, we drop the prefix $\mathbb Z$ or $\mathbb Z_4$. The framing-preserving isotopies induce an equivalence relation on framed surface braids; the equivalence classes are called \emph{(closed pure) framed surface braid classes}, denoted by the tuple $(\beta,f)$. For every framed braid class $(\beta,f)$ where the framing is $\mathbb Z$-valued when $g \geq 1$ and $\mathbb Z_4$-valued when $g=0$ we define $$\operatorname{BCF}_{\beta,f}(H) = \langle x\subset \mathcal P(H)\,|\,\mathcal B(H,x) = \beta,f_x = f\rangle_{\mathbb Z_2}.$$ 
For every non-degenerate Hamiltonian $H$ and a set $x= \{x_1,\cdots,x_k\}\subset \mathcal P(H)$, we will define the \emph{stability radius $\mathcal S(H,x)\in \mathbb R\cup\{\infty\}$ of the braid $B(H,x)$}  (see below for the definition when the framing $f_x$ is injective and Section~\ref{section braid stability} for the general definition) such that the following theorem holds.
\begin{thmA}[Quantitative Braid Stability]\label{thm: braid stability}
    Let $H$ be a non-degenerate Hamiltonian on a closed symplectic surface $(S,\omega)$ and let $x\subset \mathcal P(H)$. Let $\beta = \mathcal B(H,x)$ and $f=f_x$ be the associated braid class and framing. For every non-degenerate Hamiltonian $G$ such that $E(H-G)<\mathcal S(H,x)$ we have  $$\dim \operatorname{BCF}_{\beta,f}(G)\geq 1.$$
\end{thmA}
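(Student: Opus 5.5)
The plan is the usual Floer-theoretic continuation argument, carried out on the braided complex and filtered by action. Set $F = G - H$.

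\emph{Continuation maps.} Consider the linear homotopies $H_s = H + \rho(s)F$ and $G_s = G - \rho(s)F$, with $\rho$ a monotone cutoff. Counting index-$0$ Floer isotopies, that is, tuples of continuation cylinders with pairwise disjoint graphs, gives maps
$$\Phi:\operatorname{BCF}_{\beta,f}(H)\to \operatorname{BCF}_{\beta,f}(G),\qquad \Psi:\operatorname{BCF}_{\beta,f}(G)\to \operatorname{BCF}_{\beta,f}(H).$$
These maps preserve the braid class $\beta$, because disjointness of the graphs makes the family $(u_1(s,\cdot),\dots,u_k(s,\cdot))_{s\in\mathbb R}$ an isotopy of braids. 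They preserve the framing $f$, because each index-$0$ cylinder connects orbits of equal Conley--Zehnder index.

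\emph{Gluing and the local count at $x$.} Gluing and compactness for the composite homotopy give the following. In the count $\langle \Psi\Phi\, x, x\rangle$, the contribution from isotopies of energy below a threshold $\mathcal S(H,x)$ agrees with the count for the constant homotopy at $H$. That count is $1$, given by the trivial tuple of constant cylinders. The point is that breakings of the glued moduli space (cubes of products of index-$1$ moduli spaces, as in the figure) only occur through Floer isotopies ending at $x$ whose energy is at least the threshold.

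\emph{Energy estimate.} For a continuation cylinder, the standard estimate
$$E(u)\le \mathcal A_{H}(x_i)-\mathcal A_{G}(y_i)+\int_0^1\max\bigl(G_t-H_t\bigr)\,dt$$
summed over the strands gives, for the composite,
$$\sum_i E(u_i)\le E(G-H) \quad\text{at } x.$$
So if $E(H-G)<\mathcal S(H,x)$, where $\mathcal S(H,x)$ is defined as the minimal total energy of a nontrivial Floer isotopy (or broken isotopy) ending at $x$, all those contributions are excluded. Hence $\langle\Psi\Phi x,x\rangle = 1$, so $\Phi x \neq 0$ and $\operatorname{BCF}_{\beta,f}(G)\neq 0$. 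This is the step that uses $\mathcal S(H,x)$.

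\emph{Main obstacle.} The hardest part is compactness and gluing for Floer isotopies. A limit of isotopies could develop a collision $u_i(s,t)=u_j(s,t)$, and then the limit would leave the braid class. I would first try positivity of intersections for differences of Floer cylinders in dimension two (\`a la Sikorav and Gromov), applied in a local holomorphic chart: $u_i-u_j$ satisfies a perturbed Cauchy--Riemann equation, so intersections are isolated and positive. An isolated positive intersection that appears in a limit would persist for nearby isotopies, contradicting their disjointness. Since the asymptotics are distinct orbits, collisions cannot escape to the ends. Transversality holds componentwise, because the isotopy moduli space is an open subset of a product of moduli spaces of cylinders.
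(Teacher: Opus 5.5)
Your skeleton (continue to $G$ and back, compare the composite with the constant homotopy below an energy threshold) matches the paper's. The decisive gap is the step where you identify the composite count with $\langle\Psi\Phi x,x\rangle$.

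\textbf{The main gap: coinciding breaking orbits.} The homotopy-of-homotopies argument only controls the count of index-$0$ isotopies for the glued data $\Gamma''\#_\rho\Gamma'$ from $x$ to the tuples $z=\sigma(x)$. Gluing identifies each such tuple with broken trajectories $((u_1,v_1),\dots,(u_k,v_k))$ breaking at some $y\in\mathcal P(G)^k$. Nothing forces the $y_i$ to be pairwise distinct unless the framing is injective: two strands may break at the same $G$-orbit. Such a tuple has three defects.
\begin{itemize}
\item It is not the composite of a $\Phi$-isotopy and a $\Psi$-isotopy.
\item Its first halves do not end at a braid of $G$-orbits.
\item The piecewise intersection numbers are not even defined. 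By Lemma~\ref{intersection number and knots} the glued number is the interior count plus the linking term $\lambda$, which need not vanish.
\end{itemize}
So an odd composite count does not by itself give $\operatorname{BCF}_{\beta,f}(G)\neq 0$. Your remark that collisions cannot escape to the ends only covers the outer ends $x$ and $z$; here the collision escapes into the neck.

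The missing idea is the paper's symmetry argument. By the swapping lemma (Lemma~\ref{swapping lemma}, which rests on the symmetry of the linking number), permuting the second halves $v_i$ among strands with a common breaking orbit preserves $\sum_{i\neq j}\iota$ and the action constraint. Hence $\mathrm{Fix}(y)$ gives bijections between degenerate tuples ending at $z$ and at $\sigma(z)$. Summed over all $z$ with $[z]=[x]$, the degenerate count is a multiple of $|\mathrm{Fix}(y)|$, which is even, so a regular tuple must exist. Positivity of intersections then splits $\iota$, and the first halves give the required framed braid isotopy.

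\textbf{The same issue in the local count at $x$.} The same phenomenon undermines your claim that breakings in the parametrized moduli space only occur through low-energy Floer isotopies ending at $x$. At the breaking parameter $\lambda_\star$, an $H$-Floer piece may run from $x_j$ to $x_{j'}$, or two strands may break at a common $H$-orbit. Then the broken configuration is not an isotopy, and positivity cannot split $\iota$. This is exactly why $\mathcal S(H,x)$ contains $\mathcal E^{\operatorname{int}}$ and $\mathcal E^2$ besides $\mathcal E$. You have replaced the theorem's $\mathcal S(H,x)$ by a different, loosely defined quantity.

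\textbf{Smaller errors.}
\begin{itemize}
\item Your energy estimate is off by a factor $k$. The term $E^+(\Gamma)$ is incurred once per strand, so one only gets $\sum_iE(w_i)\le kE(H-G)$. This is why $\mathcal S(H,x)$ carries the factor $1/k$.
\item The parametrized components are products of intervals, not $1$-manifolds. Cancellation therefore needs the pure-corner parity of Lemma~\ref{combinatorial lemma}.
\item For non-contractible strands on $T^2$ the unfiltered maps $\Phi,\Psi$ need not be defined. This is why the paper works with the action-windowed maps $\Phi^\Gamma_\varepsilon$ with $\varepsilon=0$.
\end{itemize}
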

The essential improvement over the results in \cite{AM24} is that we can estimate the stability radius $\mathcal S(H,x)$ by the action differences of orbits that appear in $\beta$-braids, as we explain next for an injective framing $f$. For a smooth family of $\omega$-compatible almost complex structures $(J_t)_{t\in S^1}$ we consider Floer isotopies consisting of cylinders $u=(u_1,\dots, u_k)$ that solve the Floer equation $$\partial_su_i+J_t(\partial_tu_i-X_{H_t}\circ u_i) = 0.$$  For a smooth cylinder $u:\mathbb R \times S^1\to S$ the energy is defined by  $$E(u) = \int_\mathbb R\int _0^1 \omega(\partial_su,J_t\partial_su)\,dtds.$$ We define the energy of a Floer isotopy by $$E(u) =\sum_{i = 1}^kE(u_i).$$ When $E(u)$ is finite, the cylinders $u_i$ converge to periodic orbits $\lim_{s\to -\infty}u_i(s,\cdot) = x_i$ and $\lim_{s\to +\infty}u_i(s,\cdot) = y_i$ of $H$. In this case, one can define the index $\mu(u_i)$ of $u_i$ by $\mu([x_i,C_{x_i}])-\mu([y_i,C_{x_i}\#u_i])$, where $C_{x_i}$ is an auxiliary choice of capping of $x_i$ on which the difference does not depend. The maximal index of a Floer isotopy is defined by $$\mu_{\operatorname{max}}(u) = \max_i \mu(u_i).$$ The central quantity in defining the stability radius is the minimal energy of non-trivial Floer isotopies with one end in $x$ for which the Conley--Zehnder index drops by at most $1$ for each strand  $$\mathcal E(H,J,x) = \min \{E(u)\,|\, u \text{ is a Floer isotopy with one end given by $x$ and $\mu_{\operatorname{max}}(u) =1$}\}.$$ This generalizes the minimal energy of Floer cylinders connecting single orbits of index difference one used in similar previous work (see for example \cite{Kha22}). Moreover, we define the \emph{interior minimal energy of Floer cylinders of $x$} by $$\mathcal E^{\operatorname{int}}(H,J,x) = \min \{E(u)\, |\, u \in \mathcal M(x_i,x_j),\; \mu([x_i,C])-\mu([x_j,C\#u]) = 1\},$$ where $C$ is any choice of capping of $x_i$. The stability radius for an injective framing is now defined by \begin{align}\label{eq: stability radius injective framing} \mathcal S(H,x) = \frac 1 k \sup_J\min \{\mathcal E(H,J,x),\mathcal E^{\operatorname{int}}(H,J,x)\},\end{align} where the supremum runs over all loops of $\omega$-compatible almost complex structures such that $(H,J)$ is regular and $k = |x|$ is the number of strands. When $f$ is not an injective framing, the minimum in the definition of $\mathcal S (H, x)$ includes an additional term $\mathcal E^2 (H, J, x)$ that is sensitive to configurations of two Floer cylinders with ends in $x$. The general definition is given in Section~\ref{section braid stability}.

\subsection{Braided Floer Homology}
Although this is not relevant for our approach to proving braid stability, for non-degenerate Hamiltonians and a generic almost complex structure $J$ as auxiliary data, we define a differential $\partial$ on $\operatorname{BCF}_\beta(H)$ that satisfies $\partial^2 =0$. This differential counts Floer isotopies given by Floer cylinders $u_1,\dots,u_k$ which all drop the Conley--Zehnder index by $1$. We call the associated homology \emph{braided Floer homology in the class $\beta$} and denote it by $\operatorname{BHF}_\beta(H,J)$. To avoid Novikov coefficients, in the case $S=T^2$ we assume that all the strands of $\beta$ are contractible.
We note that with the Floer-theoretic setup of $\operatorname{BCF}_\beta(H)$, we recover a refined version of the Arnold conjecture for surfaces, which already follows from the work of Connery-Grigg \cite{CG24}: \begin{align*}\begin{split}&\text{The flow of any non-degenerate time-1 periodic Hamiltonian of a closed symplectic surface} \\ &\text{of genus $g$ has at least $2g+2$ contractible orbits of period $1$ that form the trivial braid.}\end{split}\end{align*}
Indeed, this is an immediate consequence of the following result about braided Floer homology.\begin{thmA}\label{thm: braided floer homology}
    Let $(S,\omega)$ be a closed symplectic surface of genus $g$ and let $\beta_0$ be the closed braid with $2g+2$ constant strands. For every non-degenerate $H$ and generic almost complex structure $J$, we have $$\dim \operatorname{BHF}_{\beta_0}(H,J)\geq 1.$$
\end{thmA}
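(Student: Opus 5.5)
The plan is to compute $\operatorname{BHF}_{\beta_0}$ for one convenient pair $(H_0,J_0)$ and then transfer the answer to arbitrary non-degenerate $(H,J)$ using continuation maps. As in ordinary Floer theory, counting Floer isotopies along a homotopy $(H_s,J_s)$ should give chain maps $\operatorname{BCF}_{\beta_0}(H)\to \operatorname{BCF}_{\beta_0}(G)$. Here every cylinder of the isotopy has index $0$, and the moduli space is cut out by the pairwise disjointness condition $u_i(s,t)\neq u_j(s,t)$. The usual homotopy-of-homotopies argument should show that composing $H\to G\to H$ is chain homotopic to the identity. The point to check is that the boundary of the one-dimensional moduli spaces of isotopies consists only of broken isotopies, which are still disjoint and still in class $\beta_0$. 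Granting this, $\operatorname{BHF}_{\beta_0}(H,J)$ is independent of $(H,J)$ up to isomorphism, and the theorem reduces to one computation.

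For the model, take $H_0=\epsilon f$ with $f$ a $C^2$-small autonomous Morse function and $J_0$ time-independent. Then every one-periodic orbit is a critical point of $f$, and since the strands of $\beta_0$ are constant, every generator of $\operatorname{BCF}_{\beta_0}(H_0)$ is a $(2g+2)$-element subset of $\operatorname{Crit}(f)$. I choose $f$ perfect (one minimum, $2g$ saddles, one maximum), so $\operatorname{Crit}(f)$ has exactly $2g+2$ points. The chain complex then has a single generator, the full set of critical points, and its homology is $\mathbb Z_2$.

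The subtle step is the differential: it must vanish on this generator. One difference from standard Floer theory is that a Floer isotopy counted by $\partial$ must lower the index of \emph{every} strand by exactly $1$. Take the minimum as a strand; it would have to move to an orbit of lower Conley--Zehnder index, and no such critical point exists. So no isotopy starts at the unique generator except trivial ones, which are excluded.

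This index argument uses the $\mathbb Z$-grading, which holds for $g\geq 1$ with contractible strands. For $g=0$ the framing only lives in $\mathbb Z_4$, so I would instead use the action filtration. For small $\epsilon$, cylinders of index difference $1$ are $s$-independent gradient lines by the standard Floer–Morse comparison. Such a line starting at the minimum would have to end at a point of strictly smaller value of $f$, which does not exist. Hence $\partial=0$ and $\dim \operatorname{BHF}_{\beta_0}(H_0,J_0)=1$.

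The main obstacle is the invariance step. Floer isotopies are not obviously transversely cut out as a product of cylinder moduli spaces once they carry $s$-dependent data. In a continuation family, a cylinder of index $-1$ paired with one of index $+1$ could fit into a zero-dimensional family, giving extra contributions and extra boundary components. This has to be ruled out or handled. I would first try to restrict to regular data where every component is individually transverse, so that components of negative index are empty. I would also use positivity of intersections for the difference of two solutions in dimension two to guarantee that disjointness persists under Gromov–Floer limits. Then collisions between strands cannot create new boundary, and the limits of disjoint isotopies remain disjoint braids in class $\beta_0$.
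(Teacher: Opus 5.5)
There is a genuine gap, and it is the step you flag as the main obstacle. Invariance of $\operatorname{BHF}_{\beta_0}(H,J)$ under change of $(H,J)$ is not merely hard to prove; it is false, so no homotopy-of-homotopies argument can give it. The paper says so in the Remark in Section~\ref{section: braided floer homology}, and you can see it already for $\beta_0$.

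On $S^2$, take $G=\epsilon f$ with $f$ Morse having a minimum $m$, a saddle $s$ and two maxima $M_1,M_2$. All six pairs of critical points are generators. The index-one cylinders are gradient lines: one joining each $M_i$ to $s$, two joining $s$ to $m$. Two strands can never both land on $s$. So the only isotopies in which every strand moves pair one of the two lines between $s$ and $m$ with the line between $M_i$ and $s$. These are disjoint and come in pairs, so $\partial=0$ and $\dim\operatorname{BHF}_{\beta_0}(G,J)=6$, while your perfect model gives $1$.

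The usual argument breaks for three reasons:
\begin{itemize}
\item A chain homotopy would count tuples of index $-1$ cylinders for one common parameter $\lambda$. That space has expected dimension $1-k<0$ for $k\geq 2$ strands, so it is generically empty.
\item The boundary of the synchronized one-parameter family contains configurations where a single strand splits off an index-one cylinder. This is not a term of $\partial K+K\partial$, because $\partial$ lowers the index of every strand.
\item If instead each strand carries its own $\lambda_i$, the cylinders solve different equations and positivity of intersections is lost.
\end{itemize}
Your model computation (one generator, $\partial=0$) is fine; the problem is the transfer.

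What you actually need is only the one-sided statement at the model, and there it holds for non-standard reasons; this is how the paper argues. Proposition~\ref{lemma: selfcontinuation C^2 small} shows that every regular self-continuation map at a model $(H_0,J_0)$ with exactly $2g+2$ constant orbits is the identity on chains. It uses more than perfectness of $f$: index-one cylinders must be $t$-independent, and each $\widehat{\mathcal M}_1(x,y)$ must have even cardinality. With these, corners containing a broken component of the cubes built with independent parameters $\lambda_i$ are paired off by a free $\mathbb Z_2$-action. Lemma~\ref{lemma: cylinder with legs} guarantees that this pairing preserves the vanishing of the intersection number.

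Combined with Propositions~\ref{continuation are chain maps} and~\ref{proposition: continuation map composition}, this gives $\Phi^{\Gamma''}\circ\Phi^{\Gamma'}=\Phi^{\Gamma''\#_\rho\Gamma'}=\mathrm{id}$ on $\operatorname{BCF}_{\beta_0}(H_0)$. Hence $\Phi^{\Gamma'}$ embeds $\operatorname{BHF}_{\beta_0}(H_0,J_0)\cong\mathbb Z_2$ into $\operatorname{BHF}_{\beta_0}(H,J)$.

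Your claim that Gromov--Floer limits of disjoint isotopies remain disjoint braids is also wrong. Two strands can break at the \emph{same} orbit $y$, and positivity of intersections does not prevent this: the limit is in degenerate breaking position, with an asymptotic linking term. Such corners genuinely occur in the chain-map and composition arguments. The paper cancels them mod $2$ using Lemma~\ref{swapping lemma} and the fact that the stabilizer $\mathrm{Fix}(y)$ has even order.
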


Importantly, the homology theory constructed in this paper differs from the braid Floer homology defined by van den Berg, Ghrist, Vandervorst, and Wójcik in \cite{BGVW14}. In \cite{BGVW14}, a homology theory is defined that is independent of the Hamiltonian $H$, and the differential essentially counts braid isotopies where the index drops by $1$ in a single orbit. Our homology theory depends on the Hamiltonian $H$ and counts tuples of Floer cylinders pairwise connecting strands in braids of periodic orbits, where the index drops by $1$ in every strand.
Next, we outline what we think are the essential new perspectives and ideas in this work.
\subsection{Elements of the proofs}
All proofs build on standard Hamiltonian Floer theory with additional topological and combinatorial input. In this work, the differential $\partial$ and continuation maps $\Phi$ of braided Floer spaces are defined by counting tuples of Floer cylinders $(u_1,\dots ,u_k)$ all of the same index such that \begin{align}\label{eq: intro disjointness property} u_i(s,t) \neq u_j(s,t)\end{align} for all $(s,t)\in \mathbb R \times S^1$ and $i \neq j$. This property can be expressed by a topological intersection number of the graphs of the cylinders. 
\subsubsection{The Intersection Number}
We consider a purely topological intersection number of cylinders $u_i$, which is a priori only defined when the cylinders converge to distinct loops in the limit $\lim_{s\to \pm \infty}u_i(s,t) \neq \lim_{s \to \pm \infty}u_j(s,t)$ for $i \neq j$. The intersection number $\iota(u_i,u_j)$ is the algebraic count of intersections of the graphs $\hat u_i$ and $\hat u_j$ of $u_i$ and $u_j$. Since $u_i$ and $u_j$ are Floer cylinders, their graphs are pseudoholomorphic (for a suitable almost complex structure), and the intersection number is non-negative. This means that (\ref{eq: intro disjointness property}) holds if and only if $\iota(u_i,u_j) = 0$ for all $i\neq j$. In many proofs, we need to study configurations of two broken trajectories $u_i\#v_i$ for $i = 1,2$ of Floer cylinders that break at the same orbit $y$. \begin{figure}[h!]
\begin{tikzpicture}[xscale = 0.8,yscale = 0.75]
    \draw[thin] (6,-1.5) ellipse (0.3 and 0.8);
    \path[thin] (6,-0.7) arc (-90:90:0.3 and -0.8) ;
    \draw[thin] (-6,-1.5) ellipse (0.3 and 0.8);
    \path[thin] (-6,-0.7) arc (90:-90:-0.3 and 0.8) ;
    \draw[thin,dashed] (-6,0.7) arc (-90:90:0.3 and 0.8);
    \draw[thin] (-6,2.3) arc (90:-90:-0.3 and 0.8);
    \draw[thin] (6,0.7) arc (-90:90:0.3 and 0.8) ;
    \draw[thin,dashed] (6,2.3) arc (90:-90:-0.3 and 0.8);
    \draw[thin] (0,-0.8) arc (-90:90:0.3 and 0.8) node[midway, right]{$y$};
    \draw[thin, dashed] (0,0.8) arc (90:-90:-0.3 and 0.8);
    \draw[thin] (-6,2.3) -- (6,-0.7);
    \draw[thin] (-6,0.7) -- (6,-2.3);
    \draw[thin] (-6,-2.3) -- (6,0.7);
    \draw[thin] (-6,-0.7) -- (6,2.3);
    \node[] at (-3.5,0.9) {$u_1$};
    \node[] at (-3.5,-0.9) {$u_2$};
    \node[] at (3.5,0.9) {$v_1$};
    \node[] at (3.5,-0.9) {$v_2$};
\end{tikzpicture}
\caption{Two cylinders breaking at a single orbit $y$.}

\end{figure} In this situation, the intersection number equals the count of intersections away from $y$, plus an asymptotic term $\lambda(u_1\#v_1,u_2\#v_2)$ accounting for intersections near $y$: $$\iota(u_1\#v_1,u_2\#v_2) = |\hat u_1\cap \hat u_2|+ |\hat v_1\cap\hat  v_2| + \lambda (u_1\#v_1,u_2\#v_2).$$ This asymptotic term is defined purely topologically: near a breaking orbit $y$  we associate to the ends of cylinders $\{u_i\}_{i = 1,2}$ and $\{v_i\}_{i=1,2}$ two two-component links $L^{u}$ and $L^{v}$ in 
$S^3$ for the positive and negative ends of $\{u_i\}_{i = {1,2}}$ and $\{v_i\}_{i = 1,2}$. The asymptotic term is the difference of the linking numbers of these links: $\lambda(u_1\#v_1,u_2\#v_2) = \ell(L^v)-\ell(L^u)$. This perspective is essential to the proofs of the main theorems in this paper and is, to the best of the author's knowledge, new. The following question arose in a discussion with Beomjun Sohn. \begin{ques*}
    How does this topological asymptotic term of the intersection number relate to the more classical geometric intersection number defined by Siefring \cite{Si08}? 
\end{ques*}
\subsubsection{The Moduli Spaces}
For a Hamiltonian $H:\mathbb R \times S^1 \times S \to \mathbb R$ that is locally constant outside of a compact set, a smooth family of $\omega$-compatible almost complex structures $(J_{s,t})_{(s,t)\in \mathbb R\times S^1}$, and a positive integer $k$, we consider the space of Floer isotopies consisting of $k$ Floer cylinders $$\mathcal M(H,J,k) = \left \{(u_1,\dots, u_k)\,\middle |\,\begin{aligned} 
\partial_s u_i+ J_{s,t}(\partial_tu_i-X_{H_{s,t}}\circ u_i) = 0, \\ E(u_i)<\infty,\,\iota(u_i,u_j) = 0 \text{ for } i\neq j\end{aligned}\right \}.$$ Using positivity of intersections, the condition $\iota(u_i,u_j)=0$ is equivalent to $u_i(s,t) \neq u_j(s,t)$ for all $(s,t)\in \mathbb R\times S^1$, while the condition $E(u_i)<\infty$ is a standard assumption and ensures that the Floer cylinders converge to periodic orbits for $s\to\pm \infty$. The space $\mathcal M(H,J,1)$ is simply the classical space of Floer cylinders, and $$\mathcal M(H,J,k)\subset \mathcal M(H,J,1) \times\cdots \times \mathcal M(H,J,1),$$ where $\mathcal M(H,J,k)$ is given by the union of the connected components on which $\sum_{i\neq j}\iota(u_i,u_j)=0$.   \begin{figure}[h!]
    \centering
   \begin{tikzpicture}[scale = 0.8]
       \draw[thick] (-2,-2)--(-2,2)--(2,2)--(2,-2)--cycle;
       \node[] at (0.85,1.6) {$\mathcal M(H,J,2)$};
       \filldraw[black] (-2,2) circle (2pt);
       \filldraw[black] (2,-2) circle (2pt);
       \filldraw[black] (1.5,0.5) circle (2pt);
       \filldraw[black] (-2,-0.5) circle (2pt);
       
       \begin{scope}[shift = {(-6,1.75)},scale = 0.75]
       \coordinate[] (P1) at (2,0);
            \draw[dashed] (0,0) ellipse (2 and 2);
            \draw[] (-1,1) ellipse (0.5 and 0.25);
            \draw[] (1,1) ellipse (0.5 and 0.25);
            \draw[] (-0.5,-1) arc (-180:0:-0.5 and 0.25);
            \draw[dashed] (-0.5,-1) arc (180:0:-0.5 and 0.25);
            \draw[] (1.5,-1) arc (-180:0:-0.5 and 0.25);
            \draw[dashed] (1.5,-1) arc (180:0:-0.5 and 0.25);
            \draw[] (0.5,0) arc (-180:0:-0.5 and 0.25);
            \draw[dashed] (0.5,0) arc (180:0:-0.5 and 0.25);
            
            %%%%%%%%%%%%%%%
            \draw[thin](-1.5,1) arc (0:-90:-0.5 and 0.5);
            \draw[thin](-1,0.5) arc (90:0:0.5 and 0.5);
            \draw[thin](-0.5,1) arc (0:-90:-0.5 and 0.5);
            \draw[thin](0,0.5) arc (90:0:0.5 and 0.5);
            %%%%%%%%%%%%%%%%%%%%
            \draw[thin](-1.5,-1) arc (0:-90:-0.5 and -0.5);
            \draw[thin](-1,-0.5) arc (90:0:0.5 and -0.5);
            \draw[thin](-0.5,-1) arc (0:-90:-0.5 and -0.5);
            \draw[thin](0,-0.5) arc (90:0:0.5 and -0.5);
            %%%%%%%%%%%%%%%%%
            \draw[thin](1.5,1) arc (0:-90:0.5 and 0.4);
            \draw[thin](1,0.6) arc (90:0:-0.5 and 0.6);
            \draw[thin](0.5,1) arc (0:-90:0.5 and 0.4);
            \draw[thin](0,0.6) arc (90:0:-0.5 and 0.6);
            %%%%%%%%%%%%%%%%%
            \draw[](1.5,-1) arc (0:-90:0.5 and -0.6);
            \draw[](1,-0.4) arc (90:0:-0.5 and -0.4);
            \draw[](0.5,-1) arc (0:-90:0.5 and -0.6);
            \draw[](0,-0.4) arc (90:0:-0.5 and -0.4);
       \end{scope}
       \begin{scope}[shift = {(-6,-1.75)},scale = 0.75]
             \coordinate[] (P2) at (2,0);
            \draw[dashed] (0,0) ellipse (2 and 2);
            \draw[] (-1,1) ellipse (0.5 and 0.25);
            \draw[] (1,1) ellipse (0.5 and 0.25);
            \draw[] (-0.5,-1) arc (-180:0:-0.5 and 0.25);
            \draw[dashed] (-0.5,-1) arc (180:0:-0.5 and 0.25);
            \draw[] (1.5,-1) arc (-180:0:-0.5 and 0.25);
            \draw[dashed] (1.5,-1) arc (180:0:-0.5 and 0.25);
            \draw[] (0.,0) arc (-180:0:-0.5 and 0.25);
            \draw[dashed] (0.,0) arc (180:0:-0.5 and 0.25);
            
            %%%%%%%%%%%%%%%
            \draw[thin](-1.5,1) arc (0:-90:-0.25 and 0.5);
            \draw[thin](-1.25,0.5) arc (90:0:0.25 and 0.5);
            \draw[thin](-0.5,1) arc (0:-90:-0.25 and 0.5);
            \draw[thin](-0.25,0.5) arc (90:0:0.25 and 0.5);
            %%%%%%%%%%%%%%%%%%%%
            \draw[thin](-1.5,-1) arc (0:-90:-0.25 and -0.5);
            \draw[thin](-1.25,-0.5) arc (90:0:0.25 and -0.5);
            \draw[thin](-0.5,-1) arc (0:-90:-0.25 and -0.5);
            \draw[thin](-0.25,-0.5) arc (90:0:0.25 and -0.5);
            %%%%%%%%%%%%%%%%%
            \draw[] (0.5,1)--(0.5,-1);
            \draw[] (1.5,1)--(1.5,-1);
       \end{scope}
          \begin{scope}[shift = {(6,-1.75)},scale = 0.75]
                \coordinate[] (P3) at (-2,0);
            \draw[dashed] (0,0) ellipse (2 and 2);
            \draw[] (-1,1) ellipse (0.5 and 0.25);
            \draw[] (1,1) ellipse (0.5 and 0.25);
            \draw[] (-0.5,-1) arc (-180:0:-0.5 and 0.25);
            \draw[dashed] (-0.5,-1) arc (180:0:-0.5 and 0.25);
            \draw[] (1.5,-1) arc (-180:0:-0.5 and 0.25);
            \draw[dashed] (1.5,-1) arc (180:0:-0.5 and 0.25);
            \draw[] (-0.2,0) arc (-180:0:-0.5 and 0.25);
            \draw[dashed] (-0.2,0) arc (180:0:-0.5 and 0.25);
            \draw[] (1.2,0) arc (-180:0:-0.5 and 0.25);
            \draw[dashed] (1.2,0) arc (180:0:-0.5 and 0.25);
            
            %%%%%%%%%%%%%%%
            \draw[thin](-1.5,1) arc (0:-90:-0.15 and 0.5);
            \draw[thin](-1.35,0.5) arc (90:0:0.15 and 0.5);
            \draw[thin](-0.5,1) arc (0:-90:-0.15 and 0.5);
            \draw[thin](-0.35,0.5) arc (90:0:0.15 and 0.5);
            %%%%%%%%%%%%%%%%%%%%
            \draw[thin](-1.5,-1) arc (0:-90:-0.15 and -0.5);
            \draw[thin](-1.35,-0.5) arc (90:0:0.15 and -0.5);
            \draw[thin](-0.5,-1) arc (0:-90:-0.15 and -0.5);
            \draw[thin](-0.35,-0.5) arc (90:0:0.15 and -0.5);
             %%%%%%%%%%%%%%%
            \draw[thin](1.5,1) arc (0:-90:0.15 and 0.5);
            \draw[thin](1.35,0.5) arc (90:0:-0.15 and 0.5);
            \draw[thin](0.5,1) arc (0:-90:0.15 and 0.5);
            \draw[thin](0.35,0.5) arc (90:0:-0.15 and 0.5);
            %%%%%%%%%%%%%%%%%%%%
            \draw[thin](1.5,-1) arc (0:-90:0.15 and -0.5);
            \draw[thin](1.35,-0.5) arc (90:0:-0.15 and -0.5);
            \draw[thin](0.5,-1) arc (0:-90:0.15 and -0.5);
            \draw[thin](0.35,-0.5) arc (90:0:-0.15 and -0.5);
       \end{scope}
         \begin{scope}[shift = {(6,1.75)},scale = 0.75]
               \coordinate[] (P4) at (-2,0);
            \draw[dashed] (0,0) ellipse (2 and 2);
            \draw[] (-1,1) ellipse (0.5 and 0.25);
            \draw[] (1,1) ellipse (0.5 and 0.25);
            \draw[] (-0.5,-1) arc (-180:0:-0.5 and 0.25);
            \draw[dashed] (-0.5,-1) arc (180:0:-0.5 and 0.25);
            \draw[] (1.5,-1) arc (-180:0:-0.5 and 0.25);
            \draw[dashed] (1.5,-1) arc (180:0:-0.5 and 0.25);
            \draw[] (-1.5,1)--(-1.5,-1);
            \draw[] (1.5,1)--(1.5,-1);
            \draw[] (0.5,1)--(0.5,-1);
            \draw[] (-0.5,1)--(-0.5,-1);
       \end{scope}
       \draw[dashed] (-2,2)-- (P1);
        \draw[dashed] (-2,-0.5)-- (P2);
        \draw[dashed] (1.5,0.5)--(P4);
        \draw[dashed] (2,-2)--(P3);
   \end{tikzpicture}
    \caption{Cylinders at different points in the moduli space $\mathcal M(H,J,2)$.}
    \label{fig: moduli space square}
   
\end{figure}Here, a path in the product space gives rise to a homotopy of cylinders relative to limiting orbits, so the intersection number is locally constant. To define and analyze the differential and continuation maps, we restrict the moduli space to cylinders with the same index. Here, the most relevant part of the moduli space is a product of one-dimensional intervals forming a hypercube. Passing to a face of this hypercube means one cylinder breaks into a broken trajectory, with the extreme case at the corner points, where all cylinders are broken (see Figure~\ref{fig: moduli space square}).
\subsubsection{The Symmetry}
Understanding the composition of continuation maps and the differential is central to Floer theory. These compositions always count broken Floer isotopies that consist of tuples of Floer cylinders that all break at distinct orbits. Such broken Floer isotopies appear as corner points of the moduli space we defined above. So to understand the composition of maps, we need to count these corner points. A simple but deep observation in classical Floer theory is that the number of boundary components of a compact  $1$-dimensional manifold is even. In our case, this is replaced by the observation that the number of corner points of hypercubes is even. Unfortunately, the moduli space is not sensitive to configurations of breaking orbits at corner points. Still, we want to count only those where all breaking orbits are distinct and hence correspond to honest broken isotopies. \begin{figure}[h!]
\begin{tikzpicture}
    \begin{scope}[shift = {(-4,0)},xscale = 0.3, yscale = 0.7]
    \draw[thin] (6,-1.5) ellipse (0.3 and 0.8);
    \path[thin] (6,-0.7) arc (-90:90:0.3 and -0.8) ;
    \draw[thin] (-6,-1.5) ellipse (0.3 and 0.8);
    \path[thin] (-6,-0.7) arc (90:-90:-0.3 and 0.8) ;
    \draw[thin,dashed] (-6,0.7) arc (-90:90:0.3 and 0.8);
    \draw[thin] (-6,2.3) arc (90:-90:-0.3 and 0.8);
    \draw[thin] (6,0.7) arc (-90:90:0.3 and 0.8) ;
    \draw[thin,dashed] (6,2.3) arc (90:-90:-0.3 and 0.8);
    \draw[thin] (0,-0.8) arc (-90:90:0.3 and 0.8);
    \draw[thin, dashed] (0,0.8) arc (90:-90:-0.3 and 0.8);
    \draw[thin,blue] (-6,2.3) -- (0,0.8);
    \draw[thin,blue] (-6,0.7) -- (0,-0.8);
    \draw[thin,blue] (6,2.3) -- (0,0.8);
    \draw[thin,blue] (6,0.7) -- (0,-0.8);
    \draw[thin,red] (0,0.8) -- (6,-0.7);
    \draw[thin,red] (0,-0.8) -- (6,-2.3);
    \draw[thin,red] (-6,-2.3) -- (0,-0.8);
    \draw[thin,red] (-6,-0.7) -- (0,0.8);
    \node[] at (-3.5,0.9) {$u_1$};
    \node[] at (-3.5,-0.9) {$u_2$};
    \node[] at (3.5,0.9) {$v_1$};
    \node[] at (3.5,-0.9) {$v_2$};
    \node[] at (0,-3.2) {Cylinders $u_1\#v_1$ and $u_2\#v_2$.};
    \end{scope}
    \draw[<->] (-1.5,0)--(1.5,0) node[midway, above]{\text{swap}};
    \begin{scope}[shift = {(4,0)},xscale = 0.3, yscale = 0.7]
    \draw[thin] (6,-1.5) ellipse (0.3 and 0.8);
    \path[thin] (6,-0.7) arc (-90:90:0.3 and -0.8) ;
    \draw[thin] (-6,-1.5) ellipse (0.3 and 0.8);
    \path[thin] (-6,-0.7) arc (90:-90:-0.3 and 0.8) ;
    \draw[thin,dashed] (-6,0.7) arc (-90:90:0.3 and 0.8);
    \draw[thin] (-6,2.3) arc (90:-90:-0.3 and 0.8);
    \draw[thin] (6,0.7) arc (-90:90:0.3 and 0.8);
    \draw[thin,dashed] (6,2.3) arc (90:-90:-0.3 and 0.8);
    \draw[thin] (0,-0.8) arc (-90:90:0.3 and 0.8) node[midway, right]{$y$};
    \draw[thin, dashed] (0,0.8) arc (90:-90:-0.3 and 0.8);
    \draw[thin,blue] (-6,2.3) -- (6,-0.7);
    \draw[thin,blue] (-6,0.7) -- (6,-2.3);
    \draw[thin,red] (-6,-2.3) -- (6,0.7);
    \draw[thin,red] (-6,-0.7) -- (6,2.3);
    \node[] at (-3.5,0.9) {$u_1$};
    \node[] at (-3.5,-0.9) {$u_2$};
    \node[] at (3.5,0.9) {$v_1$};
    \node[] at (3.5,-0.9) {$v_2$};
    \node[] at (0,-3.2) {Cylinders $u_1\#v_2$ and $u_2\#v_1$.};
    \end{scope}
\end{tikzpicture}\caption{The swapping operation.}\label{fig: swapping0}
\end{figure} We approach this problem by using the symmetry of broken trajectories with the same breaking orbits and show that they pair up and do not affect the count modulo $2$. Given broken trajectories $(u_1,v_1)$ and $(u_2,v_2)$ breaking at an orbit $y$, we use that $(u_1,v_2)$ and $(u_2,v_1)$ are two other broken trajectories and that this swapping operation does not change the intersection number of the broken trajectory. Hence, they remain in the moduli space. At its core, this argument relies on the asymptotic expression for the intersection number and on the symmetry of the linking number.

\subsection*{Organization of the paper:} In Section~\ref{section.intersections}, we define an intersection number for cylinders and study its behavior under Floer-theoretic gluing. In Section~\ref{section: braided floer homology}, we define a differential on $\operatorname{BCF}_\beta(H,J)$ and define continuation maps between braided Floer complexes. We then proceed to prove Theorem~\ref{thm: braided floer homology}. In Section~\ref{section braid stability}, we prove Theorem~\ref{thm: braid stability} by defining \emph{bounded continuation maps}. In Section~\ref{section: application}, we prove Theorem~\ref{thm:entropy} by constructing eggbeater maps on $T^2$ with high-entropy braids of periodic contractible orbits and a large stability radius.
Finally, in the Appendix we carry out a Maslov index calculation which we use in Section~\ref{section: application}.
\subsection*{Acknowledgement:}I thank Peter Feller for many useful discussions and support throughout the project, both mathematically and emotionally. I thank Felix Schlenk, Léo Mousseau, Beomjun Sohn, Dustin Connery--Grigg, Francesco Morabito, Baptiste Serraille, Paul Biran, and especially Johannes Hauber for their interest in the project and patiently listening to my attempts to explain it. Felix Schlenk and Johannes Hauber also generously read preprints of this paper, pointed out various mistakes, and made valuable remarks, for which I am very thankful.

\subsection*{AI Acknowledgment: } The author used Claude to identify typos and verify calculations in Section~\ref{subsection: eggbeater}.

\section{Some Preparation: Intersections}
 \label{section.intersections}
In this section, we define an intersection number of cylinders $u,v:\mathbb R\times S^1\to S$ that algebraically counts points $(s,t)\in \mathbb R\times S^1$ such that $u(s,t) = v(s,t)$. To do this, we pass to graphs of cylinders in a 4-dimensional almost complex manifold. We then make use of the well-known intersection theory of pseudoholomorphic cylinders in such manifolds to conclude that solutions to Floer equations intersect only positively. A priori, the intersection number we use is purely topological and is only defined when the Floer cylinders converge to distinct orbits in the limit. We do, however, discuss the behavior when the limiting orbits agree, specifically when they arise as breaking orbits of broken trajectories (see Figure~\ref{fig: cylinders in degenerate breaking position}). In this case, an additional asymptotic term appears, equal to the linking number of the knots given by slices of the Floer cylinders near the breaking orbit (see Lemma~\ref{intersection number and knots} and Figure~\ref{fig: local behavior at breaking orbit}).

First, we discuss the intersection number from a purely topological standpoint. We study cylinders $u:[-1,1]\times S^1\to S$, where $S$ is a closed surface. For two such cylinders $u_1,u_2$, we would like to define an intersection number that counts, algebraically, the number of points $(s,t)$ such that $u_1(s,t)= u_2(s,t)$. To do so, we consider the graphs of $u$ parametrized by $$\hat u:[-1,1]\times S^1\to [-1,1]\times S^1\times S,\quad \hat u(s,t) = (s,t,u(s,t))$$and consider the intersection of $\hat u_1,\hat u_2$ in $M = [-1,1]\times S^1\times S$.  For simplicity of notation, we denote by $\hat u$ both the graph of $u$ and its parameterization. A pair of cylinders $u_1,u_2$ is called \emph{admissible} if for every $t\in S^1$ we have $$u_1(-1,t) \neq u_2(-1,t)\quad \text{and}\quad u_1(1,t)\neq u_2(1,t).$$ For admissible $u_1$ and $u_2$ such that $\hat u_1$ and $\hat u_2$ can be put in transverse position via a homotopy that is constant at the boundary, the intersection number is defined by  $$\iota(u_1,u_2) = \sum_{p\in \hat u_1\cap \hat u_2} \epsilon_p,$$ where the orientation of the intersection determines $\epsilon_p\in \{\pm 1\}$. This intersection number is symmetric and invariant under homotopies constant on the boundary. We remark that there is a purely algebraic definition of this intersection number using Poincaré--Lefschetz duality $$D:H_2(M,A) \to H^2(M,B),$$ where $A \cup B = \partial M$ is a partition of the boundary of $M$ into two manifolds which overlap only at their common boundary $A\cap B = \partial A= \partial B$.  For admissible cylinders $u_1, u_2$, the intersection number is given as follows. Choose a partition $A\cup B$ of $\partial( [-1,1]\times S^1\times S) = \{\pm 1\}\times S^1\times S$ such that $\hat u_1(\pm 1,t) \in A$ and $\hat u_2(\pm 1,t)\in B$ for all $t\in S^1$. For admissible cylinders $u_1$ and $u_2$ we have $$\iota(u_1,u_2) = D([\hat u_1])([\hat u_2]).$$

\subsection{Intersections of Cylinders in Degenerate Breaking Position}  \label{Subsection: Intersections of Cylinders in Degenerate Breaking Position}
We now discuss the intersection number of cylinders that arise as follows. Let $u_1,u_2:[-1,0]\times S^1\to S$ and $v_1,v_2:[0,1]\times S^1\to S$  such that for all $t\in S^1$ $$y(t) = u_1(0,t) = u_2(0,t) = v_1(0,t)= v_2(0,t)$$ and consider $$u_i\#v_i: [-1,1]\times S^1\to S,\quad (s,t) \mapsto \begin{cases}
    u_i(s,t),& s\in [-1,0]\\ v_i(s,t) ,& s\in [0,1]
\end{cases}.$$ Further assume that $u_1\#v_1$ and $u_2\#v_2$ are admissible and that away from $\hat y$, $\widehat{u_1\#v_1}$ and $\widehat{u_2\#v_2}$ are transverse $\widehat{u_1\#v_1}$ and intersect only finitely many times. Cylinders that arise in this form are \emph{in degenerate breaking position} with  \emph{breaking orbit} $y$.
\begin{figure}[h!]
\begin{tikzpicture}[yscale = 0.9]
    \draw[thin] (6,-1.5) ellipse (0.3 and 0.8);
    \path[thin] (6,-0.7) arc (-90:90:0.3 and -0.8)  node[midway, right]{$z_2$};
    \draw[thin] (-6,-1.5) ellipse (0.3 and 0.8);
    \path[thin] (-6,-0.7) arc (90:-90:-0.3 and 0.8)  node[midway, left]{$x_2$};
    \draw[thin,dashed] (-6,0.7) arc (-90:90:0.3 and 0.8);
    \draw[thin] (-6,2.3) arc (90:-90:-0.3 and 0.8) node[midway, left]{$x_1$};
    \draw[thin] (6,0.7) arc (-90:90:0.3 and 0.8) node[midway, right] {$z_1$};
    \draw[thin,dashed] (6,2.3) arc (90:-90:-0.3 and 0.8);
    \draw[thin] (0,-0.8) arc (-90:90:0.3 and 0.8) node[midway, right]{$y$};
    \draw[thin, dashed] (0,0.8) arc (90:-90:-0.3 and 0.8);
    \draw[thin] (-6,2.3) -- (6,-0.7);
    \draw[thin] (-6,0.7) -- (6,-2.3);
    \draw[thin] (-6,-2.3) -- (6,0.7);
    \draw[thin] (-6,-0.7) -- (6,2.3);
    \node[] at (-3.5,0.9) {$u_1$};
    \node[] at (-3.5,-0.9) {$u_2$};
    \node[] at (3.5,0.9) {$v_1$};
    \node[] at (3.5,-0.9) {$v_2$};
\end{tikzpicture}
\caption{Cylinders in degenerate breaking position.}
\label{fig: cylinders in degenerate breaking position}
\end{figure}
That is, two admissible cylinders $w_1, w_2$ are in degenerate breaking position if their graphs $\hat w_1,\hat w_2$ intersect only finitely many times away from $0\times S^1$, where they agree. Now suppose that $w_1,w_2$ are cylinders in degenerate breaking position with breaking orbit $y$. We next discuss the local behavior of the cylinders near the breaking orbit. Consider \begin{align*}\hat y: S^1\to  [-1,1]\times S^1\times S,\quad  t\mapsto (0,t,y(t))\end{align*} and choose a small tubular neighborhood $U$ of $\hat y$ that is homeomorphic to $(-\varepsilon,\varepsilon)\times S^1\times D^2$ where $D^2$ is the two-dimensional disk. Since the $\hat w_i$ are the graphs of $w_i$ we have that the slice of the cylinder $\hat w_i$ at $s = \delta$ for  $\delta\in (-\varepsilon,\varepsilon)$ lies in $\{\delta\}\times S^1\times D^2$, that is $$\hat w_{i}\big |_{\{\delta\}\times S^1}\subset \{\delta\}\times S^1\times D^2.$$ The neighborhood $U$ can be embedded in $ (-\varepsilon,\varepsilon)\times S^3$ by embedding the filled torus $S^1\times D^2$ as the standard filled torus in $S^3$ $$\iota: (-\varepsilon,\varepsilon) \times S^1\times D^2 \hookrightarrow (-\varepsilon,\varepsilon)\times S^3.$$ Now, let us assume that $\varepsilon>0$ is small enough so that $w_1(s,t)\neq w_2(s,t)$ for every $(s,t) \in (-\varepsilon,\varepsilon)\times S^1$ where $s \neq 0$. This is possible because the pair $(w_1,w_2)$ is in degenerate breaking position. In this case, the cylinders $\hat w_i|_{(-\varepsilon,\varepsilon)\times S^1}$ define isotopies of knots in $S^3$ by $$K^\delta_i: S^1 \to \{\delta\}\times S^3,\quad K_i^\delta =  \iota \circ \hat w_i\big|_{\{\delta\}\times S^1}.$$  For every $\delta\in (-\varepsilon,\varepsilon)\setminus\{0\}$ the knots $K_1^\delta$ and $K_2^\delta$ are disjoint and the linking number $\ell(K_1^\delta,K_2^\delta)$ is well-defined. Since in $((-\varepsilon,\varepsilon)\setminus \{0\})\times S^1$ the cylinders $\hat w_1$ and $\hat w_2$ define an isotopy of $K_1^\delta\dot \cup K_2^\delta$ as links, the linking number $\ell(K_1^\delta,K_2^\delta)$ is constant on each interval $(-\varepsilon,0)$ and $(0,\varepsilon)$. 
\begin{figure}[!h]
    \begin{tikzpicture}
    %planes
    \draw[thick] (0,-3) node[below] {$ s = 0$}--(0,3)--(3.5,6)--(3.5,0)--cycle;
    
     \draw[thick] (-4,-3) node[below] {$ s = -\delta$}--(-4,3)--(-0.5,6)--(-0.5,0)--cycle;
      \draw[thick] (4,-3)  node[below] {$ s = \delta$}--(4,3)--(7.5,6)--(7.5,0)--cycle;
      \draw[thick,->](-2,-3.75)--(2,-3.75) node[midway,below]{$s$};
      \node[] at (3.1,5.1) {$S^3$};
      \node[] at (-0.9,5.1) {$S^3$};
      \node[] at (7.1,5.1) {$S^3$};
      %trefoils
    \begin{scope}[shift = {(1.75,1.5)},scale = 0.3]
\draw[samples=150,smooth,domain=21.522:129.522,variable=\t]
  plot ({sin(\t)+2*sin(2*\t)}, {cos(\t)-2*cos(2*\t)});
\draw[samples=150,smooth,domain=141.522:249.522,variable=\t]
  plot ({sin(\t)+2*sin(2*\t)}, {cos(\t)-2*cos(2*\t)});
\draw[samples=150,smooth,domain=261.522:369.522,variable=\t]
  plot ({sin(\t)+2*sin(2*\t)}, {cos(\t)-2*cos(2*\t)});
  \coordinate (K31) at (30:3);
  \coordinate (K32) at (150:3) ;
  \coordinate (K33) at (270:3);
  \end{scope}
  %%%%%%%%%%%%%%%%%%%%%%%%%%%%%%
      \begin{scope}[shift = {(-1.75,3)},scale = 0.3]
\draw[samples=150,smooth,domain=21.522:129.522,variable=\t]
  plot ({sin(\t)+2*sin(2*\t)}, {cos(\t)-2*cos(2*\t)});
\draw[samples=150,smooth,domain=141.522:249.522,variable=\t]
  plot ({sin(\t)+2*sin(2*\t)}, {cos(\t)-2*cos(2*\t)});
\draw[samples=150,smooth,domain=261.522:369.522,variable=\t]
  plot ({sin(\t)+2*sin(2*\t)}, {cos(\t)-2*cos(2*\t)});
  \coordinate (K11) at (30:3);
  \coordinate (K12) at (150:3) ;
  \coordinate (K13) at (270:3);

  \end{scope}
  %%%%%%%%%%%%%%%%%%%%%%%%%%%%%%
      \begin{scope}[shift = {(-2.75,0)},scale = 0.3]
\draw[samples=150,smooth,domain=21.522:129.522,variable=\t]
  plot ({sin(\t)+2*sin(2*\t)}, {cos(\t)-2*cos(2*\t)});
\draw[samples=150,smooth,domain=141.522:249.522,variable=\t]
  plot ({sin(\t)+2*sin(2*\t)}, {cos(\t)-2*cos(2*\t)});
\draw[samples=150,smooth,domain=261.522:369.522,variable=\t]
  plot ({sin(\t)+2*sin(2*\t)}, {cos(\t)-2*cos(2*\t)});
  \coordinate (K21) at (30:3);
  \coordinate (K22) at (150:3) ;
  \coordinate (K23) at (270:3);
  \end{scope}
   %%%%%%%%%%%%%%%%%%%%%%%%%%%%%%
      \begin{scope}[shift = {(6.25,3)},scale = 0.3]
\draw[samples=150,smooth,domain=21.522:129.522,variable=\t]
  plot ({sin(\t)+2*sin(2*\t)}, {cos(\t)-2*cos(2*\t)});
\draw[samples=150,smooth,domain=141.522:249.522,variable=\t]
  plot ({sin(\t)+2*sin(2*\t)}, {cos(\t)-2*cos(2*\t)});
\draw[samples=150,smooth,domain=261.522:369.522,variable=\t]
  plot ({sin(\t)+2*sin(2*\t)}, {cos(\t)-2*cos(2*\t)});
  \coordinate (K41) at (30:3);
  \coordinate (K42) at (150:3) ;
  \coordinate (K43) at (270:3);
  \end{scope}
    %%%%%%%%%%%%%%%%%%%%%%%%%%%%%%
      \begin{scope}[shift = {(5.25,0)},scale = 0.3]
\draw[samples=150,smooth,domain=21.522:129.522,variable=\t]
  plot ({sin(\t)+2*sin(2*\t)}, {cos(\t)-2*cos(2*\t)});
\draw[samples=150,smooth,domain=141.522:249.522,variable=\t]
  plot ({sin(\t)+2*sin(2*\t)}, {cos(\t)-2*cos(2*\t)});
\draw[samples=150,smooth,domain=261.522:369.522,variable=\t]
  plot ({sin(\t)+2*sin(2*\t)}, {cos(\t)-2*cos(2*\t)});
  \coordinate (K51) at (30:3);
  \coordinate (K52) at (150:3) ;
  \coordinate (K53) at (270:3);
  \end{scope}
%dotted lines
\draw[very thin, dashed] ($(K11)!0.02!(K31)$)--($(K11)!0.86!(K31)$);
\draw[very thin, dashed] ($(K31)!0.02!(K51)$)--($(K31)!0.86!(K51)$);
\draw[very thin, dashed] ($(K13)!0.02!(K33)$)--($(K13)!0.9!(K33)$);
\draw[very thin, dashed] ($(K33)!0.02!(K53)$)--($(K33)!0.9!(K53)$);
%%%%%%%%%%%%%%%%%%%%%%%%%
\draw[very thin, dashed] ($(K22)!0.14!(K32)$)--($(K22)!0.98!(K32)$);
\draw[very thin, dashed] ($(K32)!0.14!(K42)$)--($(K32)!0.98!(K42)$);
\draw[very thin, dashed] ($(K23)!0.1!(K33)$)--($(K23)!0.98!(K33)$);
\draw[very thin, dashed] ($(K33)!0.1!(K43)$)--($(K33)!0.98!(K43)$);
%knot names
\node[above] at (K11) {$K_1^{-\delta}$};
\node[right] at (K21) {$K_2^{-\delta}$};
\node[above] at (K41) {$K_2^{\delta}$};
\node[right] at (K51) {$K_1^{\delta}$};
    \end{tikzpicture}
    \caption{Local behavior at breaking orbit.}
    \label{fig: local behavior at breaking orbit}
\end{figure}
Finally, we find a well-defined quantity that describes the behavior of the cylinders at the breaking orbit; namely, we define \begin{align}\label{eq: asymptoric term}
\lambda(w_1,w_2) = \ell(K_1^\delta,K_2^\delta)-\ell(K_1^{-\delta},K_2^{-\delta}),\end{align} where $\delta >0$ is small enough.  The following question about this asymptotic term arose in a discussion with Beomjun Sohn. \begin{ques*}
     In the classical work of Siefring \cite{Si08}, an intersection number of pseudoholomorphic curves is defined by counting intersections in the interior of the curves plus an asymptotic term which relies on the winding numbers of the cylinders near the limiting orbits. What is the relation between this geometric asymptotic term and the one defined here, which relies only on topological information?
\end{ques*}
\begin{lemma} \label{intersection number and knots}
    Let $w_1,w_2$ be a pair of cylinders in degenerate breaking position. Then the intersection number of $w_1,w_2$ is given by $$\iota(w_1,w_2 ) = \lambda(w_1,w_2)+\sum_{p\in \hat w_1\cap \hat w_2\setminus\hat y}\epsilon_{p}.$$
\end{lemma}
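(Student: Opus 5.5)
Since $\iota$ is invariant under homotopies of $w_2$ (or $w_1$) that are constant on $\{\pm 1\}\times S^1$, the plan is to perturb $w_2$ only inside the tubular neighborhood $U\cong(-\varepsilon,\varepsilon)\times S^1\times D^2$ of $\hat y$, so that the new graph is transverse to $\hat w_1$. Outside $U$ no intersections change, so they contribute $\sum_{p\notin \hat y}\epsilon_p$. Here I use that $\varepsilon$ is chosen so small that $\hat w_1\cap\hat w_2\cap U\subset \hat y$; hence all intersection points away from $\hat y$ lie outside $U$. It then remains to show that the algebraic count of intersections created inside $U$ equals $\lambda(w_1,w_2)$.

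First I localize to $U$ and fix $0<\delta<\varepsilon$. Inside $[-\delta,\delta]\times S^1\times D^2$ I replace the piece $\hat w_2|_{[-\delta,\delta]\times S^1}$ by a graph-type surface $\Sigma$ with the same boundary slices $K_2^{\pm\delta}$ that is transverse to $\hat w_1$; for example, I take a generic small perturbation of $w_2$ on $[-\delta,\delta]\times S^1$ that is fixed near $s=\pm\delta$. This is a homotopy rel boundary of the whole cylinder, so $\iota(w_1,w_2)$ is unchanged. Restricted to $[-\delta,\delta]$, the two surfaces are therefore transverse, disjoint on $\{\pm\delta\}\times S^1$, and the count we need is $\#(\hat w_1\cap\Sigma)$.

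The key step is the standard cobordism identity for linking numbers. Embed via $\iota$ into $[-\delta,\delta]\times S^3$ and view $\hat w_1|_{[-\delta,\delta]}$ and $\Sigma$ as two annuli $A_1,A_2$ in $[-\delta,\delta]\times S^3$. Each $A_i$ is a concordance from $K_i^{-\delta}$ to $K_i^{\delta}$, and the two are transverse with disjoint boundaries. I then claim
\[
A_1\cdot A_2=\ell(K_1^\delta,K_2^\delta)-\ell(K_1^{-\delta},K_2^{-\delta}).
\]
To prove this, cap off with Seifert surfaces. Let $F^{\pm}$ be a Seifert surface (or a 2-chain in $S^3$) bounding $K_2^{\pm\delta}$. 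Then $F^{-}\cup A_2\cup F^{+}$, after pushing $F^{\pm}$ slightly outside the slab into $\{\pm\delta'\}\times S^3$ and collaring, becomes a closed 2-cycle $Z$ in $\mathbb R\times S^3$. Similarly, $A_1$ extends to a proper cylinder $\mathbb R\times K_1^{\pm\delta}$ at the ends. Since $H_2(\mathbb R\times S^3)=0$, $Z$ is a boundary, so its intersection number with the properly embedded extension of $A_1$ is zero. Expanding this intersection gives $A_1\cdot A_2+\ell(K_1^{-\delta},K_2^{-\delta})-\ell(K_1^\delta,K_2^\delta)=0$, where the signs come from the orientation of the $s$-direction on each end.

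The main obstacle is getting the orientation conventions right: the graph orientation from $(s,t)$ must match the product orientation of $\mathbb R\times S^3$ and the convention for $\ell$. These conventions decide whether the answer is $\ell^{+}-\ell^{-}$ or its negative. I would check the sign on a model example, namely $w_1\equiv y$ and $w_2$ winding once around $y$ only for $s>0$, and compare the result with the definition in \eqref{eq: asymptoric term}.
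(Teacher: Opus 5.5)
Your proposal is correct and is essentially the paper's argument. Both proofs localize near $\hat y$, make the graphs transverse by a homotopy supported in $[-\delta,\delta]\times S^1$, and identify the signed local count with $\ell(K_1^\delta,K_2^\delta)-\ell(K_1^{-\delta},K_2^{-\delta})$ by a four-dimensional capping argument. The only difference is packaging: the paper caps the $s=-\delta$ end with a $4$-ball and reads off $\ell$ as the intersection number of surfaces in $B^4$ bounding the knots, whereas you cap $A_2$ with Seifert surfaces at both ends and use $H_2(\mathbb R\times S^3)=0$. With the orientation $\partial_s\oplus S^3$, your bookkeeping gives $\ell^+-\ell^-$ with the correct sign, which matches your model example.
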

\begin{proof}
    We will homotope the cylinders in a neighborhood of the breaking orbit into transverse position and then see that in this neighborhood the number $\lambda(w_1,w_2)$ counts exactly the intersections with sign that have appeared. We use the following formulation of the linking number of two knots in $S^3$. Consider $S^3$ as the boundary of the $4$-ball $B^4$ and let $S_1,S_2$ be embedded surfaces in $B^4$ such that $\partial S_i = K_i$. Then the algebraic intersection number of the homotopy classes of $S_1,S_2$ relative to the boundary equals the linking number $\ell(K_1,K_2)$. Let $\{- \delta\}\times S^3$ bound $B^4$ for $\delta\in (0,\varepsilon)$ and choose surfaces $S_1,S_2$ that bound $K_1^{-\delta}$ and $K_2^{-\delta}$. Homotope $\hat w_1$ and $\hat w_2$ in a small tubular neighborhood $U$ of $\hat y$ such that $U\cong (-{\delta\over 2},{\delta\over 2})\times S^1\times D^2$ such that the resulting cylinders intersect transversely, and such that the homotopy is constant on $\partial U$. Glue $ [-\delta,\delta]\times S^3$ to $B^4$ by identifying $\partial  B^4$ with $ \{- \delta\}\times S^3$ and note that the cylinders $\hat w_1|_{[-\delta,\delta]\times S^1}$ and $\hat w_2|_{[-\delta,\delta]\times S^1}$ extend the surfaces $S_1$ and $S_2$ to $\Sigma_1$ and $\Sigma_2$. By construction, $\Sigma_1$ and $\Sigma_2$ are surfaces in a $4$-ball that bound the knots $K_1^\delta$ and $K_2^\delta$ on the boundary. The difference of the linking number $$\ell(K_1^\delta,K_2^\delta)-\ell(K_1^{-\delta},K_2^{-\delta})
    $$ is now given by $$\sum_{z\in \Sigma_1\cap \Sigma_2}\epsilon_z-\sum_{z\in S_1\cap S_2}\epsilon_z  = \sum_{z\in (\Sigma_1\setminus S_1)\cap (\Sigma_2\setminus S_2)}\epsilon_z,$$ where $\epsilon_z\in \{\pm 1\}$ depends on the orientation of the intersections.  By construction, this exactly counts the intersections with sign that appear after a local homotopy of $\hat w_1$ and $\hat w_2$ near the breaking orbit.
\end{proof}
The number $\lambda(w_1,w_2)$ is the local intersection number of $\hat w_1$ and $\hat w_2$ near the breaking orbit $\hat y$. That is, after a homotopy in a small neighborhood of $\hat y$ that makes the cylinders transverse, $\lambda(w_1,w_2)$ equals the number of intersections with signs in this neighborhood.\\

We now apply this to the breaking of Floer cylinders in Floer theory.
Suppose that we have a homotopy of Hamiltonians $H:\mathbb R\times S^1\times S\to \mathbb R$ such that $H(s,t,x)$ is independent of $s$ for $|s|$ large enough. We want to consider pseudoholomorphic cylinders $u:S^1\times \mathbb R\to S$ that satisfy the equation $$\partial_su + J_{s,t}(\partial _tu - X_{H_{s,t}}\circ u) = 0$$ and such that as $s\to \pm\infty$ we have $u(s,t)\to x^\pm(t)$, which consequently are periodic orbits of the Hamiltonian flow induced by $H_{\pm\infty}(t,x) = \lim _{s\to \pm \infty}H(s,t,x)$. Fix an orientation-preserving diffeomorphism $\varphi: (-1,1)\to \mathbb R$. The cylinders $u\circ\varphi$ extend to a cylinder $\tilde u:[-1,1]\times S^1 \to S$.  In this case, two cylinders $\tilde u_1$ and $\tilde u_2$ are admissible if and only if the positive limiting orbits of $u_1$ and $u_2$ are pointwise distinct, and likewise the negative limits.We analogously define $\hat u$ to be the parameterization of the graph of $\tilde u$ $$\hat u:[-1,1]\times S^1\to [-1,1]\times S^1\times S, \quad (s,t)\mapsto (s,t,\tilde u(s,t))$$ and we say that $u_1,u_2$ are admissible if $\tilde u_1,\tilde u_2$ are so. Moreover, for admissible $u_1,u_2$ we define $$\iota(u_1,u_2) = \iota(\tilde u_1,\tilde u_2).$$ \begin{prop}\label{prop.positivityofintersection}
    Let $u_1,u_2$ be admissible cylinders satisfying the Floer equation $$\partial_su_i+J_{s,t}(\partial_tu_i-X_{H_{s,t}}\circ u_i) = 0.$$ Then $\iota(u_1,u_2) \geq 0$ and if $\iota(u_1,u_2) = 0$ the maps $\hat u_1$ and $\hat u_2$ do not intersect.
\end{prop}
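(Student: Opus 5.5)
The plan is to realize the graphs $\hat u_1,\hat u_2$ as pseudoholomorphic curves in a four-dimensional almost complex manifold and then invoke positivity of intersections. On $\mathbb R\times S^1\times S$ I will define an almost complex structure $\tilde J$ for which $u$ solves the Floer equation if and only if its graph $(s,t)\mapsto(s,t,u(s,t))$ is $\tilde J$-holomorphic. The choice is the standard Gromov trick. At a point $(s,t,x)$, with tangent space $\mathbb R\partial_s\oplus\mathbb R\partial_t\oplus T_xS$, set
\[
\tilde J\partial_s=\partial_t+X_{H_{s,t}}(x),\qquad \tilde J\big(\partial_t+X_{H_{s,t}}(x)\big)=-\partial_s,\qquad \tilde J|_{T_xS}=J_{s,t}(x).
\]
Then $d\hat u(\partial_s)=\partial_s+\partial_su$ and $d\hat u(\partial_t)=\partial_t+\partial_tu$. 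The condition $\tilde J\,d\hat u(\partial_s)=d\hat u(\partial_t)$ reduces, in the vertical component, to $J\partial_su=\partial_tu-X_H$. This is exactly the Floer equation $\partial_su+J(\partial_tu-X_H)=0$. I also need to check that the orientation induced by $\tilde J$ on the graph agrees with the orientation used in defining $\iota$. This holds because $\{\partial_s+\partial_su,\ \tilde J(\partial_s+\partial_su)\}$ is an oriented frame, and $\tilde J$ induces the product orientation on $\mathbb R\times S^1\times S$.

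Since $\hat u_1$ and $\hat u_2$ are graphs, they are embedded. They are also distinct: admissibility forces $u_1\neq u_2$ near $s=\pm\infty$, and unique continuation then prevents them from sharing any open subset. By the local theory of pseudoholomorphic curves in dimension four (McDuff, Micallef--White), two distinct $\tilde J$-curves intersect in an isolated set of points. Each intersection point contributes a strictly positive local intersection multiplicity, equal to $+1$ exactly when the intersection is transverse.

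The remaining issue is compactness of the intersection set, which is needed to relate the sum of local multiplicities to $\iota(\tilde u_1,\tilde u_2)$, the intersection number computed on the compactification $[-1,1]\times S^1$. By admissibility, the limiting orbits $x_1^\pm$ and $x_2^\pm$ are pointwise distinct, so $\min_t d(x_1^\pm(t),x_2^\pm(t))>0$. Finite energy gives uniform convergence $u_i(s,\cdot)\to x_i^\pm$ in $C^0$. Hence there is $R$ with $u_1(s,t)\neq u_2(s,t)$ for $|s|\geq R$, and all intersections lie in the compact region $[-R,R]\times S^1$, where there are finitely many.

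I then perturb $\hat u_1$ slightly inside a small neighborhood of each intersection point, keeping it fixed near the boundary, so that it becomes transverse to $\hat u_2$. A pseudoholomorphic perturbation is possible but not needed: the local intersection number is a homotopy invariant, and each isolated intersection of $\tilde J$-curves has positive local index. So $\iota(u_1,u_2)$ equals the sum of the positive local multiplicities, and therefore $\iota\geq 0$, with equality only if the intersection set is empty. The claim that $\iota(u_1,u_2)=0$ implies $u_1(s,t)\neq u_2(s,t)$ for all $(s,t)$ follows directly.

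The main obstacle is making sure positivity of intersections legitimately applies. The structure $\tilde J$ depends on $(s,t)$ and is not integrable, and the cited local results require smoothness of $\tilde J$ together with the four-dimensional setting, both of which hold here. The intersections must also be kept away from the boundary of the compactified cylinder, which the uniform convergence argument above handles.
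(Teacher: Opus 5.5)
Your proposal is correct and takes the same route as the paper: the Gromov trick with the same $\tilde J$ (your $\tilde J(\partial_t+X_{H_{s,t}})=-\partial_s$ is equivalent to the paper's $\tilde J(\partial_t)=-\partial_s-J_{s,t}X_{H_{s,t}}$), so that the graphs become $\tilde J$-holomorphic, followed by positivity of intersections in dimension four. The paper simply cites the standard theory at that point. Your extra checks on orientation, on isolatedness via unique continuation, and on confining intersections to a compact region via admissibility correctly supply the details that citation covers.
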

\begin{proof}We use the Gromov trick and give an almost complex structure on $\mathbb R_s \times S^1_t\times S$ such that the graphs $\hat u_i$ are unperturbed holomorphic cylinders. Namely, define $$\tilde J(\partial_s) = \partial_t+X_{H_{s,t}},\quad \tilde J(\partial_t) = -\partial_s-J_{s,t}X_{H_{s,t}},\quad \tilde J|_{TS} = J_{s,t},$$ where we identify $X_{H_{s,t}}(z)$ with its image under the differential of the inclusion $$\iota_{s,t}:S\hookrightarrow\mathbb R\times S^1\times S,\quad z\mapsto (s,t,z).$$
We then have $$\partial_s\hat u_i +\tilde J\partial_t \hat u_i = 0.$$ The result now follows from standard theory of intersection of pseudoholomorphic curves in almost complex 4-manifolds (see \cite{MDS04}).
\end{proof}
\subsection{Gluing and Intersection Numbers}\label{section.gluing}
We now study how the intersection number behaves under gluing of pseudoholomorphic cylinders. We briefly recall the standard gluing construction for Floer cylinders (see \cite[Chapter 9]{AD14}). More concretely, fix a Hamiltonian $H:\mathbb R\times S^1 \times S\to \mathbb R$ and a smooth family of almost complex structures $\{J_{s,t}\}_{s\in \mathbb R,t\in S^1}$ and let $\mathcal M(x,z)$ denote the space of solutions $u:\mathbb R\times S^1\to S$ to $$\partial_su+ J_{s,t}(\partial_tu-X_{H_{s,t}}\circ u) = 0$$ such that $\lim_{s\to\infty}u(s,t) = z(t)$ and $\lim_{s\to -\infty}u(s,t) = x(t)$. For a generic choice of $H_{s,t}$ and $J_{s,t}$, the space $\mathcal M(x,z)$ is a finite-dimensional manifold carrying the $C^\infty_{\text{loc}}$ topology. In our context, a gluing map is a continuous map $$\psi:(\rho_0,\infty)\to \mathcal M(x,z)$$ such that there exist $u\in \mathcal M(x,y)$ and $v\in \mathcal M(y,z)$ and smooth functions $f^\pm:\mathbb R \to \mathbb R$ with $\lim_{\rho\to \infty}\psi_\rho(s+f^-(\rho),t) = u(s,t)$ and $\lim_{\rho\to \infty}\psi_\rho(s+f^+(\rho),t) = v(s,t)$ where we write $\psi_\rho$ for $\psi(\rho)$. We then say that the curves $\psi_\rho$ converge to the broken trajectory $(u,v)$ with breaking orbit $y$. For us, the map $\psi$, if it exists, is always constructed as follows. Given $u\in \mathcal M(x,y)$ and $v\in \mathcal M(y,z)$, we define a \emph{pre-gluing map} $w_\rho: \mathbb R \times S^1\to S$, where the \emph{gluing parameter} $\rho>0$ is large enough. This pre-gluing map approximates the broken trajectory $(u,v)$ from $x$ to $z$ which breaks at the orbit $y$.

First, let $\beta^+:\mathbb R\to [0,1]$ be smooth and increasing such that $$\beta^+(s) =\begin{cases}
   0,& s\leq \varepsilon\\   1,& s\geq 1
\end{cases}$$ and let $\beta^-(s) = \beta^+(-s)$. Then we define $$w_\rho(s,t) = 
\begin{cases}
u(s+\rho,t)& s\leq-1\\
\exp_{y(t)}\left(\beta^-(s)\exp_{y(t)}^{-1}(u(s+\rho,t))+\beta^+(s)\exp_{y(t)}^{-1}(v(s-\rho,t))\right)&s\in [-1,1]\\
v(s-\rho,t)&s\geq1
\end{cases},$$ where $\rho>\rho_0$ and $\rho_0$ is chosen large enough that the exponential maps are well defined. 
\begin{figure}[h!]
    \begin{tikzpicture}
        \draw[] (-5.5,0) ellipse (0.2 and 0.5);
        \draw[] (5.5,0.5) arc (-90:90: 0.2 and -0.5);
        \draw[dashed] (5.5,0.5) arc (-90:90: -0.2 and -0.5);
        \draw[] (-5.5,0.5) arc (40:110: -3.75 and -1.5)coordinate (M1);
        \draw[] (-5.5,-0.5) arc (40:110: -3.75 and -1.5) ;
        \draw[] (5.5,0.5) arc (40:110: 3.75 and -1.5)coordinate (M2);
        \draw[] (5.5,-0.5) arc (40:110: 3.75 and -1.5) ;
        \draw[] ($(M2)-(0,0.5)$) ellipse (0.2 and 0.5);
        \draw[]  (M1) arc (-90:90: 0.2 and -0.5);
        \draw[dashed]  (M1) arc (-90:90: -0.2 and -0.5);
        \path[] (-5.5,0.5) arc (40:135: -3.75 and -1.5) coordinate (M) ;
        \draw[] (M) arc (-90:90: 0.2 and -0.5);
        \draw[dashed] (M) arc (-90:90: -0.2 and -0.5);
        \draw[]  ($(M)+(0.5,0)$) arc (-90:90: 0.2 and -0.5);
        \draw[dashed]  ($(M)+(0.5,0)$) arc (-90:90: -0.2 and -0.5);
        \draw[]  ($(M)+(-0.5,-0.5)$)  ellipse (0.2 and 0.5);
        \draw[] ($(M)-(0.5,0)$) --  ($(M)+(0.5,0)$);
        \draw[] ($(M)-(0.5,1)$) --  ($(M)+(0.5,-1)$);
        \draw[dashed] (M1)--($(M)-(0.5,0)$);
        \draw[dashed] ($(M1)-(0,1)$)--($(M)-(0.5,1)$);
        \draw[dashed] (M2)--($(M)+(0.5,0)$);
        \draw[dashed] ($(M2)-(0,1)$)--($(M)-(-0.5,1)$);
        \draw[decorate,decoration={brace,amplitude=8pt}] ($(M2)-(0,1.3)$)-- ($(M1)-(0,1.3)$) ;
        \node[] at (0,-1.8) {$\text{exp}_{y(t)}(X(s,t))$};
        \node[above] at (M) {$y$};
        \node[above] at (-5.5,0.5) {$x$};
        \node[above] at (5.5,0.5) {$z$};
        \node[above left] at (M1) {$u(\rho-1,\cdot)$};
        \node[above right] at (M2) {$v(1-\rho,\cdot)$};
        %\draw[->] (0,-2.5)--(0,-3.5) node[midway, right] {$\text{as }\rho \to \infty$};
        %\draw[] (-5.5,-4.25) node[above]{$x$} arc (40:134: -3.75 and -1.5) coordinate (M') node[above] {$y$};
        %\draw[] (-5.5,-5.25) arc (40:134: -3.75 and -1.5);
        %\draw[] (5.5,-4.25) node[above] {$z$} arc (40:134: 3.77 and -1.5);
        %\draw[] (5.5,-5.25) arc (40:134.5: 3.77 and -1.5);
        %\draw[] (-5.5,-4.75) ellipse (0.2 and 0.5);
        %\draw[]  (M') arc (-90:90: 0.2 and -0.5);
        %\draw[dashed]  (M') arc (-90:90: -0.2 and -0.5);
        %\draw[]  (5.5,-4.25) arc (-90:90: 0.2 and -0.5);
        %\draw[dashed]  (5.5,-4.25) arc (-90:90: -0.2 and -0.5);
    \end{tikzpicture}
    \caption{Pre-gluing of cylinders.}
\end{figure} $ $\\
The map $\psi$ is now given by \begin{align}\label{eq: geodesic approximation}
\psi_\rho(s,t) = \exp_{w_\rho(s,t)}(\xi_\rho(s,t)),\end{align} where $\xi_\rho({s,t})$ is a well-chosen continuous vector field with $\xi_\rho(s,t) \to 0$ as $|s|\to \infty$ and such that $ \exp_{w_\rho(s,t)}(\xi_\rho(s,t))$ is indeed an element of $\mathcal M(x,z)$ (see, for example, \cite[Chapter 9]{AD14}). We now analyze the behavior of intersection numbers under standard Floer gluing.
\begin{lemma}\label{homotopy to breaking}
    Let $\varphi:(-\infty,0)\to \mathbb R$ be a diffeomorphism such that $\varphi\,|\,_{(-\infty,-\varepsilon]}(x) = x$. We identify a broken trajectory $(u,v)$ with the map $$u\#v:\mathbb R \times S^1\to S,\quad (s,t) \mapsto \begin{cases}
        u(\varphi(s),t),& s<0\\
        y(t),& s = 0\\
        v(-\varphi(-s),t)& s >0
    \end{cases},$$ where $\lim_{s\to \infty}u(s,t) = \lim_{s\to -\infty}v(s,t) = y(t)$. Then $u\#v$ and $\psi_\rho$ are homotopic relative to the limiting orbits for every $\rho$ large enough. In particular, $\widehat{u\#v}$ and $\hat \psi_\rho$ are homotopic for every $\rho$ large enough.
\end{lemma}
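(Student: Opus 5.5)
The plan is to build an explicit homotopy, fixed at the limiting orbits $x$ and $z$, in three stages: first from $\psi_\rho$ to the pre-gluing $w_\rho$, then from $w_\rho$ to a reparametrized concatenation of $u$ and $v$, and finally from that reparametrization to $u\#v$. Throughout we work with the compactified cylinders $\tilde{(\cdot)}:[-1,1]\times S^1\to S$ obtained via an orientation-preserving diffeomorphism $(-1,1)\cong\mathbb R$. A homotopy relative to the limiting orbits means a homotopy of these compactified maps that is constant on $\{\pm1\}\times S^1$. Once such a homotopy of $\tilde u\#\tilde v$ and $\tilde\psi_\rho$ exists, the statement about graphs follows immediately: $H_\tau\mapsto \widehat{H_\tau}$, $(s,t)\mapsto(s,t,H_\tau(s,t))$, is a homotopy of graphs that is constant on the boundary.

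\textbf{Step 1: from $\psi_\rho$ to $w_\rho$.} By (\ref{eq: geodesic approximation}) we have $\psi_\rho=\exp_{w_\rho}(\xi_\rho)$, so we set
\[
H^1_\tau=\exp_{w_\rho}\bigl((1-\tau)\xi_\rho\bigr),\qquad \tau\in[0,1].
\]
For this to be well defined and continuous up to $s=\pm\infty$ we need $\|\xi_\rho\|_{L^\infty}$ to be smaller than the injectivity radius and $\xi_\rho(s,\cdot)\to 0$ as $|s|\to\infty$. The decay at infinity is stated in the construction. The $L^\infty$ smallness for $\rho$ large is the standard output of the Newton iteration in \cite[Chapter 9]{AD14}: $\xi_\rho$ is small in $W^{1,p}$ with $p>2$, and Sobolev embedding then gives $C^0$ smallness. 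Since $w_\rho$ itself converges to $x$ and $z$ at the ends (it agrees with $u(\cdot+\rho)$ and $v(\cdot-\rho)$ there), every $H^1_\tau$ extends continuously to the compactification, with the correct constant boundary values.

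\textbf{Step 2: from $w_\rho$ to a reparametrized concatenation.} On $[-1,1]$, the map $w_\rho$ interpolates in the chart $\exp_{y(t)}$ between $u(s+\rho,t)$ and $v(s-\rho,t)$, both of which lie in a small tubular neighbourhood of $y$ for $\rho$ large. This neighbourhood is where the real work lies; the rest is bookkeeping.
\begin{itemize}
\item First, inside the $\exp_{y(t)}$-chart we homotope the middle piece by straight-line contraction to the map
\[
X(s,t)=\beta^-(s)\exp^{-1}_{y(t)}u(s+\rho,t)+\beta^+(s)\exp^{-1}_{y(t)}v(s-\rho,t),
\]
where we first scale the $u$-term to $0$ on $[0,1]$ and the $v$-term to $0$ on $[-1,0]$. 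Because both terms are small, this stays inside the chart.
\item After this modification the middle piece is a map passing through $y$ at $s=0$, and the result is the concatenation of the maps $s\mapsto u(s+\rho,t)$ on $s\le0$ and $s\mapsto v(s-\rho,t)$ on $s\ge0$. On these the half-cylinders are linearly retracted in the chart to hit $y$ exactly at $s=0$, i.e. the tails $u|_{[\rho,\infty)}$ and $v|_{(-\infty,-\rho]}$ are contracted radially to $y$.
\end{itemize}
This is fine because both tails are $C^0$-close to $y$ by the exponential convergence to the non-degenerate orbit $y$.

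\textbf{Step 3: from the reparametrization to $u\#v$.} The result of Step 2 agrees with $u\#v$ up to an orientation-preserving reparametrization of the $s$-coordinate on each half that fixes $0$ and the ends, together with collapsing the tails to $y$. Any two such reparametrizations are isotopic through diffeomorphisms of $(-\infty,0)$ (respectively $(0,\infty)$), via a linear interpolation after conjugating to $(-1,0)$. The collapsed tails are homotoped to the honest tails $u|_{[R,\infty)}$ by the same radial contraction in the chart, run backwards.

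Concatenating the three homotopies gives the claim. At every stage the $s\to\pm\infty$ values are $x$ and $z$, so the homotopy is relative to the limiting orbits. The main obstacle is uniformity, namely ensuring that everything stays in the injectivity radius chart around $y$ and near $x,z$ at infinity. For this I would fix $\rho_0$ such that $\sup_{s\ge\rho_0-1}d(u(s,\cdot),y)$, $\sup_{s\le1-\rho_0}d(v(s,\cdot),y)$ and $\|\xi_\rho\|_\infty$ are all below half the injectivity radius.
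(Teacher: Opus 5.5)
Your proposal is correct. Step 1 is the same as the paper's first sentence; note that on a closed surface $\exp$ is globally defined, so you only need $\xi_\rho\to 0$ as $|s|\to\infty$, not the injectivity-radius bound.

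\textbf{Where you differ.} The routes diverge in how $w_\rho$ is deformed to $u\#v$.
\begin{itemize}
\item The paper never homotopes inside a chart. It introduces the family $\tilde w^\delta_\rho$, $\delta\in(0,1]$, with rescaled cutoffs $\beta^\mp(s/\delta)$ and halves reparametrized by $\varphi(s)+\rho$ and $-\varphi(-s)-\rho$. It observes $\tilde w^1_\rho=w_\rho$, since $\varphi=\mathrm{id}$ wherever $\beta^-\neq 0$. It then uses that $\tilde w^\delta_\rho$ converges in $C^0$ to the (shifted) broken trajectory as $\delta\to 0$, and takes that limit as the $\delta=0$ end of the homotopy.
\item Your route keeps the window fixed and deforms the cutoff coefficient. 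With the reparametrization $\varphi(s)+\rho$ in place, the map $\exp_{y(t)}\bigl(((1-\sigma)\beta^-(s)+\sigma)\exp_{y(t)}^{-1}u(\varphi(s)+\rho,t)\bigr)$ on $[-1,0)$ runs from $w_\rho$ at $\sigma=0$ to $u(\varphi(s)+\rho,t)$ at $\sigma=1$. A shift $\rho\to 0$ then finishes, and $v$ is handled symmetrically. This is what your ``radial contraction run backwards'' plus ``isotopy of reparametrizations'' amounts to.
\end{itemize}
Your version checks continuity at $s=0$ and at $s=\pm\infty$ explicitly. It also isolates the one uniformity requirement: the tails $u|_{[\rho-1,\infty)}$ and $v|_{(-\infty,1-\rho]}$ must lie in the injectivity neighbourhood of $y$. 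The paper's version is shorter but leaves implicit the $C^0$ convergence near $s=0$, the continuity of the family at $\delta=0$, and the discrepancy from the shift by $\rho$.

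\textbf{Two cleanups in your Step 2.}
\begin{itemize}
\item The first bullet does nothing. Since $\beta^-\equiv 0$ on $[-\varepsilon,\infty)$ and $\beta^+\equiv 0$ on $(-\infty,\varepsilon]$, you already have $w_\rho\equiv y$ on $[-\varepsilon,\varepsilon]$.
\item The sentence saying the result ``is the concatenation of $u(s+\rho,t)$ on $s\le0$ and $v(s-\rho,t)$ on $s\ge 0$'' is not literally true. That map is discontinuous at $s=0$, because $u(\rho,t)\neq v(-\rho,t)$ in general. What you mean, and what the rest of your argument uses, is that $w_\rho$ is this concatenation with the tails beyond $\rho-1$ cut off to $y$ inside the chart.
\end{itemize}
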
\begin{proof}
   By (\ref{eq: geodesic approximation}), $\psi_\rho$ is homotopic to $w_\rho$ for $\rho$ large enough. For every $\delta\in (0,1]$ define $$\tilde w_\rho^\delta(s,t) = \begin{cases}
        u(\varphi(s)+\rho,t), & s\leq -\delta\\
        \exp_{y(t)}\left(\beta^-({s\over \delta})\exp_{y(t)}^{-1}(u(\varphi(s)+\rho,t))+\beta^+({s\over \delta})\exp_{y(t)}^{-1}(v(-\varphi(-s)-\rho,t))\right)&s\in [-\delta,\delta]\\
v(-\varphi(-s)-\rho,t)&s\geq\delta \end{cases} $$ which is defined for any $\delta\in (0,1]$ if $\rho$ is large enough. Since $\varphi\big |_{(-\infty,-\varepsilon]} = \text{id}_{(-\infty,-\varepsilon]}$ (where $\varepsilon$ is from the definition of $\beta^\pm$) we have $\tilde w^1_{\rho}=w_\rho$ for $\rho$ large enough. Now, for $\delta\to 0$ the map $\tilde w^\delta_{\rho}$ converges to $(u,v)$ in the $C^0$-topology.  Hence, for every $\rho$ we obtain a homotopy from $(u,v)$ to $\tilde w_{\rho}^{1}$. Since $w_\rho$ is homotopic to $\psi_\rho = \exp_{w_\rho}(\xi_\rho)$, we conclude that $\psi_\rho$ is homotopic to $u\#v$ for $\rho$ large enough.
\end{proof} A direct consequence is the following lemma.\begin{lemma}[Stability of Intersection]\label{stability of intersections}
     Let $u_i\in \mathcal M(x_i,y_i)$ and $v_i\in \mathcal M(y_i,z_i)$ for $i =1,2$ be pairs of admissible curves, that is $$x_1(t) \neq x_2(t),\quad y_1(t) \neq y_2(t),\quad z_1(t)\neq z_2(t)$$ for all $t\in S^1$. Let $\psi^{(i)}:(\rho_0,\infty)\to \mathcal M(x_i,z_i)$ for $i=1,2$ be gluing maps of $u_i,v_i$. Then $$\iota(u_1,u_2) + \iota(v_1,v_2) = \iota(\psi^{(1)}(\rho),\psi^{(2)}(\rho))$$ for every $\rho >0$ large enough.
\end{lemma}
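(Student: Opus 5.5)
The plan is to reduce the statement to Lemma~\ref{homotopy to breaking} together with the homotopy invariance of $\iota$ and the additivity of the topological intersection number under concatenation along disjoint slices. By Lemma~\ref{homotopy to breaking}, for every $\rho$ large enough the cylinder $\psi^{(i)}_\rho$ is homotopic, relative to the limiting orbits $x_i$ and $z_i$, to the broken map $u_i\#v_i$. After compactifying via $\varphi$, such a homotopy is constant on the boundary $\{\pm1\}\times S^1$ of $[-1,1]\times S^1$. Since $x_1,x_2$ and $z_1,z_2$ are pointwise distinct, both pairs $(\psi^{(1)}_\rho,\psi^{(2)}_\rho)$ and $(u_1\#v_1,u_2\#v_2)$ are admissible. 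Homotopy invariance of $\iota$ for admissible pairs then gives
$$\iota(\psi^{(1)}_\rho,\psi^{(2)}_\rho)=\iota(u_1\#v_1,u_2\#v_2).$$
One subtlety is that the gluing parameter $\rho$ is common to both maps while the homotopies are performed separately. This is harmless, because admissibility only concerns the boundary slices, which stay fixed throughout.

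It remains to show $\iota(u_1\#v_1,u_2\#v_2)=\iota(u_1,u_2)+\iota(v_1,v_2)$. Here the breaking orbits $y_1,y_2$ are distinct, which is the key difference from Lemma~\ref{intersection number and knots}. The slice $s=0$ of the concatenated graphs consists of $\hat y_1$ and $\hat y_2$, and these are disjoint in $\{0\}\times S^1\times S$. So the concatenated pair is admissible when restricted to $[-1,0]\times S^1$ and to $[0,1]\times S^1$ separately.

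To put the pair in transverse position, I would perturb the graphs away from $\{s=0\}$ by homotopies that are constant near $s=0$ and near $s=\pm1$. Disjointness on a closed slice is an open condition, so the perturbed graphs stay disjoint in a collar $(-\eta,\eta)\times S^1\times S$. The signed count of intersection points then splits into the count over $s<0$ and the count over $s>0$. After reparametrizing by $\varphi$, these are exactly $\iota(u_1,u_2)$ and $\iota(v_1,v_2)$, computed with their own admissible boundary slices $x_i,y_i$ and $y_i,z_i$.

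The step needing the most care is the behaviour near the ends and near $s=0$ after compactification. The reparametrized cylinders must extend continuously to the compactified domain with limits $y_i$, so that each half-pair is a genuine admissible pair in the sense used to define $\iota$. One also needs the transversality perturbations to be supported in the interior, away from the collars. Both follow from the uniform $C^0$-convergence of finite-energy Floer cylinders to nondegenerate orbits, which is already implicit in the construction of $\tilde u$. With this in hand, the additivity is a purely topological statement, and combining it with the first paragraph proves the lemma.
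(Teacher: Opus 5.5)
Your proposal is correct and follows the paper's proof. Lemma~\ref{homotopy to breaking} together with homotopy invariance gives $\iota(\psi^{(1)}(\rho),\psi^{(2)}(\rho))=\iota(u_1\#v_1,u_2\#v_2)$, and the pointwise disjointness of $y_1$ and $y_2$ splits that count into $\iota(u_1,u_2)+\iota(v_1,v_2)$; you simply make explicit the collar, transversality and reparametrization details that the paper leaves implicit.
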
\begin{proof}
    By Lemma~\ref{homotopy to breaking} the cylinders $u_i\#v_i$ and $\psi^{(i)}(\rho)$ are homotopic relative to the boundary when $\rho$ is large enough and hence $$\iota(u_1\#v_1,u_2\#v_2) = \iota(\psi^{(1)}(\rho),\psi^{(2)}(\rho)).$$ Furthermore, $$\iota(u_1,u_2) + \iota(v_1,v_2) = \iota(u_1\#v_1,u_2\#v_2)$$ since orbits satisfy $y_1(t) \neq y_2(t)$ for all $t\in S^1$ so that the count of intersection points split.
\end{proof}
We will later encounter pairs of broken trajectories in a degenerate breaking position, as defined in Section~\ref{Subsection: Intersections of Cylinders in Degenerate Breaking Position}, that break at the same periodic orbit $y$; that is, we have two cylinders $u_1\#v_1$ and $u_2\#v_2$ that break at $y$. To use the symmetry of this configuration, we ``swap'' the latter two cylinders, giving another configuration in a degenerate breaking position (see Figure~\ref{fig: swapping}). The following lemma states that this operation does not change the cylinders' intersection number and is essential to the proofs of the main theorems. For the proof of the lemma we use the expression $$\iota(u_1\#v_1,u_2\#v_2) = \lambda(u_1\#v_1,u_2\#v_2) +\sum_{z\in \widehat{u_1\#v_1}\cap \widehat{u_2\#v_2}\setminus \hat y} \epsilon_z$$ as in Lemma~\ref{intersection number and knots}. We then use the description of $\lambda$ by linking numbers  $$\lambda(u_1\#v_1,u_2\#v_2) =\ell (K_1^{\delta},K_2^\delta)- \ell (K^{-\delta}_1,K^{-\delta}_2)$$ and use the fact that the linking number is symmetric.  \begin{figure}[h!]
\begin{tikzpicture}
    \begin{scope}[shift = {(-4,0)},xscale = 0.3, yscale = 0.7]
    \draw[thin] (6,-1.5) ellipse (0.3 and 0.8);
    \path[thin] (6,-0.7) arc (-90:90:0.3 and -0.8)  node[midway, right]{$z_2$};
    \draw[thin] (-6,-1.5) ellipse (0.3 and 0.8);
    \path[thin] (-6,-0.7) arc (90:-90:-0.3 and 0.8)  node[midway, left]{$x_2$};
    \draw[thin,dashed] (-6,0.7) arc (-90:90:0.3 and 0.8);
    \draw[thin] (-6,2.3) arc (90:-90:-0.3 and 0.8) node[midway, left]{$x_1$};
    \draw[thin] (6,0.7) arc (-90:90:0.3 and 0.8) node[midway, right] {$z_1$};
    \draw[thin,dashed] (6,2.3) arc (90:-90:-0.3 and 0.8);
    \draw[thin] (0,-0.8) arc (-90:90:0.3 and 0.8) node[midway, right]{$y$};
    \draw[thin, dashed] (0,0.8) arc (90:-90:-0.3 and 0.8);
    \draw[thin,blue] (-6,2.3) -- (0,0.8);
    \draw[thin,blue] (-6,0.7) -- (0,-0.8);
    \draw[thin,blue] (6,2.3) -- (0,0.8);
    \draw[thin,blue] (6,0.7) -- (0,-0.8);
    \draw[thin,red] (0,0.8) -- (6,-0.7);
    \draw[thin,red] (0,-0.8) -- (6,-2.3);
    \draw[thin,red] (-6,-2.3) -- (0,-0.8);
    \draw[thin,red] (-6,-0.7) -- (0,0.8);
    \node[] at (-3.5,0.9) {$u_1$};
    \node[] at (-3.5,-0.9) {$u_2$};
    \node[] at (3.5,0.9) {$v_1$};
    \node[] at (3.5,-0.9) {$v_2$};
    \node[] at (0,-3.2) {Cylinders $u_1\#v_1$ and $u_2\#v_2$.};
    \end{scope}
    \draw[<->] (-1.5,0)--(1.5,0) node[midway, above]{\text{swap}};
    \begin{scope}[shift = {(4,0)},xscale = 0.3, yscale = 0.7]
    \draw[thin] (6,-1.5) ellipse (0.3 and 0.8);
    \path[thin] (6,-0.7) arc (-90:90:0.3 and -0.8)  node[midway, right]{$z_2$};
    \draw[thin] (-6,-1.5) ellipse (0.3 and 0.8);
    \path[thin] (-6,-0.7) arc (90:-90:-0.3 and 0.8)  node[midway, left]{$x_2$};
    \draw[thin,dashed] (-6,0.7) arc (-90:90:0.3 and 0.8);
    \draw[thin] (-6,2.3) arc (90:-90:-0.3 and 0.8) node[midway, left]{$x_1$};
    \draw[thin] (6,0.7) arc (-90:90:0.3 and 0.8) node[midway, right] {$z_1$};
    \draw[thin,dashed] (6,2.3) arc (90:-90:-0.3 and 0.8);
    \draw[thin] (0,-0.8) arc (-90:90:0.3 and 0.8) node[midway, right]{$y$};
    \draw[thin, dashed] (0,0.8) arc (90:-90:-0.3 and 0.8);
    \draw[thin,blue] (-6,2.3) -- (6,-0.7);
    \draw[thin,blue] (-6,0.7) -- (6,-2.3);
    \draw[thin,red] (-6,-2.3) -- (6,0.7);
    \draw[thin,red] (-6,-0.7) -- (6,2.3);
    \node[] at (-3.5,0.9) {$u_1$};
    \node[] at (-3.5,-0.9) {$u_2$};
    \node[] at (3.5,0.9) {$v_1$};
    \node[] at (3.5,-0.9) {$v_2$};
    \node[] at (0,-3.2) {Cylinders $u_1\#v_2$ and $u_2\#v_1$.};
    \end{scope}
\end{tikzpicture}\caption{The swapping operation.}\label{fig: swapping}
\end{figure}
\begin{lemma} [Swapping Lemma]\label{swapping lemma}
    Let $u_i\in  \mathcal M(x_i,y)$ and $v_i\in \mathcal M(y,z_i)$ for $i=1,\dots,n$ and let $\{x_i\}$ and $\{z_i\}$ be sets of pairwise distinct orbits. Then the maps $\{u_i\#v_i\}_i$ are pairwise admissible and for every $\sigma$ in the symmetric group of $n$ elements, we have $$\sum_{i \neq j} \iota(u_i\#v_i,u_j\#v_j) = \sum _{i \neq j}\iota(u_i\#v_{\sigma(i)},u_j\#v_{\sigma(j)}).$$ 
\end{lemma}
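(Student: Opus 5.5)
Since each $u_i\#v_{\sigma(i)}$ runs from $x_i$ to $z_{\sigma(i)}$, and the $x_i$ are pairwise distinct orbits, as are the $z_i$, the negative and positive limits of any two of these maps are distinct periodic orbits of the same (non-degenerate) Hamiltonian. Distinct $1$-periodic orbits are pointwise disjoint, since if $x_i(t_0)=x_j(t_0)$ then uniqueness of flow lines gives $x_i=x_j$. So the maps are pairwise admissible. All pairs also meet along $\hat y$ and are therefore in degenerate breaking position with breaking orbit $y$. I will assume, perturbing away from the breaking orbit by a homotopy constant near the ends and near $\hat y$ if needed, that the graphs are transverse elsewhere with finitely many intersections. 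Positivity of intersections for the Floer pieces already gives this discreteness, but transversality only matters for bookkeeping here.

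I will then apply Lemma~\ref{intersection number and knots} to each pair $(u_i\#v_{\sigma(i)},u_j\#v_{\sigma(j)})$ and split
$$\iota(u_i\#v_{\sigma(i)},u_j\#v_{\sigma(j)}) = \bigl(\ell(K^{\delta}_{\sigma(i)},K^{\delta}_{\sigma(j)})-\ell(K^{-\delta}_i,K^{-\delta}_j)\bigr)+ I(u_i,u_j)+I(v_{\sigma(i)},v_{\sigma(j)}).$$
Here $K^{-\delta}_i$ is the knot obtained from the slice of $\hat u_i$ at $s=-\delta$, and $K^{\delta}_m$ is the slice of $\hat v_m$ at $s=\delta$. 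The term $I(u_i,u_j)$ is the signed count of intersections of $\hat u_i,\hat u_j$ on $s<0$ away from $\hat y$, and $I(v_m,v_{m'})$ is the analogous count on $s>0$. The key observation is that every ingredient depends only on the \emph{unordered pair of halves}. The left half of $u_i\#v_{\sigma(i)}$ is $u_i$ regardless of $\sigma$, and its right half is $v_{\sigma(i)}$. Likewise the knot at $s=-\delta$ depends only on $u_i$ and the knot at $s=\delta$ only on $v_{\sigma(i)}$. One uniform choice of $\delta$ and of the tubular neighbourhood of $\hat y$ works for all pairs simultaneously, since there are finitely many. I need the negative-half linking numbers to be computable with the same $\delta$ for all pairs. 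This holds because $u_i(s,t)\neq u_j(s,t)$ for $s$ close to $0$, by the hypothesis of degenerate breaking position for each pair.

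Summing over ordered pairs $i\neq j$, the $u$-part $\sum_{i\neq j}\bigl(I(u_i,u_j)-\ell(K^{-\delta}_i,K^{-\delta}_j)\bigr)$ is literally independent of $\sigma$. For the $v$-part, $\sum_{i\neq j}\bigl(I(v_{\sigma(i)},v_{\sigma(j)})+\ell(K^{\delta}_{\sigma(i)},K^{\delta}_{\sigma(j)})\bigr)$, I reindex by $(m,m')=(\sigma(i),\sigma(j))$. Since $\sigma$ is a bijection, this runs over all ordered pairs $m\neq m'$, so it equals the same sum for $\sigma=\mathrm{id}$. This gives the claimed equality. Because the sum runs over ordered pairs, I do not even need the symmetry of the linking number explicitly; it would be needed only when comparing unordered pairs, for instance for a single transposition.

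The main obstacle is the well-definedness of the split: that $\lambda$ truly decomposes as (a term from the $v$-side knots) minus (a term from the $u$-side knots), and that the interior intersection count splits into $s<0$ and $s>0$ parts. This uses the reparametrisation in Lemma~\ref{homotopy to breaking}, under which $\widehat{u_i\#v_{\sigma(i)}}$ restricted to $s<0$ is the graph of $u_i$ up to a diffeomorphism in $s$. It also uses that the linking numbers are constant on $(-\varepsilon,0)$ and $(0,\varepsilon)$ with an $\varepsilon$ chosen uniformly across all pairs.
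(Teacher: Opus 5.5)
Your strategy is the paper's: split each intersection number via Lemma~\ref{intersection number and knots} into interior counts on the two halves plus a difference of linking numbers of slices near $\hat y$. Every term then depends only on the halves $u_i$ and $v_{\sigma(i)}$, and you reindex the sum over ordered pairs by $(\sigma(i),\sigma(j))$. You are right that this reindexing needs no symmetry of the linking number.

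There is, however, a gap at exactly the step where the paper uses analysis. Nothing in the hypotheses says the pairs $u_i\#v_{\sigma(i)}$, $u_j\#v_{\sigma(j)}$ are in degenerate breaking position; that must be proved. Your later appeal to ``the hypothesis of degenerate breaking position'' to get $u_i(s,t)\neq u_j(s,t)$ near the breaking level is therefore circular. Positivity of intersections, together with unique continuation, only shows that the intersection points of $\hat u_i$ and $\hat u_j$ are isolated in the open domain $\mathbb R\times S^1$. Since $u_i$ and $u_j$ both converge to the same orbit $y$ as $s\to+\infty$, isolated intersections could a priori accumulate at $\hat y$. If they did, $I(u_i,u_j)$ would be an infinite count. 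There would also be no punctured neighbourhood of the breaking level on which the slices are disjoint, so $\ell(K_i^{-\delta},K_j^{-\delta})$ would be neither defined for all small $\delta$ nor independent of $\delta$. Lemma~\ref{intersection number and knots} then cannot be applied, and a perturbation kept constant near $\hat y$, as you propose, cannot remove such an accumulation.

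The paper closes this with asymptotic analysis. By \cite[Cor.~2.5]{Si08}, two Floer half-cylinders asymptotic to the same non-degenerate orbit that do not coincide near the end satisfy $u_i(s,t)\neq u_j(s,t)$ for all sufficiently large $s$; their difference is governed by a nowhere-vanishing eigenfunction of the asymptotic operator. Coincidence near the end is excluded by unique continuation, since it would force $u_i\equiv u_j$ and hence $x_i=x_j$.

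Alternatively, you can stay topological. Isolatedness lets you choose one level $s_0$ at which all slices $u_i(s_0,\cdot)$ are pairwise pointwise distinct, and similarly one very negative level for the $v_m$. Then homotope each half on $[s_0,\infty)$, rel $s_0$ and rel $y$, to the radial contraction $\exp_{y(t)}\bigl(\rho(s)\exp_{y(t)}^{-1}u_i(s_0,t)\bigr)$, with $\rho$ decreasing from $1$ to $0$. This is a homotopy of the concatenation rel its ends, it is the same modification for every $\sigma$, and it puts all pairs simultaneously in degenerate breaking position. With either fix, the rest of your argument goes through unchanged.
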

\begin{proof}
    By the asymptotic behavior of the curves, the intersections of $\hat u_i, \hat u_j$  and $\hat v_i,\hat v_j$ respectively are disjoint from $\hat y$ and do not accumulate at $\hat y$ (see, for example, \cite[Cor. 2.5]{Si08}, whose hypothesis holds due to unique continuation). After a small homotopy through admissible pairs of curves, small enough that they intersect transversely, positively, and only finitely many times away from $\hat y$. Hence, we may assume that the curves $\{\hat u_i\#\hat v_i\}$ are in degenerate breaking position and Lemma~\ref{intersection number and knots} we have $$\iota(u_i\#v_i,u_j\#v_j) = \lambda(u_i\#v_i,u_j\#v_j) + |u_i\cap u_j|+|v_i\cap v_j|.$$
    Now, $\lambda(u_i\#v_i,u_j\#v_j) = \ell^+_{i,j} -\ell^-_{i,j}$ where $\ell^\pm_{i,j}$ is the linking number induced by the knots of $\hat u_i,\hat u_j$ (for negative sign) and $\hat v_i,\hat v_j$ (for positive sign) near $\hat y$. Thus, we have \begin{align*}\begin{split}\sum_{i\neq j} \iota(u_i\#v_i,u_j\#v_j) &= \sum_{i\neq j}\ell^+_{i,j}-\ell^-_{i,j} + \,|\,u_i\cap u_j\,|\,+ \,|\,v_i\cap v_j\,|\, \\ & = \sum_{i\neq j}\ell^+_{\sigma(i),\sigma(j)}-\ell^-_{i,j} + \,|\,u_i\cap u_j\,|\,+ \,|\,v_{\sigma(i)}\cap v_{\sigma(j)}\,|\,
    \\ &= \sum_{i\neq j}\iota (u_i\#v_{\sigma(i)},u_j\#v_{\sigma(j)})\end{split}.\end{align*}
\end{proof}

\section{The Braided Floer Chain Complex and its Homology}
\label{section: braided floer homology}
A \emph{(closed pure surface) braid} is a set of embeddings $\{s_1,\dots,s_k\}$ with pairwise disjoint images where the \emph{strands} $s_i$ are given by $$s_i: S^1\to S\times S^1,\quad t\mapsto(x_i(t),t)$$ for loops $x_i :S^1\to S$. We say two braids are isotopic if an isotopy of the strands makes their images agree. We refer to an isotopy class of this type as a \emph{braid class with $k$ strands}.
For a smooth Hamiltonian $H: S^1\times S\to \mathbb R$ we define $$\mathcal P(H) = \{x:S^1\to S\,|\,x(t) = \phi^t_H(x(0))\}$$ the space of 1-periodic orbits where we identify $S^1 = \mathbb R /\mathbb Z$. For every $k\in \mathbb Z_{\geq 1}$ and a set $X$, we denote by $\Delta_X\subset  X^k$ the big diagonal $$\Delta_X = \{(x_1,\dots,x_k)\in X^k\,|\,x_i = x_j\text{ for some $i\neq j$}\}.$$ Moreover, let $$B_k\mathcal P(H)= (\mathcal P(H)^k-\Delta_{\mathcal P(H)})/S_k$$ be the set of \emph{braids of orbits with k strands}, where we quotient by the action of the symmetric group of $k$ elements $S_k$ that permutes components. Any set $x = \{x_1,\dots,x_k\}\in B_k\mathcal P(H)$ gives rise to a closed surface braid defined by $$\mathcal B(H,x) = [\{(x_1(t),t),\dots,(x_k(t),t)\}_{t\in S^1}].$$ Here the brackets $[\cdot ]$ denote the isotopy class of the closed surface braid. The set of unordered $k$-tuples of orbits that form a $\beta$-braid is denoted by $$B\mathcal P_\beta(H) = \{x\in B_k\mathcal P(H)\,|\, \mathcal B(H,x) =  \beta\}.$$ The ordered $k$-tuples of orbits that form a $\beta$-braid is defined by  $$OB\mathcal P_\beta(H): = \pi^{-1}(B\mathcal P_\beta(H)),$$ where $\pi:OB_\beta\mathcal P(H)\to B_\beta\mathcal P(H)$ is the map that forgets the ordering. An element in $B\mathcal P_\beta(H)$ is called a \emph{$\beta$-braid of orbits} of the flow of $H$. An orbit in a braid of orbits is called a \emph{strand orbit}. The set of strands in a $\beta$-braid of orbits is denoted by $$\mathcal P_\beta(H) = \{x_1\in \mathcal P(H)\,|\, \exists \{x_1,\dots, x_k\} \text{ such that } \mathcal B(H,\{x_1,\dots,x_k\}) =\beta\}.$$ 
The vector space generated by $\beta$-braids of orbits is denoted $$\operatorname{BCF}_{\beta}(H) = \langle B\mathcal P_{\beta}(H)\rangle_{\mathbb Z_2}.$$ 
As the notation suggests, we call these spaces \emph{braided Floer complexes in the braid class $\beta$}. For a non-degenerate Hamiltonian $H$ and the additional generic auxiliary data of an almost complex structure, these vector spaces can be endowed with a differential $\partial$ that preserves the homotopy class of the braids, justifying the name. 
\subsection{The Differential and Continuation Maps}
In this section we define a homological differential $\partial$ on the space $\operatorname{BCF}_\beta(H,J)$ and prove that $\partial^2 = 0$. This differential counts Floer-theoretic isotopies that are given by tuples of Floer cylinders of Conley--Zehnder index 
$1$ that have pairwise vanishing intersection numbers.

Unfortunately, the case $S = T^2$ brings some additional technicalities when considering braids~$\beta$ with strands represented by non-contractible loops. This is because the moduli space defined shortly is not compact, since there are no global action bounds on holomorphic curves connecting non-contractible orbits. \textbf{In the remainder of Section~\ref{section: braided floer homology} we will only consider braids on $T^2$ where every strand is contractible.} However, the author believes that introducing Novikov coefficients resolves this issue, and that the proofs below go through analogously. For simplicity, we will not address this situation.

The Conley--Zehnder index for a (not necessarily contractible) orbit $x$ is defined as follows: For every homotopy class $\alpha$ and fixed reference loop $\eta_\alpha$ in this class, choose a symplectic trivialization of $\eta_\alpha^*TS$. Choose a cylinder $u$ from $\eta_\alpha$ to $x$ and push the trivialization along  $u$. This induces a symplectic trivialization on $x$, which is unique up to homotopy. The trivialization allows us to represent the differential $((d\phi_H^t)_{x(t)})_{t\in [0,1]}$ as a path of symplectic matrices. The index $\mu(x,u)$ is now given by the \emph{Maslov index} of this path. For genus $g\geq 1$ the index is independent of $u$, and we write $\mu(z)$, while for $g=0$ the index differs by a multiple of $4$ for different choices (since  $c_1(S^2) = 2$), and we write $\mu(z)$ for the index in $\mathbb Z_4$. For a cylinder $w:\mathbb R \times S^1\to S$ connecting periodic orbits $x(t) = \lim_{s\to -\infty}w(s,t)$ and $y(t) = \lim_{s \to \infty}w(s,t)$ we define the index $$\mu(w) = \mu(x,u)-\mu(y,u\#w)$$ which is independent of the capping $u$ of $x$. Before we get into the technicalities, note the following. \begin{rem}  
In general, the homology is not invariant under the choice of Hamiltonian. For a $C^2$-small Hamiltonian $h$, the generated Hamiltonian diffeomorphism has only non-degenerate and constant periodic orbits. Hence, in any non-trivial braid class $\beta$, the braided Floer homology $\operatorname{BHF}_\beta(h,J_h)$ is trivial for any choice of generic almost complex structure $J_h$. However, this need not be the case in general. For example, consider a surface of genus $g>0$ and a Hamiltonian $H$ on $S$. Let $\beta_{H,\operatorname{tot}}$ be the braid class given by the collection of all periodic orbits. Hence $\operatorname{BCF}_{\beta_{H,tot}}(H)$ is one-dimensional. Since the total braid contains the orbit of minimal and maximal Conley--Zehnder index, no incoming or outgoing Floer isotopies can exist from this braid of orbits. Hence, the class is non-trivial in homology and $\operatorname{BHF}_{\beta_{H,tot}}(H,J_H)$ is one-dimensional. Now, in general $\beta_{H,tot}$ is non-trivial and hence $\operatorname{BHF}_{\beta_{H,tot}}(H,J_H)\neq \operatorname{BHF}_{\beta_{H,tot}}(h,J_h)$. \end{rem}

For every non-degenerate Hamiltonian $H$ a generic choice of almost complex structure $J$ ensures that the moduli space $\mathcal M_\ell(x,y)$ of solutions $u:\mathbb R\times S^1\to S$ to the Floer equation $$\partial_s u+J(\partial_t u-X_H\circ u) = 0$$ that are of index $\ell$ and connect periodic orbits $x$ to $y$ is a manifold of dimension $\ell$ when equipped with the $C^\infty_{\text{loc}}$-topology (see \cite{AD14}). We call such a pair $(H,J)$ \emph{regular}. For all $x,y\in \mathcal P(H)^k\setminus \Delta_{\mathcal P(H)}$ we define $$\mathcal M_{\ell}(x,y) = \{u\in \mathcal M_{\ell}(x_1,y_1)\times \cdots\times \mathcal M_{\ell}(x_k,y_k)\,|\,\iota(u)=0\}$$ which are the connected components of $\mathcal M_{\ell}(x_1,y_1)\times\cdots\times \mathcal M_\ell(x_k,y_k)$ on which the intersection number evaluates to $0$. On $\mathcal M_\ell(x,y)$, an $\mathbb R^k$-action shifts the $s$-coordinate in every component. The quotient $$\widehat{\mathcal M}_\ell(x,y) := \mathcal M_\ell(x,y) / \mathbb R^k$$ is a manifold of dimension $(\ell-1)k$. In the case $\ell =1$, all components of $\widehat{\mathcal M}_\ell(x,y)$ are $0$-dimensional manifolds and compact, as in standard Hamiltonian Floer theory. Hence, $\widehat {\mathcal M}_1(x,y)$ is a finite set, which allows us to define the differential of braided Floer complexes by $$\partial:\operatorname{BCF}_\beta(H,J)\to \operatorname{BCF}_\beta(H,J), \quad [x]\mapsto \sum_{y\in  OB\mathcal P_\beta(H)}\,|\,\widehat{\mathcal M}_1(x,y)\,|\,[y].$$ For convenience we will write $\xi(x,y) = \,|\,\widehat{\mathcal M}_1(x,y)\,|\,.$ 
\begin{lemma}\label{lemma: differential is well defined}
    The differential $\partial$ is well-defined.
\end{lemma}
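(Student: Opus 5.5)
Well-definedness requires three things. First, $\xi(x,y)$ is a finite number. Second, it depends only on the unordered braids $[x]$ and $[y]$, or at least the sum over ordered $y$ yields a well-defined element of $\operatorname{BCF}_\beta(H)$ for an unordered input $[x]$. Third, the output lies in $\operatorname{BCF}_\beta(H)$, so any $y$ with $\xi(x,y)\neq 0$ must form a $\beta$-braid.

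\textbf{Finiteness.} Since $(H,J)$ is regular, each $\widehat{\mathcal M}_1(x_i,y_i)=\mathcal M_1(x_i,y_i)/\mathbb R$ is a compact $0$-manifold. This follows from standard Floer compactness: index-$1$ trajectories cannot break, and on $T^2$ with contractible strands the energy is bounded by the action difference, so bubbling is excluded by asphericity and there is no loss of compactness from energy. Hence the product $\prod_i\widehat{\mathcal M}_1(x_i,y_i)$ is a finite set. The intersection number $\iota$ is constant on each point of this product, because it is invariant under the $\mathbb R^k$-translations: these are homotopies relative to the limiting orbits. So $\widehat{\mathcal M}_1(x,y)$ is a subset of a finite set and $\xi(x,y)$ is finite. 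For fixed $x$ only finitely many $y$ occur, since $H$ is non-degenerate and $\mathcal P(H)$ is finite. Therefore the sum defining $\partial[x]$ is finite.

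\textbf{Braid class and ordering.} Suppose $u=(u_1,\dots,u_k)\in\mathcal M_1(x,y)$ with $\iota(u)=0$. By Proposition~\ref{prop.positivityofintersection}, $u_i(s,t)\neq u_j(s,t)$ for all $(s,t)$ and $i\neq j$. Reparametrising $s\in\mathbb R$ to $[-1,1]$ as in Section~\ref{section.intersections}, the loops $t\mapsto(u_1(s,t),\dots,u_k(s,t))$ give a continuous path of closed braids from $B(H,x)$ to $B(H,y)$. The strands stay pairwise disjoint also at the endpoints, because $x$ and $y$ lie outside the big diagonal. This path is an isotopy of braids, so $\mathcal B(H,y)=\mathcal B(H,x)=\beta$, and $y\in OB\mathcal P_\beta(H)$.

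For the ordering, a permutation $\sigma\in S_k$ acting simultaneously on $x$ and $y$ induces a bijection $\mathcal M_1(x,y)\to\mathcal M_1(\sigma x,\sigma y)$ commuting with the $\mathbb R^k$-action. It preserves $\iota(u)=\sum_{i\neq j}\iota(u_i,u_j)$, since this sum is symmetric. Hence $\xi(\sigma x,\sigma y)=\xi(x,y)$.

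The one point needing care is that the sum runs over ordered $y$ while the output is the unordered class $[y]$. One fixes an ordering of $x$. Each unordered $[y]$ then appears through its $k!$ orderings, and a given $u$ determines exactly which ordering of $y$ matches $x$. So $\sum_{\text{orderings of }y}\xi(x,y)$ counts each isotopy exactly once. By the equivariance above, this total is independent of the chosen ordering of $x$.

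I expect the main technical check to be the compactness claim in the contractible-strand or genus setting. Everything else is formal from Proposition~\ref{prop.positivityofintersection} and the homotopy invariance of $\iota$.
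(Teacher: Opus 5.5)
Your proposal is correct, and its essential step is exactly the paper's proof of independence from the representative of $[x]$: a simultaneous permutation induces a bijection $\mathcal M_1(x,y)\to\mathcal M_1(\sigma x,\sigma y)$ that preserves $\iota$, so $\xi(\sigma x,\sigma y)=\xi(x,y)$, and one then reindexes over the orderings of $y$. Your extra checks are sound but go beyond what the paper proves here: finiteness of $\widehat{\mathcal M}_1(x,y)$ is asserted just before the lemma (for $g=0$, sphere bubbling is ruled out by monotonicity and index, not asphericity), and membership of $y$ in $OB\mathcal P_\beta(H)$ is built into the definition, with your isotopy argument reappearing in the paper's proof of $\partial^2=0$.
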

\begin{proof}
   First, note that for every $\sigma \in S_k$ we have $$\sum_{y\in OB\mathcal P_{\beta}(H)}\xi(x,y)[y] = \sum_{y\in OB\mathcal P_{\beta}(H)}\xi(\sigma(x),\sigma(y))[\sigma(y)] = \sum_{y\in OB\mathcal P_{\beta}(H)}\xi(\sigma(x),y)[y].$$ Hence, $\partial$ is independent of the choice of a representative of $[x]$.
\end{proof}
\begin{rem}
    The homology theory we build differs essentially from the one defined by van den Berg, Ghrist, Vandervorst, and Wójcik in \cite{BGVW14}. This is already evident from the fact that their braid Floer homology is independent of $H$. A concrete difference is the following. In \cite{BGVW14} the differential essentially counts single cylinders of index $1$ that connect a collection of orbits that form a braid in some fixed (relative) class. In contrast, we count collections of multiple cylinders all of index $1$ that connect strands of the braids pairwise.
\end{rem}
Let us now prove that $\partial$ is indeed a differential.
\begin{prop}\label{differential squares to 0}
    For every regular pair $(H,J)$ and braid class $\beta$ we have $\partial^2 = 0$. 
\end{prop}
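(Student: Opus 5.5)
We expand $\partial^2[x]$: its coefficient of $[z]$ is $\sum_{y}\xi(x,y)\xi(y,z)$, the sum running over ordered $\beta$-braids $y$ (modulo the $S_k$ action as in Lemma~\ref{lemma: differential is well defined}). So it counts broken Floer isotopies $(u,v)$ with $u\in\widehat{\mathcal M}_1(x,y)$, $v\in\widehat{\mathcal M}_1(y,z)$, where the intermediate orbits $y_1,\dots,y_k$ are pairwise distinct. The goal is to show this count is even.

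\textbf{Step 1: the hypercube.} Consider $\mathcal M_2(x_1,z_1)\times\cdots\times\mathcal M_2(x_k,z_k)$ modulo the $\mathbb R^k$-action. Each factor $\widehat{\mathcal M}_2(x_i,z_i)$ compactifies, by standard Floer theory (compactness and gluing \cite{AD14}), to a compact $1$-manifold whose boundary consists of broken trajectories $u_i\#v_i$ with $u_i\in\widehat{\mathcal M}_1(x_i,y_i)$ and $v_i\in\widehat{\mathcal M}_1(y_i,z_i)$. The product of these compactifications is a disjoint union of products of intervals and circles. On each connected component of the open product, $\sum_{i\neq j}\iota$ is constant, since the limiting orbits are fixed and distinct. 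This constancy extends to the closure: by Lemma~\ref{homotopy to breaking}, $\psi_\rho$ is homotopic to $u\#v$ relative to the ends, and the intersection number is homotopy invariant whenever it is defined. Restricting to the components with $\iota=0$, every component that is not closed is a cube $[0,1]^k$, or a product of a cube with tori, which has no corners. Every cube has $2^k$ corners, an even number. The corners are exactly the tuples $(u_i\#v_i)_i$ in which every cylinder is broken.

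\textbf{Step 2: sorting the corners.} A corner $(u_i\#v_i)_i$ has $\sum_{i\neq j}\iota(u_i\#v_i,u_j\#v_j)=0$, since $\iota$ is constant on the cube. There are two kinds of corner.
\begin{itemize}
\item If the $y_i$ are pairwise distinct, Lemma~\ref{stability of intersections} gives $\iota(u_i,u_j)+\iota(v_i,v_j)=0$. Both terms are nonnegative by Proposition~\ref{prop.positivityofintersection}, so both vanish. Hence the corner is exactly a pair of a genuine Floer isotopy in $\widehat{\mathcal M}_1(x,y)$ and one in $\widehat{\mathcal M}_1(y,z)$ with $y$ a braid. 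Conversely, every such pair appears as a corner, by gluing each strand. We also check that $y$ lies in $OB\mathcal P_\beta(H)$: the isotopy $u$ has disjoint graphs, so it gives a braid isotopy from $x$ to $y$.
\item Otherwise some intermediate orbits coincide, say $y_i=y_j$. We must show these \emph{degenerate corners} occur an even number of times.
\end{itemize}

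\textbf{Step 3: pairing the degenerate corners (main obstacle).} Group the strand indices by their common breaking orbit, and apply the Swapping Lemma~\ref{swapping lemma} within a group with equal breaking orbit. Swapping $v_i\leftrightarrow v_j$ for $y_i=y_j$ preserves $\sum\iota=0$. This requires that $\iota$ of the broken configuration computes the cube's value, i.e.\ the degenerate-position analysis of Lemma~\ref{intersection number and knots} applied to pairs in different groups as well. Pairs in different groups are handled by additivity, as in Lemma~\ref{stability of intersections}.

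Choose canonically, e.g.\ the lexicographically first, pair $i<j$ with $y_i=y_j$ and swap $v_i,v_j$. This is an involution on degenerate corners lying in $\iota=0$ components. We need it to be fixed-point free: $v_i\neq v_j$ in $\widehat{\mathcal M}_1$ because their ends $z_i\neq z_j$ differ. The swapped tuple is again a corner of some cube, possibly a different one, since it lies in $\prod\widehat{\mathcal M}_2(x_i,z_{\sigma(i)})$; it still has $\iota=0$. This mismatch with the fixed endpoint order is the delicate point. I would handle it by summing over all orderings of $z$ (equivalently, working with unordered $[z]$), which is exactly the count the coefficient needs.

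So the total number of corners is even, the degenerate corners pair up, and therefore the genuine broken isotopies, which are the terms in $\langle\partial^2[x],[z]\rangle$, are even in number. Hence $\partial^2=0$.
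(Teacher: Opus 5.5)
Your proposal is correct and follows the paper's proof: the same $\iota=0$ hypercubes in $\prod_i\widehat{\mathcal M}_2(x_i,z_i)$ with an even number of corners, the same identification of regular corners with the terms of $\partial^2$ via positivity of intersections, and the same Swapping-Lemma cancellation of degenerate corners after summing over all orderings $\sigma(z)$. The only difference is bookkeeping: you cancel the degenerate corners with a single fixed-point-free involution (swapping $v_i,v_j$ at the lexicographically first coincidence $y_i=y_j$, which moves a corner from $\mathcal D(x,\sigma(z))$ to the summand with $z_{\sigma(i)},z_{\sigma(j)}$ exchanged), whereas the paper lets all of $\text{Fix}(y)$ act and uses that $|\text{Fix}(y)|$ is even, and your additivity check for pairs with distinct breaking orbits spells out a step the paper leaves implicit.
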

This shows that $\operatorname{BCF}_{\beta}(H,J)$ is an honest chain complex. We denote its homology by $$\operatorname{BHF}_\beta(H,J) = \ker(\partial)/\text{im}(\partial).$$
\begin{proof}
    Note that $\partial ^2$ counts exactly the number of tuples of broken trajectories with intersection number $0$ $$\partial^2([x]) = \sum_{\substack{y\in OB\mathcal P_\beta(H)}}\sum_{\substack{z\in OB\mathcal P_\beta(H)}} \xi(x,y)\xi(y,z)[z],$$ where $\xi(x,y)\xi(y,z)$ is the number of broken trajectories from $x$ to $z$ breaking at $y$ that have intersection number $0$. The proof reduces to showing the following.\\
    \underline{Claim:}
        Let $k$ be the number of strands of $\beta$. Then for every $x,z\in OB\mathcal P_\beta(H)$   we have $$\sum_{\sigma\in S_k}\sum_{\substack{y\in OB\mathcal P _\beta(H)}}\xi(x,y)\xi(y,\sigma(z)) = 0  \quad (\text{mod 2}).$$

    We first finish the proof of the proposition assuming this claim. Namely, we have \begin{align*}
        \begin{split}
            \partial ^2([x]) & = \sum_{y\in OB\mathcal P_\beta(H)}\sum_{z\in OB\mathcal P_\beta(H)} \xi(x,y)\xi(y,z)[z]\\
            & = \sum_{[z]\in B\mathcal P_\beta(H)} \sum_{\sigma\in S_k}\sum_{y\in OB\mathcal P_\beta(H)}\xi(x,y)\xi(y,\sigma(z))[z]\\ & = 0.
        \end{split}
    \end{align*}
    Let us now prove the claim. For periodic orbits $x,z\in \mathcal P(H)$, the space $\widehat {\mathcal M}_2(x,z)$ consists of a union of open intervals and copies of $S^1$. Let $\widehat {\mathcal M}^{\operatorname{int}}_2(x,z)$ denote the union of the connected components that are intervals and hence not yet compact.
    From standard Floer theory, it follows that the space $\widehat {\mathcal M}^{\operatorname{int}}_2(x,z)$ can be compactified by gluing broken trajectories $$\mathcal B(x,z) = \bigcup_{y\in \mathcal P(H)}\widehat{\mathcal M}_1(x,y) \times \widehat{\mathcal M}_1(y,z).$$ For every $x,z\in OB\mathcal P_\beta(H)$ let us denote the space $$\widehat{\mathcal M}_2^{\mathrm{cube}}(x,z) = \{(u_1,\dots,u_k) \in \widehat{\mathcal M}^{\operatorname{int}}_2(x_1,z_1)\times\cdots\times \widehat{\mathcal M}^{\operatorname{int}}_2(x_k,z_k)\,|\, \iota(u) = 0\}$$ which is the union of connected components of $\widehat{\mathcal M}_\ell(x,z)$ that are hypercubes. The space $\widehat{\mathcal M}^{\operatorname{int}}_2(x_1,z_1)\times\cdots\times \widehat{\mathcal M}^{\operatorname{int}}_2(x_k,z_k)$ can be compactified by compactifying each component. We obtain a union of closed hypercubes whose corner points are given by $$\mathcal B(x,z)= \bigcup _{y\in \mathcal P(H)^k}(\widehat{\mathcal M}_1(x_1,y_1)\times \widehat{\mathcal M}_1(y_1,z_1))\times\cdots\times (\widehat{\mathcal M}_1(x_k,y_k)\times \widehat{\mathcal M}_1(y_k,z_k)).$$ By Lemma~\ref{homotopy to breaking}, gluing preserves the algebraic intersection number of broken trajectories. Hence, $\widehat{\mathcal M}^{\mathrm{cube}}_2(x,z)$ can be compactified and we obtain a disjoint union of closed hypercubes with corner point $$\mathcal B^0(x,z) = \{((u_1,v_1),\dots,(u_k,v_k))\in \mathcal B(x,z)\,|\, \sum_{i \neq j}\iota(u_i\#v_i,u_j\#v_j)= 0\}.$$ Abusing notation, let us denote the compactification again by $\widehat{\mathcal M}^{\mathrm{cube}}_2(x,z)$, and note that this space is a compact manifold consisting of hypercubes that are given by products of closed intervals. The $0$-dimensional corner points are given by $\mathcal B^0(x,z)$ and therefore, we find that \begin{align}\label{equ: proof differential B0 even}\,|\,\mathcal B^0(x,z)\,|\, = 0 \quad \text{(mod 2)}.\end{align}
    To any element $w\in \mathcal B^0(x,z)$, we can associate its breaking orbits $y_w\in \mathcal P(H)^k$. An element $w\in\mathcal B^0(x,z)$ is called \emph{degenerate} if $y_w\in \Delta_{\mathcal P(H)}$, otherwise it is called \emph{regular}. Let us define $$\mathcal D(x,z) = \{w\in \mathcal B^0(x,z)\,|\,y_w\in \Delta_{\mathcal P(H)}\}$$ the set of degenerate broken trajectories.\\ $ $\\
    \underline{Observation:} Let $w\in \mathcal B^0(x,z)$ such that $y_w\in \mathcal P(H)^k$ is regular. Then we have $y_w\in OB\mathcal P_\beta(H)$ and therefore, $w\in \widehat{\mathcal M}_1(x,y_w)\times \widehat{\mathcal M}_1(y_w,z)$. \begin{proof}
    For $w=((u_1,v_1),\dots,(u_k,v_k))\in \mathcal B^0(x,z)$ such that $y_w$ is regular, we have that $$0= \sum_{i\neq j}\iota(u_i\#v_i,u_j\#v_j) = \sum_{i\neq j}\iota(u_i,u_j)+\iota(v_i,v_j)$$ and hence $\iota(u_i,u_j) = \iota(v_i,v_j) = 0$ for all $i \neq j$. Absuing notation let us write $u_i\in \mathcal M(x_i,(y_w)_i)$ for a representative of $u_i$ and note that we now have $u_i(s,t) \neq u_j(s,t)$ for all $(s,t)\in \mathbb R\times S^1$ and $i\neq j$. Hence, the cylinders $u_1,\dots,u_k$ give rise to an isotopy of braids associated to $x$ and $y_w$ and therefore $y_w\in OB \mathcal P_\beta(H)$. This concludes the proof of the observation.
    \end{proof}
    By the observation, $$\mathcal B^0(x,z) = \bigcup_{y\in OB\mathcal P_\beta(H)}\widehat{\mathcal M}_1(x,y)\times \widehat{\mathcal M}_1(y,z)\cup \mathcal D(x,z) $$ since breaking orbits that lie in $OB\mathcal P_\beta(H)$ are exactly the regular ones. Together with (\ref{equ: proof differential B0 even}) we conclude that $$0 = \sum_{y\in OB \mathcal P_\beta(H)}\xi(x,y)\xi(y,z) + \,|\,\mathcal D(x,z)\,|\, \quad \text{(mod 2)}.$$ To finish the proof of the claim it now suffices to show that \begin{align}\label{equ: proof differential sum even}\sum_{\sigma\in S_k}\,|\,\mathcal D(x,\sigma(z))\,|\, = 0\quad \text{(mod 2)}.\end{align}
   For $y\in \Delta_{\mathcal P(H)}$ denote $$\mathcal D(x,y,z) = \{w\in \mathcal D(x,z)\,|\,y_w = y\}$$ and note that $$\mathcal D(x,z) = \bigcup_{y\in \Delta_{\mathcal P(H)}}\mathcal D(x,y,z).$$ 
    The equality (\ref{equ: proof differential sum even}) follows from a symmetry argument. Namely, for $y\in \Delta_{\mathcal P(H)}$ write $$\text{Fix}(y) = \{\sigma\in S_k\,|\,\sigma(y) = y\}\subset S_k$$ for the subgroup of elements that fix $y$. We now define the map $$T^{x,y,z}_\sigma:\mathcal D(x,y,z)\to \mathcal D(x,y,\sigma(z)), \quad (u,v)\mapsto (u,\sigma(v)).$$ This is well-defined since after the swapping Lemma~\ref{swapping lemma} we have $\iota(u\#v) = \iota(u\#\sigma(v))$. Moreover, it is a bijection with  inverse given by $T^{x,y,\sigma(z)}_{\sigma^{-1}}$. Hence, \begin{align}\label{eq. symmetry argument}|\mathcal D(x,y,z)| = |\mathcal D(x,y,\sigma(z))|\end{align} for every $\sigma \in \text{Fix}(y)$. We now have \begin{align*}\begin{split}\sum_{\sigma \in S_k}|\mathcal D(x,\sigma(z))| & = \sum_{\sigma \in S_k}\sum_{y\in \Delta_{\mathcal P(H)}}|\mathcal D(x,y,\sigma(z))\,|\,\\ & = \sum_{y\in \Delta_{\mathcal P(H)}}\sum_{[\mu]\in S_k/\text{Fix}(y)}\sum _{\sigma\in \text{Fix}(y)}|\mathcal D(x,y,\sigma(\mu(z)))|\\ & \overset{\mathrm{(\ref{eq. symmetry argument})}}{=} \sum_{y\in \Delta_{\mathcal P(H)}}\sum_{[\mu]\in S_k/\text{Fix}(y)}\sum _{\sigma\in \text{Fix}(y)}|\mathcal D(x,y,\mu(z))| \\ & =  \sum_{y\in \Delta_{\mathcal P(H)}}\sum_{[\mu]\in S_k/\text{Fix}(y)}|\text{Fix}(y)|\,|\mathcal D(x,y,\mu(z))|
\end{split},
    \end{align*}
    where $S_k/\text{Fix}(y)= \{\text{Fix}(y)\mu\,|\,\mu\in S_k\}$. The subgroup $\text{Fix}(y)$ is a product of permutation groups, so its cardinality $|\text{Fix}(y)|$ is divisible by $2$.
    \end{proof}
\subsection{Continuation Maps}
We now define continuation maps on the braided Floer chain complex and show that they are chain maps. We first recall some standard Hamiltonian Floer theory. Given two non-degenerate Hamiltonians $H^{-\infty}$ and $H^\infty$ and generic choices of smooth loops of almost complex structures $J_{\pm \infty}= (J_{t,\pm\infty})_{t\in S^1}
$, the \emph{continuation data $\Gamma= (H,J)$} connecting  $(H_{-\infty},J_{-\infty})$ and $(H_\infty,J_\infty)$ consists of a Hamiltonian $H:\mathbb R\times S^1\times S\to \mathbb R$ and a smooth family of almost complex structures $J = (J_{s,t})_{t\in S^1, s\in\mathbb R}$ such that there exists $R> 0$ and $$H(s,t,x) = \begin{cases}
    H_{-\infty}(x,t),& s \leq  -R\\
    H_{\infty}(x,t),& s\geq R
\end{cases}$$
 and $$J_{s,t}= \begin{cases}
     J_{t,-\infty},& s\leq -R\\
     J_{t,\infty}, & s \geq R.
 \end{cases}$$
For periodic orbits $x\in \mathcal P(H^{-\infty})$ and $y\in \mathcal P(H_{\infty})$ we denote by $\mathcal M_\ell^\Gamma(x,y)$ the space of solutions $u:S^1\times \mathbb R \to S$ of the Floer equation $$\partial_su + J_{s,t}(\partial_t u - X_{H_{s,t}}\circ u) = 0.$$
Generic continuation data $\Gamma$ is \emph{regular}, which is a transversality condition ensuring that the subspace $\mathcal M_\ell^{\Gamma}(x,y)\subset \mathcal M^\Gamma(x,y)$ of solutions that are cylinders with Maslov index $\ell$ is a manifold of dimension $\ell$ equipped with the $C^\infty_{\text{loc}}$-topology (see \cite{AD14}).

Now given $x\in \mathcal P(H_{-\infty})^k\setminus\Delta_{\mathcal P(H_{-\infty})}$ and $y\in \mathcal P(H_{\infty})^k\setminus\Delta_{\mathcal P(H_{\infty})}$ a \emph{continuation isotopy} connecting $x$ and $y$ is given by a tuple $u=(u_1,\dots,u_k)$ of cylinders $u_i\in \mathcal M^{\Gamma}(x_i,y_i)$ such that for all $i\neq j$ and for all $(s,t)\in \mathbb R \times S^1$ we have $u_i(s,t) \neq u_j(s,t) $. This condition can be imposed purely algebraically. Namely, consider the intersection number $$\iota: \mathcal M^{\Gamma}(x_1,y_1)\times\cdots\times \mathcal M^{\Gamma}(x_k,y_k)\to \mathbb Z_{\geq 0},\quad \iota(u) = \sum_{i\neq j} \iota(u_i,u_j).$$ By positivity of intersection, this number is non-negative, and if $\iota(u) = 0$ we have $\iota(u_i,u_j) = 0 $ for all $i\neq j$. This in particular means that there cannot be any $(s,t)\in \mathbb R\times S^1$ such that $u_i(s,t) = u_j(s,t)$ since this would amount to a positive intersection. In other words, $u$ is an isotopy if and only if $\iota(u) = 0$. Let us define the moduli space of continuation isotopies $$\mathcal M^{\Gamma}(x,y) = \{(u_1,\dots,u_k)\in \mathcal M^{\Gamma}(x_1,.y_1) \times \cdots \times \mathcal M^\Gamma(x_k,y_k)\,|\,\iota(u) = 0\}$$ and denote the subspace of isotopies consisting of curves with index $\ell\in \mathbb Z$ by \begin{align*}\mathcal M_\ell^\Gamma(x,y) = \{(u_1,\dots,u_k)\in \mathcal M_\ell^\Gamma(x_1,y_1)\times \cdots\times \mathcal M_\ell^\Gamma(x_k,y_k)\,|\,\iota(u)=0\}.\end{align*} Note that any path in the space $\mathcal M^{\Gamma}(x_1,y_1)\times\cdots\times \mathcal M^{\Gamma}(x_k,y_k)$ is, in particular, a homotopy of cylinders relative to limiting orbits. This means that $\iota$ is locally constant and $\mathcal M^{\Gamma}(x,y)$ is simply given by the connected component of $\mathcal M^{\Gamma}(x_1,y_1)\times\cdots\times\mathcal M^{\Gamma}(x_k,y_k)$. Hence, for generic continuation data $\Gamma$, the moduli space $\mathcal M^{\Gamma}(x,y)$ is a manifold. Similarly, $\mathcal M^{\Gamma}_\ell(x,y)$ is a manifold of dimension $\ell k$. (Warning: $\mathcal M^{\Gamma}_{\ell}(x,y)$ is not the part of $\mathcal M^{\Gamma}(x,y)$ with local dimension $\ell k$.)

For regular pairs $(H,J_H)$ and $(G,J_G)$ and regular continuation data $\Gamma$ between the pairs let $$ \Phi^{\Gamma}:\operatorname{BCF}_{\beta}(H)\to \operatorname{BCF}_{\beta}(G),\quad [x] \mapsto \sum_{y\in OB\mathcal P_\beta(G)}|\mathcal M_0^\Gamma(x,y)|[y].$$
We from now on write $$\xi^\Gamma(x,y) = |\mathcal M^{\Gamma}_0(x,y)|.$$
As in Lemma~\ref{lemma: differential is well defined}, this map is well-defined.
\begin{prop}\label{continuation are chain maps}
    Let $(H,J_H)$ and $(G,J_G)$ be regular pairs and $\Gamma$ regular continuation data connecting $(H,J_H)$ to $(G,J_G)$. Then continuation maps are chain maps, $$\Phi^{\Gamma}\circ \partial_{(H,J_H)} = \partial_{(G,J_G)}\circ\Phi^{\Gamma}.$$
\end{prop}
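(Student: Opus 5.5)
I will prove the statement by mirroring the proof of Proposition~\ref{differential squares to 0}, replacing the $2$-dimensional moduli space of index-$2$ cylinders by the $1$-dimensional moduli space $\mathcal M_1^\Gamma(x_i,z_i)$ of index-$1$ continuation cylinders. For $x\in OB\mathcal P_\beta(H)$ and $z\in OB\mathcal P_\beta(G)$ the coefficient of $[z]$ in $\Phi^\Gamma\partial[x]-\partial\Phi^\Gamma[x]$ is, after summing over orderings,
$$\sum_{\sigma\in S_k}\Big(\sum_{y\in OB\mathcal P_\beta(H)}\xi(x,y)\xi^\Gamma(y,\sigma(z))+\sum_{y\in OB\mathcal P_\beta(G)}\xi^\Gamma(x,y)\xi(y,\sigma(z))\Big).$$
As in the proof of Proposition~\ref{differential squares to 0}, it suffices to show this vanishes mod $2$ for every pair $x,z$.

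First, by standard Floer theory, each component of $\mathcal M_1^\Gamma(x_i,z_i)$ that is not a circle is an open interval. Its two ends are broken trajectories of one of two types: a Floer cylinder of index $1$ for $(H,J_H)$ followed by a continuation cylinder of index $0$, or a continuation cylinder of index $0$ followed by a Floer cylinder of index $1$ for $(G,J_G)$. I therefore take the product of the interval components over $i=1,\dots,k$ and restrict to the connected components on which $\iota=0$. By Lemma~\ref{homotopy to breaking} and local constancy of $\iota$, the closure is a disjoint union of closed $k$-cubes, and its corner points are the tuples of broken trajectories $((u_i,v_i))_i$ with $\sum_{i\neq j}\iota(u_i\#v_i,u_j\#v_j)=0$. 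Each cube has $2^k$ corners, so the number of such corner points is even.

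Next I classify corners. A corner is an independent choice, for each strand $i$, of one of the two breaking types, and the breaking orbit $y_i$ is an $H$-orbit or a $G$-orbit accordingly. This is the new feature compared with the proof of $\partial^2=0$: mixed corners occur, where some strands break at $H$-orbits and others at $G$-orbits. Such a corner does not correspond to a term of $\Phi^\Gamma\partial$ or of $\partial\Phi^\Gamma$, so I must show that mixed corners contribute an even number in total.

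To do this I reinterpret the cubes. I reparametrize each interval so that it is the space of pairs (Floer cylinder up to translation, continuation cylinder) with a relative shift $\rho\in\mathbb R$ between them. For $\rho\to-\infty$ the $H$-Floer part separates on the left, and for $\rho\to+\infty$ the $G$-part separates on the right. I would try to replace the product of $k$ intervals by a one-parameter family, using a common gluing/stretching parameter for all strands; that is, I would use the diagonal $1$-dimensional moduli space of $k$-tuples solving an $s$-dependent problem with the same shift. Concretely, I would consider $k$-tuples $(u_1,\dots,u_k)$ with $u_i\in\mathcal M_1^\Gamma(x_i,z_i)$ and $\iota=0$, and cut out the subspace on which all strands have the same ``energy-center'' parameter, which is generically a $1$-manifold. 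Its ends then break simultaneously on the $H$-side or simultaneously on the $G$-side, and there are no mixed corners. The main obstacle is justifying that this diagonal family has the right compactness and that its ends are exactly the pure broken tuples; I expect this is the delicate step. If it fails, I would instead show by an induction on faces of the cube that mixed corners pair up.

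With pure corners only, the remaining argument is verbatim from Proposition~\ref{differential squares to 0}. If all breaking orbits are distinct, the Observation there shows that $\iota=0$ splits into $\iota(u_i,u_j)=\iota(v_i,v_j)=0$, so $y\in OB\mathcal P_\beta$ and the corner is counted in $\Phi^\Gamma\partial$ or in $\partial\Phi^\Gamma$. If some breaking orbits coincide, the Swapping Lemma~\ref{swapping lemma} gives a bijection $\mathcal D(x,y,z)\to\mathcal D(x,y,\sigma(z))$ for $\sigma\in\mathrm{Fix}(y)$. Since $|\mathrm{Fix}(y)|$ is even, the degenerate corners contribute $0$ mod $2$ after summing over $\sigma\in S_k$, which proves the claim.
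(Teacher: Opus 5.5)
You have the architecture right: products of interval components closed up into cubes, restriction to $\iota=0$, regular pure corners giving the terms of $\Phi^\Gamma\circ\partial_{(H,J_H)}+\partial_{(G,J_G)}\circ\Phi^\Gamma$, and degenerate pure corners cancelling via the Swapping Lemma and the evenness of $|\mathrm{Fix}(y)|$. But there is a genuine gap at the one new step, disposing of the mixed corners, and the diagonal route you propose for it does not work.

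First, $\Gamma$ is $s$-dependent, so $\mathcal M_1^\Gamma(x_i,z_i)$ carries no global translation or ``energy-center'' parameter. A relative shift $\rho$ between a Floer cylinder and a continuation cylinder only describes a gluing neighbourhood of each end.

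More seriously, nothing forces an interval component to have one end of each type. An interval may have both ends in $\bigcup_y\widehat{\mathcal M}_1(x_i,y)\times\mathcal M_0^\Gamma(y,z_i)$, or both in $\bigcup_y\mathcal M_0^\Gamma(x_i,y)\times\widehat{\mathcal M}_1(y,z_i)$. A corner is therefore a choice of one of the two endpoints of each factor, not a choice of one of the two breaking types. So your claim that a diagonal family ends in simultaneous $H$-side or $G$-side breakings, with no mixed corners, has no basis. You also do not supply the transversality or compactness such a family would need.

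The missing ingredient is purely combinatorial; the paper isolates it as Lemma~\ref{combinatorial lemma}. Color the two endpoints of each factor of a cube $Q=I_1\times\cdots\times I_k$ by breaking type, $\rho_i:\partial I_i\to\{a,b\}$. The number of pure corners of $Q$ is $\prod_i|\rho_i^{-1}(a)|+\prod_i|\rho_i^{-1}(b)|$, where $|\rho_i^{-1}(a)|+|\rho_i^{-1}(b)|=2$ for every $i$. If some factor is monochromatic, one product vanishes and the other is even. Otherwise both products equal $1$. So the pure corners are even cube by cube, and no geometric pairing of mixed corners is needed.

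With this in place, the rest of your argument goes through as written. Your fallback ``induction on faces'' points in this direction, but as it stands it is only announced, and it is precisely the step that distinguishes this proposition from $\partial^2=0$.
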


Before proving Proposition~\ref{continuation are chain maps}, we first discuss the proof strategy. To conclude that $\Phi^\Gamma\circ \partial
_{(H,J_H)}+\partial_{(G,J_G)}\circ \Phi^\Gamma = 0$ we need to show that the number of tuples of broken trajectories $((u_1,v_1),\dots,(u_k,v_k))$ connecting braided orbits $x$ to $y$ and $y$ to $z$ which are of type $\beta$ is even. First for $x,z\in OB\mathcal P_\beta (H)$ we define a moduli space which is given by a compactification of a subspace of the product $$\mathcal M^{\Gamma}_1(x_1,z_1)\times\cdots\times \mathcal M^{\Gamma}_1(x_k,z_k)$$ and consists of a union of hypercubes $$Q = I_1\times \cdots\times I_k,$$ where $I_j$ is a closed interval in the compactification of $\mathcal M^{\Gamma}_1(x_j,z_j)$. This moduli space is such that all broken trajectories $((u_1,v_1),\dots,(u_k,v_k))$ as above appear as corner points of its hypercubes. Such corner points, corresponding to tuples of broken trajectories, come in two types: $$\text{Type 1: }(u_i,v_i) \in \mathcal M^{\Gamma}_0(x_i,y_i)\times \widehat{\mathcal M}_1(y_i,z_i)  $$ $$\text{Type 2: }(u_i,v_i) \in  \widehat{\mathcal M}_1(x_i,y_i)\times \mathcal M^{\Gamma}_0(y_i,z_i).$$ A corner point is called \emph{pure} if all components are either of type I or of type II. These pure corner points are exactly the ones that are counted by the map $\Phi^\Gamma\circ \partial
_{(H,J_H)}+\partial_{(G,J_G)}\circ \Phi^\Gamma$. To conclude that the number of these corner points is even, we will apply the following combinatorial lemma. 
\begin{lemma}\label{combinatorial lemma}
     Let $\rho_{i}:\{\pm 1\} \to \{a,b\}$ for $i=1,\dots,k$ be colorings and view the set $\{\pm 1\}^k$ as the corner points of a hypercube. We call a corner point $c = (c_1,\dots,c_k)\in \{\pm 1\}^k$ pure if $\rho_i(c_i) = a$ for all $i$ or $\rho_i(c_i) = b$ for all $
     i$. Then the number of pure corner points is even.
 \end{lemma}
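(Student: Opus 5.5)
The plan is to count the pure corner points directly by splitting them into the two disjoint classes ``all $a$'' and ``all $b$.'' Then I show that the sum of the two counts is even by a short case distinction on the colorings. Throughout I assume $k\geq 1$, which is the situation in which the lemma is applied, since $k$ is the number of strands of $\beta$.

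First, for each $i$ set $a_i = |\rho_i^{-1}(a)|$ and $b_i = |\rho_i^{-1}(b)|$. Since $\rho_i$ is a map on the two-element set $\{\pm 1\}$, we have $a_i,b_i\in\{0,1,2\}$ and $a_i+b_i=2$.

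Second, a corner $c=(c_1,\dots,c_k)$ has $\rho_i(c_i)=a$ for all $i$ exactly when each $c_i$ is chosen independently from $\rho_i^{-1}(a)$. Hence the number of such corners is $\prod_{i=1}^k a_i$, and likewise the number of corners with $\rho_i(c_i)=b$ for all $i$ is $\prod_{i=1}^k b_i$. Because $k\geq 1$, no corner lies in both classes: its first coordinate cannot be colored both $a$ and $b$. Therefore the number of pure corner points is
$$N=\prod_{i=1}^k a_i+\prod_{i=1}^k (2-a_i).$$

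Third, I distinguish three cases.
\begin{itemize}
\item If some $a_{i_0}=0$, the first product vanishes. The second product contains the factor $b_{i_0}=2$, so $N$ is even.
\item If some $a_{i_0}=2$, the same argument applies with the roles of $a$ and $b$ exchanged.
\item Otherwise $a_i=b_i=1$ for all $i$, so both products equal $1$ and $N=2$.
\end{itemize}
In every case $N\equiv 0 \pmod 2$.

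There is no real obstacle here. The only points requiring care are the disjointness of the two classes of pure corners, which uses $k\geq 1$, and recording that for $k=0$ the single empty corner would be counted once, so the statement needs $k\geq 1$.
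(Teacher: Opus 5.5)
Your proof is correct. The paper argues differently, by induction on $k$ starting from the base case $k=1$. It forgets the last coordinate and writes $n_a+n_b=\tilde n_a|\rho_{k+1}^{-1}(a)|+\tilde n_b|\rho_{k+1}^{-1}(b)|=2\tilde n_a+(\tilde n_b-\tilde n_a)|\rho_{k+1}^{-1}(b)|$. It then uses the induction hypothesis that $\tilde n_a+\tilde n_b$, and hence $\tilde n_b-\tilde n_a$, is even.

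Both arguments rest on the fact that the number of $a$-pure (resp. $b$-pure) corners is multiplicative over coordinates. You unroll the paper's recursion into the closed formula $N=\prod_i a_i+\prod_i(2-a_i)$ and settle parity by a case analysis. This also computes $N$ explicitly: it equals $2$ when every $\rho_i$ is surjective, and is otherwise $0$ or a positive power of $2$.

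The paper's induction uses only that $|\rho_{k+1}^{-1}(a)|+|\rho_{k+1}^{-1}(b)|$ is even, so it works verbatim for colorings of arbitrary finite sets of even cardinality. Your closed form reaches the same generality even faster if you replace the case analysis by the congruence $2-a_i\equiv a_i \pmod 2$, which gives $N\equiv 2\prod_i a_i\equiv 0 \pmod 2$.

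Your remark that the statement needs $k\geq 1$ is accurate and matches the paper's base case.
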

 \begin{proof}
      We prove the lemma by induction over $k$. Let $n_a$ and $n_b$ denote the number of $a$-pure and $b$-pure corner points, respectively, of a $k$-dimensional hypercube. For $k = 1$, the lemma holds; there are two corner points, and both are pure. Now suppose the result holds for $k\geq 1$. Let $\tilde n_a$ and $\tilde n_b$ denote the number of pure $a$- or $b$-corner points of the hypercubes obtained by forgetting the last coordinate. Using that $2 = |\rho^{-1}_{k+1}(a)|+|\rho^{-1}_{k+1}(b)|$ we find \begin{align*}\begin{split}n_a+n_b & = \tilde n_a|\rho_{k+1}^{-1}(a)|+\tilde n_b\,|\,\rho^{-1}_{k+1}(b)| \\ & = \tilde n_a(2-|\rho_{k+1}^{-1}(b)|)+\tilde n_b|\rho_{k+1}^{-1}(b)|\\ & = 2\tilde n_a+ (\tilde n_b-\tilde n_a)|\rho_{k+1}^{-1}(b)|\end{split}\end{align*} which is even since $\tilde n_a+\tilde n_b$ and therefore also $\tilde n_b-\tilde n_a$ is even by assumption. 
 \end{proof}
The boundary components of $I_j$ can be colored by the type of broken trajectories. We will apply Lemma~\ref{combinatorial lemma} to this coloring.
\begin{proof}[Proof of Proposition~\ref{continuation are chain maps}]
    Note that for $x\in OB\mathcal P_\beta(H)$ the map $\Psi= \Phi^{\Gamma}\circ \partial_{(H,J_H)} + \partial_{(G,J_G)}\circ\Phi^{\Gamma}$ is given by $$\Psi([x]) = \sum_{\substack{z\in OB\mathcal P_\beta(G)}}\eta(x,z)[z],$$ where $\eta(x,z)$ is the cardinality of the following set consisting of broken trajectories, $$\mathcal B(x,z) =  \bigcup_{\substack{y\in OB\mathcal P_\beta(G)}}\mathcal M_0^\Gamma(x,y)\times \widehat{ \mathcal M}_1(y,z) \cup \bigcup_{\substack{y\in OB\mathcal P_\beta(H)}}\widehat{\mathcal M}_1(x,y)\times \mathcal M^{\Gamma}_0(y,z).$$ To prove the proposition, we need to show that $\Psi = 0$. First note that we have \begin{align*}\begin{split}\Psi([x]) & = \sum_{z\in OB\mathcal P_\beta(G)}\eta(x,z)[z] = \sum_{[z]\in B\mathcal P_\beta(G)}\sum_{\sigma\in S_k}\eta(x,\sigma(z))[z].\end{split}\end{align*} In the rest of the proof, we will show that for $x\in OB\mathcal P_\beta(H)$ and $z\in OB\mathcal P_\beta(G)$ we have \begin{align}\sum_{\sigma\in S_k}\eta(x,\sigma(z)) = 0 \quad \text{(mod 2)}\end{align} which proves the proposition.\\

   For $x\in \mathcal P(H)$ and $z\in \mathcal P(G)$, the moduli space $\mathcal M_1^\Gamma(x,z)$ is a  disjoint union of open intervals and copies of $S^1$. Let $\mathcal M_1^{\Gamma,\operatorname{int}}(x,z)$ denote the subset that is given by the union of all intervals. From standard Floer theory we know that $\mathcal M^{\Gamma,\operatorname{int}}_1(x,z)$ can be compactified by gluing broken trajectories $$\bigcup_{y\in \mathcal P(G)}\mathcal M^{\Gamma}_0(x,y)\times \widehat{\mathcal M}_1(y,z)\cup \bigcup_{y\in \mathcal P(H)}\widehat{\mathcal M}_1(x,y)\times \mathcal M^{\Gamma}_0(y,z)$$ as boundary components.
   It follows that for $x\in OB\mathcal P_\beta(H)$ and $z\in OB\mathcal P_\beta(G)$ the space $\mathcal M^{\Gamma,\operatorname{int}}_1(x_1,z_1) \times\cdots\times \mathcal M^{\Gamma,\operatorname{int}}_1(x_k,z_k)$ consisting of open hypercubes can be compactified to a disjoint union of closed hypercubes; corner points are given by
   $$\mathcal C(x,z) = \prod_{i =1}^k\left (\bigcup_{y_i\in \mathcal P(G)}\mathcal M_0^\Gamma(x_i,y_i)\times \widehat{ \mathcal M}_1(y_i,z_i) \cup \bigcup_{y_i\in \mathcal P(H)}\widehat{\mathcal M}_1(x_i,y_i)\times \mathcal M_0^{\Gamma}(y_i,z_i)\right ) .$$ Let now $$\mathcal M^{\Gamma,\mathrm{cube}}_1(x,z)= \{(u_1,\dots,u_k)\in \mathcal M^{\Gamma,\operatorname{int}}_1(x_1,z_1)\times\cdots\times \mathcal M^{\Gamma,\operatorname{int}}_1(x_k,z_k)\,|\,\iota(u) = 0\}$$ which is the subset of $\mathcal M^{\Gamma}_1(x,z)$ given by the disjoint union of all hypercubes.
   By Lemma~\ref{homotopy to breaking} the intersection number of curves at corner points agrees with the intersection number of curves given by points in the interior of the hypercube. Hence, by taking the subset on which the intersection number $\iota$ evaluates to $0$, we find that $\mathcal M^{\Gamma,\mathrm{cube}}_1(x,z)$ can be compactified to a union of hypercubes with corner points given by $$\mathcal C^0(x,z) = \{((u_1,v_1),\dots,(u_k,v_k))\in \mathcal C(x,z)\,|\,\sum_{i\neq j}\iota(u_i\#v_i,u_j\#v_j) = 0\}.$$  A component of a corner point is given by a broken trajectory and can come in two types $$\text{Type 1: }(u_i,v_i) \in \mathcal M^{\Gamma}_0(x_i,y_i)\times \widehat{\mathcal M}_1(y_i,z_i)  $$ $$\text{Type 2: }(u_i,v_i) \in  \widehat{\mathcal M}_1(x_i,y_i)\times \mathcal M^{\Gamma}_0(y_i,z_i).$$ A corner point $w\in \mathcal C^0(x,z)$ is called \emph{pure} if all components of $w$  are of the same type, that is $w$ lies in \begin{align*}\left (\prod_{i=1}^k \bigcup_{y_i\in \mathcal P(G)}\mathcal M_0^\Gamma(x_i,y_i)\times \widehat{ \mathcal M}_1(y_i,z_i) \right)\cup \left ( \prod_{i = 1}^k \bigcup_{y_i\in \mathcal P(H)}\widehat{\mathcal M}_1(x_i,y_i)\times \mathcal M_0^{\Gamma}(y_i,z_i)\right )\cap \mathcal C^0(x,z)\end{align*} 
   Note that this set is in bijection to the set $$\mathcal C^0_{\operatorname{pure}}(x,z) = \bigcup_{y\in \mathcal P(G)^k} \mathcal M^{\Gamma}_0(x,y)\times \widehat{\mathcal M}_1(y,z)\cup \bigcup_{y\in \mathcal P(H)^k}\widehat{ \mathcal M}_1(x,y) \times \mathcal M^{\Gamma}_0(y,z)$$
   Now we can apply the combinatorial Lemma~\ref{combinatorial lemma} to deduce that \begin{align}\label{eq: Cpure even cardinality}
       |\mathcal C^0_{\operatorname{pure}}(x,z)| = 0 \quad \text{(mod 2)}.
   \end{align} Indeed, any $\mathcal M^{\Gamma,\mathrm{cube}}_1(x,z)$ is given by a disjoint union of hypercubes, which are products of intervals. The boundary components of these intervals can be colored with respect to their type of broken trajectory. The lemma asserts that the number of corner points with components of the same type is even. However, these are exactly the pure corner points.

   Now, for any $w\in \mathcal C^0_{\operatorname{pure}}(x,z)$, we denote its breaking orbit by $y_w\in \mathcal P(H)^k$ or $y_w\in \mathcal P(G)^k$, depending on its type of breaking. A corner point $w$ is called \emph{degenerate} if $y_w\in \Delta_{\mathcal P(H)}\cup  \Delta_{\mathcal P(G)}$ and is called \emph{regular} otherwise. Let $$\mathcal D(x,z) = \{w\in \mathcal C^0_{\operatorname{pure}}(x,z)\,|\,y_w\in \Delta_{\mathcal P(H)}\cup \Delta_{\mathcal P(G)}\}$$ be the set of degenerate pure corernpoints. \\ $ $\\
   \underline{Observation 1:} Let $w\in \mathcal C^0_{\operatorname{pure}}(x,z)$ such that $y_w$ is regular. Then $y_w\in OB \mathcal P_\beta(H)$ or $y_w\in OB\mathcal P_\beta(G)$ depending ion the type of breaking.\begin{proof}
       Assume that $y_w\in \mathcal P(H)^k\setminus \Delta_{\mathcal P(H)}$, the other case is analogous. Suppose that $w=((u_1,v_1),\dots,(u_k,v_k))$ so that $$0 = \sum_{i\neq j}\iota(u_i\#v_i,u_j\#v_j) = \sum_{i\neq j}\iota(u_i,u_j) + \iota(v_i,v_j),$$ where we use that $y_w\notin\Delta_{\mathcal P(H)}$ for the last equality. By positivity of intersection we can conclude that $\iota(u_i,u_j) = \iota(v_i,v_j) = 0$ for all $i\neq j$. Hence for every $(s,t)\in \mathbb R\times S^1$ we have $u_i(s,t) \neq u_j(s,t)$ and the cylinders $u_1,\dots,u_k$ induces an isotopy of braids corresponding to $x$ and $y$. Since the associated braid of $x$ is of class $\beta$, so must the braid of $y$ be as well. This concludes the proof of the observation.
   \end{proof} We now have that \begin{align*}\begin{split}\mathcal C^0_{\operatorname{pure}}(x,z) &=  \bigcup_{y\in \mathcal P(H)^k\setminus \Delta_{\mathcal P(H)}} \mathcal M^{\Gamma}_0(x,y)\times \widehat{\mathcal M}_1(y,z)\cup \bigcup_{y\in \mathcal P(G)^k\setminus \Delta_{\mathcal P(G)}}\widehat{ \mathcal M}_1(x,y) \times \mathcal M^{\Gamma_0}(y,z)\\ & \cup \bigcup_{y\in \Delta_{\mathcal P(H)}} \mathcal M^{\Gamma}_0(x,y)\times \widehat{\mathcal M}_1(y,z)\cup \bigcup_{y\in \Delta_{\mathcal P(G)}}\widehat{ \mathcal M}_1(x,y) \times \mathcal M^{\Gamma_0}(y,z) \\ & =  \bigcup_{y\in OB\mathcal P_\beta(H)} \mathcal M^{\Gamma}_0(x,y)\times \widehat{\mathcal M}_1(y,z)\cup \bigcup_{y\in OB\mathcal P_\beta(G)}\widehat{ \mathcal M}_1(x,y) \times \mathcal M^{\Gamma_0}(y,z)\cup\mathcal D(x,z) \\ & = \mathcal B(x,z) \cup \mathcal D(x,z) \end{split}.\end{align*} To show that $$\sum_{\sigma\in S_k}\eta(x,\sigma(z)) = \sum_{\sigma\in S_k}|\mathcal B(x,\sigma(z))|= 0 \quad \text{(mod 2)}$$ it suffices to show that \begin{align}
       \sum_{\sigma\in S_k}|\mathcal D(x,\sigma(z)) | = 0 \quad \text{(mod 2)}
   \end{align} since after (\ref{eq: Cpure even cardinality}) we have $|\mathcal C^0_{\operatorname{pure}}(x,z)| = 0 \text{ (mod 2)}$. For every $y\in \Delta_{\mathcal P(H)}\cup \Delta_{\mathcal P(G)}$ let us define $$\mathcal D(x,y,z) = \{w\in \mathcal D(x,z)\,|\,y_w = y\}$$ so that $$\mathcal D(x,z) = \bigcup_{y\in \Delta_{\mathcal P(H)}\cup \Delta_{\mathcal P(G)}}\mathcal D(x,y,z).$$
   For every $y\in \Delta_{\mathcal P(H)}\cup \Delta_{\mathcal P(G)}$ let us define $$\text{Fix}(y) = \{\sigma \in S_k\,|\,\sigma(y) = y\}$$ as the subgroup of $S_k$ that fixes $y$. \\ $ $\\
   \underline{Observation 2:} For every $\sigma \in \text{Fix}(y)$ we have $$|\mathcal D(x,y,z)| = |\mathcal D(x,y,\sigma(z))|.$$
   \begin{proof}
       For every $\sigma\in \text{Fix}(y)$ let us define $$T_{\sigma}^{x,y,z}: \mathcal D(x,y,z) \to \mathcal D(x,y,\sigma(z)),\quad ((u_1,v_1),\dots,(u_k,v_k))\mapsto ((u_1,v_{\sigma(1)}),\dots,(u_k,v_{\sigma(k)}).$$ To see that this map is well defined, note that after the swapping Lemma~\ref{swapping lemma} we have that $$0 = \sum_{i\neq j}\iota(u_i\#v_i,u_j\#v_j ) = \sum_{i \neq j}(u_i\#v_{\sigma(i)},u_j\#v_{\sigma(j)})$$ and hence $((u_1,v_{\sigma(1)}),\dots,(u_k,v_{\sigma(k)}))\in \mathcal C^0_{\operatorname{pure}}(x,\sigma(z))$. Since the breaking orbit $y$ was left unchanged, we have that $((u_1,v_{\sigma(1)}),\dots,(u_k,v_{\sigma(k)}))\in \mathcal D(x,y,\sigma(z))$. Now, $T_\sigma^{x,y,z}$ is a bijection since $T_{\sigma^{-1}}^{x,y,\sigma(z)}$ is its inverse. This concludes the proof of the observation.
   \end{proof}
   Now, the subgroup $\text{Fix}(y)$ is isomorphic to a product $S_{n_1}\times\cdots\times S_{n_m}$ where $n_i$ are the multiplicities of breaking orbits in $y$. Since $y\in \Delta_{\mathcal P(H)}\cup \Delta_{\mathcal P(G)}$ there must be some $i$ such that $n_i\geq 2$. It follows that $\,|\,\text{Fix}(y)\,|\, = 0 \text{ (mod 2)}$.
   We now have that \begin{align*}\begin{split}\sum_{\sigma\in S_k}\,|\,\mathcal D(x,\sigma(z))\,|\, & = \sum_{y\in \Delta_{\mathcal P(H)}\cup \Delta_{\mathcal P(G)}} \sum_{\sigma\in S_k}\,|\,\mathcal D(x,y,\sigma(z))\,|\, \\ & = \sum_{y\in \Delta_{\mathcal P(H)}\cup \Delta_{\mathcal P(G)}}\sum_{[\mu]\in S_k/\text{Fix}(y)}\sum_{\sigma \in \text{Fix}(y)} |\mathcal D(x,y,\sigma(\mu(z)))| \\ &  = \sum_{y\in \Delta_{\mathcal P(H)}\cup \Delta_{\mathcal P(G)}}\sum_{[\mu]\in \text{Fix}(y)S_k}|\text{Fix}(y)||\mathcal D(x,y,\sigma(\mu(z)))| \\ &  = 0 \quad \text{(mod 2)}\end{split}\end{align*}
   This concludes the proof of Proposition~\ref{continuation are chain maps}.
  \end{proof}
  Next, let us discuss the composition of continuation maps. consider regular pairs $(H^a,J^a)$, $(H^b,J^b)$, and $(H^c,J^c)$ and continuation data $\Gamma' = (F',J')$ from $(H^a,J^a)$ to $(H^b,J^b)$ and continuation data $\Gamma''=(F'',J'')$ from $(H^b,J^b)$ to $(H^c,J^c)$. For a real number $\rho>0$ that is large enough we define continuation data $\Gamma''\#_\rho\Gamma'= (F''\#_\rho F',J''\#_\rho J')$ from $(H^a,J^a)$ to $(H^c,J^c)$ in the following way. $$F''\#_{\rho}F'(s,t,x) = \begin{cases}
      F'(s+\rho,t,x),& s\leq 0\\F''(s-\rho,t,x),&s \geq 0
  \end{cases},$$ $$(J''\#_\rho J')_s = \begin{cases}
      J'_{s+\rho}, & s \leq 0\\J''_{s-\rho}, & s \geq 0
  \end{cases}.$$ Note that since the continuation data is eventually constant in the $s$ parameter, the composition $\Gamma''\#_\rho \Gamma'$ is well-defined for $\rho$ large enough. From standard Floer theory (see for example \cite{AD14}) it follows that (after possibly a arbitrarily small perturbation of the continuation data) for every $x\in \mathcal P(H^a)$ any $z \in \mathcal P(H^c)$ there is a gluing map $$\chi: \bigcup_{y\in \mathcal P(H^b)}\mathcal M^{\Gamma'}_0(x,y)\times \mathcal M^{\Gamma''}_0(y,z) \to \mathcal M^{\Gamma''\#_\rho \Gamma'}_0(x,z)$$ which is a bijection. For every $(u,v) \in \mathcal M^{\Gamma'}_0(x,y)\times \mathcal M^{\Gamma''}_0(y,z)$, the curve $\chi(u,v)$ is obtained by the exponential flow along a pre-gluing map associated to $u$ and $v$ (see \cite{AD14}) and hence falls into the realm we discussed in Section~\ref{section.gluing}. 
\begin{prop} \label{proposition: continuation map composition}
    consider regular pairs $(H^a,J^a)$,$(H^b,J^b)$, and $(H^c,J^c)$ and braid class $\beta$. For continuation data $\Gamma'$ from $(H^a,J^a)$ to $(H^b,J^b)$ and continuation data $\Gamma''$ from $(H^b,J^b)$ to $(H^c,J^c)$ we have
    $$\Phi^{\Gamma''}\circ \Phi^{\Gamma'}= \Phi^{\Gamma''\#_\rho \Gamma'}:\operatorname{BCF}_{\beta}(H^a)\to \operatorname{BCF}_{\beta}(H^c)$$ for $\rho>0$ large enough. 
\end{prop}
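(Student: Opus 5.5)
We compare, for fixed $x\in OB\mathcal P_\beta(H^a)$ and $z\in OB\mathcal P_\beta(H^c)$, the coefficient of $[z]$ on both sides. The right-hand side counts $\xi^{\Gamma''\#_\rho\Gamma'}(x,z)=|\mathcal M_0^{\Gamma''\#_\rho\Gamma'}(x,z)|$. The left-hand side counts
$$\sum_{\sigma\in S_k}\sum_{y\in OB\mathcal P_\beta(H^b)}\xi^{\Gamma'}(x,y)\,\xi^{\Gamma''}(y,\sigma(z)),$$
after collecting representatives of $[z]$ as in the proof of Proposition~\ref{differential squares to 0}. We therefore aim to show
$$\sum_{\sigma\in S_k}\xi^{\Gamma''\#_\rho\Gamma'}(x,\sigma(z))\equiv\sum_{\sigma\in S_k}\sum_{y\in OB\mathcal P_\beta(H^b)}\xi^{\Gamma'}(x,y)\,\xi^{\Gamma''}(y,\sigma(z)) \pmod 2 .$$

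\textbf{Step 1 (componentwise gluing bijection).} Apply the gluing bijection $\chi$ to each strand separately. This gives a bijection
$$\prod_{i=1}^k\ \bigcup_{y_i\in\mathcal P(H^b)}\mathcal M_0^{\Gamma'}(x_i,y_i)\times\mathcal M_0^{\Gamma''}(y_i,z_i)\ \longrightarrow\ \prod_{i=1}^k\mathcal M_0^{\Gamma''\#_\rho\Gamma'}(x_i,z_i).$$
Since there are finitely many strands, and each product is a finite set, a single $\rho$ works for all of them.

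\textbf{Step 2 (intersection numbers are preserved).} Each glued curve $\chi(u_i,v_i)$ arises from the pre-gluing construction. By Lemma~\ref{homotopy to breaking}, it is homotopic relative to the limiting orbits to $u_i\#v_i$. Hence
$$\iota\big(\chi(u_i,v_i),\chi(u_j,v_j)\big)=\iota(u_i\#v_i,u_j\#v_j).$$
Consequently, $\mathcal M_0^{\Gamma''\#_\rho\Gamma'}(x,z)$ is in bijection with the set $\mathcal C^0(x,z)$ of tuples of broken trajectories with $\sum_{i\ne j}\iota(u_i\#v_i,u_j\#v_j)=0$.

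One caveat concerns the breaking orbits. Lemma~\ref{homotopy to breaking} was phrased for a single breaking orbit. When $y_i=y_j$, the two glued curves still use the same pre-gluing parameter $\rho$. The homotopy is then performed simultaneously on both curves, and it is relative to the ends $x_i,x_j,z_i,z_j$, which are pairwise distinct. So the intersection number, being defined for admissible pairs, is still preserved.

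\textbf{Step 3 (split into regular and degenerate tuples).} Split $\mathcal C^0(x,z)$ according to the breaking tuple $y_w$.

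For regular $y_w\notin\Delta_{\mathcal P(H^b)}$, the argument of Observation 1 in the proof of Proposition~\ref{continuation are chain maps} applies. By positivity of intersections, $\iota(u_i,u_j)=\iota(v_i,v_j)=0$, so $u$ and $v$ are isotopies. This gives $y_w\in OB\mathcal P_\beta(H^b)$ and $w\in\mathcal M_0^{\Gamma'}(x,y_w)\times\mathcal M_0^{\Gamma''}(y_w,z)$. Conversely, every such pair lies in $\mathcal C^0$ by Lemma~\ref{stability of intersections}. So the regular part is exactly what the left-hand side counts.

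For degenerate $y\in\Delta_{\mathcal P(H^b)}$, define $\mathcal D(x,y,z)$ and the swap map $T_\sigma^{x,y,z}$ for $\sigma\in\mathrm{Fix}(y)$ as in Observation 2. The Swapping Lemma~\ref{swapping lemma} shows that $T_\sigma$ is a bijection $\mathcal D(x,y,z)\to\mathcal D(x,y,\sigma(z))$. Summing over $\sigma\in S_k$ and grouping by cosets of $\mathrm{Fix}(y)$ then yields a factor $|\mathrm{Fix}(y)|$, which is even. Hence the degenerate contributions vanish modulo $2$ after summing over $\sigma$, and the congruence above follows.

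\textbf{Main obstacle.} The delicate step is Step 2 in the degenerate case. We must check that the simultaneous gluing of two curves through the same orbit $y$ really is homotopic, as a pair, to $(u_i\#v_i,u_j\#v_j)$ through admissible pairs. The two glued curves need not coincide near $y$, so we cannot argue one curve at a time. I would first try running the proof of Lemma~\ref{homotopy to breaking} with a common $\rho$ and a common $\delta$-family. Both families then converge in $C^0$ to the concatenations, while the boundary behavior is fixed. The Swapping Lemma is already formulated for exactly this kind of configuration, so it should apply once this homotopy is in place.
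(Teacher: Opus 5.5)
Your proposal is correct and follows the paper's proof essentially step for step: gluing strand by strand, preserving $\iota$ via Lemma~\ref{homotopy to breaking}, using positivity of intersections to show that regular breaking tuples lie in $OB\mathcal P_\beta(H^b)$, and using the $\mathrm{Fix}(y)$-symmetry from the Swapping Lemma~\ref{swapping lemma} to make the degenerate tuples cancel mod $2$. The ``main obstacle'' you flag is not actually one: admissibility of a pair depends only on its fixed ends, and $\iota$ is invariant under homotopies constant on the boundary, so you may homotope each $\chi(u_i,v_i)$ to $u_i\#v_i$ one curve at a time, which is exactly what the paper does.
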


\begin{proof}
    For every $x\in OB\mathcal P_\beta(H)$  we have \begin{align*}\Phi^{\Gamma''}\circ&\Phi^{\Gamma'}([x])   =\sum_{\substack{y\in OB\mathcal P_{\beta}(H^b)}}\sum_{\substack{z\in OB\mathcal P_{\beta}(H^c)}}\xi^{\Gamma'}(x,y)\xi^{\Gamma''}(y,z)[z]\end{align*}
    and  $$\Phi^{\Gamma''\#_\rho\Gamma'}([x]) =  \sum _{z\in OB\mathcal P_{\beta}(H^c)}\xi^{\Gamma''\#_\rho\Gamma'}(x,z)[z].$$ We would like to prove $\Phi^{\Gamma''}\circ \Phi^{\Gamma'} = \Phi^{\Gamma''\#_{\rho}\Gamma'}$ which boils down to showing the following equality. For every $x\in OB \mathcal P_\beta(H^a)$ and $z \in OB\mathcal P_\beta(H^c)$ we have 
    \begin{align}\label{eq: counting unbounded chain maps}
        \sum_{\sigma\in S_k}\sum_{\substack{y\in OB \mathcal P_{\beta}(G)}}\xi^{\Gamma'}(x,y)\xi^{\Gamma''}(y,\sigma(z)) = \sum_{\sigma\in S_k}\xi^{\Gamma''\#_\rho\Gamma'}(x,\sigma(z))\quad (\text{mod 2}).
    \end{align}
    Assuming this, let us finish the proof of Proposition~\ref{proposition: continuation map composition}.     
   For every $x\in OB\mathcal P_\beta(H^a)$ \begin{align*}\begin{split}\Phi^{\Gamma''}\circ\Phi^{\Gamma'}([x]) &= \Phi^{\Gamma''}(\sum_{y\in OB\mathcal P_{\beta}(H^b)}\xi^{\Gamma'}(x,y)[y]) \\ & =\sum_{z\in OB\mathcal P_{\beta}(H^c)}\sum_{y\in OB\mathcal P_{\beta}(H^b)}\xi^{\Gamma'}(x,y)\xi^{\Gamma''}(y,z)[z] \\  & =\sum_{[z]\in B\mathcal P_{\beta}(H^c)}\sum_{\sigma\in S_k}\sum_{y\in OB\mathcal P_{\beta}(H^b)}\xi^{\Gamma'}(x,y)\xi^{\Gamma''}(y,\sigma(z))[z]\\ \\& =\sum_{[z]\in B\mathcal P_{\beta}(H^c)} \sum_{\sigma\in S_k}\xi^{\Gamma''\#_\rho\Gamma'}(x,\sigma(z))[z] \\ &=\sum_{z\in OB\mathcal P_{\beta}(H^c)} \xi^{\Gamma''\#_\rho\Gamma'}(x,z)[z] \\ & = \Phi^{\Gamma''\#_{\rho}\Gamma'}([x])\end{split}\end{align*}

We now prove the equality (\ref{eq: counting unbounded chain maps}).
      For every $x\in \mathcal P(H^a)$ and $z\in \mathcal P(H^c)$ there is a gluing map $$ \chi:\bigcup_{\substack{y\in \mathcal P(H^b)}}\mathcal M_0^{\Gamma'}(x,y)\times \mathcal M_0^{\Gamma''}(y,z)\to \mathcal M_0^{\Gamma''\#_\rho\Gamma'}(x,z)$$ that is constructed by gluing solutions to the Floer equation as in the setting of Section~\ref{section.gluing}. For every $x\in OB \mathcal P_\beta(H^a)$ and $z\in OB\mathcal P_\beta(H^c)$ this induces a bijective map $$\chi:\bigcup_{y\in \mathcal P(H^b)^k}\mathcal M^{\Gamma'}_0(x,y)\times \mathcal M^{\Gamma''}_0(y,z)\to \mathcal M^{\Gamma''\#\rho\Gamma'}_0(x,z),$$ $$((u_1,\dots u_k),(v_1,\dots,v_k))\mapsto (\chi(u_1,v_1),\dots,\chi(u_k,v_k)).$$ 
      For every $y\in \mathcal P(H^b)^k$ and $(u,v)=((u_1,\dots,u_k),(v_1,\dots,v_k))\in \mathcal M^{\Gamma'}(x,y) \times \mathcal M^{\Gamma''}(y,z)$ we have after stability of intersections (Lemma~\ref{homotopy to breaking}) that $$\iota(u\#v) = \iota(\chi(u,v))$$ since the map $\chi$ is given by pairwise gluing the cylinders of the tuples $u$ and $v$. Moreover, if $y\in\mathcal P(H^b)^k-\Delta_{\mathcal P(H^b)}$, where $\Delta_{\mathcal P(H^b)}$ is the big diagonal of $\mathcal P(H^b)^k$, then we additionally have $$\iota(u)+\iota(v) = \iota(u\#v)= \iota(\chi(u,v)) =0.$$ By positivity of intersections we have that $\iota(u) = \iota(v) = 0$ and we must have that $y\in OB\mathcal P_{\beta}(H^b)$ since the tuples of cylinders $u$ give rise to an isotopy.
      Let now, $$\mathcal D(x,z) = \bigcup_{\substack{y\in \Delta_{\mathcal P(H^b)}}}\mathcal M_{0}^{\Gamma'}(x,y)\times \mathcal M_{0}^{\Gamma''}(y,z).$$ For every $w\in \mathcal M^{\Gamma''\#\Gamma'}_{0}(x,z)$ we find a unique $(u,v) \in \bigcup_{y\in \mathcal P(H^b)^k}\mathcal M^{\Gamma'}(x,y)\times \mathcal M^{\Gamma''}(y,z)$ such that $\chi(u,v) = w$ and with tuple of breaking orbits $y\in \mathcal P(H^b)^k$. Let $y_w$ denote this tuple of breaking orbits. Now, $y_w$ is called \emph{degenerate} if $y_w\in \Delta_{\mathcal P(H^b)}$, otherwise it is called \emph {regular}. Above we saw that when $y_w$ is regular, we have $y_w\in OB\mathcal P_{\beta}(H^b)$. With this we can see that $$\sum_{y\in OB\mathcal P_{\beta,f}(H^b)}\xi^{\Gamma'}(x,y)\xi^{\Gamma''}(y,z)$$ is exactly the cardinality of $$\bigcup_{\substack{y\in \mathcal P(H^b)^k}}\mathcal M_{0}^{\Gamma'}(x,y)\times \mathcal M_{0}^{\Gamma''}(y,z)\setminus \mathcal D(x,z) = \bigcup_{\substack{y\in  OB\mathcal P_{\beta}(H^b)}}\mathcal M_{0}^{\Gamma'}(x,y)\times \mathcal M_{0}^{\Gamma''}(y,z).$$ It follows that we have $$\sum_{\substack{y\in OB\mathcal P_{\beta}(H^b)}}\xi^{\Gamma'}(x,y)\xi^{\Gamma''}(y,z) + \,|\,\mathcal D(x,z)\,|\, = \xi^{\Gamma''\#_\rho\Gamma'}(x,z).$$ To conclude the proof we show that $\sum_{\sigma\in S_k}\,|\,\mathcal D(x,\sigma(z))\,|\,$ is divisible by $2$. First, for every $y\in \Delta_{\mathcal P(H^b)}$ let us define $$\mathcal D(x,y,z) = \{(u,v) \in \mathcal M_0^{\Gamma'}(x,y)\times \mathcal M_0^{\Gamma''}(y,z)\,|\,\iota(\chi(u,v)) = 0\}$$ and note that $$\mathcal D(x,z) =  \bigcup_{y\in \Delta_{\mathcal P(H^b)}}\mathcal D(x,y,z)$$ is given by a disjoint union. For every $y\in \Delta_{\mathcal P(H^b)}$ let $$\text{Fix}(y) = \{\sigma \in S_k\,|\,\sigma(y) = y\}.$$ Now it follows from the swapping Lemma~\ref{swapping lemma} that for every $(u,v)\in \mathcal D(x,y,z)$ and $\sigma\in S_k$ we have $$\iota(u\#v) = \iota(u \#\sigma(v)).$$ In particular, if $(u,v)\in \mathcal D(x,y,z)$ then for every $\sigma \in \text{Fix}(y)$ we have $(u,\sigma(v)) \in \mathcal D(x,y,z)$. Hence, for every $y\in \Delta_{\mathcal P(H^b)}$ and $\sigma \in \text{Fix}(y)$ the map $$T^{x,y,z}_\sigma:\mathcal D(x,y,z) \to \mathcal D(x,y,\sigma(z)), \quad (u,v)\mapsto (u,\sigma(v))$$ is well-defined. Moreover, it is a bijection with inverse $T^{x,y,\sigma(z)}_{\sigma^{-1}}$. It follows that $$|\mathcal D(x,y,z)| = |\mathcal D(x,y,\sigma(z))|$$ for every $\sigma\in \text{Fix}(y)$. We calculate \begin{align*}
        \begin{split}
            \sum_{\sigma\in  S_k}|\mathcal D(x,\sigma(z))|& = \sum_{\sigma \in S_k}\sum_{y\in \Delta_{\mathcal P(H^b)}}|\mathcal D(x,y,\sigma(z))| \\ & = \sum_{y\in \Delta_{\mathcal P(H^b)}}\sum_{[\mu]\in S_k/\text{Fix}(y)}\sum_{\sigma \in \text{Fix}(y)}|\mathcal D(x,y,\sigma(\mu(z)))|\\ & =  \sum_{y\in \Delta_{\mathcal P(H^b)}}\sum_{[\mu]\in S_k/\text{Fix}(y)}|\text{Fix}(y)|\,|\mathcal D(x,y,\mu(z))|,
        \end{split}
    \end{align*}
     where $S_k/\text{Fix}(y)= \{\text{Fix}(y)\mu\,|\,\mu\in S_k\}$.
    Now, $\text{Fix}(y) \cong S_{n_1}\times\cdots\times S_{n_m}$ where $n_i$ are the multiplicities of the loops appearing in $y$. Since $y\in \Delta_{\mathcal P(H^b)}$ has at least one loop appearing at least twice, we have $n_i \geq 2$ for some $i$, so $|\text{Fix}(y)|$ is divisible by $2$. Hence, $\sum_{\sigma\in  S_k}|\mathcal D(x,\sigma(z))|$ is divisible by $2$. This concludes the proof of Proposition~\ref{proposition: continuation map composition}
    \end{proof}
\subsection{Braided Floer Homology of the Trivial Braid}
On a fixed symplectic surface $(S,\omega)$ of genus $g$, let $\beta_0$ denote the trivial braid with $2g+2$ strands. We now prove Theorem~\ref{thm: braided floer homology} by using continuation maps. We show this in the following way. For a $C^2$-small Hamiltonian $h$, it follows from standard arguments that for a suitable almost complex structure $J_h$ we have $$\dim \operatorname{BHF}_{\beta_0}(h,J_h) = 1.$$ For such a Hamiltonian $h$ we then show that for a regular pair $(H,J_H)$ there is an injective continuation map $$\Phi:\operatorname{BHF}_{\beta_0}(h,J_h)\hookrightarrow \operatorname{BHF}_{\beta_0}(H,J_H).$$ We show the injectivity of this map by showing that there is a left inverse. Namely, we first prove in Proposition~\ref{lemma: selfcontinuation C^2 small} that for certain regular pairs $(h,J_h)$ and continuation data $\Gamma$ from $(h,J_h)$ to itself, the continuation data is the identity on a chain level $$\Phi^{\Gamma} = id_{\operatorname{BCF}_{\beta_0}(h)}.$$ We then choose $\Gamma'$ from $(h,J_h)$ to $(H,J_H)$ and $\Gamma'' $ from $(H,J_H)$ to $(h,J_h)$ and have that $$id_{\operatorname{BHF}_{\beta_0}(h,J_h)}= \Phi^{\Gamma''\#_{\rho}\Gamma'} = \Phi^{\Gamma''}\circ \Phi^{\Gamma'}.$$ \begin{prop}\label{lemma: selfcontinuation C^2 small} Suppose $H$ is a Hamiltonian on a closed surface of genus $g$ such that the following holds. \begin{enumerate}
    \item There are exactely $2g+2$ 1-periodic orbits, and they are all constant. 
    \item For all periodic orbits $x,y\in \mathcal P(H)$ the set $\widehat {\mathcal M}_1(x,y)$ has even cardniality.
    \item For all $x,y\in \mathcal P(H)$, all $u \in \mathcal M_1(x,y)$ are time-independent, that is, $u(s,t) = u(s,0)$ for all $t\in S^1$.
\end{enumerate}
    Suppose that $(H,J)$ is a regular pair and $\Gamma$ is regular continuation data starting and ending at $(H,J)$. Then $$
    \Phi^\Gamma = id_{\operatorname{BCF}_{\beta_0}(H,J)}.$$
\end{prop}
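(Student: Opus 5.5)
The plan is to reduce the statement to a count at the level of single cylinders. Then I would control the intersection condition by a homotopy argument, since the intersection number is topological.

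First, by (1) the $2g+2$ orbits $x_1,\dots,x_{2g+2}$ are all the periodic orbits, and they are constant and pairwise distinct. So $B\mathcal P_{\beta_0}(H)$ consists of the single element $[x]=[\{x_1,\dots,x_{2g+2}\}]$, and $\operatorname{BCF}_{\beta_0}(H,J)\cong\mathbb Z_2$. Hence $\Phi^\Gamma[x]=c\,[x]$ with
$$c=\sum_{\sigma\in S_k}\xi^\Gamma(x,\sigma(x))\pmod 2,$$
and I must show that $c$ is odd.

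Next I would treat the single-strand level. By (2) the ordinary Floer differential on $CF(H,J)$ vanishes mod $2$. The ordinary continuation map $\phi^\Gamma$ from $(H,J)$ to itself is chain homotopic to the identity (see \cite{AD14}), via a homotopy $K$ obtained by deforming $\Gamma$ to the $s$-independent data $(H,J)$. Since $\partial=0$, this gives $\phi^\Gamma=\mathrm{id}$ on the chain level, i.e.
$$|\mathcal M^\Gamma_0(x_i,x_j)|\equiv\delta_{ij}\pmod 2.$$
If the intersection condition were absent, $c$ would be the permanent of this matrix. Mod $2$ the permanent equals the determinant, which is $1$.

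The remaining point, and the main obstacle, is to show that imposing $\iota(u)=0$ does not change the parity. I would refine the single-strand counts by the relative homotopy class $A$ of the cylinder, since by the remark after its definition $\iota(u_i,u_j)$ depends only on the classes rel limiting orbits. The continuation moduli spaces and the chain homotopy $K$ split according to $A$. So the identity $\phi^\Gamma_A=\delta_{A,0}\,\mathrm{id}+\partial_A K+K\partial_A$ holds class by class, with $0$ the class of the constant cylinder. Here (3) enters. Index-$1$ cylinders are time-independent, so they are (up to the $J$-gradient picture) flow lines between the constant orbits. I would argue that their contributions to $\partial K+K\partial$ cancel mod $2$ within each class, using (2) together with the $t$-independence, so that each class-wise $\partial_A$ vanishes mod $2$. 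I expect this step to need the most care.

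Granting this, $\xi_A(x_i,x_j)\equiv\delta_{ij}\delta_{A,0}$. Hence mod $2$ the only contributing tuples have $\sigma=\mathrm{id}$ and every $u_i$ homotopic rel ends to the constant cylinder at $x_i$. For those tuples, $\iota(u)=\sum_{i\neq j}\iota(\text{const}_{x_i},\text{const}_{x_j})=0$, because the constant graphs are disjoint. So the restricted count agrees mod $2$ with the unrestricted one, giving $c\equiv1$ and $\Phi^\Gamma=\mathrm{id}$. The swapping argument of Proposition~\ref{continuation are chain maps} is not needed here, because all orbits are distinct.
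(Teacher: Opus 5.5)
The step you flagged is where the argument breaks: it is not true that each class-wise $\partial_A$ vanishes mod $2$. Hypothesis (2) only controls the total count $|\widehat{\mathcal M}_1(x,y)|$, and (3) does not help. Take the model case of Lemma~\ref{lemma: existence of good Hamiltonian}, with $H$ a small perfect Morse function on $T^2$. There the index-$1$ cylinders are the $t$-independent gradient lines. The two gradient lines from a saddle to the minimum together form an essential circle, since it represents a nonzero class in $H_1$. So they lie in different relative classes, and each of these classes has count $1$.

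The class-refined chain homotopy identity is also not $\delta_{A,0}\,\mathrm{id}+\partial_AK+K\partial_A$. It is a sum over splittings $A=B\#C$ of terms $\partial_BK_C$ and $K_B\partial_C$. Since $\partial_B\not\equiv 0$, nothing forces $\xi_A(x_i,x_j)\equiv\delta_{ij}\delta_{A,0}$: index-$0$ continuation cylinders in nontrivial classes with odd class-wise counts are not excluded. So your conclusion that only $\sigma=\mathrm{id}$ with trivial classes contributes is unsupported.

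What is missing is a reason why the constraint $\iota(u)=0$ cannot change the parity. It comes from the topology of cylinders with constant ends, not from class-wise counts.

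For $g\ge 1$ we have $\pi_2(S)=0$. So a cylinder with constant ends is homotopic rel ends to $(s,t)\mapsto\gamma(s)$, where $\gamma=u(\cdot,0)$. Two such paths with distinct endpoints can be perturbed rel endpoints so that $\gamma_i(s)\neq\gamma_j(s)$ for all $s$, because the diagonal of $S\times S$ has codimension $2$. Hence $\iota(u_i,u_j)=0$ always, and the constraint is vacuous.

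For $g=0$, the index fixes the relative class, since adding $d$ sphere classes shifts the index by $4d$. So $\iota$ is constant on each product $\prod_i\mathcal M^\Gamma_0(x_i,x_{\sigma(i)})$. For $\sigma=\mathrm{id}$ the class is that of the constant cylinders, where $\iota=0$.

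In both cases $\xi^\Gamma(x,\sigma(x))$ is either $0$ or $\prod_i|\mathcal M^\Gamma_0(x_i,x_{\sigma(i)})|$. For $\sigma\neq\mathrm{id}$ this is even, because some off-diagonal factor is even. For $\sigma=\mathrm{id}$ it is odd. This gives $c\equiv 1$.

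With this patch your single-strand reduction becomes a valid and shorter route, and it does not even use (3). The paper instead works at the braided level. It interpolates $\Gamma$ to the constant data and builds hypercubes of parametrized index-$0$ solutions with $\iota=0$. It applies Lemma~\ref{combinatorial lemma} to a three-coloring of their corners. Corners containing a broken component are paired off by a free $\mathbb Z_2$-action on the even set $\widehat{\mathcal M}_1(y_1,z_1)$. Lemma~\ref{lemma: cylinder with legs}, which is where (3) enters, guarantees that this swap preserves $\iota=0$.
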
  

In the proof of Proposition \ref{lemma: selfcontinuation C^2 small} the following topological lemma will be essential. \begin{lemma} \label{lemma: cylinder with legs}
    Let $u,w,v_1,v_2: [-1,1]\times S^1\to S$ such that the following holds (see Figure \ref{im: bacteria}). \begin{enumerate}
        \item $v_1$ and $v_2$ are independent of $t\in S^1$ $$v_1(s,t) = v_1(s,0)\text{ and }v_2(s,t) = v_2(s,0).$$
        \item The ends of $v_1$ and $v_2$ agree $$v_1(-1,0) = v_2(-1,0)\quad \text{and}\quad v_1(1,0) = v_2(1,0).$$
        \item The ends of $u$ and $w$ are independent of $t\in S^1$ $$u(\pm1,t) = u(\pm 1,0) \text{ and }w(\pm1,t) =w(\pm 1,0).$$
        \item $u(1,0) = v_1(-1,0) =v_2(-1,0)$ so that $u\#v_1$ and $u\#v_2$ are well-defined.
        \item  $u(-1,0)\neq w(-1,0), v_1(1,0) \neq w(1,0)$ (and $v_2(1,0)\neq w(1,0)$) so that the intersection number $\iota(u\#v_1,w)
    $ and $\iota(u\#v_2,w)$ are well-defined.
    \end{enumerate} Then $$\iota(u\#v_1,w) = \iota(u\#v_2,w).$$
    
\end{lemma}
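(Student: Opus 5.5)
The plan is to reduce the statement to an intersection count in which the two ``legs'' $v_1$ and $v_2$ enter only through their graphs over a region where $w$ is far away or plays no role. The key observation is that $v_1$ and $v_2$ are $t$-independent paths in $S$ with common endpoints $p=v_i(-1,0)$ and $q=v_i(1,0)$. Their graphs $\hat v_i$ are therefore the images of the maps $(s,t)\mapsto(s,t,\gamma_i(s))$ for paths $\gamma_1,\gamma_2:[-1,1]\to S$ from $p$ to $q$.

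First I reparametrize. Write $u\#v_i$ on $[-1,1]\times S^1$ with $u$ on $[-1,0]$ and $v_i$ on $[0,1]$, and reparametrize $w$ on the same domain. The boundary values of $u\#v_1$ and $u\#v_2$ then agree (both equal $u(-1,0)$ at $s=-1$ and $q$ at $s=1$). The intersection number is invariant under homotopies constant on the boundary, so it suffices to compare the two cylinders through the algebraic description $\iota=D([\hat u\#\hat v_i])([\hat w])$. This reduces the problem to showing
\[
[\widehat{u\#v_1}]=[\widehat{u\#v_2}] \quad\text{in } H_2(M,A),
\]
or, equivalently, that the difference cycle has zero intersection with $\hat w$. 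The difference is supported over $[0,1]\times S^1$ and equals $S^1\times(\gamma_1-\gamma_2)$ in $[0,1]\times S^1\times S$. Since $\gamma_1$ and $\gamma_2$ share endpoints, it is the product of a closed loop $c=\gamma_1\cdot\bar\gamma_2$ in $S$ with the $t$-circle, placed in some slice region.

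Next I compute the intersection of this torus-like class with $\hat w$. Concretely, I would homotope $v_2$ to $v_1$ through $t$-independent paths $\gamma_\tau$ where possible. Each time $\gamma_\tau(s)$ passes through $w(s,t)$ for some $(s,t)$, intersections could be created. The cleaner route is to compute $\iota(\hat w, S^1\times \hat c)$, where $\hat c(s)=(s,c(s))$ is a closed curve after cutting and regluing in $s$. A point $\hat w(s,t)$ meets $\{(s,t',c(s))\}$ exactly when $t=t'$ and $w(s,t)=c(s)$. This reduces to the algebraic intersection in $[0,1]\times S$ of the 2-chain $\{(s,w(s,t))\}$, the image of the projection of $w$ forgetting $t$ (an annulus with boundary on the constant points $w(0,\cdot)$ and $w(1,0)$), with the 1-cycle formed from $c$. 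The main obstacle is making this precise. I would use that the $t$-circle factor of the difference torus is a full fibre $\{s\}\times S^1\times\{pt\}$ direction. The intersection of $\hat w$ with $S^1_t\times \Gamma$, for a surface $\Gamma\subset[0,1]\times S$, equals the degree-type count of the map $(s,t)\mapsto (s,w(s,t))$ against $\Gamma$. Because the ends of $w$ are $t$-independent, this map factors through the annulus, and its boundary circles collapse to points. So $(s,t)\mapsto(s,w(s,t))$ defines a class in $H_2$ relative to two points, a sphere-like cycle. Its intersection with the 1-dimensional, $t$-independent class then vanishes by a dimension count: the pushforward of a 2-cycle under a map that factors through a collapse $[0,1]\times S^1\to$ (sphere) pairs with $S^1\times\Gamma$ via the fibre degree along $t$, which is zero since $t$ is collapsed at the ends.

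Finally, I need to handle the possibility that $w$ meets the common endpoint $q$ of the legs in the interior. I would arrange this by a small perturbation, constant near the boundary, of $w$ rel ends. Alternatively, I would note directly that hypothesis (5) ensures admissibility, so all the intersection numbers above are defined.
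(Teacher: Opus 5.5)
Your reduction is sound and takes a genuinely different route from the paper. Set $\gamma_i(s)=v_i(s,0)$, and let $\Gamma=\hat\gamma_1-\hat\gamma_2$ be the closed $1$-cycle in $[0,1]\times S$. It runs forward along $\gamma_1$ and back along $\gamma_2$, so it is not a graph $s\mapsto(s,c(s))$. Then $\hat v_1-\hat v_2=S^1\times\Gamma$. Once $\hat w$ is transverse and misses the junction circle $\{0\}\times S^1\times\{u(1,0)\}$, the difference $\iota(u\#v_1,w)-\iota(u\#v_2,w)$ equals, up to a global sign, the intersection number of $F(s,t)=(s,w(s,t))$, $(s,t)\in[-1,1]\times S^1$, with $\Gamma$ in the $3$-manifold $[-1,1]\times S$. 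The junction is the point that needs perturbing, not $q$, which (5) already separates from $w(1,0)$.

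This is \emph{not} equivalent to $[\widehat{u\#v_1}]=[\widehat{u\#v_2}]$ in $H_2(M,A)$. Take $A$ a thin neighbourhood of the ends of $\widehat{u\#v_i}$. Then $H_2(A)=0$, so $H_2(M)\to H_2(M,A)$ is injective. The two classes therefore differ by $[S^1]\times[\gamma_1\bar\gamma_2]\neq0$ whenever $\gamma_1\bar\gamma_2$ is homologically nontrivial, e.g.\ on $T^2$. Only the pairing with $\hat w$ vanishes.

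The genuine gap is the proof that $F\cdot\Gamma=0$, which fails at two points. First, you restrict $F$ to the legs' range $[0,1]\times S^1$ and claim both boundary circles collapse. Only $w(\pm1,\cdot)$ are constant; $w(0,\cdot)$ is an arbitrary loop. So that annulus has a genuine boundary, and its intersection with $\Gamma$ is not controlled by homology. Second, even after collapsing, the vanishing is neither a dimension count nor a consequence of a degree along $t$. Collapsing only tells you that $F$ closes up to a sphere; you still need a reason why a sphere and a closed loop have zero algebraic intersection.

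Here is a correct version.
\begin{itemize}
\item Use $F$ on all of $[-1,1]\times S^1$. This adds no intersections, since $\Gamma$ lies over $s\in[0,1]$ and $\hat w$ misses the junction.
\item Since both ends of $w$ are constant, $F$ descends to $\bar F:S^2\to[-1,1]\times S$.
\item Regard $\bar F$ and $\Gamma$ as compact cycles in the open manifold $(-2,2)\times S$. Their contacts with the levels $s=\pm1$ are $(\pm1,w(\pm1,0))$ and $(1,q)$, which are disjoint by (5). So their algebraic intersection is homologically invariant.
\item $H_2((-2,2)\times S)\cong\mathbb Z$ is generated by a level surface $\{s_0\}\times S$ with $s_0<0$, which $\Gamma$ does not meet. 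Hence $\bar F\cdot\Gamma=0$.
\end{itemize}
Alternatively, slide $S^1\times\Gamma$ to a level $s=1-\delta$ where $w$ is uniformly close to $w(1,0)$, and deform the loop off a small disc around $w(1,0)$.

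The paper does the mirror image of this last move without any homology. It replaces $w$ by the rel-ends homotopic cylinder $w\#\mathrm{const}_{w(1,0)}$, so that over the legs' range $w$ is the single point $w(1,0)$. Generic $t$-independent legs, being curves in a surface, miss this point. The legs then contribute nothing, and the remaining count of $u$ against $w$ is the same for $i=1,2$. Your repaired argument gives a more conceptual reason: the difference of the legs is a torus $S^1\times\Gamma$ that pairs trivially with the graph of any cylinder with $t$-independent ends. The price is the bookkeeping above.
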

\begin{figure}[h!]
    \begin{tikzpicture}
         \filldraw[black] (-5,0) circle (1pt);
         \filldraw[black] (0,0) circle (1pt);
         \draw[thick] (-2.5,0) ellipse (2.5 and 1);
         \draw[] (-2.5,1) arc (-90:90:0.5 and -1);
         \draw[dashed] (-2.5,1) arc (-90:90:-0.5 and -1);
        \draw[thick,smooth,samples=9,domain=0:2.5] plot (\x,{(1-\x/5)*(0.5* sin(1.5*\x r) + 0.5 *sin(\x r)+  0.5*sin(0.5*\x r)+0.25*\x)}) coordinate (M1);
         \draw[thick,smooth,samples=9,domain=2.5:5] plot (\x,{(1-\x/5)*(0.5* sin(1.5*\x r) + 0.5 *sin(\x r)+  0.5*sin(0.5*\x r)+0.25*\x)}) coordinate (E1);
         \node[] at ($(M1)+(0.6,0.2)$) {$v_1(s,t)$};
        \filldraw[black] (E1) circle (1pt);
        \node[right] at (E1) {$v_1(1,t) = v_2(1,t)$};
        \draw[thick,smooth,samples=9,domain=2.5:5] plot (\x,{(1-\x/5)*(-0.4* sin(1.2*\x r) - 0.6 *sin(0.9*\x r)+  0.5*sin(0.5*\x r)-0.25*\x)}) coordinate (E2);
        \draw[thick,smooth,samples=9,domain=0:2.5] plot (\x,{(1-\x/5)*(-0.4* sin(1.2*\x r) - 0.6 *sin(0.9*\x r)+  0.5*sin(0.5*\x r)-0.25*\x)}) coordinate (M2);
        \node[] at ($(M2) + (0.5,-0.2)$) {$v_2(s,t) $};
        \node[left] at (-5,0) {$u(-1,t)$};
        \node[right] at (0,0) {$u(1,t)$};
    \end{tikzpicture}
    \caption{The concatenation $u\#v_i$ from Lemma~\ref{lemma: cylinder with legs}.}
    \label{im: bacteria}
\end{figure}
\begin{proof}
    For $z\in S$ let $$\text{const}_z:[-1,1]\times S^1\to S,\quad (s,t) \mapsto z$$ be the constant cylinder. Note that $w$ is homotopic to $w\#\text{const}_{w(1,0)}$ relative endings. Hence we have $$\iota(u\#v_i,w) = \iota(u\#v_i,w\#\text{const}_{w(1,0)}).$$ For dimension reasons, after an arbitrarily small perturbation of $u,v_1,v_2$, we may assume the lemma's assumptions hold, and we also have $v_i(s,t)\neq w(1,0)$ for every $s,t$ (and, in particular, $u(1,0) \neq w(1,0)$). After another perturbation of $u$ relative to its endpoints, we may assume that the graphs of $u\#v_i$ and $w\#\text{const}_{w(1,0)}$ are transverse. The intersection number $\iota(u\#v_i,w\#\text{const}_{w(1,0)})$ is now simply a count of the intersections of the graphs of the curves. Since $v_i(s,t)\neq w(1,t)$ for every $s,t$, the part associated to $v_i$ does not contribute to the count, and we have $$\iota(u\#v_1,w\#\text{const}_{w(1,0)}) = \iota(u\#v_2,w\#\text{const}_{w(1,0)}).$$
\end{proof}

To prove Proposition~\ref{lemma: selfcontinuation C^2 small}, let us first recall some facts from standard Hamiltonian Floer theory. Let $\Gamma_0 = (F_0,J_{F_0})$ and $\Gamma_1 = (F_1,J_{F_1})$ be regular continuation data connecting $H_{-\infty}$ to $H_{\infty}$ and a generic interpolation $(\Gamma_s)_{s\in [0,1]} = (F_s,J_{F_s})_{s\in [0,1]}$ of continuation data such that $\Gamma_s$ is constant near the boundary. For every $x\in \mathcal P(H^{-\infty})$ and $z\in \mathcal P(H^\infty)$ the moduli space $$\mathcal M^{\{\Gamma_s\}}(x,z) = \{(u,\lambda)\,|\, u\in \mathcal M_0^{\Gamma_\lambda}(x,z)\}$$ is a smooth 1-dimensional manifold with boundary. The boundary is given by $$\mathcal M^{\Gamma_0}_0(x,z)\cup \mathcal M^{\Gamma_1}_0(x,z).$$ It can be compactified by gluing the boundary of the form of broken trajectories $$
\Pi^{\{\Gamma_\lambda\}}(x,z) = \bigcup_{y\in \mathcal P(H^\infty)}\mathcal M_{-1}^{\{\Gamma_\lambda\}}(x,y)\times \widehat {\mathcal M}_1(y,z) \cup \bigcup_{y\in \mathcal P(H^{-\infty})}\widehat{\mathcal M}_1(x,y) \times \mathcal M_{-1}^{\{\Gamma_\lambda\}}(y,z).$$  The identification of this space as the boundary of the moduli space is done via a gluing map of the form as we discussed in Section~\ref{section.gluing} (see \cite{AD14}). 
\begin{proof}[Proof of Proposition~\ref{lemma: selfcontinuation C^2 small}]
    Let $\Gamma_0 = (H,J_H)$ be the constant continuation data and $\Gamma_1 = \Gamma$, and let $(\Gamma_s)_{s\in[0,1]}$ be a generic choice of interpolation of continuation data between $\Gamma_0$ and $\Gamma_1$ such that for any $x,z\in\mathcal P(H)$, $ \mathcal M^{\{\Gamma_s\}}(x,z)$ is a smooth manifold with boundary that consists of a disjoint union of intervals and copies of $S^1$. Let $\mathcal M^{\{\Gamma_\lambda\},\operatorname{int}}(x,z)$ denote the subset of $\mathcal M^{\{\Gamma_\lambda\}}(x,z)$ given by the union of all open intervals. We can color the boundary of the compactification of $\mathcal M^{\{\Gamma_s\},\operatorname{int}}(x,z)$ with three colors by the property that the boundary component either lies in $\mathcal M^{\Gamma_0}(x,z)$, in $\mathcal M^{\Gamma_1}(x,z)$ or $\Pi^{\{\Gamma_\lambda\}}(x,z)$. Now for $x,z\in OB \mathcal P_{\beta_0}(H)$ let $$\mathcal M^{\{\Gamma_\lambda\},\operatorname{cube}}(x,z) = \{((u_1,\lambda_1),\dots,(u_{2g+2},\lambda_{2g+2}))\in \prod_{i = 1}^{2g+2}\mathcal M^{\{\Gamma_\lambda\},\operatorname{int}}(x_i,z_i)\,|\,\sum_{i\neq j}\iota(u_i,u_j) = 0\}.$$ 
    This space is a manifold given by the union of hypercubes that are products of intervals.  Let $Q_1,\dots,Q_\ell$ denote the resulting hypercubes. For every hypercube $Q_j = I_{1,j}\times \cdots \times I_{k,j}$ we color the boundary components of the intervals $I_{i,j}$ as follows \begin{align}\label{def: 3-coloring} \rho_{i,j}: \partial I_{i,j} \to \{a,b,c\},\quad p \mapsto \begin{cases}
        a, & p\in \mathcal M^{{\Gamma_0}}(x_i,z_i)\\ b , &p \in \mathcal M^{\Gamma_1}(x_i,z_i)\\ c , & p\in \Pi^{\{\Gamma_s\}}(x_i,z_i)
    \end{cases}.\end{align}
    Note that by assumption there are $2g+2$ periodic orbits and if $x\in OB\mathcal P_{\beta_0}$ then every orbit appears in the tuple $x$. Moreover, for any $z\in OB\mathcal P_{\beta_0}(H)$ there exists $\sigma\in S_{2g+2}$ such that $z = \sigma(x)$.\\
    \underline{Observation:} If $z=x$ there is exactly one $a$-pure corner point and there is none if $z \neq x$. \begin{proof}
     Since $\Gamma_0$ is the constant continuation data, we have that $\mathcal M^{\Gamma_0}(x_i,z_i)$ is empty if $x_i\neq z_i$ and consists of the constant cylinder at $x_i$ if $x_i = z_i$. If there is an $a$-pure corner point, it must consist of the $2g+2$ constant cylinders at the $2g+2$ constant periodic orbits, which forces $x_i = z_i$. On the other hand, if $x= z$, then the $2g+2$ constant cylinders must appear as a corner point since their pairwise intersection number is trivial. This proves the observation.
    \end{proof}Note that $$\Phi^{\Gamma_1}([x]) = \sum_{\sigma\in S_k}\xi^{\Gamma_1}(x,\sigma(x))[x],$$ where $\xi^{\Gamma_1}(x,z)$ is the number of $b$-pure corner points for fixed braid of orbits $x$ and $z$. We say a boundary component is $d$-colored if it is either $b$- or $c$-colored. Applying Lemma \ref{combinatorial lemma}, we can conclude that the number of $a$-pure corner points plus the number of $d$-pure corner points is even. We now study the cardinality of the latter corner points.  Let us denote $$\Pi^{\{\Gamma_\lambda\}}(x,y,z) := \mathcal M_{-1}^{\{\Gamma_\lambda\}}(x,y)\times \widehat {\mathcal M}_1(y,z) \cup \widehat{\mathcal M}_1(x,y) \times \mathcal M_{-1}^{\{\Gamma_\lambda\}}(y,z),$$ so that $$\Pi^{\{\Gamma_\lambda\}}(x,z) = \bigcup_{y\in \mathcal P(H)} \Pi^{\{\Gamma_\lambda\}}(x,y,z),$$ since in our specific case $H^\infty = H^{-\infty} = H$. For every $y\in (\mathcal P(H)\cup\{\emptyset\})^k$ we define $$\mathcal C(y) =  \left \{(w_1,\dots,w_{2g+2})\middle |\begin{aligned} &w_i\in\begin{cases}\mathcal M^{\Gamma_1}(x_i,z_i), & y_i = \emptyset\\\mathcal M_{-1}^{\{\Gamma_\lambda\}}(x_i,y_i)\times \widehat{\mathcal M}_1(y_i,z_i)\cup\widehat {\mathcal M}_1(x_i,y_i)\times \mathcal M_{-1}^{\{\Gamma_\lambda\}}(y_i,z_i),& y_i \neq \emptyset\end{cases},\\&\sum _{i \neq j}\iota(w_i,w_j) = 0\end{aligned}\right\},$$ which is the set of corner points where every component is $b$- or $c$-colored consisting of trajectories breaking at the tuple $y$ (where a component is not broken if $y_i= \emptyset$). Note that $$|\mathcal C(\emptyset,\dots,\emptyset)| = \text{number of $b$-pure corner points}.$$ \underline{Claim:} For every $y\in (\mathcal P(H)\cup \{\emptyset\})^k$ such that $y \neq (\emptyset,\dots,\emptyset)$ we have $$|\mathcal C(y)| = 0 \quad (\text{mod }2).$$ $ $\\ Assuming this claim, let us conclude the result. We first conclude that the parity of the number of $b$-pure corner points equals that of the number of $d$-pure corner points. Indeed, we have \begin{align*}
        \text{number of $d$-pure corner points} & = \sum _{y\in (\mathcal P(H)\cup \{\emptyset \})^k}|\mathcal C(y)|\\ & = |\mathcal C(\emptyset,\dots,\emptyset)| + \sum_{\substack{y\in (\mathcal P(H)\cup \{\emptyset \})^k\\ y \neq (\emptyset,\dots,\emptyset)}}|\mathcal C(y)| \\ & =\text{number of $b$-pure corner points} \quad (\text{mod }2),
    \end{align*} where we use the claim in the last line. Now, if $z = x$, we know from the observation that the number of $a$-pure corner points is odd; hence the number of $d$-pure corner points, and therefore also the number of $b$-pure corner points, is odd. It follows that $\xi^{\Gamma_1}(x,x)$ is odd. If $z \neq x$, we similarly conclude that the number of $b$-pure corner points is even, and hence $\xi^{\Gamma_1}(x,z)$ is even.  We can conclude that $\Phi^{\Gamma_1}([x]) = [x]$. \\ Let us now prove the claim. After relabeling, we may and do assume that $y_1 \neq \emptyset.$ If $\mathcal C(y)$ is empty there is nothing to show. Otherwise, there exists $w_1 = (u_1,v_1) $ where $u_1\in \mathcal M_{-1}^{\{\Gamma_\lambda\}}(x_1,y_1)\times \widehat{\mathcal M}_1(y_1,z_1)$ or $w_1 = (v_1,u_1)\in \widehat {\mathcal M}_1(x_1,y_1)\times \mathcal M_{-1}^{\{\Gamma_\lambda\}}(y_1,z_1).$ We will assume that the first holds; the latter case is analogous. By assumption the cardinality of  $\widehat{\mathcal M}_1(y_1,z_1)$ is even. Hence, the set admits a free $\mathbb Z_2$-action $$s:  \widehat{\mathcal M}_1(y_1,z_1)\times \mathbb Z_2\to  \widehat{\mathcal M}_1(y_1,z_1).$$ This induces  the following $\mathbb Z_2$-action on $\mathcal C(y)$ $$\bar s: \mathcal C(y)\times \mathbb Z_2\to \mathcal C(y), \quad ((u_1,v_1),w_2,\dots,w_{2g+2})\mapsto((u_1,s(v_1)),w_2,\dots,w_{2g+2}).$$ By Lemma \ref{lemma: cylinder with legs} we have that $$0=\iota(u_1\#v_1,w_i) = \iota(u_1\#s(v_1),w_i)$$ and hence $((u_1,s(v_1)),w_2,\dots,w_{2g+2})$ is an element of $\mathcal C(y)$. Since $s$ is free, the action $\bar s$ is free, and hence every orbit of $\bar s$ consists of two elements. Thus $|\mathcal C(y)|$ is even, which concludes the proof of the claim and the proposition.
\end{proof}
\begin{lemma}\label{lemma: existence of good Hamiltonian}
    There is a non-degenerate Hamiltonian $H$ satisfying the conditions of Proposition \ref{lemma: selfcontinuation C^2 small}
\end{lemma}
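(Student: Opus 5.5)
The plan is to take $H=\varepsilon f$ for a $C^2$-small Morse function $f$ on $S$ with exactly $2g+2$ critical points: one minimum, one maximum and $2g$ saddles. Such an $f$ exists: use the standard height-type Morse function on a genus $g$ surface, which realizes the minimal number of critical points allowed by the Morse inequalities. For $\varepsilon>0$ small, the only $1$-periodic orbits of $X_{\varepsilon f}$ are the critical points, which gives condition (1). These orbits are non-degenerate, since the linearized flow at a non-degenerate critical point has no eigenvalue $1$ when $\varepsilon\|\operatorname{Hess} f\|<2\pi$. As $J$ we take a time-independent $\omega$-compatible $J$ for which the gradient flow of $f$ with respect to $g_J=\omega(\cdot,J\cdot)$ is Morse--Smale; on a surface this only requires that no flow line runs from one saddle to another, which holds after a small perturbation.

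For condition (3) I would invoke the standard result of Salamon--Zehnder (see also \cite{AD14}): for $\varepsilon$ small and $J$ time-independent, $(\varepsilon f,J)$ is regular, and every Floer cylinder of index $1$ (in fact of index at most $2$) is $t$-independent. Such a cylinder is a reparametrized negative gradient line $u(s,t)=\gamma(s)$ with $\gamma'=-\varepsilon\nabla f(\gamma)$. For contractible orbits on surfaces of genus $\geq 1$ this is the usual identification of Floer and Morse complexes. On $S^2$ I would additionally rule out index-$1$ cylinders that carry a sphere class. A sphere bubble shifts the index by a nonzero multiple of $2c_1=4$, while the index difference of critical points lies in $\{-2,\dots,2\}$. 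An action/energy estimate then shows that such cylinders have energy at least $\omega(S^2)-O(\varepsilon)$, and Salamon--Zehnder's argument excludes them for small $\varepsilon$. This step is the main obstacle, because the statement has to be cited carefully: index $\leq 1$ cylinders for time-independent small Hamiltonians are $t$-independent, including for non-contractible capping classes on $S^2$.

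Condition (2) follows from condition (3). The index-$1$ moduli spaces $\widehat{\mathcal M}_1(x,y)$ are in bijection with unparametrized negative gradient lines from $x$ to $y$ whose Morse indices differ by $1$. So either $x$ is the maximum and $y$ a saddle, or $x$ is a saddle and $y$ the minimum. On a surface each saddle has a one-dimensional unstable manifold consisting of exactly two flow lines. With only one minimum and one maximum, both unstable lines of a saddle go to the minimum, and both stable lines come from the maximum. Hence each nonempty $\widehat{\mathcal M}_1(x,y)$ has exactly two elements. The remaining index-$1$ pairs have $\widehat{\mathcal M}_1(x,y)=\emptyset$, for example maximum to minimum, which has index difference $2$ and is irrelevant, or saddle to saddle, which is excluded by Morse--Smale. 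In every case the cardinality is even. It is essential here that there is a single maximum and a single minimum, which is exactly why the count is $2g+2$.

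Finally, I would check that the sets of orbits used are consistent. The tuple of all $2g+2$ constant orbits forms the trivial braid $\beta_0$. Regularity of $(H,J)$ follows from the same Salamon--Zehnder result, so all hypotheses of Proposition~\ref{lemma: selfcontinuation C^2 small} hold.
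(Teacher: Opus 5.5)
Your proof is correct. It shares the paper's setup: a Morse function with exactly $2g+2$ critical points, rescaled so that the only $1$-periodic orbits are the non-degenerate critical points, with $t$-independence of index-$1$ Floer cylinders quoted from the Salamon--Zehnder / \cite{AD14} theory.

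You diverge from the paper in how you establish condition (2).
\begin{itemize}
\item The paper argues algebraically. Over $\mathbb Z_2$ one has $\dim CF(H,J)=2g+2=\dim HF(H,J)$, so the Floer differential vanishes and every count $|\widehat{\mathcal M}_1(x,y)|$ is even.
\item You count directly. After identifying $\widehat{\mathcal M}_1(x,y)$ with unparametrized negative gradient lines, the Morse--Smale condition and the presence of a single minimum and a single maximum force each nonempty index-$1$ moduli space to have exactly two elements.
\end{itemize}

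The paper's route is shorter and needs no information about the flow-line structure. However, it leans on the computation $HF\cong H_*(S;\mathbb Z_2)$ and yields only parity. Your route is elementary once the Floer--Morse identification is granted, and it gives exact counts rather than parities. (The same parity could also be read off from the vanishing Morse differential of this perfect Morse function.)

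Your treatment of $S^2$ is a useful addition, since \cite{AD14} works on aspherical manifolds and the paper does not address this case. There, an index-$1$ cylinder cannot lie in a nontrivial sphere class, because changing the class shifts the index by a multiple of $4$ while index differences of critical points lie in $\{-2,\dots,2\}$. The index argument alone settles this, so the energy estimate you add after it is unnecessary.
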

\begin{proof}
Choose a Morse function $H$ on $S$ such that there are exactly $2g+2$ critical points. By considering $H/n$ for $n$ large enough, we may assume that the periodic orbits $\mathcal P(H)$ are non-degenerate, constant, and correspond to the critical points of $H$. For a generic almost complex structure, we may further assume that any $u\in \mathcal M_1(x,y)$ satisfies $u(s,t) = u(s,0)$ for all $(s,t)\in \mathbb R\times S^1$ (see \cite{AD14}). Now, since the standard Hamiltonian Floer complex and homology with coefficients in $\mathbb Z_2$ satisfy $$\dim CF(H,J) = \dim HF(H,J)$$ we can conclude that the differential must be trivial and the cardinality of moduli spaces $\widehat {\mathcal M}_1(x,y)$ must be divisible by $2$. 
\end{proof}
We can now prove Theorem~\ref{thm: braided floer homology}. \begin{proof}[Proof of Theorem~\ref{thm: braided floer homology}]
By Lemma~\ref{lemma: existence of good Hamiltonian}, there is a non-degenerate Hamiltonian $H$ and almost complex structure $J_H$ that satisfy the assumptions of Proposition~\ref{lemma: selfcontinuation C^2 small}. For every non-degenerate Hamiltonian diffeomorphism $\phi$, let $G$ be a generating Hamiltonian and $J_G$ a generic choice of almost complex structure. Choose continuation data $\Gamma'$ from $(H,J_H)$ to $(G,J_G)$ and $\Gamma''$ from $(G,J_G)$ to $(H,J_H)$. Let $\beta_0$ be the trivial braid with $2g+2$ strands. Then by Proposition~\ref{proposition: continuation map composition} we have $$\Phi^{\Gamma''}\circ \Phi^{\Gamma'} = \Phi^{\Gamma''\#\Gamma'}$$ since any periodic orbit of $G$ is non-degenerate. Here, the continuation maps relate the braided Floer complexes in the braid class $\beta_0$. Since $\Gamma''\#\Gamma'$ is self continuation data at $(H,J_H)$ by Proposition~\ref{lemma: selfcontinuation C^2 small} we have that $$\Phi^{\Gamma''\#\Gamma'} = id_{\operatorname{BCF}_{\beta_0}}(H,J).$$ By Proposition~\ref{continuation are chain maps}, we have that the continuation maps are chain maps and hence $\Phi^{\Gamma''\#\Gamma'}$ induces the identity in homology. In particular, we have that $$\Phi^{\Gamma'}:\operatorname{BHF}_{\beta_0}(H,J_H)\hookrightarrow \operatorname{BHF}_{\beta_0}(G,J_G)$$ is injective and hence, $\dim \operatorname{BHF}_{\beta_0}(G,J_G)\geq 1$.
\end{proof}

\section{Braid Stability and Bounded Continuation Maps} \label{section braid stability}
A \emph{framed braid} is a braid $b= \{s_1,\dots,s_k\}$ together with a framing $$f:\{s_1,\dots,s_k\} \to \mathbb Z \quad \text{or} \quad f:\{s_1,\dots,s_k\} \to \mathbb Z_4.$$ We say two framed braids are isotopic if they are isotopic as braids via an isotopy that respects the framing. For a framed closed surface braid class $(\beta,f)$ we define the \emph{framed braided Floer space of class $(\beta,f)$}  of a non-degenerate Hamiltonian $H$ as $$\operatorname{BCF}_{\beta,f}(H) = \left \langle x = \{x_1,\cdots,x_k \}\subset \mathcal P(H)\,\big | \,\mathcal B(H,x) = \beta, f_x = f\right \rangle_{\mathbb Z_2},$$ where $f_x: \{x_1,\dots,x_k\}\to \mathbb Z$ when the genus $g\geq 1$ and $f_x: \{x_1,\dots,x_k\}\to \mathbb Z_4$ when the genus $g =0$ is given by $f_x(x_i) = \mu(x_i)$.
In this section, we prove the braid stability statement from Theorem~\ref{thm: braid stability}: For every non-degenerate Hamiltonian $H$ and $x\subset \mathcal P(H)$ we define a quantity $\mathcal S(H,x)\in \mathbb R\cup \{\infty\}$ such that for every non-degenerate Hamiltonian $G$ satisfying $kE(H-G)<\mathcal S(H,x)$ we have $$\dim \operatorname{BCF}_{\beta,f}(G)\geq 1,$$ where $\beta = \mathcal B(H,x)$, $f = f_x$, and $k$ is the number of strands. We prove this by using \emph{$\varepsilon$-bounded continuation map} $$\Phi_\varepsilon:\operatorname{BCF}_{\beta,f}(H)\to \operatorname{BCF}_{\beta,f}(G)$$ for $\varepsilon\geq 0$ which counts Floer-theoretic braid isotopies consisting of cylinders $u_1,\dots,u_k$ each connecting orbits in a braid of class $(\beta,f)$ of $H$ and $G$ whose action difference is absolutely bounded by $\varepsilon$.
\subsection{Bounded Continuation Maps}
For two Hamiltonians $H,G: S^1\times S\to \mathbb R$ and a cylinder $u:\mathbb R\times S^1\to S$ such that $\lim_{s\to -\infty}u(s,t) = x(t)$ and $\lim_{s\to\infty}u(s,t) = y(t)$ we will consider the action difference of two capped periodic orbits $$ \mathcal A_H([x,C])-\mathcal A_G([y,C\#u]),$$ where $C$ is a capping of $x$, that is, a cylinder from $x$ to a fixed reference loop in its homotopy class. Note that this expression is independent of the choice of capping. 
Moreover, for a smooth family of $\omega$-compatible almost complex structures $(J_{s,t})_{(s,t)\in \mathbb R \times S^1}$ we define the \emph{energy of the cylinder $u$} by $$E(u) = \int_\mathbb R \int _0^1 \omega(\partial_su,J_{s,t}\partial_su)\,dtds.$$  In the case when $(J_{s,t}) = (J_t)$ is independent of $s$ and $u$ is a Floer-cylinder solving $$\partial_su+J_t(\partial_tu-X_{H_t}\circ u)=0$$ a standard calculation shows that $E(u)$ equals the action difference of the limiting orbits $$E(u) = \mathcal A_H([x,C_{x_i}])-\mathcal A([y,C_{x_i}\#u])$$ which, in particular, is non-negative. Let us introduce a measure of the energy of the continuation data $\Gamma = (H_s,J_s)$. That is, let $$E^+(\Gamma) = \int_{-\infty}^\infty\int_0^1\max_{x\in S}\partial_{s}H(s,t,x)\,dtds$$ and note that we have $$E^+(\Gamma''\#_\rho\Gamma')= E^+(\Gamma'')+E^+(\Gamma')$$ for $\rho>0$ large enough.
 For every $\varepsilon\geq 0$ we refine the moduli space $\mathcal M^{\Gamma}_\ell(x,y)$ and define $$\mathcal M^{\Gamma}_{\ell,\leq \varepsilon}(x,y) = \left\{u\in \mathcal M^{\Gamma}_\ell(x,y)\,\middle | \, \left |\sum_{i =1}^k\, \mathcal A_{H}([x_i,C])-\mathcal A_{G}([y_i,C\#u_i])\right |\leq \varepsilon\right\},$$ where $C_{x_i}$ is an arbitrary choice of capping of $x_i$. Since the expression $\mathcal A_{H}([x_i,C_{x_i}])-\mathcal A_{G}([y_i,C_{x_i}\#u_i])$ is invariant under homotopy of $u_i$ relative limiting orbits, it is locally constant on $\mathcal M^{\Gamma}_\ell(x,y)$. Hence, $\mathcal M^{\Gamma}_{\ell,\leq \varepsilon}(x,y)$ is the union of connected components for which any element $u$ and any component of it satisfies $\left |\sum_{i = 1}^k\mathcal A_{H}([x_i,C])-\mathcal A_{G}([y_i,C\#u_i])\right | \leq \varepsilon$. A standard computation (which is also done in the proof of Proposition \ref{Proposition: bounded continuation maps identity} in more generality) shows that for all $u_i\in \mathcal M_\ell^{\Gamma}(x_i,z_i)$ $$-E^+(\Gamma)\leq \mathcal A_H([x,C_{x_i}])-\mathcal A_G([z_i,C_{x_i}\#u_i])$$ and $$E(u_i)\leq \mathcal A_H([x,C_{x_i}])-\mathcal A_G([z_i,C_{x_i}\#u_i])+E^+(\Gamma).$$ It follows that there is the following energy bound $$E(u_i) \leq kE^+(\Gamma)+\varepsilon.$$
With this energy bound, as in standard Hamiltonian Floer theory, the space $\mathcal M^{\Gamma}_{0,\leq \varepsilon}(x,y)$ is compact and therefore consists of a finite number of points. Importantly, in contrast to Section~\ref{section: braided floer homology}, we here also consider the case of non-contractible orbits on $T^2$. Compactness of the moduli spaces in these cases is guaranteed by the  predescribed action bound. This leads us to the definition of $\varepsilon$-\emph{bounded continuation maps of braided Floer spaces in the class} $(\beta,f)$. 
Namely, regular pairs $(H,J_H)$ and $(G,J_G)$ and regular continuation data $\Gamma$ connecting them we define $$\Phi^\Gamma_\varepsilon: \operatorname{BCF}_{\beta,f}(H) \to \operatorname{BCF}_{\beta,f}(G), \quad [x]\to \sum_{y\in OB\mathcal P_{\beta,f}(G)}|\mathcal M^{\Gamma}_{0,\leq \varepsilon}(x,y)|[y],$$ where $[\cdot]$ stands for the equivalence class of unordered braids. For better readability, let us form now on write $$\xi^\Gamma_\varepsilon(x,y) = |\mathcal M^\Gamma_{0,\leq \varepsilon}(x,y)|.$$
\begin{lemma}
    The map $\Phi^{\Gamma}_\varepsilon$ is well-defined.
\end{lemma}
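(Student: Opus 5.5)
Well-definedness has two parts. The sum defining $\Phi^\Gamma_\varepsilon([x])$ must be finite. It must also be independent of the ordered representative $x$ of the unordered braid $[x]$. As for the unbounded map, it is enough to check that permuting the strands simultaneously on both ends induces a bijection of the moduli spaces. This is the same strategy as in Lemma~\ref{lemma: differential is well defined}.

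For finiteness, I use the energy estimate recorded just before the statement. Every component $u_i$ of an element of $\mathcal M^\Gamma_{0,\leq\varepsilon}(x,y)$ satisfies
$$E(u_i)\leq kE^+(\Gamma)+\varepsilon,$$
which uses the lower bound $-E^+(\Gamma)$ on each summand of the action difference of the other strands. Gromov--Floer compactness for index $0$ continuation cylinders then gives finitely many $u_i$ for each pair $(x_i,y_i)$, and only finitely many target tuples $y$ admit such cylinders. This holds also in the non-contractible case on $T^2$, because the action window confines the relevant homotopy classes of cylinders. Hence $\xi^\Gamma_\varepsilon(x,y)<\infty$ and only finitely many $y$ contribute.

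For independence of the ordering, fix $\sigma\in S_k$. The map $(u_1,\dots,u_k)\mapsto(u_{\sigma(1)},\dots,u_{\sigma(k)})$ sends $\mathcal M^\Gamma_{0,\leq\varepsilon}(x,y)$ bijectively onto $\mathcal M^\Gamma_{0,\leq\varepsilon}(\sigma(x),\sigma(y))$, for three reasons:
\begin{itemize}
\item the total intersection number $\sum_{i\neq j}\iota(u_i,u_j)$ is symmetric in the indices;
\item the index condition holds componentwise;
\item the bound $\bigl|\sum_i \mathcal A_H([x_i,C])-\mathcal A_G([y_i,C\#u_i])\bigr|\leq\varepsilon$ involves a sum over all strands and is therefore permutation-invariant.
\end{itemize}
Moreover $y\mapsto\sigma(y)$ is a bijection of $OB\mathcal P_{\beta,f}(G)$, since it preserves both the braid class and the framing. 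Reindexing the sum then gives
$$\sum_{y}\xi^\Gamma_\varepsilon(\sigma(x),y)[y]=\sum_{y}\xi^\Gamma_\varepsilon(\sigma(x),\sigma(y))[\sigma(y)]=\sum_y\xi^\Gamma_\varepsilon(x,y)[y].$$

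Finally, the image lies in $\operatorname{BCF}_{\beta,f}(G)$ by construction, because the sum only runs over $y\in OB\mathcal P_{\beta,f}(G)$. The only point needing genuine care is the compactness step for non-contractible strands. There I would rely on the energy bound above together with the observation that a cylinder's action difference is determined by its relative homotopy class.
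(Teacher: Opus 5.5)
Your proposal is correct and is essentially the paper's argument. The paper's proof consists only of your reindexing step, namely $\xi^\Gamma_\varepsilon(\sigma(x),\sigma(y))=\xi^\Gamma_\varepsilon(x,y)$ together with $[\sigma(y)]=[y]$, and it settles finiteness in the paragraph before the lemma using the same bound $E(u_i)\leq kE^+(\Gamma)+\varepsilon$ and standard compactness. One refinement: for non-contractible strands on $T^2$, it is this energy bound combined with Gromov--Floer compactness that leaves only finitely many relative homotopy classes, not the action window alone. Concatenating with the translation of a loop along its own homology class changes the class without changing the area.
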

\begin{proof} 
We need to check that the map does not depend on the representative $x$ of the class $[x]$. Indeed, if $\sigma\in S_k$ is a permutation of $k$ objects, we have $$\sum _{y\in OB\mathcal P_{\beta,f}(G)} \xi_\varepsilon^\Gamma(x,y)[y] = \sum _{\sigma(y)\in OB\mathcal P_{\beta,f}(G)} \xi_\varepsilon^\Gamma(\sigma(x),\sigma(y))[\sigma(y)] = \sum _{y\in OB\mathcal P_{\beta,f}(G)} \xi_\varepsilon^\Gamma(\sigma(x),y)[y],$$ where $\sigma(z) = (z_{\sigma(1)},\dots,z_{\sigma(k)})$.
\end{proof}
\subsection{The Quantity $\mathcal S(H,x)$}
We now define the quantity $S(H,x)$ for a non-degenerate $H$ and $x\subset \mathcal P(H)$. Let $\beta$ be a fixed closed surface braid class. For a smooth family of $\omega$-compatible almost complex structures $(J_t)_{t\in S^1}$ and $x,y\in OB\mathcal P_\beta(H)$ we define $$\mathcal M(x,y) = \{u\in \mathcal M(x_1,y_1)\times \cdots \times \mathcal M(x_k,y_k)\, |\, \iota(u) = 0\}$$  the space of \emph{Floer isotopies}. The energy of a Floer isotopy is defined as the sum of the energy of the Floer cylinders $$E(u) = \sum_{i = 1}^k E(u_i),$$ and we write $$\mu_{\operatorname{max}}(u) = \max_i\mu(u_i).$$ The stability radius is defined as $$\mathcal S(H,x) =  \frac 1 k\sup_J\min \{\mathcal E(H,J,x),\mathcal E^{\operatorname{int}}(H,J,x),\mathcal E^2(H,J,x)\},$$ where the supremum runs over all $J$ such that $(H,J)$ is regular and $k$ is the number of strands $ k = |x|$. The quantities $\mathcal E(H,J,x)$, $\mathcal E^{\operatorname{int}}(H,J,x)$, and  $\mathcal E^2(H,J,x)$ are the minimal energy $E(u)$ of Floer cylinders in certain constellations of Floer cylinders.  
First, the quantity $\mathcal E(H,J,x)$ is the minimal energy of Floer isotopies $u\in \mathcal M(x,y)$ or $u\in \mathcal M(y,x)$ for $y\in  OB\mathcal P_\beta(H)$ with $\mu_{\operatorname{max}}(u)  = 1$ $$\mathcal E(H,J,x) =\min \{E(u)\,|\, y\in OB\mathcal P_\beta(H), u \in \mathcal M(x,y)\cup \mathcal M(y,x),\mu_{\operatorname{max}}(u) = 1\}.$$ This is the most relevant quantity of the three appearing in the definition of $\mathcal S(H,x)$, as the other two can be neglected when choosing the right framing. Namely, the quantity $\mathcal E^{\operatorname{int}}(H,J,x)$ is the minimal energy of an index $1$ cylinder connecting orbits $x_i$ to $x_j$ $$\mathcal E^{\operatorname{int}}(H,J,x) = \min \{E(u)\,|\, u \in \mathcal M(x_i,x_j),\,\mu(u ) = 1\} $$ which is by convention equal to $\infty$ when $\mu(x_i) \neq \mu(x_j)+1$ for all $i \neq j$. The quantity $\mathcal E^2(H,J,x)$ is the most nuanced of the three and accounts for the configuration (see Fig.~\ref{fig: Configurations for E2}) of pairs of Floer trajectories that start (or end) in distinct orbits in a single braid of class $\beta$ and end (or start) in a single orbit $y\in \mathcal P(H)$ (warning: $y$ need not be a strand of a braid of periodic orbits of type $(\beta,f)$). 
\begin{figure}[h!]
    \begin{tikzpicture}[yscale = 1.4]
    \draw[thick, ->] (-11,-2)--(-11,0.5) node[midway, right]{$\mathcal A_H(\cdot)$};
    \begin{scope}[shift = {(-7,0)}]
        \node[] at (0,1.3) {Configuration I:};
        \node[] at (-1.7,-0.5){$u_1$};
        \node[] at (1.7,-0.5){$u_2$};
        \draw[] (-2,0) ellipse (0.5 and 0.2);
        \node[] at (-2,0.5) {$x_1\in \mathcal P_{\beta,f}(H)$};
        \draw[] (2,0) ellipse (0.5 and 0.2);
        \node[] at (2,0.5) {$x_2\in \mathcal P_{\beta,f}(H)$};;
        \draw[] (0.5,-1.5) arc (0:-180:0.5 and 0.2);
        \draw[dashed] (0.5,-1.5) arc (0:-180:0.5 and -0.2);
        \node[] at (0,-2) {$y\in \mathcal P(H)$};
        \draw[] (-2.5,0) arc (0:90:-1 and -0.75);
        \draw[] (-0.5,-1.5) arc (0:90:1 and 0.75);
        \draw[] (-1.5,0) arc (0:90:-1 and -0.75);
        \draw[] (0.5,-1.5) arc (0:90:1 and 0.75);
        \draw[] (2.5,0) arc (0:90:1 and -0.75);
        \draw[] (0.5,-1.5) arc (0:90:-1 and 0.75);
        \draw[] (1.5,0) arc (0:90:1 and -0.75);
        \draw[] (-0.5,-1.5) arc (0:90:-1 and 0.75);
        \node[] at (0,-2.5) {$(u_1,u_2) \in \mathcal M_1(x_1,y)\times \mathcal M_1(x_2,y)$};
        \end{scope}
        \begin{scope}
        \node[] at (-1.8,-1.1){$u_1$};
        \node[] at (1.8,-1.1){$u_2$};
        \node[] at (0,1.3) {Configuration II:};
        \draw[] (-1.5,-1.5) arc (0:-180:0.5 and 0.2);
        \draw[dashed] (-1.5,-1.5) arc (0:-180:0.5 and -0.2);
        \node[] at (-2,-2) {$x_1\in \mathcal P_{\beta,f}(H)$};
        \draw[] (2.5,-1.5) arc (0:-180:0.5 and 0.2);
        \draw[dashed] (2.5,-1.5) arc (0:-180:0.5 and -0.2);
        \node[] at (2,-2) {$x_2\in \mathcal P_{\beta,f}(H)$};;
        \draw[] (0,0) ellipse (0.5 and 0.2);
        \node[] at (0,0.5) {$y\in \mathcal P(H)$};
        \draw[] (-2.5,-1.5) arc (0:90:-1 and 0.75);
        \draw[] (-0.5,0) arc (0:90:1 and -0.75);
        \draw[] (-1.5,-1.5) arc (0:90:-1 and 0.75);
        \draw[] (0.5,0) arc (0:90:1 and -0.75);
        \draw[] (2.5,-1.5) arc (0:90:1 and 0.75);
        \draw[] (0.5,0) arc (0:90:-1 and -0.75);
        \draw[] (1.5,-1.5) arc (0:90:1 and 0.75);
        \draw[] (-0.5,0) arc (0:90:-1 and -0.75);
          \node[] at (0,-2.5) {$(u_1,u_2) \in \mathcal M_1(y,x_1)\times \mathcal M_1(y,x_2)$};
        \end{scope}
    \end{tikzpicture}
    \caption{Configurations of Floer cylinders relevant for $\mathcal E^2(H,J,x_1,x_2)$.}
    \label{fig: Configurations for E2}
\end{figure} We define $$\mathcal E^2(H,J,x) = \inf\left \{\max\{E(u_1),E(u_2)\}\,\middle |\, \begin{aligned}&y\in \mathcal P(H), \,  1\leq i < j\leq k, \\ &(u_1,u_2) \in \mathcal M_1(x_i,y)\times \mathcal M_1(x_j,y)\cup \mathcal M_1(y,x_i)\times \mathcal M_1(y,x_j)\end{aligned}\right \}$$ which considers constellations as in Figure~\ref{fig: Configurations for E2}. Note that when $\mu(x_i) \neq \mu(x_j)$ for $i \neq j$, then the infimum runs over the empty set and $\mathcal E^2(H,J,x) = \infty$ by convention. Hence in the case where $f_x$ is injective: $$ \mathcal S(H,x) = \frac 1 k\sup_J\mathcal \min \{\mathcal E(H,J,x), \mathcal E^{\operatorname{int}}(H,J,x)\}$$ as promised in the Introduction \ref{section: intro}.
 For every $x\in OB\mathcal P_{\beta}(H)$ with framing $f_x = f$ let $$\pi_{[x]}:\operatorname{BCF}_{\beta,f}(H) \to \langle [x]\rangle_{\mathbb Z_2}$$ be the projection to the factor $[x]$. We prove the following result, which is essential for braid stability.
\begin{prop}\label{Proposition: bounded continuation maps identity}
Let $(H,J)$ be a regular pair and $\Gamma$ continuation data connecting $(H,J)$ to itself. Let $\varepsilon\geq 0$ and suppose there exists $x\in OB\mathcal P_\beta(H)$ with $f = f_x$ and such that $0\leq E^+(\Gamma)$ and $$kE^+(\Gamma)+\varepsilon<\min\{\mathcal E(H,J,x),\mathcal E^{\operatorname{int}}(H,J,x),\mathcal E^2(H,J,x)\},$$ where $k$ is the number of strands. Then we have $$\pi_{[x]}\circ \Phi^\Gamma_\varepsilon([x]) =[x].$$
\end{prop}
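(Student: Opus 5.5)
We will imitate the proof of Proposition~\ref{lemma: selfcontinuation C^2 small}, with the constant continuation data replaced by a homotopy and with energy bounds doing the work of the assumptions on $H$ there. Let $\Gamma_0$ be the constant continuation data at $(H,J)$ and $\Gamma_1=\Gamma$. We choose a generic interpolation $(\Gamma_\lambda)_{\lambda\in[0,1]}$ with $E^+(\Gamma_\lambda)\le E^+(\Gamma)$ for all $\lambda$; for instance, rescale the $s$-derivative of the Hamiltonian linearly in $\lambda$, and then perturb slightly using the strict inequality. The standard estimates $-E^+\le \mathcal A_H-\mathcal A_H$ and $E(u_i)\le \mathcal A_H(x_i)-\mathcal A_H(y_i)+E^+$ give the following. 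Every cylinder in a tuple with total action difference at most $\varepsilon$ has energy at most $kE^+(\Gamma)+\varepsilon$. Each single cylinder then has action drop bounded below by $-E^+$, so the sum bound controls each summand. Hence the one-parameter moduli spaces $\mathcal M^{\{\Gamma_\lambda\}}_{\le\varepsilon}(x_i,z_i)$ are compact up to breaking. We form the hypercube space of tuples with $\iota=0$, restricted to the components with action bound $\le\varepsilon$, exactly as before. We color boundary points $a$ (at $\Gamma_0$), $b$ (at $\Gamma_1$) and $c$ (broken at some $y$). By Lemma~\ref{combinatorial lemma} applied with colors $a$ and $d=\{b,c\}$, the number of $a$-pure corners plus the number of $d$-pure corners is even.

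First, we count $a$-pure corners for $z=\sigma(x)$. The only $\Gamma_0$ solutions are constant cylinders, so there is exactly one such corner when $z=x$ and none otherwise. Next, we must show that among $d$-pure corners ending at some $\sigma(x)$, those with at least one $c$-component contribute evenly after summing over $\sigma\in S_k$; only the purely $b$-colored ones then remain, and these compute $\pi_{[x]}\Phi^\Gamma_\varepsilon([x])$. A $c$-component has the form $(u_i,v_i)$ with one of the two pieces an index-$1$ Floer cylinder of $(H,J)$ and the other of index $-1$ in the parametrized family. By the energy bound, the $H$-Floer piece has energy at most $kE^+(\Gamma)+\varepsilon$, which is strictly below each of $\mathcal E$, $\mathcal E^{\operatorname{int}}$ and $\mathcal E^2$. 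This rules out all breaking configurations as follows. If every component breaks at pairwise distinct orbits, the index-$1$ pieces form a Floer isotopy from or to $x$ with $\mu_{\max}=1$; this is excluded by $\mathcal E$, since the total energy of the $H$-pieces is at most $kE^+(\Gamma)+\varepsilon$. Suppose instead some component breaks at an orbit $x_j$ of the braid, so that an index-$1$ cylinder runs between $x_i$ and $x_j$; this is excluded by $\mathcal E^{\operatorname{int}}$. Finally, suppose two components break at the same orbit $y$ on the $H$-side. Their index-$1$ pieces form Configuration I or II, which is excluded by $\mathcal E^2$. The remaining possibility is mixed: some components unbroken ($b$), some broken at distinct orbits outside $x$, with the broken $H$-pieces pairwise disjoint. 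We would argue that the broken $H$-pieces, completed by constant trivial cylinders, still form an isotopy with $\mu_{\max}=1$ ending at $x$, so that $\mathcal E$ again excludes it.

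The main obstacle is this mixed case, together with the bookkeeping of which end carries the index-$1$ piece: $x$ is both the start and the target $\sigma(x)$, so breakings at the $x$-end versus the $z$-end must each match an isotopy in $\mathcal M(x,y)$ or $\mathcal M(y,x)$. The plan is to rule out partially broken corners entirely. We would argue that the unbroken $b$-components cannot be simultaneously present at a corner: that would require a cylinder of the $\Gamma_1$-moduli space of index $0$ together with $(-1)$-index cylinders in the family. If this fails, we would fall back on a pairing argument via Lemma~\ref{swapping lemma}, as in the degenerate-corner counts of Proposition~\ref{proposition: continuation map composition}. Once all $c$-containing corners are excluded, parity gives $\sum_\sigma \xi^\Gamma_\varepsilon(x,\sigma(x))\equiv 1$. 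Since $\pi_{[x]}\Phi^\Gamma_\varepsilon([x])$ is exactly this sum times $[x]$, this yields the claim.
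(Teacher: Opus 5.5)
Your overall architecture matches the paper's, but the step that excludes broken corners has a genuine gap. The shared parts are:
\begin{itemize}
\item the interpolation with $E^+(\Gamma_\lambda)\le E^+(\Gamma)$;
\item the hypercubes of tuples with $\iota=0$ and total action difference at most $\varepsilon$;
\item the combinatorial lemma;
\item the energy bound $kE^+(\Gamma)+\varepsilon$ on the $(H,J)$-pieces;
\item padding with trivial cylinders.
\end{itemize}

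\textbf{The gap.} To pass from $\iota=0$ for the whole tuple to pairwise disjointness of the index-$1$ $(H,J)$-pieces, you must split $\iota$ level by level. You then need positivity of intersections on each level, including the level of continuation pieces. Positivity (Proposition~\ref{prop.positivityofintersection}) holds only for cylinders solving the \emph{same} Floer equation.

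At a corner of $Q=I_1\times\cdots\times I_k$ the components are independent. A broken component carries its own parameter $\lambda_j$, and an unbroken $b$-component sits at $\lambda=1$. So the continuation pieces solve equations for different $\Gamma_\lambda$, and their mutual intersection numbers need not be non-negative. A negative one can cancel a positive intersection between $(H,J)$-pieces.

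Your proposed way out does not work. The claim that an unbroken $b$-component cannot coexist with a broken one at a corner, for index reasons, is false. Every choice of endpoints is a corner, and nothing prevents $I_1$ from ending in a broken trajectory at $\lambda_1$ while $I_2$ ends in $\mathcal M^{\Gamma_1}_0(x_2,z_2)$.

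The fallback via Lemma~\ref{swapping lemma} does not address this either. That lemma permutes pieces broken at a common orbit and says nothing about pieces living at different parameters.

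A smaller point: two components broken at the same $y$, with one $(H,J)$-piece on top and the other at the bottom, is neither Configuration I nor II. This is harmless only once you argue level by level, where only coincidences at the same level matter.

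\textbf{The missing idea: move to a common parameter.} This is how the paper argues.
\begin{itemize}
\item It suffices to treat hypercubes having a $1$-pure corner; for $0$-pure corners the argument is symmetric with the minimum instead of the maximum. Other hypercubes contain no pure corners.
\item Discard those in which some $I_j$ has both ends in $\mathcal M^{\Gamma_1}_0$, since they contribute an even number. Then every $I_j$ has one end at $\lambda=1$ and the other at $\lambda=0$ or at a breaking parameter $\lambda_j$.
\item Set $\lambda_\star=\max_j\lambda_j$. The intermediate value theorem along each $I_j$ gives a point of $Q$ at which every component is either broken at $\lambda_\star$ or an honest $\Gamma_{\lambda_\star}$-solution. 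Since $\iota$ is constant on $Q$, this tuple still has $\iota=0$.
\item Padding by trivial cylinders gives three levels governed by $(H,J)$, $\Gamma_{\lambda_\star}$ and $(H,J)$.
\item The energy bound, together with $\mathcal E^{\operatorname{int}}$ and $\mathcal E^2$, shows the orbits at both level interfaces are pairwise distinct. So $\iota$ splits, and positivity forces each level to vanish.
\item The top or bottom level is then a Floer isotopy with one end $x$ (after reordering $\sigma(x)$), with $\mu_{\max}=1$ and energy $<\mathcal E(H,J,x)$. This is a contradiction.
\end{itemize}
Hence hypercubes with a pure corner have no broken endpoints at all. The two-colour version of Lemma~\ref{combinatorial lemma}, with types $0$ and $1$, then gives the parity, and your three-colour bookkeeping becomes unnecessary.
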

The proof shows the following. When $kE^+(\Gamma)+\varepsilon<\min\{\mathcal E(H,J,x),\mathcal E^{\operatorname{int}}(H,J,x),\mathcal E^2(H,J,x)\}$ holds for \emph{all} $x\subset \mathcal P(H)$ such that $\beta = \mathcal B(H,x)$ and $f_x = f$ then $$\Phi_\varepsilon^{\Gamma} = id_{\operatorname{BCF}_{\beta,f}(H)}.$$
 Before proving this proposition, we first do some preparation. First, recall some standard facts from Hamiltonian Floer theory. Given regular pairs $(H,J_H)$ and $(G,J_G)$ and continuation data $\Gamma_0$ and $\Gamma_1$ connecting them, one can choose a generic smooth interpolation of continuation data $(\Gamma_\lambda)_{\lambda\in [0,1]}$ where $\Gamma_\lambda=\Gamma_0$ for $\lambda$ close to 0 and $\Gamma_\lambda =\Gamma_1$ for $\lambda$ close to $1$. For two $x\in \mathcal P(H)$ and $z\in \mathcal P(G)$  and $\ell\in \mathbb Z$ the space $$\mathcal M_\ell^{\{\Gamma_\lambda\}}(x,z) = \{(\lambda,u)\,|\,\lambda\in [0,1], u \in \mathcal M_\ell^{\Gamma_ \lambda}(x,z)\}$$ is a smooth finite dimensional manifold with boundary of dimesnion $\ell+1$. Importantly, if $\ell = -1$, the manifold is compact and consists of finitely many points. When $\ell = 0$, the possibly non-compact manifold is 1-dimensional with boundary given by $$\mathcal M_0^{\Gamma_0}(x,z)\cup \mathcal M_0^{\Gamma_1}(x,z).$$ 
To compactify this space, one adds a boundary of the form of broken trajectories. Namely, consider $$\Pi^{\{\Gamma_\lambda\}}(x,z) = \bigcup_{\substack{y\in \mathcal P(H)}}\widehat{ \mathcal M}_1(x,y)\times \mathcal M^{\{\Gamma_\lambda\}}_{-1}(y,z)\cup \bigcup_{\substack{y\in \mathcal P(H)}}\mathcal M_{-1}^{\{\Gamma_\lambda\}}(x,y)\times \widehat{ \mathcal M}_1(y,z).$$
Any broken trajectory $(u,v)\in \Pi^{\{\Gamma_\lambda\}}(x,z)$ is identified as the boundary of $\mathcal M^{\{\Gamma_\lambda\}}(x,z)$ by a gluing process that can be performed with a method as described in Section~\ref{section.gluing}. For this brief recapitulation from standard Floer theory, see \cite{AD14}.
Now, let us return to braided orbits. For pairs $(H,J_H)$ and $(G,J_G)$ and $x\in OB\mathcal P_{\beta,f}(H),y\in OB\mathcal P_{\beta,f}(G)$ we define $$\mathcal M_{0,\leq \varepsilon}^{\{\Gamma_\lambda\}}(x,z) = \left\{((u_1,\lambda_1),\dots,(u_k,\lambda_k))\,\middle|\, \begin{aligned}&(u_i,\lambda_i)\in \mathcal M^{\{\Gamma_\lambda\}}_0(x_i,z_i),\quad
\sum_{i\neq j}\iota(u_i,u_j) = 0,\\& \left |\sum_{i = 1}^k\mathcal A_H([x_i,C_{x_i}])-\mathcal A_G([z_i,C_{x_i}\#u_i])\right|\leq \varepsilon\end{aligned}\right\}.$$ 
Any path in $\mathcal M_{0}^{\{\Gamma_\lambda\}}(x_1,y_1)\times\cdots\times\mathcal M^{\{\Gamma_\lambda\}}_{0}(x_k,y_k)$ gives rise to a smooth homotopy of the cylinders $u_1,\dots,u_k$ relative limiting orbits. Since the intersection number is invariant under homotopy, the space $\mathcal M_{0,\leq \varepsilon}^{\{\Gamma_\lambda\}}(x,z)$ is given by the connected component of $\mathcal M_{0}^{\{\Gamma_\lambda\}}(x_1,y_1)\times\cdots\times\mathcal M_{0}^{\{\Gamma_\lambda\}}(x_k,y_k)$ on which the intersection number evaluates to $0$ and we have $$\left |\sum_{i = 1}^k\mathcal A_H([x_i,C_{x_i}])-\mathcal A_G([z_i,C_{x_i}\#u_i])\right |\leq \varepsilon.$$ These connected components are products of $1$-dimensional manifolds. \\

To prove Proposition~\ref{Proposition: bounded continuation maps identity} we first choose a suitable homotopy of continuation data from $\Gamma_1 :=\Gamma$ to the constant one $\Gamma_0$. For every $z\in OB\mathcal P_{\beta,f}(H)$ with $[z] = [x]$ (which means $z = \sigma(x)$ any $u\in \mathcal M^{\Gamma_0}_{0,\leq \varepsilon}(x,z)\cup \mathcal M^{\Gamma_1}_{0,\leq \varepsilon}(x,z)$ arises as the corner point of a hypercube which is a connected component of $\mathcal M_{0,\leq \varepsilon}^{\{\Gamma_\lambda\}}(x,z)$. We show that under the conditions in Proposition~\ref{Proposition: bounded continuation maps identity}, the connected components with such corner points are compact and hence are hypercubes, that is, products of closed intervals in $\mathcal M_{0,\leq \varepsilon}^{\{\Gamma_\lambda\}}(x_i,z_i)$. corner points of these hypercubes are elements in $$(\mathcal M_{0,\leq \varepsilon}^{\Gamma_0}(x_1,z_1)\cup \mathcal M_{0,\leq \varepsilon}^{\Gamma_1}(x_1,z_1))\times\cdots\times (\mathcal M_{0,\leq \varepsilon}^{\Gamma_0}(x_k,z_k)\cup \mathcal M_{0,\leq \varepsilon}^{\Gamma_1}(x_k,z_k)).$$ 
Such a corner point is called \emph{pure} if it lies in $$\mathcal M_{0,\leq \varepsilon}^{\Gamma_0}(x_1,z_1)\times \cdots\times \mathcal M_{0,\leq \varepsilon}^{\Gamma_0}(x_k,z_k)\cup \mathcal M^{\Gamma_1}_{0,\leq \varepsilon}(x_1,z_1)\times\cdots\times \mathcal M^{\Gamma_1}_{0,\leq \varepsilon}(x_k,z_k)$$ and they are called \emph{mixed} otherwise. The maps $\Phi_0^{\Gamma_0}$ and $\Phi_0^{\Gamma_1}$ count exactly the pure corner points hypercubes in $\mathcal M_{0,\leq \varepsilon}^{\{\Gamma_\lambda\}}(x,z)$. To show $\Phi^{\Gamma}_0+\Phi^{\Gamma_0}_0=0$, we use the fact that the number of pure corner points is even, as stated in Lemma~\ref{combinatorial lemma}. 
 For a compact hypercube $Q$ in $\mathcal M_{0,\leq \varepsilon}^{\{\Gamma_\lambda\}}(x,z)$, we have that $$Q = I_1\times \cdots \times I_k,$$ where $I_i \subset \mathcal M^{\{\Gamma_\lambda\}}_{0,\leq \varepsilon}(x_i,z_i)$ is a closed interval. The boundary components of these intervals come in two colors $$\text{Type 0: }(u_i,0) \text{ where } u_i \in \mathcal M^{\Gamma_0}_{0,\leq \varepsilon}(x_i,z_i), $$ $$\text{Type 1: }(u_i,1) \text{ where }u_i \in \mathcal M^{\Gamma_1}_{0,\leq \varepsilon}(x_i,z_i). $$ Applying Lemma~\ref{combinatorial lemma} to the induced coloring, we find that the number of pure corner points is even. Let us now prove Proposition~\ref{Proposition: bounded continuation maps identity}.
\begin{proof}[Proof of Proposition~\ref{Proposition: bounded continuation maps identity}] Let $z\in OB\mathcal P_\beta(H)$ with $[z] = [x]$.
    Let us write $\Gamma_1 = \Gamma = (F_1,J_1)$ and let us define a homotopy to the constant continuation data $\Gamma_0 =(H,J) = (F_0,J_0)$ given by $\Gamma_\lambda= (F_\lambda,J_\lambda)$ for $\lambda\in [0,1]$ where $$F_\lambda(s,t,x) = H(t,x) + f(\lambda)(F_1(s,t,x)-H(t,x))$$ and $f:\mathbb R\to [0,1]$ is a smooth increasing function such that $f(s) = 0$ for $s<{1\over 3}$ and $f(s) = 1$ for $s>{2\over 3}$. After a choice of almost complex structure and a $C^\infty$-small compactly supported perturbation, we can assume that $\Gamma_1$ and $\{\Gamma_\lambda\}_{[0,1]}$ are regular. However, for the following energy estimates, we can neglect this perturbation. A standard computation shows that for every $\lambda\in [0,1]$ and solution $w\in \mathcal M_\ell^{\Gamma_\lambda}(x,z)$ we have that $$\mathcal A_H([z,C\#w])-\mathcal A_{H}([x,C])\leq E^+(\Gamma).$$  Indeed we have \begin{align} \label{computation: action estimate}
        \begin{split}
            0 & \leq \int_{-\infty}^{\infty}\int _0^1\omega(\partial_sw,J_{\lambda}\partial_sw)\,dtds \\ & =\int _{-\infty}^{\infty}\int_0^1 \omega(\partial_sw,\partial_tu-X_{F_{\lambda}}\circ w)\,dtds\\ 
            & = \int_{-\infty}^{\infty}\int_0^1\omega(\partial_sw,\partial_t w)+ \omega(X_{F_{\lambda}}\circ w,\partial_sw)\,dtds\\ & = \int_{-\infty}^{\infty}\int_0^1\omega(\partial_sw,\partial_t w)\,dtds - \int _{-\infty}^{\infty}\int _0^1dF_{\lambda}\partial_sw\,dt ds.
        \end{split}
    \end{align} and \begin{align*}
        \begin{split}
            \int _{-\infty}^{\infty}\int _0^1dF_{\lambda}\partial_sw\,dt ds& = \int_{-\infty} ^{\infty}\int _0^1\partial_s(F_{\lambda}\circ w)\,dt ds-\int _{-\infty}^{\infty}\int_0^1\partial_s F_\lambda \circ w\,dt ds  \\ & = \int_0^1H(t,z(t))\,dt- \int_0^1 H(t, x(t))dt-\int_{-\infty}^{\infty}\int_0^1f(\lambda)\partial_sF_1(s,t,w(s,t))dtds \\ 
            &\geq \int_0^1H(t,z(t))\,dt-\int_0^1H(t,x(t))\,dt -f(\lambda)E^+(\Gamma).
        \end{split}
    \end{align*} 
    In total, we have \begin{align*}\begin{split}
       \mathcal A_H([z,C\#w])-\mathcal A_{H}([x,C]) &= -\int_{-\infty}^\infty\int_0^1\omega(\partial_sw,\partial_tw)\,dtds-\int_0^1H(t,x(t))-H(t,z(t))\,dt\\ &\leq f(\lambda)E^+(\Gamma)\leq E^+(\Gamma).
   \end{split} \end{align*}
    A broken trajectory $(u,v)\in \Pi^{\{\Gamma_\lambda\}}(x,z)$ can come in two forms, either $u\in \widehat{ \mathcal M}(x,y) $ or $v\in  \widehat{ \mathcal M}(y,z)$. Let us consider the first case; the second case is analogous. Abusing the notation, let $u$ again denote the representative in $\mathcal M(x,y)$. 
    The existence of a broken trajectory ensures that there is a sequence $(\lambda_n)$ such that $\lambda_n\to \lambda_*$, a sequence $(s_n)$ such that $s_n \to \infty$, and a sequence $w_n\in \mathcal M^{\lambda_n}(x,z)$ such that $w_n(s-s_n,t)$ converges to $u$ and $w_n(s+s_n,t)$ converges to $v$ in $C^\infty_{\text{loc}}$-topology.
    For every such sequence $w_n$ we have that for $n$ large enough $w_n$ is homotopic to $u\#v$ relative to limiting orbits. We find \begin{align*}\begin{split}E(u) &=\mathcal A_{H}([x,C] )-\mathcal A_H([y,C\#u])\\ &= \big (\mathcal A_{H}([x,C] )-\mathcal A_H([z,C\#w_n])\big )-\big(\mathcal A_H([y,C\#u])-\mathcal A_H([z,C\#u\#v])\big)\\ &\leq \mathcal A_{H}([x,C] )-\mathcal A_H([z,C\#w_n])+E^+(\Gamma).\end{split}\end{align*}
    This inequality will be the tool to rule out the breaking of orbits.

   The goal is to show that $\Phi^{\Gamma_0}_\varepsilon([x])+\Phi^{\Gamma_1}_\varepsilon([x]) = 0$ which concludes the result since $\Phi^{\Gamma_0}_\varepsilon$ is the identity. We do this by first observing that $\Phi^{\Gamma_0}_\varepsilon([x])+\Phi^{\Gamma_1}_\varepsilon([x])$ counts certain corner points of certain hypercubes in the manifold $\mathcal M_{0,\leq \varepsilon}^{\{\Gamma_\lambda\}}(x,z)$ where $z\in OB\mathcal P_{\beta,f}(H)$ with $[z] = [x]$. We then show that this count is even.\\ 
   The space $\mathcal M_{0,\leq \varepsilon}^{\{\Gamma_\lambda\}}(x,z)$ is a product of 1-dimensional moduli spaces, all of which can be compactified by gluing broken trajectories. The compactification of these moduli spaces induces a compactification of $\mathcal M_{0,\leq \varepsilon}^{\{\Gamma_\lambda\}}(x,z)$, given by a disjoint union of products of compact $1$-dimensional manifolds. We consider the subset given by the union of all hypercubes, which are given by a product of intervals, and denote this subspace by $$\mathcal M_{0,\leq \varepsilon}^{\{\Gamma_\lambda\},\mathrm{cube}}(x,z) \subset \mathcal M_{0,\leq \varepsilon}^{\{\Gamma_\lambda\}}(x,z).$$ 
   The $0$-dimensional corner points of such a hypercube is are element of $$\prod_{i =1}^k \mathcal M_{0,\leq \varepsilon}^{\Gamma_0}(x_i,z_i)\cup\mathcal M_{0,\leq \varepsilon}^{\Gamma_1}(x_i,z_i)\cup \Pi^{\{\Gamma_\lambda\}}(x_i,z_i).$$ Note that all elements in $$\prod_{i =1}^k \mathcal M_{0,\leq \varepsilon}^{\Gamma_0}(x_i,z_i) \cup \prod_{i =1}^k \mathcal M_{0,\leq \varepsilon}^{\Gamma_1}(x_i,z_i)$$ appear as corner points of a hypercube in $\mathcal M^{\{\Gamma_\lambda\},\mathrm{cube}}_{0,\leq \varepsilon}(x,z)$. A corner point is called \emph{pure} if all components either lie in $\mathcal M_{0,\leq \varepsilon}^{\Gamma_0}(x_i,z_i)$ (in which case we say it is $0$-pure), all in $\mathcal M_{0,\leq \varepsilon}^{\Gamma_1}(x_i,z_i)$ (in which case we say $1$-pure), or all in $\Pi^{\{\Gamma_\lambda\}}(x_i,z_i)$ (in which case we say purely broken).  To show that $\Phi_\varepsilon^{\Gamma_0}+\Phi_\varepsilon^{\Gamma_1}([x]) = 0$ it suffices to show that the number of 0- and 1-pure corner points of hypercubes in $\mathcal M^{\{\Gamma_\lambda\}}_{0,\leq \varepsilon}(x,z)$ is even. To do so we show that if a hypercube $Q\subset \mathcal M^{\{\Gamma_\lambda\},\mathrm{cube}}_{0,\leq \varepsilon}(x,z)$ has a $0$- or $1$-pure corner points, then it trivially has an even number of $0$- or $1$-pure corner points, or there no component of a corner point is a broken trajectory, that is all corner points lie in  $$\prod_{i = 1}^k\mathcal M^{\Gamma_0}_{0,\leq \varepsilon}(x_i,z_i)\times \mathcal M^{\Gamma_1}_{0,\leq \varepsilon}(x_i,z_i)$$ In the latter case a broken trajectory must not appear in any component of a corer point.
   We show this in the case where a $1$-pure corner point exists. The other case is analogous. \\
   
   So, let $Q\subset \mathcal M^{\{\Gamma_\lambda\}}_{0,\leq \varepsilon}(x,z)$ be a hypercube that has a $1$-pure corner point. Suppose that some corner point contains broken trajectories. The hypercube is given by a product of closed intervals $$Q = I_1\times \cdots\times I_k,$$ where points in the interior of $I_j$ correspond to pairs $(s_\lambda,\lambda)$ with $s_\lambda\in \mathcal M_{0,\leq \varepsilon}^{\{\Gamma_\lambda\}}(x_j,z_j)$ and the boundary correspond to either a broken trajectory or an element in $\mathcal M^{\Gamma_0}_{0,\leq \varepsilon}(x_j,z_j)\cup \mathcal M_{0,\leq \varepsilon}^{\Gamma_1}(x_j,z_j)$ while at. In contrast, one boundary corresponds to a point in $\mathcal M^{\Gamma_0}_{1,\leq \varepsilon}(x_j,z_j)$. If both boundary components of some interval $I_j$ corresponds to an element in $\mathcal M^{\Gamma_1}(x_j,z_j)$ there is an even number of $1$-pure corer points in the hypercube. This does not affect the count modulo $2$, hence we may assume that there are none. If the other boundary component corresponds to a broken trajectory $(u_j,v_j)$ or $(v_j,u_j)$, we can associate with it a $\lambda_{j}\in (0,1)$ that is such that $$(u_j,v_j)\in \mathcal M_{-1}^{\Gamma_{\lambda_j}}(x_j,y_j)\times \widehat {\mathcal M}_1(y_j,z_j)\text{ or }(v_j,u_j)\in  \widehat {\mathcal M}_1(x_j,y_j)\times \mathcal M_{-1}^{\Gamma_{\lambda_j}}(y_j,z_j).$$ Let $\lambda_\star = \max \lambda_j$ where the maximum runs over $\lambda_j$ associated to a broken trajectory at a boundary of some $I_j$ ( in the case of $0$-pure corner points we would take the minimum here). We find that for all $j$, there exists a broken trajectory with $\lambda_j = \lambda_\star$ corresponding to a bounded point of $I_j$ or simply $u_j\in \mathcal M^{\Gamma_{\lambda_\star}}(x_j,z_j)$ corresponding to an interior point of $I_j$. In the first case, we write $y_j$ for the breaking orbit of the broken trajectory. After relabeling, we have the following tuple at our hand $$ u = (u_1,\dots,u_p,(u_{p+1},v_{p+1}),\dots,(u_q,v_q),(v_{q+1},u_{q+1}),\dots,(v_k,u_k)),$$ where $u_j\in \mathcal M^{\Gamma_{\lambda_\star}}(x_j,z_j)$ for $j =1,\dots,p$, $(u_j,v_j)\in \mathcal M^{\Gamma_{\lambda_*}}(x_j,y_j)\times \widehat {\mathcal M}(y_j,z_j)$ for $j =p+1,\dots,q$, and $(v_j,u_j) \in  \widehat {\mathcal M}(x_j,y_j)\times \mathcal M^{\Gamma_{\lambda_*}}(y_j,z_j)$ for $j =q+1,\dots,k$. Abusing notation, we shall from now on write $v_j$ for a representative in $\mathcal M_1(x_i,z_j)$ or in $\mathcal M_{1}(y_i,z_i)$. Now, for every loop $\gamma:S^1\to S$ let $$Z_\gamma:\mathbb R \times S^1\to S,\quad (s,t) = \gamma(t)$$ be the constant cylinder. Let us define \begin{align*}
   w_i = \begin{cases}
              Z_{x_i}\#u_i\#Z_{z_i},& \text{for } i =1,\dots,p\\ Z_{x_{i}}\# u_i\# v_i,& \text{for }i = p+1,\dots,q\\
              v_i\#u_i\#Z_{z_i},& \text{for } i = q+1,\dots,k
   \end{cases}
\end{align*} and denote $$w_{i,1} = \begin{cases}Z_{x_i}, & i = 1,\dots,q\\ v_i , & i = q+1,\dots,k\end{cases},\quad w_{i,2} = u_i,\quad w_{i,3} = \begin{cases}
    Z_{z_i}, & i = 1,\dots,p \text{ or }i = q+1,\dots,k\\v_i,& i = p+1,\dots,q
\end{cases}.$$Consider the tuple of cylinders given by $$w =(w_1,\dots,w_k)$$ which is homotopic to $u$ and given by extending by constancylinder in some of the components. Any cylinder in this tuple is divided into three parts $w_{i,1},w_{i,2},$ and $w_{i,3}$ where the first and the last part satisfy $$\partial_sw_{i,j}+J_H(\partial_tw_{i,j} -X_H\circ w_{i,j}) = 0$$ for $j = 1,3$ and the middle part satisfies $$\partial_sw_{i,2}+J_{\lambda_\star}(\partial_t w_{i,2}- X_{F_{\lambda_\star}}\circ w_{i,2}) = 0.$$ 
\begin{figure}[h!]
\begin{tikzpicture}[scale = 0.85]
    
    %cylinders
    \draw[] (-8,3) ellipse (1 and 0.2);  
    \draw[] (-4,3) ellipse (1 and 0.2);
    \draw[] (0,3) ellipse (1 and 0.2);
    \draw[] (-7,3)-- (-7,1);
    \draw[] (-9,3)-- (-9,1);
    \draw[] (-5,3)-- (-5,1);
    \draw[] (-3,3)-- (-3,1);
    \draw[] (-1,3)-- (-1,1);
    \draw[] (1,3)-- (1,1);
    \draw[] (-7,1) arc (0:-180:1 and 0.2);  
    \draw[] (-3,1) arc (0:-180:1 and 0.2);
    \draw[] (1,1) arc (0:-180:1 and 0.2);
    \draw[dashed] (-7,1) arc (0:180:1 and 0.2);  
    \draw[dashed] (-3,1) arc (0:180:1 and 0.2);
    \draw[dashed] (1,1) arc (0:180:1 and 0.2);
    \draw[] (-7,1)-- (-7,-1);
    \draw[] (-9,1)-- (-9,-1);
    \draw[] (-5,1)-- (-5,-1);
    \draw[] (-3,1)-- (-3,-1);
    \draw[] (-1,1)-- (-1,-1);
    \draw[] (1,1)-- (1,-1);
    \draw[] (-7,-1) arc (0:-180:1 and 0.2);  
    \draw[] (-3,-1) arc (0:-180:1 and 0.2);
    \draw[] (1,-1) arc (0:-180:1 and 0.2);
    \draw[dashed] (-7,-1) arc (0:180:1 and 0.2);  
    \draw[dashed] (-3,-1) arc (0:180:1 and 0.2);
    \draw[dashed] (1,-1) arc (0:180:1 and 0.2);
    \draw[] (-7,-1)-- (-7,-3);
    \draw[] (-9,-1)-- (-9,-3);
    \draw[] (-5,-1)-- (-5,-3);
    \draw[] (-3,-1)-- (-3,-3);
    \draw[] (-1,-1)-- (-1,-3);
    \draw[] (1,-1)-- (1,-3);
    \draw[] (-7,-3) arc (0:-180:1 and 0.2);  
    \draw[] (-3,-3) arc (0:-180:1 and 0.2);
    \draw[] (1,-3) arc (0:-180:1 and 0.2);
    \draw[dashed] (-7,-3) arc (0:180:1 and 0.2);  
    \draw[dashed] (-3,-3) arc (0:180:1 and 0.2);
    \draw[dashed] (1,-3) arc (0:180:1 and 0.2);
    %equations
    \node[right] at (2,2) {$\partial_sw_{i,1}+J_H(\partial_tw_{i,1}-X_H\circ w_{i,1})=0$};
     \node[right] at (2,0) {$\partial_sw_{i,2}+J_{\lambda_*}(\partial_tw_{i,2}-X_{F_{\lambda_*}}\circ w_{i,2})=0$};
      \node[right] at (2,-2) {$\partial_sw_{i,3}+J_H(\partial_tw_{i,3}-X_H\circ w_{i,3})=0$};
    %labels
    %1
    \node[] at (-8,4) {$i = 1,\dots,p$};
    \node[right] at(-7,3) {$x_i$};
    \node[right] at (-7,1) {$y_{i,1} = x_i$};
    \node[] at (-8,2) {$Z_{x_i}$};
    \node[right] at (-7,-1) {$y_{i,2} = z_i$};
    \node[] at (-8,0) {$u_i$};
    \node[right] at(-7,-3) {$z_i$};
    \node[] at (-8,-2) {$Z_{z_i}$};
    %2
    \node[] at (-4,4) {$i = p+1,\dots,q$};
    \node[right] at (-3,3) {$x_i$};
    \node[] at (-4,2){$Z_{x_i}$};
    \node[right] at (-3,1) {$y_{i,1}=x_i$};
    \node[] at (-4,0){$u_i$};
    \node[right] at (-3,-1) {$y_{i,2}=y_i$};
    \node[right] at (-3,-3) {$z_i$};
    \node[] at (-4,-2){$v_i$};
    %3
    \node[] at (0,4) {$i = q+1,\dots,k$};
    \node[right] at(1,3) {$x_i$};
    \node[] at (0,2){$v_i$};
    \node[right] at (1,1) {$y_{i,1} = y_i$};
    \node[] at (0,0) {$u_i$};
    \node[right] at (1,-1) {$y_{i,2} = z_i$};
    \node[] at (0,-2){$Z_{z_i}$};
    \node[right] at (1,-3) {$z_i$};
\end{tikzpicture}
\caption{The cylinders $w_i$.}
\end{figure}
Let $y_{j,1}$ and $y_{j,2}$ denote the periodic orbits connecting the first two parts and the latter two parts, respectively, of the $j$-th component.
    That is $$y_{j,1}=\begin{cases}
        x_j, &\text{for } j = 1,\dots,q\\y_j,&\text{for } j = q+1,\dots,k
    \end{cases}$$ and $$y_{j,2}=\begin{cases}
        z_j, &\text{for } j = 1,\dots,p\text{ or } j = q+1,\dots,k\\y_j,&\text{for } j = p+1,\dots,q
    \end{cases}.$$ \\
    \underline{Observation 1:} For every hypercube $Q\subset \mathcal M^{\{\Gamma_\lambda\}}_{0,\leq \varepsilon}(x,z)$ that has a $1$-pure corner point $c$ we have $$\sum_{i=1}^kE(w_{i,1})+E(w_{i,3}) <\min\{\mathcal E(H,J,x),\mathcal E^{\operatorname{int}}(H,J,x),\mathcal E^2(H,J,x)\}.$$ In particular, for any $v_i = w_{i,j}$ for $j = 1$ or $j = 3$ we have $$E(v_i)\leq\min\{\mathcal E(H,J,x),\mathcal E^{\operatorname{int}}(H,J,x),\mathcal E^2(H,J,x)\} $$
    
    \begin{proof}We have $$ \mathcal A_{H}([x_i,C_{x_i}])-\mathcal A_H([z_i,C_{x_i}\#s_\lambda])  = E(w_{i,1})+e_i+E(w_{i,3}),$$ where $e_i =\mathcal A_H([y_{i,1},C_{y_{i,1}}])-\mathcal A_H([y_{i,2},C_{y_{i,1}}\#w_{i,2}])
    \geq -E^+(\Gamma)$. Using that $$\sum_{i =1}^k\mathcal A_{H}([x_i,C_{x_i}])-\mathcal A_H([z_i,C_{x_i}\#s_\lambda])\leq\varepsilon$$ we find\begin{align*}
        \begin{split}
            \sum_{i=1}^k E(w_{i,1})+E(w_{i,3})&\leq \sum_{i=1}^k E(w_{i,1})+E(w_{i,3}) +\sum_{i = 1}^ke_i+ kE^+(\Gamma)\\ & \leq\left ( \sum_{i = 1}^k\mathcal A_{H}([x_i,C_{x_i}])-\mathcal A_H([z_i,C_{x_i}\#s_\lambda]) \right)+kE^+(\Gamma)\\ &\leq  \varepsilon+kE^+(\Gamma)\\ & < \min\{\mathcal E(H,J,x),\mathcal E^{\operatorname{int}}(H,J,x),\mathcal E^2(H,J,x)\}
        \end{split}
    \end{align*}  This concludes the proof of the observation. \end{proof}
    The quantities, $\mathcal E^{\operatorname{int}}(H,J,x)$ and $\mathcal E^2(H,J)$, essentially only play a role in the following observation.\\ $ $\\
    \underline{Observation 2:} For $i = 1,2$ the set $\{y_{j,i}\}_{j =1,\dots,k}$ consist of distinct orbits.

\begin{proof}
    We prove this or $i = 1$; the other case is analogous. (For the other case, recall that we consider $z\in OB\mathcal P_\beta(H)$ with $[x] = [z]$. This means that there is $\sigma \in S_k$ such that $z = \sigma(x)$.)
    Suppose that $y_{j,1} = y_{j',1}$ for $j \neq j'$. For every index $\ell$ we either have $ y_{\ell,1} = x_\ell$ or $y_{\ell,1} = y_{\ell}$ is a breaking orbit. There are four cases to consider, and we show that each leads to a contradiction. \begin{enumerate}
    \item[] Case 1: $x_j = y_{j,1}$ and $ y_{j',1} = x_{j'}$. \\
    This is the easiest case since we have that $x_j\neq x_{j'}$ by definition.\\
    \item[]Case 2: $y_{j,1} = y_j$ and $y_{j',1} = x_{j'}$.\\
    Since $y_{j,1} = y_{j',1}$ by assumption, we have $y_j = x_{j'}$ and in this case $v_j = w_{i,j}$ for $j= 1,3$ satisfies $$\partial_sv_i+J_H(\partial_tv_i-X_H\circ v_i)=0.$$ The cylinder $v_j$ connects $x_j$ and $x_{j'}$ which are both orbits in $x$. However, in Observation 1 we saw that we have $E(v_i)<\min\{\mathcal E(H,J,x),\mathcal E^{\operatorname{int}}(H,J,x),\mathcal E^2(H,J,x)\}\leq \mathcal E^{\operatorname{int}}(H,J,x)$. By the definition of $\mathcal E^{\operatorname{int}}(H,J,x)$, this is not possible, and hence we conclude that $y_j\neq x_{j'}$ for every $j,j'$. \\
    \item[] Case 3: $y_{j,1} = x_j$ and $y_{j',1} = y_{j'}$.\\
    This is Case 2 with indices swapped. \\
    \item[] Case 4: $y_{j,1} = y_j$ and $y_{j',1} = y_{j'}$.\\
  We find that for one of the curves $v\in\{v_j,v_{j'}\}$ we have $$\mathcal E^2(H,J,x)\leq \max\{E(v_j),E(v_{j'})\}=  E(v).$$ This again contradicts Observation 1 where we found that $E(v)= \max\{E(v_j),E(v_{j'})\}<\min\{\mathcal E(H,J,x),\mathcal E^{\operatorname{int}}(H,J,x),\mathcal E^2(H,J,x)\}$.
     
    \end{enumerate}This concludes the proof of the observation.
    \end{proof}

    After this observation, we have $$0 = \iota(u) = \iota(w) = \sum_{i \neq j}\iota(w_{i,1},w_{j,1})+\iota(w_{i,2},w_{j,2})+\iota(w_{i,3},w_{j,3}).$$ By positivity of intersection (Lemma~\ref{prop.positivityofintersection}), we can now conclude that for all $i \neq j$ $$\iota(w_{i,1},w_{j,1}) = \iota(w_{i,2},w_{j,2})= \iota(w_{i,3},w_{j,3}) = 0$$ and hence for all $i \neq j$ and all $(s,t)\in \mathbb R\times S^1$ we have $$w_{i,r}(s,t) \neq w_{j,r}(s,t),$$ for $r =1,2,3$.  Hence, the curves induce isotopies between the braids   $B(H,\{x_1,\dots,x_k\})$, $B(H,\{y_{1,1},\dots,y_{k,1}\})$, $B(H,\{y_{1,2},\dots,y_{k,2}\}),$ and  $B(H,\{z_1,\dots,z_k\})$ and we have that $$(w_{1,1},\dots,w_{k,1})\in \mathcal M(x,(y_{1,1},\dots,y_{k,1})),\quad (w_{1,3},\dots,w_{k,3})\in \mathcal M(y_{1,2},\dots,y_{k,2}),x).$$  By assumption we have that $\mu_{\operatorname{max}}(w_{1,1},\dots,w_{k,1}) =1$ or $\mu_{\operatorname{max}}(w_{1,3},\dots,w_{k,3}) =1$. Without loss of generality, we may assume that this holds in the first case. After the observation we know that $$E(w_{1,1},\dots,w_{k,1})=\sum_{i =1}^kE(w_{i,1}) < \mathcal E(H,J,x).$$ However, this contradicts the definition of $\mathcal E(H,J,x)$ and hence the assumption that broken trajectories appear as components of admissible corner hypercubes. This means that the corner points must lie in $$\prod_{i =1}^k\mathcal M^{\Gamma_0}(x_i,z_i)\cup \mathcal M^{\Gamma_1}(x_i,z_i).$$ Now for a hypercube $Q\subset \mathcal M^{\{\Gamma_\lambda\},\mathrm{cube}}_{0,\leq \varepsilon}(x,z)$ $$Q= I_1\times \cdots\times I_k$$ with a $0$- or $1$-pure corner point where $I_j \subset \mathcal M^{\{\Gamma_\lambda\}}_0(x_i,z_i)$ are connected components which are closed intervals, we define colorings of the boundary $\rho_i :\partial I_i\to \{0,1\}$ by the type of boundary component $$\text{Type 0: }(u_i,0) \text{ where } u_i \in \mathcal M^{\Gamma_0}_{0,\leq \varepsilon}(x_i,z_i), $$ $$\text{Type 1: }(u_i,1) \text{ where }u_i \in \mathcal M^{\Gamma_1}_{0,\leq \varepsilon}(x_i,z_i). $$ By Lemma~\ref{combinatorial lemma}, we have that the number of $0$- and $1$-pure corner points of such hypercubes is even. These corner points correspond to the set $$\mathcal M^{\Gamma_0}_{0,\leq \varepsilon}(x,z)\cup \mathcal M^{\Gamma_1}_{0,\leq \varepsilon}(x,z).$$ Therefore we find $$ \pi_{[x]}(\Phi_\varepsilon^{\Gamma_1}([x])+\Phi_{\varepsilon}^{\Gamma_0}([x])) = \sum _{\substack{z\in OB\mathcal P_{\beta,f}(H)\\ [z] = [x]}}\,|\,\mathcal M^{\Gamma_0}_{0,\leq \varepsilon}(x,z)\cup \mathcal M^{\Gamma_1}_{0,\leq \varepsilon}(x,z)\,|\,[z] = 0 \quad (\text{mod }2).$$  Since $\Gamma_0$ is the constant continuation data, it follows from standard arguments (see \cite[Proposition 11.1.14]{AD14}) $$\,|\,\mathcal M^{\Gamma_0}_{0,\leq \varepsilon} (x,z)\,|\, = \begin{cases}
        0 , & x \neq z\\ 1, & x= z
    \end{cases}$$ and hence $\Phi_\varepsilon^{\Gamma_0} = id_{\operatorname{BCF}_{\beta,f}(H)}$. This means that $$\pi_{[x]}\circ \Phi^{\Gamma_1}_\varepsilon([x]) = [x]$$ which concludes the proof of Proposition~\ref{Proposition: bounded continuation maps identity}.
    \end{proof}
Using this result, we can now prove the braid stability statement of Theorem~\ref{thm: braid stability}.

\subsection{Braid Stability}
This section proves the quantitative braid stability result in Theorem~\ref{thm: braid stability}. We briefly outline the proof strategy. Given non-degenerate Hamiltonians $ H, G$, and $x\subset \mathcal P(H)$ with associated braid class $\beta$ we choose suitable almost complex structures and continuation data $\Gamma''$ from $G$ to $H$ and $\Gamma'$ from $H$ to $G$ such that $$kE^+(\Gamma''\#_\rho\Gamma')= kE(H-G)<\mathcal S(H,x).$$ The continuation data $\Gamma''\#_\rho\Gamma'$ connects $H$ to itself, and we can apply Proposition~\ref{Proposition: bounded continuation maps identity}. We conclude that there is an odd number of tuples of continuation cylinders $w$ from $x$ to some $z\in OB\mathcal P_\beta(H)$ with $[x] = [z]$. For such a tuple $w=(w_1,\dots,w_k)$ the continuation cylinders $w_i$ are obtained by gluing the continuation cylinders $u_i$ and $v_i$ with respect to the data $\Gamma''$ and $\Gamma'$. The tuples $(u_1,\dots,u_k)$ end in breaking orbits $y_1,\dots,y_k\in \mathcal P(G)$ which may come with multiplicities. Applying a symmetry argument, we show that the number of such tuples, where the breaking orbits come with multiplicities, is even. This implies that there is at least one tuple of $(u_1,\dots,u_k)$ ending in pairwise distinct orbits. Using that $\iota(u_i,u_j) = 0$ we conclude that the tuple $(u_1,\dots,u_k)$ induces an isotopy of the braids $B(H,x)$ and $B(G,y)$, which means that $\dim \operatorname{BCF}_{\beta,f}(G)\geq 1$. 
\begin{proof}[Proof of Theorem~\ref{thm: braid stability}]
     Choose an almost complex structure $J$ such that $(H,J)$ is regular pair and $kE(H-G)<\min\{\mathcal E(H,J,x),\mathcal E^{\operatorname{int}}(H,J,x),\mathcal E^2(H,J,x)\}$. We define self-continuation data $\Gamma = \Gamma''\#_\rho \Gamma'$ at $(H,J)$ and will apply Proposition~\ref{Proposition: bounded continuation maps identity} to it. Let $f:\mathbb R \to [0,1]$ be smooth increasing such that $f(s) = 0$ for $s<-1$ and $f(s) = 1$ for $s>1$. Let $H'$ and $H''$ be interpolations $\mathbb R\times S^1\times S \to \mathbb R$ between $H$ and $G$ defined as follows $$H'(s,t,x) = H(t,x) + f(s)(G(t,x)-H(t,x)),$$ $$H''(s,t,x) = G(t,x)+f(s)(H(t,x)-G(t,x)).$$ Note that $$\int_{-\infty}^{s_0}\int_0^1\max_{p\in S}\partial _sH'(s,t,p)\,dtds = -f(s_0)\int_0^1\min_{x\in S}\left(H(t,x)-G(t,x)\right)\, dt$$ $$\int_{-\infty}^{s_0}\int_0^1\max_{p\in S}\partial _sH''(s,t,p)\,dtds =f(s_0)\int_0^1\max_{x\in S}\left(H(t,x)-G(t,x)\right)\,dt.$$ 
    We have $$E^+(\Gamma') = \int_0^1\max_{x\in S}\left(H(t,x)-G(t,x)\right)\,dt\quad \text{and}\quad E^+(\Gamma'') =  -\int_0^1\min_{x\in S}\left(H(t,x)-G(t,x)\right)\, dt.$$ 
     In particular, $kE^+(\Gamma''\#_\rho\Gamma') = kE(H-G)$ which is bounded above by $k\mathcal S(H,x)$ and hence also by $\min\{\mathcal E(H,J,x),\mathcal E^{\operatorname{int}}(H,J,x),\mathcal E^2(H,J,x)\}$. By choosing suitable interpolations of almost complex structures we have continuation data $\Gamma' = (H',J')$ and $\Gamma''= (H'',J'')$ such that after a $C^\infty$-small perturbation we have that $\Gamma',\Gamma'',$ and $\Gamma''\#_\rho \Gamma'$ is regular for $\rho$ large enough and such that $kE^+(\Gamma''\#_{\rho}\Gamma')<\min\{\mathcal E(H,J,x),\mathcal E^{\operatorname{int}}(H,J,x),\mathcal E^2(H,J,x)\}$. Now, applying Proposition~\ref{Proposition: bounded continuation maps identity} we find  $\pi_{[x]}\circ \Phi^{\Gamma''\#_\rho\Gamma'}_0([x]) = [x].$ This means that there are an odd number of cylinders in the following set $$\mathcal T(x) = \bigcup_{\substack{z\in OB\mathcal P_\beta\\ [z] = [x]}}\mathcal M^{\Gamma''\#_{\rho}\Gamma'}_{0,\leq 0}(x,z).$$ We now use a standard result from Floer theory on gluing continuation data. Namely, there is a bijection $$\chi: \bigcup_{y\in \mathcal P(G)} \mathcal M^{\Gamma'}_0(x_i,y)\times \mathcal M^{\Gamma''}_0(y,x_i)\to \mathcal M^{\Gamma''\#_{\rho}\Gamma'}_0(x_i,x_i),\quad (u,v)\mapsto \chi(u,v)$$ which is obtained by gluing the cylinders in a fashion described in Section~\ref{section.gluing}. In particular, Lemma~\ref{homotopy to breaking} applies which means that for pairs $(u_i,v_i)\in \bigcup_{y\in \mathcal P(G)} \mathcal M^{\Gamma'}_0(x_i,y)\times \mathcal M^{\Gamma''}_0(y,z_i)$ we have $$\sum_{i \neq j}\iota(u_i\#v_i,u_j\#v_j) = \sum_{i\neq j}\iota(\chi(u_i,v_i),\chi(u_j,v_j)).$$ Let $$B\mathcal T(x) = \{(\chi^{-1}(w_1),\cdots,\chi^{-1}(w_k))\,|\,w= (w_1,\cdots,w_k)\in \mathcal T(x)\}$$ be the set of tuples of broken trajectories corresponding to a tuple of trajectories 
     $w\in \mathcal T(x)$. For every $w\in \mathcal  T(x)$ let $y^w = (y_1^w,\cdots,y_k^w)\in \mathcal P(G)^k$ be the tuple of breaking orbits of the broken trajectory $(\chi^{-1}(w_1),\cdots,\chi^{-1}(w_k))\in B\mathcal T(x)$. We say a trajectory $w\in \mathcal T(x)$ is \emph{degenerate} if the associated tuple of breaking orbits $y^w$ has multiplicities in its entries, that is, if 
    $y^w\in \Delta_{\mathcal P(G)}$. \\ $ $ \\ \underline{Observation:} It is sufficient to prove that there is a regular $w\in \mathcal T(x)$. \begin{proof}
        Assume that $w\in \mathcal T(x)$ is regular. Then consider the associated broken trajectory $$((u_1,v_1),\dots,(u_k,v_k))= (\chi^{-1}(w_1),\dots,\chi^{-1}(w_k))\in B\mathcal T(x).$$ We noticed before that $$0 = \sum_{i\neq j}\iota(u_i\#v_i,u_j\#v_j).$$ Since $w$ is regular, the breaking orbits $y_1^w,\cdots,y_k^w$ are pairwise distinct which means that the intersection numbers $\iota(u_i,u_j)$ and $\iota(v_i,v_j)$ are well-defined for $i\neq j$ and that $$0 =  \sum_{i\neq j}\iota(u_i\#v_i,u_j\#v_j)=\sum_{i \neq j}\iota(u_i,u_j)+\iota(v_i,v_j).$$ After positivity of intersection, this means that $$0 = \iota(u_i,u_j) = \iota(v_i,v_j)$$ for all $i \neq j$. This in particular means that for all $(s,t)\in \mathbb R\times S^1$ we have $u_i(s,t) \neq u_j(s,t)$ and therefore, the cylinders $(u_1,\cdots,u_k)$ induce an isotopy of the braids $B(H,x)$ and $B(G,y^w)$. Since the cylinders $u_i$ connect orbits of the same index, the framing is preserved. Thus, there is a braid of orbits of $G$ with framed braid class $(\beta,f)$, so $\dim \operatorname{BCF}_{\beta,f}(G)\geq 1$. This concludes the proof of the observation.
    \end{proof} We now show that there is a regular trajectory in $\mathcal T(x)$. Let $$D\mathcal T(x) = \{w\in \mathcal T(x)\,|\,y^w\in \Delta_{\mathcal P(G)}\}$$ be the set of degenerate trajectories. Since $\mathcal T(x)$ splits as a disjoint union into the set of regular and degenerate trajectories, and since we already established that the cardinality of $\mathcal T(x)$ is odd, it suffices to show that $$|D\mathcal T(x)| = 0 \quad (\text{mod }2).$$ We remark here that if the framing is injective, we are done since in this case $\mu(x_i ) = \mu(y_i^w)$ are pairwise distinct and hence, the orbits $y_i^w$ are always pairwise distinct, which means that $D\mathcal T(x) = \emptyset$. In the general case, we argue by symmetry. For every $y=(y_1,\dots,y_k)\in \Delta_{\mathcal P(G)}$ and $z\in OB\mathcal P_\beta(H)$ such that $[x] = [z]$ let $$DB\mathcal T(x,y,z) = \left \{((u_1,v_1),\dots,(u_k,v_k))\in B\mathcal T(x)\,\middle |\,\begin{aligned} &\text{for } w= (\chi(u_1,v_1),\dots,\chi(u_k,v_k)),\\ &  y^{w} = y,w\in \mathcal M^{\Gamma''\#_\rho\Gamma'}_0(x,z)\end{aligned}\right \},$$ the set of broken trajectories with breaking orbits $y$ corresponding to degenerate trajectories connecting $x$ to $z$. Note that the cardinality of $$D\mathcal T(x) \quad  \text{ and } \quad \bigcup_{y\in \Delta_{\mathcal P(G)}}\bigcup_{\substack{z\in OB\mathcal P_\beta(H)\\ [x] =[z]}}DB\mathcal T(x,y,z) $$ agrees.  For every $y\in \Delta _{\mathcal P(G)}$ we show that the cardinality of $\bigcup_{\substack{z\in OB\mathcal P_\beta(H)\\ [x] =[z]}} DB\mathcal T(x,y,z)$ is even which concludes the result. Let us denote by $$\text{Fix}(y) = \{\sigma \in S_k\,|\, \sigma(y) = y\},$$ the set of permutations that leave $y$ invariant. For every $\sigma\in \text{Fix}(y)
    $ we have \begin{align}
        \label{eq: proof braid stability symmetry} |DB\mathcal T(x,y,z)| = |DB\mathcal T(x,y,\sigma(z))|.
    \end{align} Indeed, the following map is a bijection $$S^{x,y,z,\sigma}:DB\mathcal T(x,y,z) \to DB\mathcal T(x,y,\sigma(z)),\quad ((u_1,v_1),\dots,(u_k,v_k))\mapsto ((u_1,v_{\sigma(1)}),\dots ,(u_k,v_{\sigma(k)})).$$ We argue that this map is well-defined. First, note that $((u_1,v_{\sigma(1)}),\dots ,(u_k,v_{\sigma(k)}))\in \mathcal M_0^{\Gamma'}(x,y)\times \mathcal M_0^{\Gamma''}(y,\sigma(z))$ since we have $\sigma(y) =y$. By Lemma~\ref{swapping lemma} we have $$0= \sum_{i \neq j}\iota(u_i\#v_i,u_j\#v_j) = \sum_{i \neq j}\iota(u_i\#v_{\sigma(i)},u_j\#v_{\sigma(j)}).$$ By Lemma~\ref{homotopy to breaking} we then also have $$0=\sum_{i\neq j}\iota(\chi(u_i,v_{\sigma(i)}),\chi(u_j,v_{\sigma(j)})).$$ Moreover, \begin{align*}
        \begin{split}0 &= \left |\sum_{i =1}^k\mathcal A_H([x_i,C_{x_i}])-\mathcal A_H([z_i,C_{x_i}\#u_i\#v_i])\right| \\ & =  \left |\sum_{i =1}^k\mathcal A_H([x_i,C_{x_i}])-\mathcal A_H([z_{\sigma(i)},C_{x_i}\#u_i\#v_{\sigma(i)}])\right |\end{split}\end{align*} and hence $(\chi(u_1,v_{\sigma(1)}),\dots,\chi(u_k,v_{\sigma(k)}))\in \mathcal T(x).$ By definition, we find that $((u_1,v_{\sigma(1)}),\cdots,(u_k,v_{\sigma(k)}))\in DB\mathcal T(x,y,\sigma(z))$. To see that the map $S^{x,y,z,\sigma}$ is a bijection, note that an inverse is given by $S^{x,y,\sigma(z),\sigma^{-1}}$, which is defined since $\text{Fix}(y)$ is a group and $\sigma^{-1}\in \text{Fix}(y)$. This shows (\ref{eq: proof braid stability symmetry}). We now have \begin{align*}
        \begin{split}
        |D\mathcal T(x)| & = \sum_{y\in \Delta_{\mathcal P(G)}}\sum_{\substack{z\in OB\mathcal P_\beta(H)\\ [x] = [z]}}|DB\mathcal T(x,y,z)| \\ & = \sum_{y\in \Delta_{\mathcal P(G)}}\sum_{\sigma \in S_k}|DB\mathcal T(x,y,\sigma(x))| \\ & =  \sum_{y\in \Delta_{\mathcal P(G)}}\sum_{[\nu] \in S_k/\text{Fix}(y)}\sum _{\sigma \in \text{Fix}(y)}|DB\mathcal T(x,y,\sigma(\nu(x)))| \\ & =  \sum_{y\in \Delta_{\mathcal P(G)}}\sum_{[\nu] \in S_k/\text{Fix}(y)} |\text{Fix}(y)||DB\mathcal T(x,y,\nu(x))| \\ & = 0 \quad (\text{mod } 2),
        \end{split}
    \end{align*} where we use that $[x] = [z]$ if and only if there is $\sigma \in S_k$ such that $\sigma(z) = x$ and that for every $y\in\Delta_{\mathcal P(G)}$ the group $\text{Fix}(y)$ is a non-trivial product of permutation groups and therefore has even cardinality.
\end{proof}

\section{An Application: Contractible Orbits and Entropy of Eggbeaters on $T^2$}\label{section: application}
The goal of this section is prove Theorem~\ref{thm:entropy}. We outline the strategy. \begin{enumerate}
    \item In Section~\ref{section: braids and entropy} we define a notion of entropy $h(\beta)$ of a closed braid $\beta$ that satisfies the following. For a Hamiltonian $H$ and periodic orbits $\{x_1,\dots,x_k\}\subset \mathcal P(H)$ at fixed points $z_1,\dots, z_k$ of $\phi_H^1$ we have $$h(\mathcal B(H,x_1,\dots,x_k)) \leq h_{\operatorname{top}}(\phi_H^1).$$
    \item In Section~\ref{subsection: torus braids} we study closed braids on symplectic tori and define a family of closed braids $\beta_{m,n}$ with two contractible strands such that $${1\over 5}\log(|mn|)\leq h(\beta_{m,n}) $$ and hence $h(\beta_{m,n}) \to \infty$ as $m,n\to \infty$.
    \item In Section~\ref{subsection: eggbeater} we first define a sequence of Hamiltonians $H_k$ whose flows are eggbeater maps on the torus. 
    \item We then analyze the contractible orbits of the flow of $H_{2k}$ and fix a pair of contractible orbits $x_1^{(k)},x_2^{(k)}\in \mathcal P(H_{2k})$ whose associated framings $$f_{k}: x_1^{(k)}\mapsto 2,\quad x_2^{(k)}\mapsto -2$$  are injective and such that $ \mathcal B(H_{2k},x_1^{(k)},x_2^{(k)})=\beta_{2k+1,2k+1}.$
    \item Using topological obstructions we restrict the set of pairs $\{y_1,y_2\}\subset \mathcal P(H_{2k})$ that can be connected to $\{x_1,x_2\}$ by an isotopy. Using this restriction on Floer isotopies, a computation then shows the following for the stability radius of the braids of orbits $\{x_1^{(k)},x_2^{(k)}\}$ $$\mathcal S(H_{2k},x_1^{(k)},x_2^{(k)})\to \infty \quad (k\to \infty).$$ Here, the fact that we use braids is essential since it refines the set of actions to which we need to compare the action of $x_1^{(k)},x_2^{(k)}$. 
    \item Applying the braid stability result Theorem~\ref{thm: braid stability} we then conclude that in a large Hofer neighborhood of $\phi_{2k}$ any Hamiltonian diffeomorphism is generated by a Hamiltonian $G$ whose flow has a braid of orbits of type $\beta_{2k+1,2k+1}$. This gives a lower bound on the topological entropy of $\psi$ and proves Theorem~\ref{thm:entropy}.
\end{enumerate}
\subsection{Preliminaries: Surface Braids and Entropy}\label{section: braids and entropy}
 For $k$ distinct points $y_1,\dots,y_k\in S$ the \emph{ braid group} with $k$ strands is defined by equipping the set of isotopy classes of a set of $k$  (indistinguishable) strands $$s_i: [0,1]\to S\times [0,1],\quad t\mapsto (x_i(t),t)$$ for paths $x_i:[0,1]\to S$ such that $\{x_1(0),\dots,x_k(0)\} = \{x_1(1),\dots,x_k(1)\} = \{y_1,\dots,y_k\}$, with
 the group multiplication given by juxtaposition of braids (for more details see \cite{BM12}). We denote this group by $\mathrm{BG_k}(y_1,\dots,y_k).$ The \emph{pure braid group} $$\mathrm{PB_k}(y_1,\dots,y_k) \subset \mathrm{BG_k}(y_1,\dots,y_k)$$ is defined to be the subgroup given by the elements where $x_i$ are loops, that is $x_i(0) = x_i(1)$ for all $i$. For every element in the pure braid group, the closure gives a well-defined braid class invariant under conjugation in the total braid group. More precisely, there is the following correspondence $$\{\mathrm{Cl}_{\mathrm{BG_k}}(b)\,|\,b\in \mathrm{PB_k}(y_1,\dots,y_k)\}\xleftrightarrow[\text{closure}]{1:1}\{\text{classes of closed surface braids with $k$ strands}\},$$ where $\mathrm{Cl}_{\mathrm{BG_k}}(b)$ is the conjugacy class of $b$ in the $\mathrm{BG_k}(y_1,\cdots,y_k)$.
 
 Moreover, there is a group homomorphism $${\text{Push}}: \mathrm{BG_k}(y_1,\dots,y_k) \to \mathrm{MCG}(S\setminus\{y_1,\dots,y_k\}),$$ where $\mathrm{MCG}(S\setminus\{y_1,\dots,y_k\})$ is the mapping class group of the $k$-times punctured surface $S\setminus \{y_1,\dots,y_k\}$ (see \cite{BM12}). It is characterized by the property that for the flow $\phi^t$ with fixed points  $y_1,\dots y_k$ of $\phi^1$
 the mapping class represented by $\phi^1\,|\,_{S\setminus \{y_1,\dots,y_k\}}$ is the image of $((\phi^t(y_1),t),\dots,(\phi^t(y_k),t))$.

 For a compact topological space  $X$, there are several ways to define the topological entropy of a homeomorphism $f:X\to X$. When the topology of $X$ is metrizable, one can define it as follows. First, choose a metric $d$ inducing the topology and for every $n\in \mathbb Z_{\geq 1}$ define a new metric by $$d_{n}(x,y) = \max_{i = 0,\dots,{n-1}}d(f^i(x),f^i(y)).$$ We say a set $P$ is $(n,\varepsilon)$-spanning if for all $x\in X$ there is some $p\in P$ with $d_n(x,p)<\varepsilon.$ Let $r(f,n,\varepsilon)$ be the minimal cardinality of a set $P$ that is $(n,\varepsilon)$-spanning; by compactness this number is finite. We then define $$h_{\mathrm{\operatorname{top}}}(f) = \lim_{\varepsilon\to 0}\limsup_{n\to \infty}{1\over n}\log(r(f,n,\varepsilon)).$$ This agrees with the general definition of topological entropy when $X$ is not metrizable and is independent of the choice of metric $d$.

 Suppose that $F\subset X$ is a finite set. To a mapping class $[f]\in \mathrm{MCG}(X\setminus F)$ one can associate a notion of entropy, which is a measure of the growth rate of words under the action of $f_*$  on $\pi_1(X\setminus F)$. This algebraic definition of entropy is a lower bound on the topological entropy of the mapping class. Let $A$ be a set of generators of $\pi_1(X\setminus F)$. For every $w\in \pi_1(X\setminus F)$ denote by $L_A(w)$ the minimal length of the word $w$ in the letters $A\cup A^{-1}$. Then define $$h([f]) = \sup_{w\in \pi_1(X\setminus F)}\limsup_{n\to \infty}{1\over n}\log(L_A(f^n_*(w))).$$ It is not hard to see that it suffices only to consider $w\in A$ and that the expression is independent of the choice of generators $A$. 
Given a homeomorphism $f:X\to X$ such that $f(F) = F$, for a finite set $F$, one can conclude that $$h([f\,|\,_{X\setminus F}]) \leq h_{\mathrm{\operatorname{top}}}(f),$$by compactifying the space $S\setminus F$ by adding boundary components at each puncture consisting of copies of $S^1$ (see, for example, \cite{Bo06}).
Passing to conjugacy classes, the push map associates closed surface braids to conjugacy classes of mapping classes. Since the topological entropy of the mapping class is invariant under conjugation, the push map induces a notion of entropy for closed surface braids $\beta$ $$h(\beta) := h(\text{Push}(\beta)).$$ Importantly, if $\phi^t$ is a flow and $F$ is a set of fixed points of $\phi^1$, the topological entropy of the closed braid $\mathcal B(\phi^t,F)$, which is the isotopy class of $\{(\phi^t(x))\}_{x\in F}$, gives a lower bound on the entropy of $\phi^1$, namely $$h(\mathcal B(\phi^t,F))\leq h_{\mathrm{\operatorname{top}}}(\phi^1).$$
A class of dynamically interesting homeomorphisms is \emph{pseudo-Anosov} maps. For such maps, there exist two transverse measured foliations and a \emph {dilatation} $\lambda>1$ such that $f$ stretches one foliation by a factor $\lambda$ and the other by $1/\lambda$. A mapping class $[f]$ is psuedo-Anosov if there is a pseudo-Anosov representative $f$. In this case, this representative minimizes the topological entropy in the mapping class, and we have the following (see \cite{FS79}) $$h_{\operatorname{top}}(f) = h([f]) = \log(\lambda).$$ We say $\lambda$ is the dilatation of the mapping class. There are many ways to estimate $\lambda$; in the case when $[f]$ is the image of a point-push map along a loop $\gamma$, we can estimate $\lambda$ by the self-intersection number defined by $$i(\gamma) = {1\over 2}\min_{\mu}|\{(t_1,t_2)\in S^1\;|\;t_1\neq t_2 \text{ and }\mu(t_1) = \mu(t_2)\}|,$$ where the minimum runs over all loops $\mu$ freely homotopic to $\gamma$  (see \cite{Do11}). A loop $\gamma$ on a surface $S_{g,n}$ of genus $g$ with $n$ punctures is called \emph{filling} if any loop freely homotopic to $\gamma$ intersects any simple closed essential loop on $S_{g,n}$ (where essential means non-contractible and not homotopic to a simple loop around a puncture). By Kra \cite{Kr81}, the associated point-push mapping class $[f_\gamma]$ has positive entropy if $\gamma$ is filling. Analyzing the self-intersection number of $\gamma$, one can give a quantitative estimate (see \cite{Do11}). Let $p$ be a marked point on $
S_{g,n}$ and $\gamma$ a loop starting at $p$. By \cite[Theorem 1.5]{Do11} if $3g+n>3$ and $\gamma$ represents a primitive element in $\pi_1(S_{g,n},p)$ then for the point push map $\mathrm{\text{Push}}(\gamma)\in \mathrm{MCG}(S_{g,n}\setminus\{p\})$ the dilatation $\lambda$ of the mapping class satisfies  $$\lambda \geq \sqrt[5]{(i(\gamma)+1)}.$$ We will work on the once punctured torus $S_{1,1}$ where the inequality $3g+n>3$ holds.
\subsection{Torus Braids}\label{subsection: torus braids}
For two base points $z_1,z_2\in T^2$ the \emph{pure braid group on the torus with two strands} is given by $\mathrm{PB}_2(T) = \pi_1(X_2(T),(z_1,z_2))$ where $$X_2(T) = \{(x,y)\in T^2\times T^2\,|\,x\neq y\}$$ is the configuration space of two points. A representation of this group is given by  $$\mathrm{PB}_2(T) = \langle \tau_{1,a},\tau_{1,b},\tau_{2,a},\tau_{2,b}\,|\, \text{Relations}\rangle$$ with complicated relations that we omit here (we point the interested reader to the reference \cite{Sc08}). Here the generators $\tau_{i,a}$ and $\tau_{i,b}$ thought of the braids that correspond to \begin{align}\label{def: torus braid representatives}\begin{split}&\tau_{1,a} = (z_1+(t,0),z_2),\quad \tau_{1,b}= (z_1+(0,t),z_2)\\& \tau_{2,a} = (z_1,z_2+(t,0)),\quad \tau_{2,b}= (z_1,z_2+(0,t))\end{split}\end{align} which are pairs of loops for $t\in [0,1]$.\\ $ $\\ \begin{center}
\begin{tikzpicture}
    \begin{scope}[shift = {(-5.4,0)}]
        \draw[] (-1.4,-1.4) -- (1.4,-1.4) -- (1.4,1.4)--(-1.4,1.4)--cycle;
         \filldraw[black] (-0.5,-0.5) circle (1pt) node[above] {$z_1$};
         \filldraw[black] (0.5,0.5) circle (1pt) node[right] {$z_2$};
         \draw[thick, postaction={decorate,
  decoration={markings, mark=at position 0.5 with {\arrow{stealth}}}}](-0.5,-0.5)-- (1.4,-0.5);
  \draw[thick](-1.4,-0.5)-- (-0.5,-0.5);
  \node[] at(0,-1.8) {$\tau_{1,a}$};
    \end{scope}
    \begin{scope}[shift = {(-1.8,0)}]
        \draw[] (-1.4,-1.4) -- (1.4,-1.4) -- (1.4,1.4)--(-1.4,1.4)--cycle;
         \filldraw[black] (-0.5,-0.5) circle (1pt) node[right] {$z_1$};
         \filldraw[black] (0.5,0.5) circle (1pt) node[right] {$z_2$};
          \draw[thick, postaction={decorate,
  decoration={markings, mark=at position 0.5 with {\arrow{stealth}}}}] (-0.5,-0.5) -- (-0.5,1.4);
  \draw[thick] (-0.5,-1.4)--(-0.5,-0.5);
  \node[] at(0,-1.8) {$\tau_{1,b}$};
    \end{scope}
    \begin{scope}[shift = {(1.8,0)}]
        \draw[] (-1.4,-1.4) -- (1.4,-1.4) -- (1.4,1.4)--(-1.4,1.4)--cycle;
         \filldraw[black] (-0.5,-0.5) circle (1pt) node[right] {$z_1$};
         \filldraw[black] (0.5,0.5) circle (1pt) node[above] {$z_2$};
         \draw[thick, postaction={decorate,
  decoration={markings, mark=at position 0.5 with {\arrow{stealth}}}}] (-1.4,0.5) -- (0.5,0.5);
  \draw[thick] (0.5,0.5)--(1.4,0.5);
  \node[] at(0,-1.8) {$\tau_{2,a}$};
    \end{scope}
    \begin{scope}[shift = {(5.4,0)}]
        \draw[] (-1.4,-1.4) -- (1.4,-1.4) -- (1.4,1.4)--(-1.4,1.4)--cycle;
         \filldraw[black] (-0.5,-0.5) circle (1pt) node[right] {$z_1$};
         \filldraw[black] (0.5,0.5) circle (1pt) node[right] {$z_2$};
         \draw[thick, postaction={decorate,
  decoration={markings, mark=at position 0.5 with {\arrow{stealth}}}}] (0.5,-1.4) -- (0.5,0.5);
  \draw[thick] (0.5,0.5)--(0.5,1.4);
  \node[] at(0,-1.8) {$\tau_{2,b}$};
    \end{scope}
\end{tikzpicture}
\end{center}$ $\\
Note that there is a homeomorphism $$\psi: X_2(T) \to \left(T^2\setminus \{(0,0)\}\right)\times T^2,\quad (x,y) \mapsto (x-y,y)$$ which gives rise to an isomorphism $$\psi_*: \mathrm{PB}_2(T) \to F_2\times \mathbb Z^2,$$ where $F_2 = \langle a,b\rangle$ is the free group with two generators (Attention: we use the convention that words in $F_2$ and in the braid group are read from left to right). We use that the group $F_2$ is naturally isomorphic to $\pi_1(T^2\setminus \{(0,0)\},z_1-z_2)$ via an isomorphism that maps the loops $$z_1-z_2+(t,0)\mapsto a,\quad z_1-z_2 + (0,t) \mapsto b.$$ With the representation as in (\ref{def: torus braid representatives}) we have an isomorphism $$\langle \tau_{1,a},\tau_{1,b},\tau_{2,a},\tau_{2,b}\,|\, \text{Relations}\rangle\to F_2\times \mathbb Z^2,$$ where \begin{alignat}{2}&\tau_{1,a}\mapsto (a,0),&& \quad  \tau_{1,b}\mapsto (b,0),\\ & \tau_{2,a}\mapsto (a^{-1},a), &&\quad \tau_{2,b}\mapsto (b^{-1},b),\end{alignat}where $\mathbb Z^2 = \langle a,b\rangle^\mathrm{ab}$.
Now consider the abelinization $$\pi: F_2\times \mathbb Z^2\to \mathbb Z^2\times\mathbb Z^2$$
\begin{lemma}\label{lemma: braids with contractible strands}
    A braid has contractible strands if and only if for the corresponding element $(w,c)\in F_2\times \mathbb Z^2$ we have $$\pi(w,c) = 0.$$
\end{lemma}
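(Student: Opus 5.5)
The plan is to compute the free homotopy classes of the two strands directly from the element $(w,c)\in F_2\times\mathbb Z^2$ by composing $\psi$ with the two coordinate projections of the configuration space. A pure braid $b\in \mathrm{PB}_2(T)$ is a loop $(x(t),y(t))$ in $X_2(T)$. Its strands are the loops $x$ and $y$ in $T^2$. Since $\pi_1(T^2)=\mathbb Z^2$ is abelian, each strand is contractible exactly when its class in $H_1(T^2)=\mathbb Z^2$ vanishes. So the condition to verify is that both $[x]$ and $[y]$ are zero in $\mathbb Z^2$.

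The second strand is the second coordinate of $\psi(x,y)=(x-y,y)$, so $[y]=c$. The first strand satisfies $x=(x-y)+y$, and addition is a group homomorphism $T^2\times T^2\to T^2$. Hence $[x]=\iota_*[x-y]+[y]$, where $\iota:T^2\setminus\{0\}\hookrightarrow T^2$ is the inclusion. On $\pi_1$ the inclusion is the abelianization $F_2\to\mathbb Z^2$, $a\mapsto(1,0)$, $b\mapsto(0,1)$, with $\mathbb Z^2=\langle a,b\rangle^{\mathrm{ab}}$ as in the paper. Writing $w^{\mathrm{ab}}$ for the image of $w$, this gives $[x]=w^{\mathrm{ab}}+c$ and $[y]=c$. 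Both vanish if and only if $w^{\mathrm{ab}}=0$ and $c=0$, that is, if and only if $\pi(w,c)=(w^{\mathrm{ab}},c)=0$.

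As a sanity check I will run the generators through this formula. For $\tau_{2,a}\mapsto(a^{-1},a)$ it gives first strand class $-a+a=0$ and second strand class $a$, which matches the picture where $z_1$ is fixed and $z_2$ moves horizontally. The generators $\tau_{1,a}$, $\tau_{1,b}$ and $\tau_{2,b}$ check in the same way.

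The one step needing care is justifying that $x-y$ and $y$ are based loops whose classes behave as claimed, with compatible basepoints. I will handle this by working in homology rather than with based loops: free homotopy classes in $T^2$ are detected by $H_1$, and $\psi_*$ followed by the projections and the Hurewicz map gives exactly $\pi$. The isomorphism $\psi_*$ and its values on the generators are taken as given from the preceding text.
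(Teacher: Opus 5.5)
Your proof is correct and is essentially the paper's argument: both identify $c$ with the class of the second strand and recover the first strand's class from the abelianization of $w$. The only difference is that the paper first uses $c=0$ to rewrite the braid as a word in $\tau_{1,a}^{\pm1},\tau_{1,b}^{\pm1}$, where the second strand is stationary. Your use of the addition map, $x=(x-y)+y$, instead gives the general formula $[x]=w^{\mathrm{ab}}+c$ and so avoids that reduction.
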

\begin{proof}
    Firstly, the second strand is contractible if and only if $c = 0$. We now find a representation of the braid as a product of letters $\tau_{1,a}^{\pm 1}$ and $\tau_{1,b}^{\pm 1}$. The abelianization of $w$ in $\mathbb Z^2$ is exactly the homology class of the first strand, which is contractible if and only if this class is trivial.
\end{proof}
A path $\gamma:[0,1]\to T^2\setminus \{(0,0)\}$ is \emph{admissible} if it intersects $\{0\}\times \mathbb R/\mathbb Z$ and $\mathbb R \times \{0\}/\mathbb Z^2$ transversely. We now define a function $$\text{spell}: \left \{\gamma:[0,1]\to T^2\setminus \{(0,0)\}\,|\,\gamma \text{ admissible}\right \}\to F_2.$$
Consider the universal cover $$p: \mathbb R^2 \to \mathbb R ^2 /\mathbb Z^2$$ and a lift $\tilde\gamma$ of an admissible path $\gamma$. The word $\text{spell}(\gamma)$ is obtained in the following way. For every intersection of $\tilde \gamma$ with a horizontal grid line $\mathbb R\times \{k\}$ or vertical grid line $\{k\}\times \mathbb R$ for some $k\in \mathbb Z$, we write a letter in the obvious order given by $\tilde \gamma$. Namely, write $a$ if $\tilde \gamma$ intersects some vertical grid line from left to right and $a^{-1}$ if it intersects from right to left. Similarly, write $b$ if $\tilde \gamma$ intersects a horizontal grid line from bottom to top and write $b^{-1}$ if it intersects from top to bottom. The resulting word is defined to be $\text{spell}(\gamma)$. Clearly for a loop $ \gamma$ the word $\text{spell}(\gamma)$ is a possibly unreduced representative of the word in $F_2$ that corresponds to the class $[\gamma]\in \pi_1(T^2\setminus \{(0,0)\})$. For two loops $x_1,x_2:S^1\to T^2$ such that $x_1(t)\neq x_2(t)$ for all $t\in S^1$ we define $$\text{spell}(x_1,x_2) = \text{spell}(x_1-x_2).$$ Note that in general $\text{spell}(x_1,x_2)\neq \text{spell}(x_2,x_1)$.
\begin{figure}[h!]
  \begin{tikzpicture}
    %vertical
    \draw[thin] (-4,-2.5) -- (-4,2.5);
    \draw[thin] (-2,-2.5) -- (-2,2.5);
    \draw[thin] (0,-2.5) -- (0,2.5);
    \draw[thin] (2,-2.5) -- (2,2.5);
    \draw[thin] (4,-2.5) -- (4,2.5);
    \draw[thin] (6,-2.5) -- (6,2.5);
    %horizontal
    \draw[thin] (-4.5,2)-- (6.5,2);
    \draw[thin] (-4.5,0)-- (6.5,0);
    \draw[thin] (-4.5,-2)-- (6.5,-2);
    %%%%%%%%%%%%%%%%%%%%%%%%%%%%%%%%%%%%
    \node[above right] at (-4,-2) {$(0,0)$};
    \draw[thick, postaction={decorate,
  decoration={markings, mark=at position 0.5 with {\arrow{stealth}}}}] (-3.5,-1) arc (-40:-140:-6) coordinate (endpoint) node[midway, above]{$\tilde \gamma$};
    \filldraw[black] (-4,-2) circle (1pt);
    \filldraw[black] (endpoint) circle (1pt);
    \filldraw[black] (-3.5,-1) circle (1pt);
    \node[] at (1,-3) {$\text{spell}(\gamma) = ba^4b^{-1}$};
  \end{tikzpicture} 
  \caption{The spelling of a path $\gamma$.}
\end{figure}
\begin{lemma}\label{lemma: conjugacy classes and spelling}
    Let $(x_1,x_2)$ and $(y_1,y_2)$ be loops in $X_2(T^2)$ such that  each loop $x_i$ and $y_i$ is contractible in $T^2$. If the loops are homotopic in $X_2(T^2)$ (that is the associated braids are isotopic) then $\text{spell}(x_1,x_2)$ and $\text{spell}(y_1,y_2)$ are conjugate. 
\end{lemma}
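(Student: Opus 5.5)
The plan is to pass through the homeomorphism $\psi:X_2(T)\to (T^2\setminus\{(0,0)\})\times T^2$, $(x,y)\mapsto (x-y,y)$. A homotopy of loops $(x_1,x_2)\simeq(y_1,y_2)$ in $X_2(T^2)$ composed with $\psi$ and then with the projection to the first factor gives a free homotopy of loops in $T^2\setminus\{(0,0)\}$ from $x_1-x_2$ to $y_1-y_2$. Free homotopy classes of loops in a path-connected space correspond to conjugacy classes in $\pi_1$. It therefore suffices to show two things: that for a loop $\gamma$ in $T^2\setminus\{(0,0)\}$, $\operatorname{spell}(\gamma)$ represents the class of $\gamma$ in $F_2\cong\pi_1(T^2\setminus\{(0,0)\},\gamma(0))$, and that this happens under an identification that is canonical up to conjugation. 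If the loops are not admissible, I first perturb them slightly within their homotopy class; one should check that spell is then independent of the perturbation up to free reduction.

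To make the first point precise, I would use the deformation retraction of $T^2\setminus\{(0,0)\}$ onto the wedge of the two circles $\{1/2\}\times S^1\cup S^1\times\{1/2\}$ (shifted so they avoid the puncture). Dually, I would use the standard fact that for a graph-complement the word of a loop is read off from its signed crossings with the arcs $\{0\}\times S^1\setminus\{(0,0)\}$ and $S^1\times\{0\}\setminus\{(0,0)\}$. These two arcs cut the punctured torus into an open disk, which is simply connected. Consequently the sequence of signed crossings of an admissible loop determines its based homotopy class. A homotopy rel basepoint that stays admissible changes the crossing sequence only by inserting or cancelling adjacent pairs $cc^{-1}$: these come from tangencies, and the loop is never allowed to pass through the puncture. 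The resulting reduced word is exactly the image under the isomorphism in which the loop $z+(t,0)$ maps to $a$ and $z+(0,t)$ maps to $b$. Reading crossings on the lift $\tilde\gamma$ in $\mathbb R^2$ with the grid lines is the same as reading crossings with the two arcs in the torus, so this word is $\operatorname{spell}(\gamma)$.

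The remaining issue is the basepoint. The based identification depends on choosing a path from $\gamma(0)$ into the disk, and changing that path conjugates the word. Moving the basepoint of a loop along the loop cyclically permutes the spelled word, which is again a conjugation. Hence $\operatorname{spell}(x_1-x_2)$ and $\operatorname{spell}(y_1-y_2)$ represent freely homotopic loops and are conjugate in $F_2$.

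The hypothesis that the strands are contractible is not needed for this argument; it seems only to matter for how the lemma is used later. I expect the main subtlety to be the invariance of the spelled word under admissible homotopy, namely the general-position argument that any homotopy avoiding the puncture can be made transverse to the arcs except at finitely many tangencies, each of which changes the word by $cc^{-1}$. This is routine, and I would cite the standard correspondence between $\pi_1$ of the punctured torus and crossing sequences rather than prove it in detail.
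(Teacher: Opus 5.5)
Your argument is correct and is essentially the paper's: both pass through $\psi$ to the free factor $F_2\cong\pi_1(T^2\setminus\{(0,0)\})$ and absorb the change of basepoint into a conjugation. The only difference is that you project the homotopy onto the first factor directly, whereas the paper invokes conjugacy of closed braids in $\mathrm{PB}_2(T)\cong F_2\times\mathbb Z^2$; your version makes visible that the contractibility hypothesis is not needed (conjugacy in that product is componentwise anyway), and you justify that $\text{spell}(\gamma)$ represents $[\gamma]$, which the paper takes as clear.
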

\begin{proof}
    First, suppose that $(x_1(0),x_2(0)) = (y_1(0),y_2(0))$. In this case, the lemma follows from the fact that the closures of two braids are isotopic if and only if they are conjugate to each other in the braid group at $(x_1(0),x_2(0))$. Indeed, $(\text{spell}(x_1,x_2),0)$ and $(\text{spell}(y_1,y_2),0)$ in $ F_2\times \mathbb Z^2$ are the corresponding braids in the braid group, and they are conjugate if and only if the first components are conjugate. In general, let $(\alpha_1,\alpha_2)$ be a path from $(x_1(0),x_2(0))$ to $(y_1(0),y_2(0))$. Note that $(\alpha_1\#y_1\#\alpha_1^{-},\alpha_2\#y_2\#\alpha_2^-)$ is a braid at $(x_1(0),x_2(0))$ whose closure is isotopic to the one of $(y_1,y_2)$. Clearly, the word $\text{spell}(\alpha_1\#y_1\#\alpha_1^{-},\alpha_2\#y_2\#\alpha_2^-)$ is conjugate to $\text{spell}(y_1,y_2)$. The general case now follows from the case $(x_1(0),x_2(0))= (y_1(0),y_2(0))$.
\end{proof}
We now discuss the entropy of a family of closed braids that will come up in the construction of eggbeater maps. For integers $n,m\in \mathbb Z\setminus\{0\}$ let $\beta_{m,n}$ denote the closed torus braid that is obtained by the closure of a braid given by the element $(a^mb^na^{-m}b^{-n},0)$. Note that this definition is independent of the choice of base points $(x_1,x_2)$, since by Lemma~\ref{lemma: conjugacy classes and spelling} the closures are isotopic if the spelling of the difference loops is conjugate.  
\begin{lemma}\label{lemma: entropy of the braids}
    For every $m,n\in \mathbb Z$ such that $|n|,|m|\geq 2$ we have $$h(\beta_{m,n})\geq  {1\over 5}\log(|mn|).$$ 
\end{lemma}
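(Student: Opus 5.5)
The plan is to reduce the braid entropy $h(\beta_{m,n})$ to the dilatation of a point-push mapping class on the once-punctured torus $S_{1,1}$, and then apply Dowdall's bound \cite[Theorem 1.5]{Do11}.

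\emph{Reduction to a point push.} Using the homeomorphism $\psi:X_2(T)\to (T^2\setminus\{(0,0)\})\times T^2$, a braid represented by $(w,0)\in F_2\times\mathbb Z^2$ has its second strand constant, say at $z_2$. The first strand then traces a loop $\gamma$ based at $z_1$ with $z_1-z_2$ spelling $w$, that is, $\gamma$ lives in $T^2\setminus\{z_2\}\cong S_{1,1}$. Under $\text{Push}$ this braid maps to the mapping class of $T^2\setminus\{z_1,z_2\}$ obtained by pushing $z_1$ along $\gamma$ in the surface punctured at $z_2$, composed with nothing at $z_2$. So $\text{Push}(\beta_{m,n})$ is the point-push $\text{Push}(\gamma)\in\mathrm{MCG}(S_{1,1}\setminus\{p\})$ with $[\gamma]=a^mb^na^{-m}b^{-n}\in\pi_1(S_{1,1},p)\cong F_2$. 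I would verify this identification carefully against the conventions of \cite{BM12}, including the left-to-right reading and the fact that $\text{Push}$ on the full braid group restricts on pure braids to point-pushing each strand. Since entropy is conjugation invariant, the choice of base points is irrelevant by Lemma~\ref{lemma: conjugacy classes and spelling}.

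\emph{Applying Dowdall.} Here $g=1$ and $n=1$, so $3g+n=4>3$. The word $w=[a^m,b^n]$ is a nontrivial commutator. It is primitive in the sense of not being a proper power, because a nontrivial commutator in a free group is never a proper power (Schützenberger). The next step is to check that $\gamma$ is filling, and in fact the estimate needs only the self-intersection number. Then $\lambda\geq (i(\gamma)+1)^{1/5}$, and by \cite{FS79}, or directly if the class is pseudo-Anosov (Kra \cite{Kr81}), $h(\beta_{m,n})\geq\log\lambda$. The claim therefore reduces to showing $i(\gamma)+1\geq |mn|$.

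\emph{Main obstacle: lower bound on $i(\gamma)$.} We need the minimal self-intersection over the free homotopy class of $[a^m,b^n]$ in the once-punctured torus to be at least $|mn|-1$. First, $i(\gamma)$ is bounded below by the linking/combinatorial count for cyclically reduced words in $F_2$ (Cohen–Lustig's algorithm, or a direct lift argument to $\mathbb R^2\setminus\mathbb Z^2$). A drawn representative — $m$ horizontal turns, $n$ vertical turns, then back — crosses itself roughly $2(|m|-1)|n|$-type many times, which exceeds $|mn|$ for $|m|,|n|\geq2$. To get a lower bound I would use the sub-arcs $a^m$ and $b^{n}$ pieces: every lift of the $a^{\pm m}$ segments must cross every lift of the $b^{\pm n}$ segments in the strip picture, producing $|m||n|$ essential crossing pairs. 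These pairs cannot be removed by homotopy because the corresponding pairs of subwords are linked in the cyclic word, which is the Cohen–Lustig criterion. A cruder count giving $i(\gamma)\geq |mn|-1$ suffices, so I would aim only for exhibiting $|mn|-1$ linked pairs of distinct cyclic subword positions.
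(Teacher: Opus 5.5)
Your reduction is the paper's. $\mathrm{Push}(\beta_{m,n})$ is the point-push of $z_1$ along a commutator loop in $T^2\setminus\{z_2\}\cong S_{1,1}$; the paper's loop $B^{-n}\#A^{-m}\#B^n\#A^m$ agrees with your $a^mb^na^{-m}b^{-n}$ up to inversion and conjugacy, which changes neither entropy nor self-intersection. You then apply Dowdall's bound $\lambda\geq (i(\gamma)+1)^{1/5}$ and Fathi--Shub, as the paper does. Your Sch\"utzenberger argument for primitivity is an improvement, since the paper only asserts it.

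The gap is the step you flag as the main obstacle: $i(\gamma)+1\geq |mn|$ is never established. The paper does not derive it either. It imports the exact value $i(\gamma_{m,n})=4|mn|-2|m|-2|n|$ from the literature (via the Cohen--Lustig algorithm), after which $4|mn|-2|m|-2|n|+1\geq |mn|$ is elementary. What you offer in its place is a heuristic:
\begin{itemize}
\item Crossings of a drawn representative say nothing about minimal position.
\item The claim that every lift of an $a^{\pm m}$ run crosses every lift of a $b^{\pm n}$ run is not correct as stated, since only specific pairs among the infinitely many lifts cross.
\item Invoking the Cohen--Lustig criterion requires exhibiting explicit pairs of subwords of the cyclic word. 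You must check the linking condition for each pair against the cyclic order of $a^{\pm 1},b^{\pm 1}$ at the vertex. You must also show the pairs are pairwise inequivalent, so nothing is counted twice.
\end{itemize}
Until that count is carried out, or the known value is cited, the key inequality is unproved.

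You also should not treat filling as optional, so your remark that the estimate needs only $i(\gamma)$ is wrong. Kra's criterion is an equivalence. Dowdall's inequality concerns the dilatation of the pseudo-Anosov coming from a filling curve. Fathi--Shub gives $h=\log\lambda$ only in the pseudo-Anosov case.

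Filling genuinely requires $|m|,|n|\geq 2$. For example, $[a^m,b]=a^m\,(ba^{-m}b^{-1})$ lies in $\langle a,bab^{-1}\rangle$, which is carried by the complement of a simple closed curve in the class $a$. So that loop misses this curve and does not fill. The paper merely asserts filling as well, but your argument must at least list it as a required input.
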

\begin{proof}
     Let $x_2 =(0,0)$ and $x_1 = (1/2,1/2)$ in $T^2 = \mathbb R^2/\mathbb Z^2$. The braid $$b_{m,n} = \tau_{1,a}^m\tau_{1,b}^n\tau_{1,a}^{-m}\tau_{1,b}^{-n}$$ corresponds to the word $(a^mb^na^{-m}b^{-n},0)$ and hence the closure of the braid is given by $\beta_{m,n}$. The associated mapping class of the braid is given by a point push map of the point $x_1$ along the loop $\gamma_{m,n} = B^{-n}\#A^{-m}\#B^n\#A^m$ where $A(t) = (1/2+t,1/2)$ and $B(t) = (1/2,1/2+t)$ for $t\in [0,1]$. We consider the loop $\gamma_{m,n}$ as a loop in $\pi_1(T^2\setminus \{x_2\},x_1)$ where it is primitive (i.e. not a power of another element) and filling. We now have that (see \cite{cL16} for an algorithm to compute the self-intersection number or see \cite{CR04} for a direct calculation) $$\iota(\gamma_{m,n}) = 4|mn|-2|m|-2|n|.$$
     After \cite{Do11} for the dilatation $\lambda_{m,n}$ of the mapping class of $\varphi_{m,n} = \operatorname{Push(\gamma_{m,n})}$ in $ \mathrm{MCG}(T^2\setminus\{x_1,x_2\})$ the following estimate holds $$\lambda_{m,n}\geq \sqrt[5]{\iota(\gamma_{m,n})+1}= \sqrt[5]{4|mn|-2|m|-2|n|+1}.$$ Therefore, $$h(\beta_{m,n}) =h(\varphi_{m,n})\geq \log(\lambda_{m,n}) \geq {1\over 5}\log(4|mn|-2|m|-2|n|+1)\geq {1\over 5}\log(|nm|),$$ where we use \cite{FS79} for the estimate with the dilatation and use that $|n|,|m|\geq 2$ in the last inequality.
\end{proof}
\subsection{Proof of Theorem~\ref{thm:entropy}: Eggbeater Maps, Braids of Orbits, and Action Estimates}\label{subsection: eggbeater}
In this section, inspired by Khanevsky's eggbeater map \cite{Kha22} on $T^2$ and eggbeater maps on higher-genus surfaces \cite{PS16,AGKKKPRRSSZ19}, we construct Hamiltonian diffeomorphisms $\phi_k$ generated by Hamiltonians $H_k$ on $T^2=\mathbb R^2/\mathbb Z^2$ equipped with the standard symplectic form $\omega = dx\wedge dy$. If not stated otherwise, we identify a point $z\in T^2$ with the unique tuple $(x,y) \in [0,1)^2$. We then choose a sequence of pairs of periodic orbits $\{x_1^{(k)},x_2^{(k)}\}\subset \mathcal P(H_{2k})$ with  $\mathcal B(H_{2k},x_1^{(k)},x_2^{(k)}) = \beta_{2k+1,2k+1}$ such that $\mathcal S(H_{2k},x_1^{(k)},x_2^{(k)})\to \infty$. We use these maps to prove Theorem~\ref{thm:entropy} by applying the braid stability result from Theorem~\ref{thm: braid stability}.

 Let us now define the Hamiltonians $H_k$. Consider the function $$s:[0,1]\to \mathbb R,\quad t\mapsto\begin{cases}- 4t, & t \in [0,1/4]\\4t-2& t\in[1/4,3/4] \\-4t+4, & t\in [3/4,1]
\end{cases}.$$ For convenience of notation we often will consider $s$ as a function on $S^1$ as it is periodic. For an integer $k$ define $$s_{k}(t) = (k+\varepsilon)s(t),$$ where $\varepsilon>0$ is chosen (depending on $k$) to be small enough.
Smooth $s_k$ near $t = 1/4,3/4$ by a $C^0$-small perturbation such that $$\int_0^1s_{k}(t)\,dt = 0,$$ the symmetry $$s_k(t+1/2) =-s_{k}(t)$$ still holds, and that the smoothening is supported on $\{t\in [0,1]\,|\, |s_k(t)|>k\}$. For convenience, we will view the function $s_k$ as a $1$-periodic function on $\mathbb R$.
Next, define the functions $$h_{k}(t) = \int_0^ts_{k}(r)dr$$ and let $H_{k,a}(x,y) = h_{k}(y),\quad H_{k,b}(x,y) = h_{k}(x)$ be two autonomous Hamiltonians on $T^2$. 
\begin{center}
\begin{tikzpicture}
    \begin{scope}[scale = 1.3, shift = {(-5.5,0)}]
    %grid
    \draw[->] (0,-2)--(0,2) node[above] {$s(t)$};
    \draw[->] (0,0) --(4.5,0) node[right]{$t$};
    \draw[] (-3pt,0) node[left] {$0$} -- (0,0);
    \draw[] (-3pt,1.5) node[left] {$1$}--(3pt,1.5);
    \draw[] (-3pt,-1.5) node[left] {$-1$}--(3pt,-1.5);
    \draw[] (1,3pt)--(1,-3pt) node[below]{$1/4$};
    \draw[] (2,3pt)--(2,-3pt);
    \draw[] (3,3pt)--(3,-3pt) node[below]{$3/4$};
    \draw[] (4,3pt) --(4,-3pt)  node[below]{$1$};
    %function
    \draw[](0,0)--(1,-1.5) -- (3,1.5) -- (4,0);
    \node[] at (2,-2.5) {\text{The function $s(t)$.}};
    \end{scope}
     \begin{scope}[scale = 1.3]
    %grid
    \draw[->] (0,-2)--(0,2)node[above] {$\int_0^ts(r)dr$};
    \draw[] (-3pt,1.5) node[left] {$0$}--(3pt,1.5);
   \node[left] at (-3pt,-1.5) {$-1/4$};
    %function
    \begin{scope}[shift = {(0,-1.5)}]
    \draw[->] (-3pt,0) --(4.5,0) node[right] {$t$};
    \draw[] (1,3pt)--(1,-3pt) node[below]{$1/4$};
    \draw[] (2,3pt)--(2,-3pt) node[below]{$1/2$};
    \draw[] (3,3pt)--(3,-3pt) node[below]{$3/4$};
    \draw[] (4,3pt) --(4,-3pt)  node[below]{$1$};
    \draw[samples=150,smooth,domain=0:1,variable=\t]
  plot ({\t},{3-1.5*\t^2});
  \draw[samples=150,smooth,domain=-1:1,variable=\t]
  plot ({2+\t},{1.5*\t*\t});
  \draw[samples=150,smooth,domain=-1:0,variable=\t]
  plot ({4+\t},{3-1.5*\t*\t});
  \end{scope}
  \node[] at (2,-2.5) {\text{The function $\int_0^ts(r)dr$.}};
    \end{scope}
\end{tikzpicture}
\end{center}Denote by $\phi_{k,a}^t$ and $\phi_{k,b}^t$ the flows induced by these Hamiltonians. They are given by $$\phi_{k,a}^t(x,y) = (x-ts_{k}(y),y)\quad \text{and}\quad \phi_{k,b}^t(x,y)=(x,y+ts_{k}(x)).$$ For $\varepsilon>0$ small enough, consider  $$\phi_k = \phi_{k,b}^{1}\circ\phi_{k,a}^{1}\circ \phi_{k,b}^{1}\circ \phi_{k,a}^{1}$$ generated by $H_k = H_{k,b}\#H_{k,a}\#H_{k,b}\#H_{k,a}$, where $\#$ denotes the concatenation in the time variable. We will show that $\phi_k$ is an example of a sequence promised in Theorem~\ref{thm:entropy}. 
For every homeomorphism $\phi$ of $T^2$ let $\tilde \phi$ denote the lift to the universal cover $\mathbb R^2$. For every fixed point $z\in [0,1]^2$ of $\phi_k$ we call the points $$z_1= \tilde\phi_{k,a}(z),\quad z_2 = \tilde \phi_{k,b}(\tilde \phi_{k,a}(z)),\quad  z_3 = \tilde \phi_{k,a}(\tilde \phi_{k,b}(\tilde \phi_{k,a}(z))),\quad  \text{and}\quad z_4 = \tilde \phi_k(z)$$ intermediate points. Note that $z = (x,y)$ is a fixed point of $\phi_k$ whose orbit is contractible if and only if $\tilde \phi_k(z) = z$. In this case there are two real numbers $R_a,R_b\in \mathbb R$ such that the intermediate points are given by $$z_1 =(x+R_a,y),\quad z_2 = (x+R_a,y+R_b),\quad z_3 = (x,y+R_b) \quad \text{and}\quad  z_4 = (x,y).$$ Indeed, we have $R_a = -s_{k}(y)$ and $R_b= -s_{k}(x)$ as follows from the proof of the following lemma. 
\begin{figure}[h!]
    \begin{tikzpicture}[scale = 1.25]

        \node[] at (-4.5,-2.3) {$(0,0)$}; 
        \filldraw[black] (-4,-2) circle (1pt);
        \draw[thin] (-4,-2.2)--(-4,-0.75);
        \draw[thin] (-4,2.2)--(-4,0.75);
        \draw[dotted] (-4,0.75)--(-4,-0.75);
        \draw[thin] (-3,-2.2)--(-3,-0.75);
        \draw[thin] (-3,2.2)--(-3,0.75);
        \draw[dotted] (-3,0.75)--(-3,-0.75);
        \draw[thin] (3,-2.2)--(3,-0.75);
        \draw[thin] (3,2.2)--(3,0.75);
        \draw[dotted] (3,0.75)--(3,-0.75);
        \draw[thin] (4,-2.2)--(4,-0.75);
        \draw[thin] (4,2.2)--(4,0.75);
        \draw[dotted] (4,0.75)--(4,0.3);
        \draw[dotted] (4,-0.75)--(4,-0.3);
        \node[] at (4,0) {$R_b$};
        %%%%%%%%%%%%%%%%%%%%%%%%%%%%%%%%
        \draw[thin] (-4.2,-2)--(-2.25,-2);
        \draw[thin] (2.25,-2)--(4.2,-2);
        \draw[dotted](-2.2,-1)--(-0.3,-1);
        \draw[dotted](2.2,-1)--(0.3,-1);
        \node[] at (0,-1) {$R_a$};
        \draw[thin] (-4.2,-1)--(-2.25,-1);
        \draw[thin] (2.25,-1)--(4.2,-1);
        \draw[dotted](-2.2,-2)--(2.2,-2);
        \draw[thin] (-4.2,1)--(-2.25,1);
        \draw[thin] (2.25,1)--(4.2,1);
        \draw[dotted](-2.2,1)--(2.2,1);
        \draw[thin] (-4.2,2)--(-2.25,2);
        \draw[thin] (2.25,2)--(4.2,2);
        \draw[dotted](-2.2,2)--(2.2,2);
        \filldraw[black] (-3.5,-1.5) node[below] {$z=z_0$} circle (1.5pt);
        \filldraw[black] (3.5,-1.5) node[below] {$z_1$} circle (1.5pt);
        \filldraw[black] (3.5,1.5) node[above] {$z_2$} circle (1.5pt);
        \filldraw[black] (-3.5,1.5) node[above] {$z_3$} circle (1.5pt);
        \draw[thick] (-3.5,-1.5)  -- (-2.25,-1.5);
        \draw[thick] (2.25,-1.5)--(3.5,-1.5);
        \draw[dotted,thick, postaction={decorate,
  decoration={markings, mark=at position 0.5 with {\arrow{stealth}}}}] (-2.2,-1.5)--(2.2,-1.5);
        \draw[thick] (3.5,-1.5)  -- (3.5,-0.75);
        \draw[thick] (3.5,0.75)--(3.5,1.5);
        \draw[dotted,thick, postaction={decorate,
  decoration={markings, mark=at position 0.5 with {\arrow{stealth}}}}] (3.5,-0.7)--(3.5,0.7);
        \draw[thick] (3.5,1.5)  -- (2.25,1.5);
        \draw[thick] (-2.25,1.5)--(-3.5,1.5);
        \draw[dotted,thick, postaction={decorate,
  decoration={markings, mark=at position 0.5 with {\arrow{stealth}}}}] (2.2,1.5)--(-2.2,1.5);
        \draw[thick] (-3.5,1.5)  -- (-3.5,0.75);
        \draw[thick] (-3.5,-0.75)--(-3.5,-1.5);
        \draw[dotted,thick, postaction={decorate,
  decoration={markings, mark=at position 0.5 with {\arrow{stealth}}}}] (-3.5,0.7)--(-3.5,-0.7);
        \draw[decorate,decoration={brace,amplitude=8pt}](-3.5,-1.5)--(3.5,-1.5);
         \draw[decorate,decoration={brace,amplitude=8pt}](3.5,1.5)--(3.5,-1.5);
    \end{tikzpicture}
    \caption{Dynamics of the intermediate points on $\mathbb R^2$.}
\end{figure}
\begin{lemma}\label{lemma: condition on contractible orbits}
    A point $(x,y)\in T^2$ is a fixed point with contractible orbit if and only if $$ s_{k}(y) =-s_{k}(y-s_{k}(x))\quad \text{and} \quad s_{k}(x) = -s_{k}(x-s_{k}(y)).$$ 
\end{lemma}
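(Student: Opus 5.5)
The plan is to compute the lift $\tilde\phi_k$ to $\mathbb R^2$ explicitly and compare its value at $z$ with $z$ itself. Let $\tilde\phi_{k,a}$ and $\tilde\phi_{k,b}$ be the time-$1$ maps of the lifted flows, $\tilde\phi_{k,a}(x,y)=(x-s_k(y),y)$ and $\tilde\phi_{k,b}(x,y)=(x,y+s_k(x))$. These are well defined on $\mathbb R^2$ because $s_k$ is $1$-periodic. The Hamiltonian $H_k=H_{k,b}\#H_{k,a}\#H_{k,b}\#H_{k,a}$ generates the isotopy obtained by concatenating the four flows. Hence the lift of its trajectory through $z$ starting at $z\in[0,1)^2$ ends at $\tilde\phi_k(z)=\tilde\phi_{k,b}\tilde\phi_{k,a}\tilde\phi_{k,b}\tilde\phi_{k,a}(z)$.

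The first step is the standard covering-space fact on $T^2=\mathbb R^2/\mathbb Z^2$. The $1$-periodic orbit through a fixed point $z$ is a loop whose lift starting at $z$ ends at $z+v$ for some $v\in\mathbb Z^2$. The free homotopy class of the loop is $v\in\pi_1(T^2)=\mathbb Z^2$, since $\pi_1$ is abelian and free homotopy classes coincide with based ones. So $z$ is a fixed point with contractible orbit if and only if $\tilde\phi_k(z)=z$ holds exactly in $\mathbb R^2$.

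The second step is to compute the intermediate points. Write $z=(x,y)$. Then
$z_1=(x-s_k(y),\,y)$ and $z_2=(x-s_k(y),\,y+s_k(x-s_k(y)))$. Writing $y_2=y+s_k(x-s_k(y))$, we get $z_3=(x-s_k(y)-s_k(y_2),\,y_2)$, and with $x_3$ the first coordinate of $z_3$, $z_4=(x_3,\,y_2+s_k(x_3))$.

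The third step is to impose $z_4=z$ componentwise.
\begin{itemize}
\item The first coordinate gives $s_k(y_2)=-s_k(y)$. In particular $x_3=x$.
\item Using $x_3=x$, the second coordinate gives $s_k(x-s_k(y))+s_k(x)=0$, i.e.\ $s_k(x)=-s_k(x-s_k(y))$.
\item Substituting this back yields $y_2=y-s_k(x)$. The first condition then becomes $s_k(y)=-s_k(y-s_k(x))$.
\end{itemize}
These are exactly the two equations in the statement.

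For the converse, assume both equations hold. The second equation gives $y_2=y-s_k(x)$. The first equation then gives $s_k(y_2)=-s_k(y)$, so $x_3=x$. Finally $y_2+s_k(x)=y$, so $z_4=z$. Along the way we also read off $R_a=-s_k(y)$ and $R_b=-s_k(x)$, as claimed before the lemma.

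The only delicate point is the first step: justifying that contractibility is equivalent to the lift closing up exactly. This uses that the time-dependent trajectory is the concatenation of the four lifted flow segments, and that the displacement vector determines the free homotopy class on the torus. Everything else is direct substitution. Periodicity of $s_k$ ensures that the equations are well defined for $(x,y)\in T^2$.
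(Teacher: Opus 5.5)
Your proof is correct and is essentially the paper's argument: compute the intermediate points of the lifted map $\tilde\phi_k$ in $\mathbb R^2$, impose the exact equality $z_4=z$ (not just equality modulo $\mathbb Z^2$), and solve by substitution, with the converse checked by direct substitution. The only difference is that you spell out the covering-space reason why contractibility of the orbit is equivalent to the lift closing up exactly in $\mathbb R^2$, which the paper simply asserts.
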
 
\begin{proof}
    The intermediate points of any point $(x_0,y_0)$ are given by $$(x_1,y_1)=(x_0-s_{k}(y_0),y_0),\quad (x_2,y_2)=(x_1,y_1+s_{k}(x_1)),\quad (x_3,y_3)= (x_2-s_{k}(y_2),y_2).$$ Moreover, $(x_0,y_0)$ is a fixed point with a contractible orbit if and only if $$(x_0,y_0)= (x_4,y_4)= (x_3,y_3+s_{k}(x_3)).$$ Importantly, we consider the equality in $\mathbb R^2$ and not merely on $T^2$. We find that $(x_2,y_2) = (x_0-s_k(y_0),y_0+s_k(x_0-s_{k}(y_0)))$ and, similarly, $(x_0,y_0) = (x_4,y_4) = (x_2-s_k(y_2),y_2+s_k(x_2-s_k(y_2)))$. Adding one equation to the other we find \begin{align}\label{eq:lemmma conditions on contractibel orbits 1}s_k(y_2) = -s_k(y_0) \quad \text{and}\quad  s_k(x_2-s_k(y_2))=-s_k(x_0-s_k(y_0))\end{align} and since $x_0=x_4 = x_3 = x_2-s_k(y_2)$ we find \begin{align}\label{eq:lemmma conditions on contractibel orbits 2}s_k(x_0) = -s_k(x_0-s_k(y_0)).\end{align}  Using (\ref{eq:lemmma conditions on contractibel orbits 2}) we now find $$s_k(y_2 ) = s_k(y_1+s_k(x_1))= s_k(y_0+s_k(x_0-s_k(y_0)))=s_k(y_0-s_k(x_0)),$$ where we use $y_0 = y_1$. Hence, combining this with (\ref{eq:lemmma conditions on contractibel orbits 1}) we find $$ s_k(y_0) = -s_k(y_0-s_k(x_0)).$$  On the other hand, assume that $s_k(x_0) = -s_k(x_0-s_k(y_0))$ and $s_k(y_0) = -s_k(y_0-s_k(x_0))$.  Then we have $s_k(x_0) = -s_k(x_2)$ since $x_2 = x_1 = x_0-s_k(y_0)$. Moreover, we then have $s_k(y_0) = -s_k(y_2)$ since $y_0-s_k(x_0) = y_1+s_k(x_2) = y_1+s_k(x_1) = y_2$. We now find
    \begin{align*}\begin{split}z_4& = (x_4,y_4) \\ & = (x_3,y_3+s_k(x_3))\\ & = (x_2-s_k(y_2),y_2+s_k(x_2-s_k(y_2))) \\ & =(x_1-s_k(y_2),y_1+s_k(x_1)+s_k(x_1+s_k(y_0)))\\ &= (x_1-s_k(y_2),y_1+s_k(x_2)+s_k(x_0))\\ & = (x_0-s_k(y_0)-s_k(y_2),y_0+s_k(x_2)+s_k(x_0))\\ & = (x_0,y_0)\end{split}\end{align*}
 which means that $(x_0,y_0)$ is a fixed point with contractible orbit. \end{proof}
 
 \begin{lemma}\label{lemma: fixed points are non-degenerate}
    For $k\in \mathbb Z_{\geq 1}$ large enough and $\varepsilon>0$ small enough, all fixed points $z = (x,y)$ with contractible orbits are non-degenerate and satisfy $|s_k(x)|< k$ and $|s_k(y)|< k$.
\end{lemma}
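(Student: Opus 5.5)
The plan is to make the fixed-point equations of Lemma~\ref{lemma: condition on contractible orbits} quantitative, using that $s_k$ is piecewise linear with slope $\pm 4(k+\varepsilon)$. Only near its extrema, where $|s_k|>k$, is $s_k$ smoothed; there its values lie in $[k,k+\varepsilon]$ in absolute value. I first prove the bounds $|s_k(x)|<k$ and $|s_k(y)|<k$. These put all relevant points in the linear region, and non-degeneracy then becomes a computation with shear matrices.

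\emph{Step 1 (bounds).} Let $(x,y)$ be a contractible fixed point and suppose $|s_k(x)|\geq k$, so $s_k(x)=\pm(k+\eta)$ with $0\le\eta\le\varepsilon$.
\begin{itemize}
\item The relation $s_k(x)=-s_k(x-s_k(y))$ forces $|s_k(x-s_k(y))|\geq k$. Both $x$ and $x-s_k(y)$ therefore lie in the $O(\varepsilon/k)$-neighbourhoods of $1/4$ and $3/4$ (mod $1$), and the signs of $s_k$ at the two points are opposite. Hence $s_k(y)$ lies within $O(\varepsilon/k)$ of a half-integer $n+\tfrac12$.
\item In the second relation $s_k(y)=-s_k(y-s_k(x))$, the shift $s_k(x)$ equals the integer $\pm k$ up to an error of size at most $\varepsilon$.
\item By $1$-periodicity and the Lipschitz bound $4(k+\varepsilon)$, $s_k(y-s_k(x))=s_k(y)+O(k\varepsilon)$. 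So $s_k(y)=-s_k(y)+O(k\varepsilon)$, i.e.\ $|s_k(y)|=O(k\varepsilon)$.
\end{itemize}
For $\varepsilon\ll 1/k^2$ this contradicts $|s_k(y)|\approx|n+\tfrac12|\geq\tfrac12$. The bound on $s_k(y)$ follows by the symmetric argument with the roles of $x$ and $y$ exchanged. Since the same relations give $s_k(x_1)=s_k(x-s_k(y))=-s_k(x)$ and $s_k(y_2)=-s_k(y)$, the intermediate coordinates at which derivatives of $s_k$ are evaluated also satisfy $|s_k|<k$. All four lie in the unsmoothed, linear region.

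\emph{Step 2 (non-degeneracy).} I write $d\phi_k$ at $z$ via the chain rule along the intermediate points as $B_2A_2B_1A_1$. Here $A_i=\begin{pmatrix}1&a_i\\0&1\end{pmatrix}$ and $B_i=\begin{pmatrix}1&0\\b_i&1\end{pmatrix}$, where $a_i=-s_k'$ and $b_i=s_k'$ are evaluated at the appropriate intermediate coordinates. By Step 1 each of $a_i,b_i$ is $\pm c$ with $c=4(k+\varepsilon)$. A direct computation gives
\[
\operatorname{tr}(B_2A_2B_1A_1)=2+(a_1+a_2)(b_1+b_2)+a_1a_2b_1b_2 .
\]
In this expression $|a_1a_2b_1b_2|=c^4$ while $|(a_1+a_2)(b_1+b_2)|\le 4c^2$. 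Hence the trace differs from $2$ once $c^2>4$, which holds for every $k\geq1$. A symplectic $2\times2$ matrix with trace $\neq 2$ does not have eigenvalue $1$, so $z$ is non-degenerate.

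\emph{Main obstacle.} I expect the delicate part to be Step 1: tracking the $O(\varepsilon)$ errors coming from the smoothing and from $s_k(x)$ being only approximately an integer. The key is the order of choices: fix $k$ first, then take $\varepsilon\ll k^{-2}$, so that the $O(k\varepsilon)$ error stays below $\tfrac12$. The argument must use only that the smoothing is supported where $|s_k|>k$, which is exactly what the construction guarantees.
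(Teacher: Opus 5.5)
Your proposal is correct and follows essentially the same route as the paper. Both arguments rule out $|s_k|\geq k$ by combining the two fixed-point relations of Lemma~\ref{lemma: condition on contractible orbits} with two facts: on the smoothed region $s_k$ is within $O(\varepsilon)$ of the integer $\pm k$, and $s_k$ is Lipschitz. Both then show $\operatorname{tr}(d_z\phi_k)\neq 2$ for the product of four shears. The differences are cosmetic: you close the contradiction with the half-integer observation, and you handle the intermediate points via $s_k(x_1)=-s_k(x)$ and $s_k(y_2)=-s_k(y)$, whereas the paper passes to the fixed point $\phi_{k,b}^1\circ\phi_{k,a}^1(z)$.
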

\begin{proof}
    We first show that any fixed point $(x,y)\in T^2$ with a contractible orbit has the property that for all intermediate points $(x_i,y_i)$ the coordinates $x_i,y_i$ lie in the part where $s_k$ is linear. Assume that this is not the case. Without loss of generality, we assume this fails, i.e. $x = x_0$ or $y=y_0$. Indeed, if this is not the case, it must hold for $x_2$ or $x_3$ (recall that $x_0 = x_3$, $x_1 = x_2$, $y_0 = y_1$, and $y_2 = y_3$). In this case, we consider the fixed point $\tilde z = \phi_{k,b}^1\circ \phi_{k,a}^1(z)$ which is given by $(x_2,y_2)$.\\
    This means that $|s_k(y)| \in [ k,k+3\varepsilon/2]$ or $|s_k(x+s_k(y))| \in [k,k+3\varepsilon/2]$ where we assume that the perturbation to a smooth approximation $s$ of the original function is small enough and done in a small enough neighborhood of non-smooth points.\\ First assume that $|s_k(y)| \in [ k,k+3\varepsilon/2]$. Since $s_k$ is periodic, we have $s_k(x-s_k(y)) \in s_k(x\pm [ 0,3\varepsilon/2])$. By Lemma~\ref{lemma: condition on contractible orbits} we must have $s_k(x) = -s_k(x-s_k(y))$ and hence we find $0<c\leq 3 \varepsilon/2$ such that $$|s_k(x)|= {1\over 2}|s_k(x)-s_k(x-s_k(y))| = {1\over 2}|s_k(x)-s_k(x\pm c)|.$$ Choosing $\varepsilon$ small enough, we have that $c>0$ is arbitrarily small. By continuity, for every $\delta>0$ we have that $|s_k(x)| = {1\over2}|s_k(x)-s_k(x\pm c)|<\delta$ when $\varepsilon>0$, and therefore also $c$, is small enough. But this contradicts $s_k(y) = -s_k(y-s_k(x))$ by choosing $\delta$ small enough. Indeed, we similarly have $$|s_k(y)| ={1\over 2}|s_k(y)-s_k(y-s_k(x))|$$ which tends to $0$ when $\varepsilon$ tends to $0$ since in this case $|s_k(x)|$ tends to $0$. This contradicts the assumption that $|s_k(y)|\geq k$. An analogous argument holds in the case $|s_k(x-s_k(y))|\in [k,k+3\varepsilon/2]$ by using $|s_k(x-s_k(y))| = |s_k(x)|$.\\
    Hence, we may assume that $
    x_i, y_i$ lie in the part of $[0,1]$ where $s_k$ and $s_k$ respectively are linear. A computation shows  we have \begin{align*}\begin{split}d_z\phi_k & = \begin{pmatrix}
        1& 0\\4\epsilon_4\lambda& 1
    \end{pmatrix}\begin{pmatrix}
        1& 4\epsilon_3\lambda\\0& 1
    \end{pmatrix}\begin{pmatrix}
        1& 0\\4\epsilon_2\lambda& 1
    \end{pmatrix}\begin{pmatrix}
        1& 4\epsilon_1\lambda\\ 0 & 1
    \end{pmatrix} \\ &= \begin{pmatrix}1+16\epsilon_2\epsilon_3\lambda^2
& \cdots\\ \cdots & 1+16(\epsilon_1\epsilon_2+\epsilon_3\epsilon_4+\epsilon_4\epsilon_1)\lambda^2+256\epsilon_1\epsilon_2\epsilon_3\epsilon_4\lambda^4\end{pmatrix}\end{split},\end{align*} where $\lambda=k+\varepsilon$ and $\epsilon_i\in \{\pm 1\}$ depends on $z$ and is given by $$\epsilon_1 = -\text{sign}(\dot s_k(y_0)),\quad \epsilon_2 = \text{sign}(\dot s_k(x_1)),\quad \epsilon_3 = -\text{sign}(\dot s_k(y_2)),\quad \epsilon_4 = \text{sign}(\dot s_k(x_3)).$$ Since $\det(d_z\phi_k) = 1$, $
1$ is an eigenvalue of $d_z\phi_k$ if and only if $\text{tr}(d_z\phi_k) = 2$. However, we have $$\text{tr}(d_z\phi_k) = 2+16(\epsilon_1\epsilon_2+\epsilon_2\epsilon_3+\epsilon_3\epsilon_4+\epsilon_4\epsilon_1)\lambda^2+256\epsilon_1\epsilon_2\epsilon_3\epsilon_4\lambda^4\neq2$$ for $k\geq 1$ and $\varepsilon>0$ small enough. Therefore, any fixed point with a contractible orbit is non-degenerate.
\end{proof} 
As a path of Hamiltonian diffeomorphisms from the identity to $\phi_k$ we consider the following isotopy \begin{align}\label{def: path of Hamiltonian diffoemorphisms}\{\phi_{k,b}^{t}\}\#\{\phi_{k,a}^{t}\}\#\{\phi_{k,b}^t\}\#\{\phi_{k,a}^t\}.\end{align}
We now characterize the fixed points with contractible orbits using the following data. Firstly, for every such fixed point $z$ of $\phi_k$ the \emph{winding numbers} $(m,n)\in \mathbb Z^2$ are defined by \begin{align}\label{eq: coefficients in spelling}
     m= \lfloor x-s_{k}(y)\rfloor\quad \text{and}\quad n = \lfloor y-s_{k}(x)\rfloor,
 \end{align} where $\lfloor \cdot \rfloor$ denotes the floor function. Since $|s_k|$ is bounded by $k+\varepsilon$, the winding numbers of any fixed points lie in $m,n\in [-k,k]$. Indeed, by Lemma~\ref{lemma: fixed points are non-degenerate} we have $|s_k(y)| < k$ and $x\in [0,1)$ and therefore $x-s_k(y)\in (-k,k+1)$ which means $|m|\leq k$. Likewise, we find $|n|\leq k$.  Next, we define the \emph{branching index} $\epsilon = \epsilon(z)\in \{\pm 1 \}^4$ by   $$\epsilon_1 = -\text{sign}(\dot s_k(y_0)),\quad \epsilon_2 = \text{sign}(\dot s_k(x_1)),\quad \epsilon_3 = -\text{sign}(\dot s_k(y_2)),\quad \epsilon_4 = \text{sign}(\dot s_k(x_3)),$$ where $z_i = (x_i,y_i)$ are the intermediate points. The branching index captures whether the flow locally acts as a positive or negative shear at the intermediate points. 

We now study the dynamics of fixed points $z=(x,y)$ such that $s(x)\neq 0\neq s(y)$. Let $m,n\in [-k,k]$ and fix a branching index $\epsilon$.
 Denote by $\bar z_i$ the shifted intermediate points in $[0,1]^2$ given by $$\bar z_0 = z_0\quad \bar z_1= z_1-(m,0),\quad \bar z_2 = z_2-(m,n),\quad \bar z_3 = z_3-(0,n).$$ The point $z_0$ is a fixed point of $\phi_k$ with contractible orbit and winding number $(m,n)$ if and only if it solves the following system of equations. \begin{align}\label{eq: system of equations 1}
        \begin{split}
            \overline z_1 = \overline z_0 + (-s_{k}(\overline y_0)-m,0),\quad \overline z_2 = \bar z_1 + (0,s_{k}(\overline x_1)-n)   \\
            \overline z_3 = \overline z_2 + (-s_{k}(\overline y_2)+m,0),\quad \overline z_0 = \bar z_3 + (0,s_{k}(\overline x_3)+n).
        \end{split}
    \end{align}  This can be written as \begin{align*}
        \begin{split}\overline z_1 = \begin{pmatrix}
            1& 4\epsilon_1\lambda\\ 0 & 1
        \end{pmatrix}\overline z_0 + \nu_{1}, \quad \overline z_2 = \begin{pmatrix}
            1&0\\ 4\epsilon_2\lambda& 1
        \end{pmatrix}\overline z_1+ \nu_{2}, \\ 
        \overline z_3= \begin{pmatrix}
            1& 4\epsilon_3\lambda\\ 0 & 1
        \end{pmatrix}\overline z_2 + \nu_{3}, \quad \overline z_0 = \begin{pmatrix}
            1&0\\ 4\epsilon_4\lambda& 1
        \end{pmatrix} \overline z_3 + \nu_{4},
        \end{split}
    \end{align*}
    where $\lambda = k+\varepsilon$ and $\nu_{i}$ are shift parameters given by \begin{align*}
        \begin{aligned}
            &\nu_{1} = \begin{cases}
                (-m,0), & \epsilon_1 = 1,s(y) <0\\ (-m+2 \lambda,0), & \epsilon_1 = -1\\
                (-m-4\lambda ,0), & \epsilon_1 = 1,s(y)>0
            \end{cases},&\nu_{2} = \begin{cases}
                (0,-n), & \epsilon_2 = -1,s(x) >0\\ (0,-n-2 \lambda), & \epsilon_2 = 1\\
                (0,-n+4\lambda ), & \epsilon_2 = -1,s(x)<0
            \end{cases},\\
            &\nu_{3} = \begin{cases}
                (m,0), & \epsilon_3 = 1,s(y) >0\\ (m+2 \lambda,0), & \epsilon_3 = -1\\
                (m-4\lambda ,0), & \epsilon_3 = 1, s(y)<0
            \end{cases},&\nu_{4} = \begin{cases}
                (0,n), & \epsilon_4 = -1,s(x) <0\\ (0,n-2 \lambda), & \epsilon_4 = 1\\
                (0,n+4\lambda ), & \epsilon_4 = -1, s(x)>0
            \end{cases},
        \end{aligned}
    \end{align*}
    where the cases depend on the direction of flow and slope of the speed function at the intermediate points, and we use that $s_k(x_1) =-s_k(x_0) = -s_k(x)$ and $s_k(y_2) = -s_k(y_0) =-s_k(y)$. One can formulate this with closed forms by \begin{align*}
        \begin{split}
            &\nu_{1} = (-m-\lambda[\text{sign}(s(y))(1+\epsilon_1)+2\epsilon_1],0), \, \nu_{2} = (0,-n-\lambda[\text{sign}(s(x))(1-\epsilon_2)+2\epsilon_2]),\\
            &\nu_{3} = (m+\lambda[\text{sign}(s(y))(1+\epsilon_3)-2\epsilon_3],0), \, \nu_{4} = (0,n+\lambda[\text{sign}(s(x))(1-\epsilon_4)-2\epsilon_4]).
        \end{split}
    \end{align*} 
    Any fixed point then satisfies the equation \begin{align}\label{eq: accumulated equation} (A-id)z = -\nu,\end{align} where $$A = \begin{pmatrix}
        1&0 \\  4\epsilon_4\lambda & 1
    \end{pmatrix} \begin{pmatrix}
        1& 4\epsilon_3\lambda \\ 0 & 1
    \end{pmatrix}\begin{pmatrix}
        1&0 \\  4\epsilon_2\lambda & 1
    \end{pmatrix} \begin{pmatrix}
        1& 4\epsilon_1\lambda \\ 0 & 1
    \end{pmatrix}$$ and $\nu$ is the accumulated shift parameter $$\nu = \nu_4+\begin{pmatrix}
        1 & 0 \\ 4\epsilon_4\lambda & 1
    \end{pmatrix} \left [\nu_3+\begin{pmatrix}
        1 &  4\epsilon_3\lambda\\0 & 1
    \end{pmatrix}\left [\nu_2+\begin{pmatrix}
        1 & 0\\4\epsilon_2\lambda & 1
    \end{pmatrix} \nu_1\right] \right].$$
In the proof of Lemma~\ref{lemma: fixed points are non-degenerate}, we already showed that $A$ does not have $1$ as an eigenvalue and hence $A-id$ is invertible. Hence, there is a unique solution $z$ to (\ref{eq: accumulated equation}). The solution to this equation is approximated by $$x = {1\over 2}+{\text{sign}(s(x)) (1-\epsilon_4)\over 4}-{\epsilon_4n \over 4k}+{1\over k}\left ({(\epsilon_1+\epsilon_3)\epsilon_4\over 16}\left({m\over k}+\text{sign}(s(y))\right )+{\epsilon_4\text{sign}(s(y))\over 8}\right)+O\left (1\over k^2\right)$$ $$y = {1\over 2}+{\text{sign}(s(y)) (1+\epsilon_1)\over 4}+{\epsilon_1m \over 4k}+{1\over k}\left ({(\epsilon_2+\epsilon_4)\epsilon_1\over 16}\left({n\over k}+\text{sign}(s(x))\right )-{\epsilon_1\text{sign}(s(x))\over 8}\right)+O\left (1\over k^2\right),$$ where we use that $\varepsilon<{1\over k}$ is chosen small enough for each $k$ as can be verified with a Python script. The point $(x,y)$ is a fixed point of $\phi_k$ exactly if it satisfies $(x,y)\in [0,1)^2$.

Lastly, note that for every fixed point $z$ of $\phi_k$ we must either have $$s(x)=s(y) = 0\quad \text{or}\quad s(x) \neq 0 \neq s(y).$$ Indeed, if $s(x) =0$ then Lemma~\ref{lemma: condition on contractible orbits} asserts that $s(y) = -s(y)$ and hence also must be $0$ and analogously if the role of $x$ and $y$ are swapped. We formulate the result of this discussion in the following lemma. 

\begin{lemma}\label{lemma: classify fixed points}
Any fixed point $z= (x,y)$ of $\phi_k$ with contractible orbits either satisfies $s(x) = s(y) = 0$ or solves (\ref{eq: accumulated equation}).
\end{lemma}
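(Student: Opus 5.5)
This is essentially a bookkeeping statement assembling facts already established, and I would argue it in three steps.

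\emph{Step 1: the dichotomy.} Let $z=(x,y)$ be a fixed point of $\phi_k$ with contractible orbit. Since $s_k=(k+\varepsilon)s$ up to a smoothing supported where $|s_k|>k$, Lemma~\ref{lemma: fixed points are non-degenerate} gives $|s_k(x)|,|s_k(y)|<k$. Hence $s_k(x)=0$ if and only if $s(x)=0$, and likewise for $y$. Suppose $s(x)=0$. Lemma~\ref{lemma: condition on contractible orbits} gives $s_k(y)=-s_k(y-s_k(x))=-s_k(y)$, so $s_k(y)=0$ and therefore $s(y)=0$. The same argument applies with the roles of $x$ and $y$ swapped. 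So either $s(x)=s(y)=0$, or $s(x)\neq 0\neq s(y)$.

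\emph{Step 2: the case $s(x)\neq0\neq s(y)$.} By the first part of the proof of Lemma~\ref{lemma: fixed points are non-degenerate}, every coordinate of every intermediate point lies in a region where $s_k$ is affine with slope $\pm4\lambda$, where $\lambda=k+\varepsilon$. The branching index $\epsilon(z)$ is then defined, and so are the winding numbers $(m,n)$ from (\ref{eq: coefficients in spelling}), with $|m|,|n|\le k$.

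Next I would justify the system (\ref{eq: system of equations 1}). This is the definition of the intermediate points, rewritten after subtracting the integer translations $(m,0)$, $(m,n)$ and $(0,n)$, so that each shifted point $\bar z_i$ lies in $[0,1]^2$. On each affine branch, $s_k(t)$ equals $\pm4\lambda t$ plus a constant, and that constant is determined by which branch is active. The branch is determined by $\epsilon_i$ together with $\mathrm{sign}(s(x))$ or $\mathrm{sign}(s(y))$. Here I use the identities $s_k(x_1)=-s_k(x)$ and $s_k(y_2)=-s_k(y)$, which come from the proof of Lemma~\ref{lemma: condition on contractible orbits}. Substituting these affine expressions turns each step into the shear-plus-translation form with the listed $\nu_i$.

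\emph{Step 3: composing.} Composing the four affine maps gives $\bar z_0=A\bar z_0+\nu$, which is exactly (\ref{eq: accumulated equation}).

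The main obstacle is Step 2. One has to check, case by case, that the branch constants match the displayed formulas for the $\nu_i$. In particular, the sign conventions at $t$ near $0$ versus near $1$ on the decreasing pieces of $s$ must be handled carefully. I would verify this on the three pieces of $s$ separately, using the symmetry $s_k(t+1/2)=-s_k(t)$ to reduce the checks for $\nu_3$ and $\nu_4$ to those already done for $\nu_1$ and $\nu_2$.
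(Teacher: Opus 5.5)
Your proposal is correct and follows the paper's own argument. The paper also obtains the dichotomy from Lemma~\ref{lemma: condition on contractible orbits}. For $s(x)\neq0\neq s(y)$, it places the intermediate coordinates on affine branches via Lemma~\ref{lemma: fixed points are non-degenerate}. It then rewrites the four steps as the shifted system (\ref{eq: system of equations 1}) with the listed $\nu_i$ and composes them into (\ref{eq: accumulated equation}).

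One small correction to your checking plan concerns how the $\nu_3,\nu_4$ tables relate to the $\nu_1,\nu_2$ tables. The link is the identities $s_k(\bar y_2)=-s_k(y)$ and $s_k(\bar x_1)=-s_k(x)$, which flip which sign of $s$ selects $[0,1/4]$ versus $[3/4,1)$. The half-period symmetry $s_k(t+1/2)=-s_k(t)$ does not do this, because it does not map the three affine pieces of $s$ on $[0,1)$ to one another.
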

Let $z$ be a fixed point of $\phi_k$ and let $\bar z_i$ denote its intermediate points projected to the torus. The intermediate points pass from $\bar z_i$ to $\bar z_{i+1}$ by either an action of $\phi_{k,a}$ or $\phi_{k,b}$. At these points, the differential $d_{\bar z_{i}}(\phi_{k,a})$ or $d_{\bar z_{i}}\phi_{k,b}$ can take two forms depending on which branch of $s$ the corridnate $\bar y_i$ or $\bar x_i$ lies determined by the branching index at $\epsilon_i$. Namely, in the standard trivialization of $TT^2$ we have $$d_{\bar z_i}\phi_{k,a} = \begin{pmatrix}
    1 & \epsilon_i4\lambda\\ 0 & 1
\end{pmatrix},\quad d_{\bar z_i}\phi_{k,b} = \begin{pmatrix}
    1 & 0 \\ \epsilon_i 4\lambda & 1
\end{pmatrix}.$$ 
The following lemma is somewhat computation-heavy, and we will prove it in Appendix~\ref{section: appendix}. 
\begin{lemma}\label{lemma: index of orbits}
    Let $z$ be a fixed point of $\phi_k$ with $\epsilon = \epsilon(z) \in \{\pm 1\}^4$.
    For $k$ large enough we have $$\mu(z) = {(\epsilon_1+\epsilon_3)-(\epsilon_2+\epsilon_4)\over 2}.$$
\end{lemma}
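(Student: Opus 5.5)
The index is a Maslov index of a path of symplectic matrices, so I would compute it directly from the linearized flow along the concatenated isotopy (\ref{def: path of Hamiltonian diffoemorphisms}). The torus has the global trivialization $TT^2\cong T^2\times\mathbb R^2$, and every strand is contractible. So $\mu(z)$ is the Conley--Zehnder index of the path $\Psi(t)=d_z\phi^t$ in this trivialization, and no capping correction arises. Along each of the four pieces the linearization is a shear. By Lemma~\ref{lemma: fixed points are non-degenerate} every intermediate point lies on a linear branch of $s_k$, so the pieces are $t\mapsto\begin{pmatrix}1&4\epsilon_i\lambda t\\0&1\end{pmatrix}$ or $\begin{pmatrix}1&0\\4\epsilon_i\lambda t&1\end{pmatrix}$ (up to reparametrization in $t$), with $\lambda=k+\varepsilon$. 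For $s(x)=s(y)=0$ one still has to check that the points sit on linear branches; there $s_k$ is linear near $1/4,3/4$ or near $0,1/2$, so this is automatic. Hence $\Psi$ is the concatenation of four shear paths, and it ends at $A=d_z\phi_k$, which does not have $1$ as an eigenvalue.

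Next I would compute $\mu$ through the Robbin--Salamon index: sum crossing-form signatures plus endpoint contributions. Equivalently, I would homotope the path with fixed endpoints into a normal form. The crossing form of a shear path $\begin{pmatrix}1&ct\\0&1\end{pmatrix}$ is degenerate, because every $t$ is a crossing. So I would instead use the standard rule that a shear path $\exp(tJS)$ with $S$ a rank-one semidefinite matrix contributes $\pm\tfrac12$ according to the sign of $S$. The generator of $\begin{pmatrix}1&c\\0&1\end{pmatrix}$ is $J S$ with $S=\mathrm{diag}(0,\pm c)$ up to convention. With the convention fixed, the $\phi_{k,a}$-shears then contribute $+\tfrac12\epsilon_i$ and the $\phi_{k,b}$-shears $-\tfrac12\epsilon_i$, which gives the formula's signs.

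Additivity of the Robbin--Salamon index under concatenation requires the intermediate endpoints to be handled correctly. The partial products are not the identity, so I would use the catenation property for paths starting at a common symplectic matrix. A cleaner alternative is the symplectic-shift property: write $\Psi$ as the product path $\Psi_4\Psi_3\Psi_2\Psi_1$, where each factor is a shear path from the identity, and use the fact that the index of a product of loops/paths homotopes to the sum when the endpoints behave well. In the Robbin--Salamon normalization the index of $\Psi$ equals $\sum_i \mu_{RS}(\Psi_i)$ plus a correction given by Maslov triple (Wall/Kashiwara) indices of the Lagrangian graphs at the junctions. This yields the raw sum $\tfrac12[(\epsilon_1+\epsilon_3)-(\epsilon_2+\epsilon_4)]$ plus a correction. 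I would show the correction is zero for $k$ large by deforming $\lambda\to\infty$ through values where $A$ stays nondegenerate. The trace formula in the proof of Lemma~\ref{lemma: fixed points are non-degenerate} shows $\operatorname{tr}A-2$ is dominated by $256\epsilon_1\epsilon_2\epsilon_3\epsilon_4\lambda^4$ for $\lambda$ large. So the endpoint never acquires eigenvalue $1$ along such a deformation, and the index is locally constant in $\lambda$.

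The main obstacle is the last step: computing the correction term for the endpoint $A$ and confirming that the result is an integer of the stated form. Note that $(\epsilon_1+\epsilon_3)-(\epsilon_2+\epsilon_4)$ is always even, so $\mu$ lands in $\{-2,\dots,2\}$. To settle it concretely, I would do a direct computation in the asymptotic regime. I would track the winding of the path in $\mathrm{Sp}(2)$ via the rotation of a vector, for instance using the polar decomposition, or by counting the crossings of $\Psi(t)$ with the Maslov cycle $\{\det(\Psi-\mathrm{id})=0\}$ after a small generic perturbation. For large $\lambda$ each shear moves a vector nearly parallel to one axis, so the angle changes by approximately $\pm\pi/2$ at each junction, with the signs dictated by the $\epsilon_i$. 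Summing these changes and applying the standard rule (for hyperbolic $A$, $\mu$ is the rounded rotation number) gives the stated formula. I would verify this on the case $\epsilon=(1,-1,1,-1)$, where the formula should give $2$, against a direct rotation count as a check.
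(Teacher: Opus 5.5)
Your setup is sound. In the global trivialization the linearized path is exactly the concatenation of the four shear paths, and $A=d_z\phi_k$ is hyperbolic for $k$ large. In the paper's $\omega\oplus-\omega$ convention (crossing form $Q(x,y)=-qx^2+py^2$ at $t=0$), a horizontal shear with entry $ct$ contributes $\tfrac12\operatorname{sign}c$ and a vertical one contributes $-\tfrac12\operatorname{sign}c$, as you say. The gap is the step that turns these half-contributions into $\mu(z)$. The index of $t\mapsto\Psi_i(t)B$ with $B\neq\mathrm{id}$ is not $\mu(\Psi_i)$ in general, and your argument that the junction correction vanishes does not work. Deforming $\lambda\to\infty$ while $A$ avoids eigenvalue $1$ only shows that $\mu(z)$, and hence the correction, is constant for large $\lambda$. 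It says nothing about the value of that constant. The corrections do cancel, since the paper's formula is exactly your raw sum, but nothing in the proposal establishes this. As written, the decisive step is missing.

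Your fallback, the rotation count, is the right idea, and made precise it gives a correct proof by a different route. It needs three things.
\begin{itemize}
\item Track the expanding eigenvector $v$ of $A$, not an arbitrary vector. From the explicit product, $v$ is $O(1/k)$-close to the vertical axis.
\item Use monotonicity of the polar angle of $w(t)$ along each shear: $\dot\theta=-c\,w_2^2/|w|^2$ for a horizontal shear and $\dot\theta=c\,w_1^2/|w|^2$ for a vertical one. At the junctions $v$ sits $O(1/k)$-close alternately to the vertical and horizontal axes. Hence the four pieces rotate $v$ by $-\epsilon_1\tfrac\pi2$, $\epsilon_2\tfrac\pi2$, $-\epsilon_3\tfrac\pi2$, $\epsilon_4\tfrac\pi2$, each up to $O(1/k)$.
\item Close the count. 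Since $Av\in\mathbb Rv$, the total rotation lies exactly in $\pi\mathbb Z$, so for $k$ large it equals $-\tfrac\pi2\bigl[(\epsilon_1+\epsilon_3)-(\epsilon_2+\epsilon_4)\bigr]$. For a path ending at a hyperbolic matrix, $\mu(z)$ is minus this rotation divided by $\pi$. That sign must be derived from the paper's convention (clockwise counts positively, as its $t=0$ crossing form shows), not chosen to match the target formula.
\end{itemize}

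The paper proceeds differently. It homotopes to the simultaneous-shear path $P_\epsilon(t)$ and locates crossings from $\operatorname{tr}P_\epsilon(t)=2$. It then computes Robbin--Salamon crossing forms, including the extra crossing at $t_1=2/\lambda$ in the alternating cases. In the twelve cases with a degenerate crossing at $t=0$, it perturbs by extra shears $\alpha t,\beta t$ and invokes stratum-homotopy invariance. The paper's method stays inside the crossing-form formalism but needs a case split and perturbations. Your rotation argument treats all sixteen sign patterns uniformly and explains the formula as one quarter-turn per shear. Its cost is relying on the $n=1$ rotation characterization of the Conley--Zehnder index for hyperbolic endpoints.
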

We will estimate the stability radius of braids by action estimates. For this, the following action formula will be essential. 
\begin{lemma}\label{lemma: action}
    For every fixed point $z =(x,y)$ with contractible orbit such that $s(x)\neq 0 \neq s(y)$, with winding numbers $(m,n)$ and branching index $\epsilon = \epsilon(z)\in \{\pm 1\}^4$ the action of its orbit $\gamma$ is given by $$\mathcal A_{H_k}(\gamma) = -mn+{k\over 4}(\mu(z)-2)+{\epsilon_1+\epsilon_3\over 8}\left ({m^2\over k}-2|m|\right)-{\epsilon_2+\epsilon_4\over 8}\left ({n^2\over k}-2|n|\right)-{|m|+|n|\over 2}+ O(1).$$
\end{lemma}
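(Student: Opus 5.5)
To compute $\mathcal A_{H_k}(\gamma)$ I will use that $H_k$ is a time-concatenation of four autonomous shear Hamiltonians and that the orbit is contractible. The action then splits into four Hamiltonian terms and one symplectic area term. Concretely, $\gamma$ is the concatenation of four straight segments in $T^2$. Lifting to $\mathbb R^2$, these are the horizontal segment $z_0\to z_1$ (flow of $H_{k,a}$, on which $H_{k,a}=h_k(y_0)$ is constant), the vertical segment $z_1\to z_2$ (with $H_{k,b}=h_k(x_1)$), the horizontal segment $z_2\to z_3$ (value $h_k(y_2)$), and the vertical segment $z_3\to z_0$ (value $h_k(x_3)$). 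The Hamiltonian part is therefore
$$\tfrac14\bigl(h_k(y_0)+h_k(x_1)+h_k(y_2)+h_k(x_3)\bigr),$$
up to the time-reparametrization convention of the concatenation. The lifted loop is the boundary of the axis-parallel rectangle with corners $z_0,z_1,z_2,z_3$ and sides $R_a=-s_k(y)$, $R_b=-s_k(x)$. A capping disk is this rectangle pushed down to $T^2$, so $\int_C\omega=\pm R_aR_b=\pm s_k(x)s_k(y)$, with the sign fixed by the orientation of the traversal. This gives the exact formula
$$\mathcal A_{H_k}(\gamma)=\tfrac14\sum_i h_k(\cdot)\mp s_k(x)s_k(y).$$

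The next step is to insert the fixed-point data. By Lemma~\ref{lemma: fixed points are non-degenerate}, every coordinate of every intermediate point lies on a linear branch of $s_k$. On such a branch, $h_k$ is an explicit quadratic: $h_k(t)=-2\lambda t^2$ on $[0,1/4]$, $2\lambda t^2-2\lambda t+\lambda/4$-type expressions on the middle branch, and so on. The branch used is exactly the one recorded by $\epsilon_i$, together with $\operatorname{sign}(s(x))$ and $\operatorname{sign}(s(y))$. I will substitute the asymptotic expansions of $x$ and $y$ derived after (\ref{eq: accumulated equation}). I will also use the relations $s_k(x_1)=-s_k(x)$, $s_k(y_2)=-s_k(y)$ and $x_1\equiv x-s_k(y)$, $y_2\equiv y-s_k(x)$ mod $1$, where the winding numbers $m,n$ from (\ref{eq: coefficients in spelling}) record the integer shifts. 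In particular,
$$s_k(y)=x-m-\{x-s_k(y)\},$$
so $s_k(y)\approx -m+O(1)$ and $s_k(x)\approx -n+O(1)$. These make the area term contribute $-mn+O(1)$ plus corrections linear in $m,n$. Then $h_k$ evaluated near a point $t$ with $s_k(t)\approx \pm m$ equals $\pm\frac{m^2}{8\lambda}$ minus a branch constant. For instance, on a branch of slope $\pm4\lambda$ with $s_k(t_0)=0$, we have $h_k(t)=h_k(t_0)+s_k(t)^2/(\pm 8\lambda)$. Summing the four terms gives the $\frac{\epsilon_1+\epsilon_3}{8}\frac{m^2}{k}$ and $-\frac{\epsilon_2+\epsilon_4}{8}\frac{n^2}{k}$ contributions. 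The constants $h_k(t_0)\in\{0,-\lambda/4\}$ (the minimum and maximum of $h_k$) give the $k$-linear term, and these should combine to $\frac k4(\mu(z)-2)$ by Lemma~\ref{lemma: index of orbits}.

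The main obstacle is the bookkeeping of the linear terms $-\frac{\epsilon_1+\epsilon_3}{4}|m|$ and $-\frac{|m|+|n|}{2}$, together with the branch constants. These arise from the fractional parts $\{x\},\{x-s_k(y)\}$ (which are $\tfrac12+O(1/k)$ or $\tfrac14$-type offsets, depending on the branch) multiplied by $m$ or $n$, and from the sign conventions distinguishing $s(y)>0$ from $s(y)<0$, which is how the absolute values arise. I would organize this by treating the eight sign patterns $(\epsilon_i,\operatorname{sign}s(x),\operatorname{sign}s(y))$ through the closed forms for $\nu_i$, and by writing every quantity as $a_2\,m^2/k+a_1 m+a_0 k+O(1)$, discarding anything $O(1)$. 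I would check one representative case, e.g.\ $\epsilon=(1,-1,1,-1)$ with $s(x),s(y)>0$, carefully by hand and then argue the others by the symmetries $s_k(t+1/2)=-s_k(t)$ and $(x,y)\mapsto(y,x)$ composed with time reversal.
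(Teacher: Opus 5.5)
Your architecture is the paper's: the action is the four Hamiltonian values on the segments minus the area of the lifted rectangle, $h_k$ is quadratic on each linear branch, and the branch constants $h_k(t_0)\in\{0,-\lambda/4\}$ combine to $\frac{\lambda}{4}(\mu(z)-2)$. Two normalizations are off, though.

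First, there is no factor $\frac14$. Each quarter of the concatenation must produce $\phi^1_{k,a}$ or $\phi^1_{k,b}$, so its generator is $\chi'(t)H_{k,a}$ (resp.\ $\chi'(t)H_{k,b}$) with $\int\chi'\,dt=1$. Since $H_{k,a}$ is constant on the segment, $\int_0^1H_k(t,\gamma(t))\,dt=h_k(y_0)+h_k(x_1)+h_k(y_2)+h_k(x_3)$. With your $\frac14$ the $k$-linear term would be $\frac{k}{16}(\mu-2)$. Second, the sign is not free: $\oint x\,dy$ over the lifted loop equals $R_aR_b$, so $\mathcal A_{H_k}(\gamma)=\sum h_k-s_k(x)s_k(y)$.

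The genuine gap is in the $m^2/k$ and $n^2/k$ terms. Use your own formula $h_k(t)=h_k(t_0)+s_k(t)^2/(\pm8\lambda)$ with slope signs $-\epsilon_1,\epsilon_2,-\epsilon_3,\epsilon_4$ at $y_0,x_1,y_2,x_3$. The Hamiltonian part then contributes $-\frac{\epsilon_1+\epsilon_3}{8}\frac{m^2}{k}+\frac{\epsilon_2+\epsilon_4}{8}\frac{n^2}{k}$, which is the opposite of what you claim.

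The missing $\frac{\epsilon_1+\epsilon_3}{4}\frac{m^2}{k}-\frac{\epsilon_2+\epsilon_4}{4}\frac{n^2}{k}$ comes from the area term. You assert that term is $-mn$ plus corrections linear in $m,n$, because you treat the fractional parts as branch constants up to $O(1/k)$. They are not. On the linear branches
\[
\bar x_0=\tfrac12+\tfrac{\epsilon_4s(x)}{4}+\tfrac{(1-\epsilon_4)\operatorname{sign}s(x)}{4},\qquad \bar x_1=\tfrac12-\tfrac{\epsilon_2s(x)}{4}-\tfrac{(1-\epsilon_2)\operatorname{sign}s(x)}{4},
\]
and $s(x)\approx-n/k$ is of order one when $|n|\sim k$. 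Write $s_k(y)=-m+X$ and $s_k(x)=-n+Y$ with
\[
X=\bar x_0-\bar x_1=\tfrac{(\epsilon_2+\epsilon_4)s(x)}{4}+\tfrac{(2-\epsilon_2-\epsilon_4)\operatorname{sign}s(x)}{4},\qquad
Y=\bar y_0-\bar y_2=-\tfrac{(\epsilon_1+\epsilon_3)s(y)}{4}+\tfrac{(2+\epsilon_1+\epsilon_3)\operatorname{sign}s(y)}{4}.
\]
Then $-s_k(x)s_k(y)=-mn+nX+mY+O(1)$. Using $s(x)=-n/k+O(1/k)$, $n\operatorname{sign}s(x)=-|n|+O(1)$, and the analogues for $y$,
\[
nX=-\tfrac{\epsilon_2+\epsilon_4}{4}\bigl(\tfrac{n^2}{k}-|n|\bigr)-\tfrac{|n|}{2}+O(1),\qquad
mY=\tfrac{\epsilon_1+\epsilon_3}{4}\bigl(\tfrac{m^2}{k}-|m|\bigr)-\tfrac{|m|}{2}+O(1).
\]
Adding the Hamiltonian part gives exactly the lemma. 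This is the content of the $\frac{\epsilon_1+\epsilon_3}{4}\frac{m}{k}$-type corrections in the paper's expansions of $s(x)$ and $s(y)$.

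In fact, your floor relations combined with these branch formulas reproduce those expansions without solving the linear system. Since everything is uniform in $\epsilon$ and in the signs of $s(x),s(y)$, your representative-case check and symmetry argument also become unnecessary.
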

\begin{proof}
    Let $z_i = (x_i,y_i)\in \mathbb R^2$ be the intermediate points of $z$ and let $\bar z_i = (\bar x_i,\bar y_i)$ denote the intermediate points projected to $T^2$. We first show the following formula for the action, and then approximate $s(x)$ and $s(y)$ \begin{align}\label{eq: Action estimate 1}
   \mathcal A_{H_k}(\gamma) = -\lambda^2 s(x)s(y)+{\lambda \over 8}\left [2\mu(z) - (\epsilon_1+\epsilon_3)s(y)^2+(\epsilon_2+\epsilon_4)s(x)^2-4\right] + O(1),     
    \end{align} where $\lambda = (k+\varepsilon)$. The action is given by $$\mathcal A_{H_k}(\gamma) = h_{k}(y_0)+ h_{k}(x_1)+ h_{k}(y_2) + h_{k}(x_3) -(x_1-x_0)(y_2-y_1),$$ where $(x_1-x_0)(y_2-y_1)$ accounts for the area of the capping, and the rest comes from the Hamiltonian part. First, note that the area term is given by $$(x_1-x_0)(y_2-y_1) = s_k(x_1)(-s_k(y_0)) = s_k(x_0)s_k(y_0)= \lambda^2s(x_0)s(y_0).$$ We have that \begin{align*}
        \bar y_0 = \begin{cases}
            (-\epsilon_1s(\bar y_0)+1-\epsilon_1)/4, & \bar y_0\in [0,1/2]\\ 
            (-\epsilon_1s(\bar y_0)+3+\epsilon_1)/4, & \bar y_0 \in [1/2,1]
        \end{cases},   \bar x_1 = \begin{cases}
            (\epsilon_2s(\bar x_1)+1+\epsilon_2)/4, & \bar x_1\in [0,1/2]\\ 
            (\epsilon_2s(\bar x_1)+3-\epsilon_2)/4, & \bar x_1 \in [1/2,1]
        \end{cases}
    \end{align*}\begin{align*}
        \bar y_2 = \begin{cases}
            (-\epsilon_3s(\bar y_2)+1-\epsilon_3)/4, &\bar y_2\in [0,1/2]\\ 
            (-\epsilon_3s(\bar y_2)+3+\epsilon_3)/4, & \bar y_2 \in [1/2,1]
        \end{cases},   \bar x_3 = \begin{cases}
            (\epsilon_4s(\bar x_3)+1+\epsilon_4)/4, & \bar x_3\in [0,1/2]\\ 
            (\epsilon_4s(\bar x_3)+3-\epsilon_4)/4, & \bar x_3 \in [1/2,1]
        \end{cases}
    \end{align*}
    We have that $$h_k(t) = \begin{cases}
        -2\lambda t^2, & t\in [0,1/4]\\ 
        \lambda(2(t-1/2)^2-1/4),&t\in  [1/4,3/4]\\
        -2\lambda (t-1)^2, & t \in [3/4,1]
    \end{cases}.$$
     Note that $$\epsilon_1 = \begin{cases}
         -1, & y_0 \in [1/4,3/4]\\ 1,& \text{otherwise}
     \end{cases}, \quad \epsilon_2 = \begin{cases}
         1, & x_1 \in [1/4,3/4]\\ -1,& \text{otherwise}
     \end{cases},$$ $$\epsilon_3 = \begin{cases}
         -1, & y_2 \in [1/4,3/4]\\ 1,& \text{otherwise}
     \end{cases}, \quad \epsilon_4 = \begin{cases}
         1, & x_3 \in [1/4,3/4]\\ -1,& \text{otherwise}
     \end{cases}.$$ Going through all cases, we find the closed form for the Hamiltonian terms\begin{align*}
         h_k(\bar y_0) = -\lambda(\epsilon_1s(\bar y_0)^2+1-\epsilon_1)/8, \quad h_{k}(x_1) = \lambda(\epsilon_2s(\bar x_1)^2-1-\epsilon_2)/8
     \end{align*}
     \begin{align*}
         h_k(\bar y_2) = -\lambda(\epsilon_3s(\bar y_2)^2+1-\epsilon_3)/8, \quad h_{k}(\bar x_3) = \lambda(\epsilon_4s(\bar x_3)^2-1-\epsilon_4)/8
     \end{align*}
     Using that $s(\bar y_2) = -s(\bar y_0)$ and $s(\bar x_0) = s(\bar x_3) = -s(\bar x_1)$ and additionally using $\mu(z) = ((\epsilon_1+\epsilon_3)-(\epsilon_2+\epsilon_4))/2$ as in Lemma~\ref{lemma: index of orbits} shows (\ref{eq: Action estimate 1}). \\ We now estimate the values of $s(x)$ and $s(y)$. By Lemma~\ref{lemma: classify fixed points}, we either have $s(x) = s(y) = 0$ or $s(x) \neq 0 \neq s(y)$. In the first case, formula (\ref{eq: Action estimate 1}) says that $$\mathcal A_{H_k}(\gamma) = {\lambda\over 8}(2\mu(z)-4) +O(1) = {k\over 4}(\mu(z)-2)+O(1),$$ where we use $\lambda = k+\varepsilon$. The winding numbers are $(m,n) = 0$ in this case and hence we arrive at the promised expression in the lemma. Now assume $s(x)\neq 0 \neq s(y)$. The fixed point solves the system of equations (\ref{eq: system of equations 1}) and hence also (\ref{eq: accumulated equation}), given by $$(A-id) z = -\nu.$$ Solving this system of equations, with a Python script, one can verify that we have $$x = {1\over 2}+{\text{sign}(s(x)) (1-\epsilon_4)\over 4}-{\epsilon_4n \over 4k}+{1\over k}\left ({(\epsilon_1+\epsilon_3)\epsilon_4\over 16}\left({m\over k}+\text{sign}(s(y))\right )+{\epsilon_4\text{sign}(s(y))\over 8}\right)+O\left (1\over k^2\right)$$ $$y = {1\over 2}+{\text{sign}(s(y)) (1+\epsilon_1)\over 4}+{\epsilon_1m \over 4k}+{1\over k}\left ({(\epsilon_2+\epsilon_4)\epsilon_1\over 16}\left({n\over k}+\text{sign}(s(x))\right )-{\epsilon_1\text{sign}(s(x))\over 8}\right)+O\left (1\over k^2\right),$$ where we use that $\varepsilon<{1\over k}$ is chosen small enough for each $k$. Plugging this back into $s(\cdot)$ we find \begin{align*}\begin{split}&s(x) = -{n\over k}+{1\over k}\left ({(\epsilon_1+\epsilon_3)\over 4}\left({m\over k}+\text{sign}(s(y))\right )+{\text{sign}(s(y))\over 2}\right)+O\left (1\over k^2\right),\\ & s(y) = -{m\over k}- {1\over k}\left ({(\epsilon_2+\epsilon_4)\over 4}\left({n\over k}+\text{sign}(s(x))\right )-{\text{sign}(s(x))\over 2}\right)+O\left (1\over k^2\right).\end{split}\end{align*} Using that $\lambda = k+O\left (1\over k\right )$ we calculate $$-\lambda^2 s(x)s(y) = -mn+{\epsilon_1+\epsilon_3\over 4}\left ({m^2\over k}-|m|\right )-{\epsilon_2+\epsilon_4\over 4}\left ({n^2\over k}-|n|\right)-{|m|+|n|\over 2}+O(1),$$ where we use that $\text{sign}(s(y))m = -|m|$ and $\text{sign}(s(x))n = -|n|$. The Hamiltonian term of the action is then given by $${\lambda \over 8}(2\mu(z)-(\epsilon_1+\epsilon_3)s(y) ^2+(\epsilon_2+\epsilon_4)s(x)^2-4)= {k\over 4}(\mu(z)-2)-{\epsilon_1+\epsilon_3\over 8}{m^2\over k}+{\epsilon_2+\epsilon_4\over 8}{n^2\over k}+O(1).$$ Using both terms and (\ref{eq: Action estimate 1}) we find $$\mathcal A_{H_k}(\gamma) = -mn+{k\over 4}(\mu(z)-2)+{\epsilon_1+\epsilon_3\over 8}\left ({m^2\over k}-2|m|\right)-{\epsilon_2+\epsilon_4\over 8}\left ({n^2\over k}-2|n|\right)-{|m|+|n|\over 2}+ O(1).$$
\end{proof}
We now consider the construction for $2k$ in place of $k$ and show that for a choice of $x_1^{(k)},x_2^{(k)}\in \mathcal P(H_{2k})$ and any suitable choice of almost complex structure $J$ we have  $$\mathcal E(H_{2k},J,x_1^{(k)},x_2^{(k)})\to \infty\quad (k\to \infty).$$ Namely, let $x_1^{(k)},x_2^{(k)}\in \mathcal P(H_{2k})$ be the contractible loops with winding numbers $(k,k)$ and $(-k,-k)$ and branching indices $\epsilon(x_1) = (1,-1,1,-1)$ and $\epsilon(x_2) = (-1,1,-1,1)$ so that $\mu(x^{(k)}_1) = 2$ and $\mu(x_2^{(k)}) =-2$. Indeed, solving (\ref{eq: accumulated equation}) in this case ensures that these fixed points exist. For better readability, we drop the superscript $(k)$. In the following, we give an obstruction to the existence of Floer isotopies with one end in $\{x_1,x_2\}$ and another pair $\{y_1,y_2\}\subset \mathcal P({H_{2k}})$ relying on the winding numbers of the orbits $y_1$ and $y_2$; this argument is purely topological. We then estimate the energy of the Floer isotopies using the action differences of the orbits $x_i$ and $y_i$ for $i =1,2$. 
\begin{lemma}\label{lemma: winding numbers}
    Let $k$ be large enough and $y_1,y_2\in \mathcal P({H_{2k}})$ contractible orbits with winding numbers $(m_i,n_i)$ such that there exists a Floer isotopy $(u_1,u_2)$, where $u_i$ connects $x^{(k)}_i$ and $y_i$ for $i=1,2$. Then we have $$m_1-m_2 = n_1-n_2= \pm 2k.$$
\end{lemma}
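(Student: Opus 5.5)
The plan is purely topological: turn the Floer isotopy into an isotopy of braids, then read off the winding numbers from the conjugacy class of a spelling word.

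\emph{Step 1: from the Floer isotopy to a conjugacy of words.} By definition of a Floer isotopy, $u_1(s,t)\neq u_2(s,t)$ for all $(s,t)$. Reparametrizing $s$ by the diffeomorphism $\varphi:(-1,1)\to\mathbb R$ used in Section~\ref{section.intersections}, the pair $(u_1,u_2)$ becomes a homotopy in $X_2(T^2)$ from the loop $(x_1,x_2)$ to the loop $(y_1,y_2)$. All four loops are contractible, so Lemma~\ref{lemma: conjugacy classes and spelling} applies: $\operatorname{spell}(x_1,x_2)$ and $\operatorname{spell}(y_1,y_2)$ are conjugate in $F_2$. The order of the strands is fixed ($u_i$ runs from $x_i$ to $y_i$), so we compare $\operatorname{spell}(x_1-x_2)$ with $\operatorname{spell}(y_1-y_2)$ and not with the swapped word.

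\emph{Step 2: compute the spelling of a pair of contractible fixed points.} The path \eqref{def: path of Hamiltonian diffoemorphisms} moves every point in four phases. In the $a$-phases both points move only horizontally, in the $b$-phases only vertically. Hence the difference loop $y_1-y_2$ is a concatenation of four segments: horizontal, vertical, horizontal, vertical. Their lengths are $R_a^{(1)}-R_a^{(2)}$, then $R_b^{(1)}-R_b^{(2)}$, then the negatives of these two; this is the rectangle picture of the intermediate points. A straight segment crosses only grid lines of one direction, so
$$\operatorname{spell}(y_1,y_2)=a^{p}b^{q}a^{-p}b^{-q}.$$
Using \eqref{eq: coefficients in spelling}, $p=m_1-m_2+\delta_a$ and $q=n_1-n_2+\delta_b$, where $\delta_a,\delta_b\in\{-1,0,1\}$ are floor corrections. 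These corrections depend only on the order of the fractional coordinates $\bar x_i,\bar y_i$ and of $\bar x_i-s_k(\bar y_i)$ and its analogues. I would determine them from the asymptotic formulas for $x,y$ preceding Lemma~\ref{lemma: action}. For $(x_1,x_2)$ this should reproduce $p=q=2k+1$, consistent with $\mathcal B(H_{2k},x_1,x_2)=\beta_{2k+1,2k+1}$.

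\emph{Step 3: conjugacy of commutators in $F_2$.} For $p,q\neq0$ the word $a^pb^qa^{-p}b^{-q}$ is cyclically reduced. Two cyclically reduced words are conjugate exactly when they are cyclic rotations of each other. The rotations of $[a^p,b^q]$ that again have the shape $a^{p'}b^{q'}a^{-p'}b^{-q'}$ are the word itself, with $(p',q')=(p,q)$, and $a^{-p}b^{-q}a^{p}b^{q}$, with $(p',q')=(-p,-q)$. If $p'$ or $q'$ were $0$ the word would be trivial, which cannot be conjugate to the nontrivial word for $(x_1,x_2)$. Hence for $(y_1,y_2)$ we get $p'=q'=\pm(2k+1)$.

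\emph{Step 4 (main obstacle): removing the floor corrections.} From $p'=q'=\pm(2k+1)$ we must deduce $m_1-m_2=n_1-n_2=\pm2k$. Winding numbers satisfy $|m_i|,|n_i|\le 2k$. For the sign $+$, $m_1-m_2=2k+1-\delta_a\le 4k$ leaves room for both $\delta_a=0$ and $\delta_a=1$, so the bound alone does not decide. I would first try to prove that $\delta_a$ is forced to equal $\operatorname{sign}(p)$ whenever $|p|$ is large. From the asymptotic formulas, for $k$ large every fixed point has $\bar x,\bar y$ within $O(1/k)$ of $\tfrac12$ or of $\tfrac14,\tfrac34$ (depending on the sign of $s$ and on $\epsilon$). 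A long difference segment must then start and end on opposite sides of a grid line in a fixed way, which should force exactly one extra crossing. This is the point I expect to require a careful case check over branching indices, possibly again using the closed-form solutions of \eqref{eq: accumulated equation}. If a case cannot be settled this way, I would fall back on Lemma~\ref{lemma: index of orbits} together with index preservation ($\mu(y_i)=\mu(x_i)$), which fixes $\epsilon(y_1)=(1,-1,1,-1)$ and $\epsilon(y_2)=(-1,1,-1,1)$. That pins down $\bar x_i,\bar y_i$ up to $O(1/k)$, determines the corrections, and gives $m_1-m_2=n_1-n_2=\pm2k$.
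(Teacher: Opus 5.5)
Your Steps 1--3 match the paper's argument: the conjugacy from Lemma~\ref{lemma: conjugacy classes and spelling}, the commutator shape $a^pb^qa^{-p}b^{-q}$, and $p=q=\pm(2k+1)$ by cyclic reduction. The gap is Step 4, which carries the actual content of the lemma, and neither of your two routes closes it.

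Your primary route rests on a false claim. In the asymptotic formula for a fixed point, with $k$ replaced by $2k$, there is a term $-\epsilon_4 n/(4k)$ (and $\epsilon_1 m/(4k)$ for $y$). This term is of order one when the winding numbers are comparable to $k$, which is exactly the situation here. So the coordinates are not within $O(1/k)$ of $\tfrac12,\tfrac14,\tfrac34$.

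Your fallback fails for two reasons. First, a Floer isotopy of $H_{2k}$ does not preserve Conley--Zehnder indices. The lemma has no index hypothesis, and it is applied in Lemma~\ref{lemma: stability radius computation} to isotopies with $\mu_{\operatorname{max}}(u)=1$, where a strand changes index. Second, even a known branching index $\epsilon(y_i)$ only says whether each intermediate coordinate lies in $(\tfrac14,\tfrac34)$. It does not say whether it lies in $(0,\tfrac12)$ or $(\tfrac12,1)$, and that is what decides the floor corrections.

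The missing idea is a sign argument using the strict bound $|s_{2k}|<2k$ at contractible fixed points (Lemma~\ref{lemma: fixed points are non-degenerate}) together with $y_2=y_0-s_{2k}(x_0)$. Write $z=y_1(0)$, $w=y_2(0)$ and take $n=2k+1$.
\begin{itemize}
\item If $s_{2k}(x_0^z)\geq 0$, then $y_2^z-y_2^w\leq y_0^z-y_0^w+s_{2k}(x_0^w)$. This gives $n\leq\lfloor s_{2k}(x_0^w)\rfloor+1\leq 2k$, a contradiction.
\item The same argument excludes $s_{2k}(x_0^w)\leq 0$. Hence $x_0^z<\tfrac12<x_0^w$.
\item Symmetrically, $m=2k+1$ and $x_1=x_0-s_{2k}(y_0)$ give $y_0^z<\tfrac12<y_0^w$.
\end{itemize}
It follows that $\lfloor x_0^z-x_0^w\rfloor=-1$. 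Next, Lemma~\ref{lemma: condition on contractible orbits} gives $s_{2k}(x_1)=-s_{2k}(x_0)$. So the fractional part of $x_1^z$ lies in $(\tfrac12,1)$ and that of $x_1^w$ lies in $(0,\tfrac12)$, hence $\lfloor x_1^z-x_1^w\rfloor=m_1-m_2$.

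This shows your correction is $\delta_a=1=\operatorname{sign}(p)$, as you guessed, so $m_1-m_2=2k$. The same reasoning gives $n_1-n_2=2k$, and the case $-(2k+1)$ follows by exchanging the roles of $z$ and $w$. No asymptotics and no index information are needed.
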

\begin{proof}
    We apply Lemma~\ref{lemma: conjugacy classes and spelling} to the loops $\{x_1^{(k)},x_2^{(k)}\}$ and $\{y_1,y_2\}$ and have that an isotopy can only exist if the words $\text{spell}(x_1,x_2)$ and $\text{spell}(y_1,y_2)$ are conjugate in $F_2$. The fixed points $z= x_1^{(k)}(0)$ and $w = x_2^{(k)}(0)$ of $\phi_{2k}$ and their intermediate points satisfy the following dynamics. $$z_0,w_2\in [0,1/2]\times [0,1/2]+\mathbb Z^2,\quad z_1,w_3\in [1/2,1]\times [0,1/2]+\mathbb Z^2,$$ $$ z_2,w_0\in [1/2,1]\times [1/2,1]+ \mathbb Z^2,\quad z_3,w_1\in [0,1/2]\times [1/2,1]+\mathbb Z^2.$$  \begin{figure}[h!]
            \begin{tikzpicture}
                \begin{scope}[shift = {(-3,0)}]
                    \draw[] (-2,-2)--(2,-2)--(2,2)--(-2,2)--cycle;
                    \draw[thin, dashed] (0,-2)--(0,2);
                    \draw[thin,dashed] (-2,0)--(2,0);
                    \filldraw[black] (-1,-1) circle (2pt) node[left] {$z_0$};
                    \filldraw[black] (1,-1) circle (2pt) node[right] {$z_1$};
                    \filldraw[black] (1,1) circle (2pt) node[right] {$z_2$};
                    \filldraw[black] (-1,1) circle (2pt) node[left] {$z_3$};
                    \draw[->] (-0.75,-1)--(0.75,-1);
                    \draw[->] (1,-0.75)--(1,0.75);
                    \draw[->] (0.75,1)-- (-0.75,1);
                    \draw[->] (-1,0.75) --(-1,-0.75);
                    \node[] at (1.7,1.7) {$T^2$};
                \end{scope}
                 \begin{scope}[shift = {(3,0)}]
                    \draw[] (-2,-2)--(2,-2)--(2,2)--(-2,2)--cycle;
                    \draw[thin, dashed] (0,-2)--(0,2);
                    \draw[thin,dashed] (-2,0)--(2,0);
                    \filldraw[black] (-1,-1) circle (2pt) node[left] {$w_2$};
                    \filldraw[black] (1,-1) circle (2pt) node[right] {$w_3$};
                    \filldraw[black] (1,1) circle (2pt) node[right] {$w_0$};
                    \filldraw[black] (-1,1) circle (2pt) node[left] {$w_1$};
                    \draw[->] (-0.75,-1)--(0.75,-1);
                    \draw[->] (1,-0.75)--(1,0.75);
                    \draw[->] (0.75,1)-- (-0.75,1);
                    \draw[->] (-1,0.75) --(-1,-0.75);
                    \node[] at (1.7,1.7) {$T^2$};
                \end{scope}
            \end{tikzpicture}
            \caption{The dynamics of intermediate points $z_0,z_1,z_2,$ and $z_3$ and $w_0,w_1,w_2,$ and $w_3$.}
            \label{fig: dynamics of intermediate points}
        \end{figure} \\
        With this in mind and the fact that the winding numbers are $(\pm k,\pm k)$, we can conclude that $$\text{spell}(x_1^{(k)},x_2^{(k)}) = a^{2k+1}b^{2k+1}a^{-2k-1}b^{-2k-1}.$$ Note that for the orbits $y_1,y_2$ there exists $m,n\in \mathbb Z$ such that $$\text{spell}(y_1,y_2) = a^mb^na^{-m}b^{-n}.$$ This word is only conjugate to $a^{2k+1}b^{2k+1}a^{-2k-1}b^{-2k-1}$ if $m = n = \pm (2k+1)$ since in $F_2$ words are conjugate if and only if their cyclically reduced words agree up to cyclic permutations. Let us consider the case $m = n= 2k+1$; the other case is analogous. Let now $z= (x^z,y^z) = y_1(0)$ and $w= (x^w,y^w) = y_2(0)$ be the corresponding fixed points. Denote by $z_i = (x_i^z,y_i^z)$ and $w_i = (x_i^w,y_i^w)$ the intermediate points of $z$ and $w$. Then we have $$ m= \lfloor x_1^z-x_1^w\rfloor-\lfloor x_0^z-x_0^w\rfloor\quad \text{and}\quad n = \lfloor y_2^z-y_2^w\rfloor-\lfloor y_0^z-y_0^w\rfloor.$$ \underline{Observation: } $x_0^z<{1\over 2}<x_0^w$ and $y_0^z<{1\over 2}<y_0^w$. \begin{proof}
           First assume that $s_{2k}(x_0^z)\geq 0$. We find the following contradiction \begin{align*}
               \begin{split}
               n= 2k+1 & =  \lfloor y_2^z-y_2^w\rfloor-\lfloor y_0^z-y_0^w\rfloor\\ & =  \lfloor y_0^z-s_{2k}(x_0^z)-(y_0^w-s_{2k}(x_0^w))\rfloor-\lfloor y_0^z-y_0^w\rfloor
               \\ & \leq  \lfloor y_0^z-y_0^w+s_{2k}(x_0^w))\rfloor-\lfloor y_0^z-y_0^w\rfloor\\ & \leq \lfloor s_{2k}(x_0^w)\rfloor+1\\& \leq 2k,
               \end{split}
           \end{align*}
           where we use Lemma~\ref{lemma: fixed points are non-degenerate} for the last inequality. This shows that $x_0^z<{1\over 2}$. Assuming that $s_k(x_0^w)\leq 0$, a similar argument shows that $x_0^w>{1\over 2}$. The proof that $y_0^z<{1\over 2}<y_0^w$ is analogous. This concludes the proof of the observation.
        \end{proof} This means that $ m= \lfloor x_1^z-x_1^w\rfloor+1$ and $n = \lfloor y_2^z-y_2^w\rfloor+1.$ Moreover, the points $z$ and $w$ follow the same dynamics as in Figure~(\ref{fig: dynamics of intermediate points}) which means that $$x_1^z \in [m_z+1/2,m_z+1)\quad \text{and}\quad x_1^w \in [m_w,m_w+1/2)$$ $$y_2^z \in [n_z+1/2,n_z+1)\quad \text{and}\quad y_2^w \in [n_w,n_w+1/2),$$ where $(m_z,n_z)$ and $(m_w,n_w)$ are the winding numbers of $z$ and $w$. This means that $x_1^z-x_1^w\in(m_z-m_w,m_z-m_w+1)$ and $y_2^z-y_2^w\in (n_z-n_w,n_z-n_w+1)$ and hence $2k+1=m = m_z-m_w+1$ and $2k+1=n = n_z-n_w+1$.
    This concludes the proof of the Lemma.
\end{proof}
We now estimate the energy of Floer isotopies with ends in $\{x_1,x_2\}$. At this point, the fact that $\mathcal E(H,J,x)$ relies on the energy of Floer isotopies rather than on the energy of the single strands is essential since there are choices of orbits $y_1,y_2\in \mathcal P(H_{2k})$ that are not obstructed by Lemma~\ref{lemma: winding numbers} where one of the action difference $\mathcal A_{H_{2k}}(x_1)-\mathcal A_{H_{2k}}(y_1)$ or $\mathcal A_{H_{2k}}(y_2)-\mathcal A_{H_{2k}}(x_2)$ is small but the other is of order $k$.  \begin{lemma}\label{lemma: stability radius computation}
   For $k$ large enough and any generic almost complex structure $J$ we have $$\mathcal E(H_{2k},J,x_1^{(k)},x_{2}^{(k)})\geq {1\over 8}k+O(1) $$
\end{lemma}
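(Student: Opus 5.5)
The plan is to bound the energy of any Floer isotopy $u=(u_1,u_2)$ with one end at $\{x_1,x_2\}=\{x_1^{(k)},x_2^{(k)}\}$ and $\mu_{\max}(u)=1$ from below by action differences. For an $s$-independent $J$ we have $E(u_i)=|\mathcal A_{H_{2k}}(x_i)-\mathcal A_{H_{2k}}(y_i)|$, with the sign fixed by the direction of the isotopy. The two directions $u\in\mathcal M(x,y)$ and $u\in\mathcal M(y,x)$ are symmetric, so I treat $u\in\mathcal M(x,y)$. Since all cylinders are contractible with contractible ends (the strands of $x$ are contractible), the actions are given by Lemma~\ref{lemma: action} with $2k$ in place of $k$.

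First, by Lemma~\ref{lemma: winding numbers} the other end $y=\{y_1,y_2\}$ has winding numbers $(m_i,n_i)$ with $m_1-m_2=n_1-n_2=\pm 2k$. Because all winding numbers lie in $[-2k,2k]$, this forces a strong constraint. Second, each cylinder has index at most $1$ and index $\geq 0$; in fact each has index exactly $1$ in the differential setting, but it suffices that $\mu(y_1)\in\{1,2\}$ and $\mu(y_2)\in\{-2,-3\}$. Since $\mu\in\{-2,\dots,2\}$ by Lemma~\ref{lemma: index of orbits}, we get $\mu(y_2)=-2$, so $u_2$ must have index $0$ and hence be trivial, with $y_2=x_2$ up to regularity. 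I would handle this carefully. For index-$0$ cylinders with a generic $s$-independent $J$, the only solutions are constant in $s$, so $y_2=x_2$ and $u_2$ is trivial. Then $u_1$ has index $1$, and $(m_1,n_1)-(-k,-k)=\pm(2k,2k)$ forces $(m_1,n_1)=(k,k)$, the same winding as $x_1$. The analogous case analysis covers $u_1$ trivial.

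Third, I would compute the action differences using Lemma~\ref{lemma: action}. For $x_1$ with $\epsilon=(1,-1,1,-1)$ and $(m,n)=(k,k)$ at parameter $2k$, the terms give
\[\mathcal A(x_1)=-k^2+\tfrac{2k}{4}\cdot 0+\tfrac{2}{8}\left(\tfrac{k^2}{2k}-2k\right)+\tfrac{2}{8}\left(\tfrac{k^2}{2k}-2k\right)-k+O(1),\]
and similarly for a $y_1$ with $\mu(y_1)=1$ and the same winding numbers. The index-dependent term $\tfrac{2k}{4}(\mu-2)$ together with the branching-index terms $\pm\tfrac{\epsilon_1+\epsilon_3}{8}(\tfrac{k}{2}-2k)$ changes by a definite multiple of $k$ when one passes from $\epsilon=(1,-1,1,-1)$ to any branching index with $\mu=1$; that is, $\epsilon_1+\epsilon_3$ drops from $2$ to $0$ or $\epsilon_2+\epsilon_4$ rises from $-2$ to $0$. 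Enumerating these cases, I expect the action difference to be at least $\tfrac{k}{8}+O(1)$ in each, since for example $\tfrac{2k}{4}-\tfrac{2}{8}\cdot\tfrac{3k}{2}=\tfrac{k}{8}$. The special fixed points with $s(x)=s(y)=0$ have $(m,n)=(0,0)$ and are excluded by the winding constraint.

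The main obstacle is the reduction in the second step. It is unclear whether index $\leq 1$ per strand really forces the partner cylinder to be trivial, or whether both strands can move with winding shifts that still satisfy Lemma~\ref{lemma: winding numbers}, for instance $(m_1,n_1)=(k',k')$ and $(m_2,n_2)=(k'-2k,k'-2k)$ with $k'\neq k$. In that case I would instead use the sum $E(u_1)+E(u_2)=\mathcal A(x_1)-\mathcal A(y_1)+\mathcal A(x_2)-\mathcal A(y_2)$. The dominant term $-mn$ then contributes $-k^2\cdot 2+k'^2+(k'-2k)^2=2(k'-k)^2\geq 0$, while the index terms contribute at least $\tfrac{2k}{4}$ minus the branching corrections. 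Bounding these uniformly over all $(k',\epsilon)$ is where the care lies, and this is exactly where summing over the isotopy, rather than treating single strands, is essential.
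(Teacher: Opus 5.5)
Your proposal is correct and follows essentially the paper's argument: Lemma~\ref{lemma: winding numbers}, together with the fact that $\mu(x_1^{(k)})=2$ and $\mu(x_2^{(k)})=-2$ are the extreme indices, forces one cylinder of the isotopy to be constant; this pins the winding numbers of the other end to $(\pm k,\pm k)$, and Lemma~\ref{lemma: action} then gives an action difference of exactly $\tfrac{k}{8}+O(1)$ in both directions. The worry in your last paragraph is already settled by your own second step (since $\mu(y_2)\geq -2$, the cylinder $u_2$ can only have index $0$ and is therefore constant), so the summed-energy fallback is not needed.
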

\begin{proof}
Suppose that $y_1,y_2\in \mathcal P(H_{2k})$ are such that there exists a Floer isotopy $(u_1,u_2)$ with $\mu_{\operatorname{max}}(u_1,u_2) = 1$ connecting $x_i$ and $y_i$ for $i=1,2$. Purely for topological reasons, by Lemma~\ref{lemma: winding numbers}, we must have that $y_1$ and $y_2$ are contractible loops with winding numbers $(k+p,k+q)$ and $(-k+p,-k+q)$ or $(-k+p,-k+q)$ and $(k+p,k+q)$ where $p,q\in \mathbb Z$ are integers with $|p|,|q| \leq k$. Note that an orbit $y$ with fixed index $\mu(y)$ has the same action when the winding number is $(k+p,k+q)$ as when it is $(-k-p,-k-q)$. Since we consider all integer values of $p,q\in [-k,k]$, we may and do only consider the case when the winding numbers of $y_1$ and $ y_2$ are $(k+p,k+q)$ and $(-k+p,-k+q)$ since we obtain the other case by inverting $p$ and $q$. There are two cases to consider: $(\mu(y_1),\mu(y_2)) = (1,-2)$ or $(\mu(y_1),\mu(y_2)) = (2,-1)$. Note that we need not consider the case  $(\mu(y_1),\mu(y_2)) = (1,-1)$ as there cannot exist a Floer isotopy between $\{x_1,x_2\}$ and $\{y_1,y_2\}$; since the index does not increase along Floer cylinders, we must either have $\mu(x_i)\leq \mu(y_i)$ or $\mu(y_i)\leq \mu(x_i)$ for $i=1,2$ respectively. Hence, in any Floer isotopy, $u_1$ or $u_2$ is constant. \\ $ $\\
\underline{Case 1:} $\mu(y_1) = 2$ and  $\mu(y_2) = -1$. \\
    Since $u_1$ is constant, the first strand remains unchanged, hence $p = q = 0$. We have that $$E(u_1,u_2) = E(u_2) \geq\left(\mathcal A_{H_{2k}}(y_2)-\mathcal A_{H_{2k}}(x_2)\right)>0.$$ Note that we only need to consider the orbits $y_2$ where this difference is positive. Using Lemma~\ref{lemma: action} a short computation shows that $$\mathcal A_{H_{2k}}(x_2) = -k^2-2k+{\mu(x_2)\over 8}k+O(1).$$ Moreover, we have \begin{align*}\begin{split}\mathcal A_{H_{2k}}(y_2) &=-k^2+{k\over 2}(\mu(y_2)-2)+{\epsilon_1+\epsilon_3\over 8}\left({k^2\over 2k}-2k\right) -{\epsilon_2+\epsilon_4\over 8}\left({k^2\over 2k}-2k\right)- k+O(1)\\ & =-k^2-2k+{\mu(y_2)\over 8}k+O(1).\end{split}\end{align*} Hence, we find the estimate $$
            \mathcal A_{H_{2k}}(y_2)-\mathcal A_{H_{2k}}(x_2) ={\mu(y_2)-\mu(x_2)\over8}k +O(1) = {1\over 8}k + O(1).
 $$Hence, we find $E(u_1,u_2) \geq {1\over 8}k+O(1)$. \\$ $
    \\
    \underline{Case 2:} $\mu(y_1) = 1$ and $\mu(y_2) = -2$. \\ This case is very similar to the first one; however, now $u_2$ is constant and we have $p=q=0$. We again find $$E(u_1,u_2) = E(u_1) \geq \left ( \mathcal A_{H_{2k}}(x_1)-\mathcal A_{H_{2k}}(y_1)\right)>0$$ and only need to consider those $y_1$ where the action difference is positive. Again using Lemma~\ref{lemma: action} a short computation shows $$\mathcal A_{H_{2k}}(x_1) = -k^2-2k+{\mu(x_1)\over 8}k+O(1).$$ Similarly as in the first case we find \begin{align*}\begin{split}\mathcal A_{H_{2k}}(y_1) &= -k^2-2k+{\mu(y_1) \over 8}k+O(1).\end{split}\end{align*} We find $$\mathcal A_{H_{2k}}(x_1)-\mathcal A_{H_{2k}}(y_1) = {\mu(x_1)-\mu(y_1)\over 8}k+ O(1) = {1\over 8}k+O(1).$$ Hence, we find $E(u_1,u_2)\geq {1\over 8}k+O(1)$.\end{proof}
    Putting everything together, we can now prove Theorem~\ref{thm:entropy}. \begin{proof}[Proof of Theorem~\ref{thm:entropy}]
        Consider the diffeomorphism $\phi_{2k}$ and let $x^{(k)}_1,x^{(k)}_2\in \mathcal P({H_{2k}})$ be the contractible orbits as in Lemma~\ref{lemma: winding numbers}. In the proof of Lemma~\ref{lemma: winding numbers} we saw that $$\text{spell}(x_1^{(k)},x_2^{(k)}) = a^{2k+1}b^{2k+1}a^{-2k-1}b^{-2k-1}$$ and hence as discussed in Section~\ref{subsection: torus braids} we have $$\mathcal B(H_{2k},x_1^{(k)},x_2^{(k)}) = \beta_{2k+1,2k+1}.$$ By Lemma~\ref{lemma: entropy of the braids} the entropy of the braid is bounded by $$h(\beta_{2k+1,2k+1})\geq {2\over 5}\log(2k+1).$$ In Lemma~\ref{lemma: stability radius computation} we saw that $\mathcal E(H_{2k},J,x_1^{(k)},x_2^{(k)})\to \infty$ as  $k\to \infty$ for every almost complex structure $J$. Since the framing of the braid $B(H_{2k},x_1^{(k)},x_2^{(k)})$ is $$f_{2k}: x_1^{(k)}\mapsto \mu(x_1^{(k)}) = 2,\quad  x_2^{(k)}\mapsto \mu(x_2^{(k)}) = -2$$ we have that $\mathcal E^{\operatorname{int}}(H_{2k},J,x_1^{(k)},x_2^{(k)}) = \infty$ since there are no index $1$ Floer cylinders connecting the orbits. Using the definition of the stability radius (\ref{eq: stability radius injective framing}) for injective framings we find $${1\over 16}k+O(1) \leq \mathcal S\left (H_{2k},x_1^{(k)},x_2^{(k)}\right )\to \infty \quad (k\to \infty).$$
        For a non-degenerate Hamiltonian diffeomorphism $\psi$ such that $$d_H(\phi_{2k},\psi)< \mathcal S (H_{2k},x_1^{(k)},x_2^{(k)} ),$$ there exists a Hamiltonian $G$ generating $\psi$ such that $E(H-G)< \mathcal S (H_{2k},x_1^{(k)},x_2^{(k)} )$. After the braid stability Theorem~\ref{thm: braid stability} we have that $\dim \operatorname{BCF}_{\beta_{2k+1,2k+1},f_{2k}}(G)\geq 1$ which means that there are periodic orbits $y_1,y_2\in \mathcal P(G)$ such that $\mathcal B(G,y_1,y_2) = \beta_{2k+1,2k+1}$. It follows that $${1\over 5}\log(2k+1)< h(\beta_{2k+1,2k+1}) \leq h_{\operatorname{top}}(\psi).$$ Now, more generally, if $\psi$ is possibly a degenerate Hamiltonian diffeomorphism, there are arbitrarily small $C^\infty$ perturbations of $\psi$ yielding a non-degenerate Hamiltonian diffeomorphism $\tilde \psi$ such that still $d_H(\phi_{2k},\tilde \psi)<\mathcal S (H_{2k},x_1^{(k)},x_2^{(k)} )$. For $\tilde \psi$ the entropy estimate holds. Since topological entropy is $ C^\infty$-upper-semicontinuous \cite{Yo87,Ne89}, the estimate holds for $\psi$ as well. This means that for $\alpha_k = {1\over 5}\log(2k+1)$ we have $${1\over 16}k+O(1) \leq d_H(\operatorname{Ent}_{\leq \alpha_k}(T^2,\omega),\phi_{2k}) ,$$ for $k$ large enough.
    \end{proof}
\section{Appendix}\label{section: appendix}
Next, we prove Lemma~\ref{lemma: index of orbits}. We use the Maslov index as defined by Robbin and Salamon in \cite{RS93}. This index assigns to a path of Lagrangian subspaces $\Lambda:[0,1]\to \mathcal L(n)\subset Gr(n,\mathbb R^{2n})$ and a reference subspace $V\in \mathcal L(n)$ a half-integer which is invariant under homotopy of this path relative to the endpoints. The index counts the intersections of $\Lambda$ and $V$ as follows. Namely, if we assume that the crossings are regular, for every $t_0\in [0,1]$ there is a well-defined quadratic form $$Q(\Lambda,t_0):\Lambda(t_0)\to \mathbb R, \quad v\mapsto {d\over dt}|_{t=t_0}\omega(v,w(t)),$$ where we fix a Lagrangian complement $W$ of $\Lambda(t_0)$ and choose $w(t)\in W $ such that $v+w(t)\in \Lambda(t)$ for small $t$. Let $V$ be a further Lagrangian subspace. A $t\in [0,1]$ such that $\Lambda(t) \cap V$ is non-trivial is called \emph{crossing time} and define the associated \emph{crossing form} by $$\Gamma(\Lambda,V,t_0) = Q(\Lambda,t_0)\,|\,_{\Lambda(t_0)\cap V}.$$ The Maslov index of a path of Lagrangians is then given by $$\mu(\Lambda,V) = {1\over 2}\text{sgn}(\Gamma(\Lambda,V,0))+\sum_{0<t<1}\text{sgn}(\Gamma(\Lambda,V,t)) + {1\over 2}\text{sgn}(\Gamma(\Lambda,V,1)).$$ This index is, in particular, useful since it is well-defined even when $\Lambda (1) = V$, which allows us to consider paths of symplectic matrices which end in a symplectic matrix with eigenvalue $1$. 

The connection to the Conley--Zehnder index is the following. Since we consider Hamiltonian diffeomorphisms in $T^2 = \mathbb R^2/\mathbb Z^2$, the standard trivialization of the tangent bundle induces a representation of the differentials $\{d(\phi_k^t)_{\phi^t_k(x_0)}\}$ as a path of symplectic matrices. For a periodic orbit $x\in \mathcal P(H)$ we let $\Psi_x:[0,1]\to Sp(2,\mathbb R)$ denote this path. Furthermore, let for every path $A:[0,1]\to Sp(2,\mathbb R)$  $$\mathrm{Graph}(A(t)) = \{(v,A(t)v)\,|\,v\in \mathbb R^2\}$$ denote the path of the graphs. In our case, the Conley--Zehnder index is then given by paths of Lagrangians in $\mathbb R^4$ $$\mu_{CZ}(x) = \mu(\mathrm{Graph}(\Psi_x),\Delta_{\mathbb R^2}),$$ where we choose $\omega \oplus -\omega$ on $\mathbb R^4$. For more details, see \cite{RS93}. Note that for every $(0,0)\neq (v,v)\in \mathrm{Graph}(A(t))\cap \Delta_{\mathbb R^2}$  we have $A(t)v = v$ and $A(t)$ has the  eigenvalue $1$. Hence, the index $\mu(\mathrm{Graph}(A(t)),\Delta_{\mathbb R^2})$ sums the intersection form over all times $t$ where $A(t)$ has the eigenvalue $1$ (with half counts on the boundary). This index is invariant under so-called \emph{stratum homotopies} (see \cite{RS93}). In particular, this means the following. Given a homotopy $A_s(t)$ of paths of matrices for $s,t\in [0,1]$, such that $A_s(0) = id$ and none of the matrices $A_{s}(1)$ has $1$ as an eigenvalue, then $\mu(\mathrm{Graph}(A_s(t)),\Delta_{\mathbb R^2}) = \mu(\mathrm{Graph}(A_{0}(t)),\Delta_{\mathbb R^2})$. We will make use of this shortly.

\begin{proof}[Proof of Lemma~\ref{lemma: index of orbits}]

Let us for now fix a large positive integer $k\in \mathbb Z_{\geq 1}$ and define $\lambda = 4(k+\varepsilon)$. For every fixed point $z =z_0 = (x_0,y_0)$ with intermediate points $(x_i,y_i)$ let $$\epsilon_1 = -\text{sign}(\dot s_{k}(y_0)),\quad \epsilon_2 = \text{sign}(\dot s_{k}(x_1)),\quad \epsilon_3 = -\text{sign}(\dot s_{k}(y_2)),\quad \epsilon_4 = \text{sign}(\dot s_{k}(x_3)).$$
For such a fixed point, using the standard trivialization of $TT^2$, the path $(d\phi_k^t)_{\phi_k^t(z_0)}$ represented as matrices with respect to the standard basis is homotopic through symplectic matrices to the path \begin{align*}\begin{split} P_\epsilon(t) &=\begin{pmatrix}
    1& 0\\\lambda\epsilon_4t & 1
\end{pmatrix} \begin{pmatrix}
    1 & \lambda\epsilon_3t\\ 0 & 1
\end{pmatrix}\begin{pmatrix}
    1& 0 \\ \lambda\epsilon_2t&1
\end{pmatrix}\begin{pmatrix}
    1 & \lambda\epsilon_1t\\0&1
\end{pmatrix}\\ & = \begin{pmatrix}
    1+t^2\lambda^2\epsilon_2\epsilon_3 &t\lambda(\epsilon_1+\epsilon_3+t^2\lambda^2\epsilon_1\epsilon_2\epsilon_3)\\t\lambda(\epsilon_2+\epsilon_4+t^2\lambda^2\epsilon_2\epsilon_3\epsilon_4)& 1 + t^2\lambda^2(\epsilon_1\epsilon_2+\epsilon_3\epsilon_4+\epsilon_4\epsilon_1)+t^4\lambda^4\epsilon_1\epsilon_2\epsilon_3\epsilon_4
\end{pmatrix}\end{split}\end{align*}
We will distinguish cases and compute the index in each case. Unfortunately, for 12 of the 16 choices, the path $P_\epsilon(t)$ gives rise to degenerate intersection forms. Hence, we need to perturb the path.  Let us first discuss the four paths where we do not need to perturb $P_\epsilon(t)$. For every $\epsilon\in \{\pm 1\}^4$ we will show that $$\mu(z) = {(\epsilon_1+\epsilon_3)-(\epsilon_2 + \epsilon_4)\over 2}.$$
The matrix $P_\epsilon(t)$ has 1 as an eigenvalue for $t\in [0,1]$ if and only if $$\text{tr}(P_\epsilon(t)) = 2 + t^2\lambda^2(\epsilon_1+\epsilon_3)(\epsilon_2+\epsilon_4) + t^4\lambda^4\epsilon_1\epsilon_2\epsilon_3\epsilon_4 = 2$$ which means that $t=0$ or $$t = \sqrt{-{(\epsilon_1+\epsilon_3)(\epsilon_2+\epsilon_4)\over \lambda^2\epsilon_1\epsilon_2\epsilon_3\epsilon_4}}.$$
We here compute the intersection form of the crossing time $t=0$ in generality, as it will appear in any of the cases below. Suppose that $$\dot  P_\epsilon(0) = \begin{pmatrix}
    0 & p \\ q & 0
\end{pmatrix}.$$
Since $P_\epsilon(0) = id$ we have $\mathrm{Graph}(P_\epsilon(0)) = \Delta\subset \mathbb R^4$. Let us choose the Lagrangian complement $W = 0\times \mathbb R\times \mathbb R\times 0$ of $\Delta_{\mathbb R^4}$. For every $t\in \mathbb R$ close to 0 and $v=(x,y,x,y) \in \Delta_{\mathbb R^4}$ let $w(t) = (0,\eta(t),\xi(t),0)\in W$ such that $v+w(t) \in \mathrm{Graph}(P_\epsilon(t))$. This means that $$\begin{pmatrix}
    x+\xi(t)\\ y
\end{pmatrix} = P_\epsilon(t)\begin{pmatrix}
    x\\y+\eta(t)
\end{pmatrix}.$$ Differentiating at $t=0$ and using that $\eta(0)= 0$ as well as $P_\epsilon(0) = id$ we find $$\begin{pmatrix}
    \dot \xi(0)\\ 0
\end{pmatrix} = \dot P_\epsilon(0)\begin{pmatrix}
    x\\ y
\end{pmatrix}+\begin{pmatrix}
    0\\\dot \eta(0)
\end{pmatrix}.$$ Hence, we find $$\dot \xi(0) = py\quad \text{and}\quad \dot \eta(0) = -qx.$$
We now have that \begin{align*}
    Q(x,y) &= {d\over dt}\,\big|\,_{t=0} \omega(v,w(t)) \\ & =\dot \eta(0)x+\dot \xi(0)y\\ & = -qx^2+py^2.
\end{align*}
\underline{Case:} $\epsilon=\pm(1,1,1,1)$.\\
Then the only $t\in [0,1]$ such that $1$ is an eigenvalue of $P_\epsilon(t)$ is $t=0$. We calculate that $$\dot P_\epsilon(0) = \begin{pmatrix}
    0& \pm 2\lambda\\ \pm2\lambda& 0
\end{pmatrix}$$ and find \begin{align*}
    Q(x,y) = \mp 2\lambda x^2\pm2\lambda y^2
\end{align*}
which is a quadratic form of signature $0$. Thus, the fixed point $z$ has index $\mu(z) = 0$, which agrees with the given formula. \\
\underline{Case:} $\epsilon = \pm (1,-1,1,-1)$.\\
 In this case we find that $$\dot P_\epsilon(0) = \begin{pmatrix}
    0 & \pm2\lambda\\\mp2\lambda& 0
\end{pmatrix}$$ and a similar computation as above shows that the crossing form at $t = 0$ is given by $$Q(x,y) = \pm 2\lambda x^2\pm2\lambda y^2$$ which is a quadratic form of signature $\pm2$. Hence, the crossing time $t=0$ contributes $\pm1$ to the index. However, there is another crossing time $$t_1 = {2\over \lambda},$$ which lies in $[0,1]$ when $k$ is large enough. We have that $$P_\epsilon(t) = \begin{pmatrix}
    1-t^2\lambda^2 & \pm 2t\lambda\mp t^3\lambda^3\\\mp2t\lambda\pm t^3\lambda^3 & 1 - 3t^2\lambda^2 + t^4\lambda ^4
\end{pmatrix}$$ and hence $$P_\epsilon(t_1) = \begin{pmatrix}
    -3 & \mp4\\ \pm 4& 5
\end{pmatrix}.$$ Note that again $W = 0\times \mathbb R\times \mathbb R \times 0$ is a Lagrangian complement to $\mathrm{Graph}(P_\epsilon(t_1))$. We find that the eigenvector of $P_\epsilon(t_1)$ with eigenvalue $1$ is given by $(\mp 1,1)$ and the intersection $\mathrm{Graph}(P_\epsilon(t_1)) \cap \Delta $ is spanned by the vector $v=(\mp 1,1,\mp 1,1)$. For $t\in \mathbb R$ close to $t_1$ we let $w(t) = (0,\eta(t),\xi(t),0)$ such that $v +w(t) \in \mathrm{Graph}(P_\epsilon(t))$. Similar to above, this means that $$\begin{pmatrix}
    x+\xi(t)\\ y
\end{pmatrix} = P_\epsilon(t)\begin{pmatrix}
    x\\y+\eta(t)
\end{pmatrix}.$$ Differentiation at $t_1$ and using $\eta(t_1)=0$ that$$\begin{pmatrix}
    \dot \xi(t_1) \\ 0 
\end{pmatrix} = \begin{pmatrix}
    -4\lambda &\mp10\lambda\\ \pm10\lambda & 20\lambda
\end{pmatrix}\begin{pmatrix}
    \mp 1\\1
\end{pmatrix}+\begin{pmatrix}
    -3&\mp 4\\\pm4 & 5
\end{pmatrix}\begin{pmatrix}
    0 \\ \dot \eta(t_1)
\end{pmatrix}.$$ We find that $$ \dot \eta(t_1) = -2\lambda\quad \text{and}\quad  \pm \dot \xi(t_1)=2\lambda.$$ Now, we have that \begin{align*}\begin{split}
    \Gamma(\mathrm{Graph}(P_\epsilon(t)),\Delta,t_1)(s) & = Q(\mathrm{Graph}(P_\epsilon(t_1)))|_{\mathrm{Graph}(P_\epsilon(t_1))\cap \Delta}(s) \\ & = s^2{d\over dt}\big|_{t = t_1}\omega(v,w(t))\\ & =  s^2(\mp \dot \eta(t_1)+\dot \xi(t_1) )\\ & = \pm s^24\lambda.
\end{split}\end{align*}
We therefore find that at crossing time $t= t_1$, the crossing form contributes $\pm 1 $ to the index. Finally, we find $\mu(z)= \pm 2$, which agrees with the given formula in the statement of the lemma.
\\ 
Now consider the remaining cases where $\epsilon_1+\epsilon_3 = 0$ or $\epsilon_2+\epsilon_4 = 0$. Unfortunately, in these cases, the crossing form of $P_\epsilon$ at $t=0$ is degenerate. Since $$\text{tr}(P_\epsilon(t)) = 2 + t^4\lambda^4\epsilon_1\epsilon_2\epsilon_3\epsilon_4$$ we have that $\text{tr}(P_\epsilon(t)) = 2$ if and only if $
t=0$. Hence, $P_\epsilon(t)$ has $1$ as an eigenvalue if and only if $t=0$, which is the only (degenerate) crossing time of the Lagrangian path associated to $P_\epsilon(t)$. We perturb the path $P_\epsilon(t)$ such that this remains true and $t =0$ is the only crossing time but is now regular. \\
Let $\alpha,\beta\in \mathbb R$ be small enough in absolute value and to be fixed later. We consider the perturbed path of symplectic matrices $$ P_{\epsilon,\alpha,\beta}(t) = \begin{pmatrix}
    1 & 0 \\\alpha t & 1\end{pmatrix}P_\epsilon(t)\begin{pmatrix}
        1 & \beta t\\ 0 & 1
    \end{pmatrix}.$$ Note that $$\text{tr}(P_{\epsilon,\alpha,\beta}(1)) = \text{tr}(P_\epsilon(1)) + \alpha (P_\epsilon(1))_{12}+\beta (P_\epsilon(1))_{21}+\alpha \beta (P_\epsilon(1))_{11}.$$
    Since $$\text{tr}(P_\epsilon(1))= 2 + \lambda^2(\epsilon_1+\epsilon_3)(\epsilon_2+\epsilon_4) + \lambda^4\epsilon_1\epsilon_2\epsilon_3\epsilon_4\neq 2$$ when $k$ is large enough, we have that $\text{tr}(P_{\epsilon,\alpha,\beta}(1))\neq 2$ when $|\alpha|,|\beta|$ are small enough. Note that $ P_{\epsilon,\alpha,\beta}(0) = id$. It follows that $P_\epsilon$ and $P_{\epsilon,\alpha,\beta}$ are stratum homotopic via the homotopy given by $$I:[0,1]\times [0,1] \to Sp(2,\mathbb R),\quad (s,t)\mapsto  \begin{pmatrix}
    1 & 0 \\\alpha st & 1\end{pmatrix}P_\epsilon(t)\begin{pmatrix}
        1 & \beta st\\ 0 & 1
    \end{pmatrix}. $$
    A short computation shows \begin{align*}\begin{split}\text{tr}(P_{\epsilon,\alpha,\beta}(t)) =& 2+t^4\lambda^4\epsilon_1\epsilon_2\epsilon_3\epsilon_4+ \\ &+ t^2  \left [\alpha \beta +\alpha\lambda(\epsilon_1+\epsilon_3)+ \beta\lambda(\epsilon_2+\epsilon_4)
    \right ]\\ & +t^4\left [\alpha \lambda^3\epsilon_1\epsilon_2\epsilon_3 + \beta \lambda^3\epsilon_2\epsilon_3\epsilon_4+\alpha\beta\lambda^2\epsilon_2\epsilon_3\right],\end{split}\end{align*} where we use that $\epsilon_1+\epsilon_3 = 0$ or $\epsilon_2 + \epsilon_4 =0$.  Hence, we have $\text{tr}( P_{\epsilon,\alpha,\beta}(t)) = 2$  if and only if $t = 0$ or $t = \sqrt{D_{\epsilon,\alpha,\beta}}$ and $t\in [0,1]$ where $$D_{\epsilon,\alpha,\beta} = {-{\alpha \beta +\alpha \lambda(\epsilon_1+\epsilon_3)+\beta\lambda(\epsilon_2+\epsilon_4)\over \lambda^4\epsilon_1\epsilon_2\epsilon_3\epsilon_4+\left [\alpha\lambda^3\epsilon_1\epsilon_2\epsilon_3 + \beta \lambda^3\epsilon_2\epsilon_3\epsilon_4+\alpha\beta\lambda^2\epsilon_2\epsilon_3\right]}}.$$ $ $\\
    \underline{Case:} $\epsilon_1 + \epsilon_3 = 0$ and $\epsilon_2+\epsilon_4 \neq 0$.\\ 
   Choose $\delta>0$ small enough and set $\alpha = 0$ and $\beta = \delta(\epsilon_2+\epsilon_4)\epsilon_1\epsilon_2\epsilon_3\epsilon_4$. Note that in this case we have $\epsilon_1\epsilon_3 = -1$ and $\epsilon_2 \epsilon_4 = 1$ and hence $\beta = -\delta (\epsilon_2+\epsilon_4)$. When $\delta$ is small enough, $D_{\epsilon,\alpha,\beta}<0$, so $ t=0$ is the only crossing time. We have that $$\dot P_{\epsilon,\alpha,\beta} (0) = \dot P_\epsilon(0) + \begin{pmatrix}
       0 & \beta \\ 0 & 0
   \end{pmatrix} = \begin{pmatrix}0 & -\delta(\epsilon_2+\epsilon_4) \\ 
       \lambda(\epsilon_2+\epsilon_4) & 0
   \end{pmatrix}.$$ As above, we find that the associated intersection form at $t = 0$ is given by $$Q(x,y) = -(\epsilon_2+\epsilon_4)\left[\lambda x^2+\delta y^2\right]$$ which has signature $ -(\epsilon_2+\epsilon_4)$ since $\delta,\lambda>0$. This contributes $-(\epsilon_2+\epsilon_4)/2$, and since $t=0$ is the only crossing time, the index is $-(\epsilon_2+\epsilon_4)/2$, which agrees with the formula. \\
   \underline{Case:} $\epsilon_1 + \epsilon_3 \neq 0$ and $\epsilon_2+\epsilon_4 = 0$.\\
    This case is analogous to the last case, but we set $\beta = 0$ and $\alpha = -\delta (\epsilon_1+\epsilon_3)$ for small enough $\delta>0$. We find that $D_{\epsilon,\alpha,\beta}<0$ and hence that $t = 0$ is the only crossing time. We now have $$\dot P_{\epsilon,\alpha,\beta}(0) = \begin{pmatrix}
        0 & \lambda(\epsilon_1+\epsilon_3)\\ -\delta (\epsilon_1+\epsilon_3) & 0
    \end{pmatrix}.$$ The intersection form at $t = 0$ is then given by $$Q(x,y) = (\epsilon_1+\epsilon_3)\left[ \delta x^2 +\lambda y^2\right ],$$ which has signature $(\epsilon_1+\epsilon_3)$ since $\delta,\lambda>0$. Hence, the index is given by $(\epsilon_1+\epsilon_3)/2$, which agrees with the formula stated in the lemma.\\ $ $\\
    \underline{Case:} $\epsilon_1+\epsilon_3= \epsilon_2+\epsilon_4 =0$.
    \\ In this case we choose $\delta >0$ small enough and set $\alpha = \beta = \delta$. We have that $\epsilon_1\epsilon_2\epsilon_3\epsilon_4 = 1$, and hence, for $\delta>0$ sufficiently small, we find $D_{\epsilon,\alpha,\beta}<0$ and that $t = 0$ is the only crossing time. Moreover, we have $$\dot P_{\epsilon,\alpha,\beta}(0) = \begin{pmatrix}
        0 & \delta \\ \delta & 0
    \end{pmatrix}$$ and the intersection form at $t =0$ is given by $$Q(x,y) = -\delta x^2+\delta y^2$$ which has signature $0$. Hence, the index is $0$ in this case, which agrees with the formula given in the lemma.
    \end{proof}
\bibliographystyle{alpha}
\bibliography{biblio}
\end{document}